\def\sideremark#1{\ifvmode\leavevmode\fi\vadjust{\vbox to0pt{\vss
\hbox to 0pt{\hskip\hsize\hskip1em%
\vbox{\hsize2cm\tiny\raggedright\pretolerance10000%
\noindent {\color{red}{#1}}\hfill}\hss}\vbox to8pt{\vfil}\vss}}}%
\theoremstyle{plain}
\newtheorem{propn}{Proposition}[section]
\newtheorem{thm}[propn]{Theorem}
\newtheorem{lemma}[propn]{Lemma}
\newtheorem{cor}[propn]{Corollary}
\newtheorem{fct}[propn]{Fact}
\newtheorem{exo}[propn]{Example}
\theoremstyle{definition}
\newtheorem{defn}[propn]{Definition}
\theoremstyle{remark}
\newtheorem{rem}{Remark}
\newenvironment{rlist}
{

\begin{enumerate}}
{\end{enumerate}}
\def \bnabla{\overline{\nabla}}
\def \V{\mathcal{V}}
\def \bbV{\mathbb{V}}
\def \bbR{\mathbb{R}}
\def \bbC{\mathbb{C}}
\def \m{\mathfrak{m}}
\def \s {\mathfrak{s}}
\numberwithin{equation}{section}
\newcommand{\g}{\mathfrak{g}}
\newcommand{\h}{\mathfrak{h}}
\newcommand{\LL}{\mathfrak{l}}
\newcommand{\z}{\mathfrak{z}}
\newcommand{\n}{\mathfrak{n}}
\newcommand{\rad}{\mathfrak{r}}
\newcommand{\uh}{\underline{\h}}
\newcommand{\ug}{\underline{\g}}
\newcommand{\ur}{\underline{\rad}}
\newcommand{\Ll}{\mathfrak{l}}
\newcommand{\E}{\mathcal{E}}
\newcommand{\Kk}{\mathfrak{k}}
\newcommand{\gl}{\mathfrak{gl}}
\newcommand{\Sl}{\mathfrak{sl}}
\newcommand{\og}{\overline{\g}}
\newcommand{\oh}{\overline{\h}}
\newcommand{\so}{\mathfrak{so}}
\newcommand{\un}{\mathfrak{u}}
\newcommand{\spr}{\mathfrak{sp}}
\newcommand{\hol}{\mathfrak{hol}}
\newcommand{\stg}{\mathfrak{stab}(R^g)}
\DeclareMathOperator{\IM}{Im}
\DeclareMathOperator{\Der}{Der}
\DeclareMathOperator{\Hom}{Hom}
\DeclareMathOperator{\Aut}{Aut}
\DeclareMathOperator{\aut}{\mathfrak{aut}}
\DeclareMathOperator{\ung}{\underline{g}}
\DeclareMathOperator{\D}{D}
\DeclareMathOperator{\VV}{V}
\DeclareMathOperator{\mL}{L}
\DeclareMathOperator{\ad}{ad}
\DeclareMathOperator{\Ad}{Ad}
\DeclareMathOperator{\me}{Met}
\DeclareMathOperator{\Tr}{Tr}
\DeclareMathOperator{\Hol}{Hol}
\DeclareMathOperator{\Ric}{Ric}
\DeclareMathOperator{\ric}{ric}
\DeclareMathOperator{\di}{d}
\DeclareMathOperator{\li}{\mathscr{L}} 
\DeclareMathOperator{\spa}{span}
\DeclareMathOperator{\Sym}{Sym}
\DeclareMathOperator{\GL}{GL}
\DeclareMathOperator{\BK}{B} 
\def \H{\mathcal{H}}
\def \scrV{\mathscr{V}}
\def \scrH{\mathscr{H}}
\begin{document}
\title{3-symmetric spaces, Ricci solitons, and homogeneous structures}
\subjclass[2020]{53C30, 53C12, 53C55}
\keywords{Riemannian $3$-symmetric spaces, homogeneous structures, Ricci solitons, polar foliations}
\author[T. Murphy]{Thomas Murphy}
\address[T. Murphy]{Department of Mathematics, California State University Fullerton, 800 N. State College Blvd., Fullerton 92831 CA, USA}
\email{tmurphy@fullerton.edu}
\urladdr{http://www.fullerton.edu/math/faculty/tmurphy/}
\author[P.-A.Nagy]{Paul-Andi Nagy}
\address{Paul-Andi Nagy\\
Center for Complex Geometry, Institute for Basic Science (IBS)\\
55 Expo-ro, Yuseong-gu, 34126 Daejeon, South Korea
}
\email{paulandin@ibs.re.kr}

\begin{abstract}
The full classification of Riemannian $3$-symmetric spaces is presented. Up to Riemannian products the main building 
blocks consist in (possibly symmetric) spaces with semisimple isometry group, nilpotent Lie groups of step at most $2$ and spaces of type III and IV. 

For the most interesting family of examples, the Type III spaces, we produce an explicit description including results concerning the moduli space of all $3$-symmetric metrics living on a given Type III space. Each moduli space contains a unique distinguished point corresponding to an (almost-K\"ahler) expanding Ricci soliton metric.  
For certain classes of  3-symmetric metrics there are many different groups acting transitively and isometrically on a fixed Riemannian 3-symmetric space.  The construction of  expanding Ricci solitons on   spaces of Type III is also shown to generalize to \emph{any} effective representation of a simple Lie group of non-compact type, yielding a very general construction of homogeneous Ricci solitons. We also give a procedure to compute the isometry group of any Ambrose--Singer space. 
\end{abstract}

\maketitle 

\setcounter{tocdepth}{1}
\tableofcontents

\section{Introduction} \label{intro}
Arguably the most notable and widely studied class of homogeneous spaces after the Riemannian symmetric spaces are the  $3$-symmetric spaces. Taking a connected real Lie group $G$ and an automorphism $\theta$ of $G$ with order $3$, let $G^{\theta}$ denote the fixed point set of $\theta$ and $G^{\theta}_0$ its identity component.  Choosing the  closed  isotropy subgroup  $H  = G^{\theta}_0$
yields a  simply connected 3-symmetric space $M = G/H$. The object of study in the present paper are the  \emph{Riemannian $3$-symmetric spaces}, which are defined as the  3-symmetric spaces $(M,g)$ equipped with a $G$-invariant  Riemannian metric.  The corresponding homogeneous spaces arising from an automorphism  of order 2 are exactly the Riemannian symmetric spaces of Cartan.

Their study has a long history since the introduction of $k$-symmetric spaces by Graham--Leger \cite{gl} (see also \cite{kow}, \cite{jim}). As justification for their importance, we briefly record that they admit quasi-K\"ahler metrics \cite{gr1}, naturally occur in the study of nearly-K\"ahler metrics, and can be viewed as a generalization of Hermitian symmetric spaces, which are the most important family of symmetric spaces. Progress towards the classification of 3-symmetric spaces with  $G$ semisimple was achieved by Gray--Wolf \cite{gw} \cite{gw2}, assuming the transitive group action $G$ is holomorphic.  Even under this assumption  there remain issues in giving an explicit list of all  Riemannian 3-symmetric spaces in their classification, as the duality correspondence can affect the signature of the metric.  Examples with a solvable transitive group action  are far  more sporadic \cite{discala2} 
and the classification has remained largely open.

Let us begin by mentioning an example highlighting  some peculiarities of Riemannian 3-symmetric spaces which should persuade the reader that full classification results are far more subtle than the corresponding classification of Riemannian symmetric spaces. $\mathbb{C}P^{2n+1}$ equipped with its Fubini--Study metric is a Hermitian symmetric space and hence  is naturally endowed with a 3-symmetric structure as already mentioned.  At the same time, $\mathbb{C}P^{2n+1}$ arises as the twistor space of $\mathbb{H}P^n$ and thus may be equipped with a nearly K\"ahler, homogeneous (and hence 3-symmetric) metric. The upshot is that the same manifold can admit two different 3-symmetric metrics, a phenomena which does not happen in the classical theory of symmetric spaces. In addition, as explained in \cite{MuNa}, for the nearly--K\"ahler metric there are {\textit{two  different}  groups acting transitively and isometrically on the same Riemannian manifold, but one of these group actions does not preserve the 3-symmetric structure. This highlights key difficulties with the classical definitions: the transitive groups and metrics are intertwined together and it is not at all clear how to explain these phenomena  from this perspective.
\subsection{The main result} \label{mm1}
Our approach combines the Lie theoretic set-up and almost Hermitian geometry as follows.

In an analogous fashion to  the classical theory of Riemannian symmetric spaces to any Riemannian $3$-symmetric space one associates 
a Riemannian $3$-symmetric Lie algebra. This consists of a Lie algebra $\g$ together with an automorphism $\sigma \in \Aut(\g)$ with $\sigma^3=1$ and $\sigma \neq 1$. This entails the reductive decomposition $\g=\h \oplus \bbV$ where 
the subalgebra $\h:=\ker(\sigma-1)$ and the vector subspace $\bbV:=\mathrm{Im}(\sigma^2+\sigma+1)$ admits a linear complex structure 
$J = \frac{1}{\sqrt{3}}1_{\vert \bbV} + \frac{2}{\sqrt{3}} \sigma_{\vert \bbV}.$ The adjective Riemannian translates 
into the assumption that the subalgebra $\h:=\ker(\sigma-1)$ is compactly embedded, in the sense that the representation $(\h,\bbV)$ 
is tangent to a Lie group representation $(H,\bbV)$, with $H$ compact.

More geometrically, Riemannian $3$-symmetric spaces are almost complex manifolds $(M^{2m},J)$ which admit a Hermitian Ambrose-Singer connection $\mathrm{D}$ with torsion type  $\tau^{\mathrm{D}} \in \Lambda^1M \otimes \gl_{J}^{\perp}TM$; furthermore we require the holonomy group $\Hol(\mathrm{D})$ be compact. This assumption allows treating all compatible Riemannian metrics 
$$\mathrm{Met}(M,J,\D):=\{g : \mathrm{D}g=0 \ \mathrm{and} \ g(J \cdot ,J \cdot)=g \}
$$
simultaneously.  Building the infinitesimal model for $\mathrm{D}$ at a point $x \in M$ leads us to the study of the transvection Lie algebra $\ug=\mathbb{V} + [\mathbb{V}, \mathbb{V}]$ where $\bbV=T_xM$. Then $\ug$ is Riemannian $3$-symmetric 
in the sense above and the corresponding Lie group $\underline{G}$ always acts transitively \cite{kow} on $M$.

Our first main result, which classifies Riemannian $3$-symmetric spaces according to the algebraic structure of $\ug$ is as follows.

\begin{thm}\label{t1}
Every  connected, simply connected  Riemannian 3-symmetric space is  a Riemannian product with factors 
\begin{rlist}
\item  3-symmetric spaces with $\underline{\g}$ semisimple (spaces of Type I); 
\item  3-symmetric spaces  with $\underline{\g}$ nilpotent, $[\ug,[\ug,\ug]]=0$, (spaces of Type II);
\item 3-symmetric spaces of Type III; 
\item 3-symmetric spaces of Type IV. 
\end{rlist}
\end{thm}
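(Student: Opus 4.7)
The plan is to work at the level of the transvection Lie algebra $\ug = \bbV + [\bbV,\bbV]$ at a base point, reducing Theorem \ref{t1} to an algebraic statement: $(\ug,\sigma)$ decomposes as a direct sum of $\sigma$-invariant ideals, each of one of the four types. The first step is a $\sigma$-equivariant Levi decomposition. Since $\langle\sigma\rangle$ is finite, a standard averaging/conjugation argument (or Mostow's theorem on reductive subalgebras stable under a finite automorphism group) produces a $\sigma$-invariant Levi subalgebra $\s\subseteq\ug$, so that $\ug = \s \ltimes \rad$ with the radical $\rad$ also $\sigma$-invariant. Each of $(\s,\sigma|_\s)$ and $(\rad,\sigma|_\rad)$ is itself a Riemannian $3$-symmetric Lie algebra with compactly embedded isotropy $\uh\cap\s$, respectively $\uh\cap\rad$.

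The second step is to promote this Lie-algebraic splitting to a Riemannian product of the simply connected space $M$. Here the assumption that $\uh$ acts as a compact group on $\bbV$ is crucial: any $\uh$-invariant decomposition of $\bbV$ is automatically orthogonal for every $g\in\me(M,J,\D)$, and the $\sigma$-equivariant Levi splitting induces such a decomposition $\bbV = \bbV_\s \oplus \bbV_\rad$. Combined with the bracket relations $[\s,\rad]\subseteq\rad$ and the fact that the corresponding distributions are $\D$-parallel, this gives a de Rham-style Riemannian product $M = M_\s \times M_\rad$, the factor $M_\s$ being of Type I by definition.

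The remaining and more delicate step is to decompose $M_\rad$ into factors of Types II, III and IV. I would stratify by the structure of $\rad$ and its derived series. If $\rad$ is nilpotent with $[\rad,[\rad,\rad]]=0$ we are in Type II directly. Otherwise one has either non-vanishing $[\rad,[\rad,\rad]]$ or non-nilpotent directions in $\rad$; in each case I would use the $\sigma$-invariance of the descending central series and of the nilradical, together with the $\uh$-invariance of the corresponding flags, to peel off $\sigma$-invariant ideals giving Riemannian product factors whose algebraic structure matches, by the definitions introduced later in the paper, either Type III or Type IV. At every such peeling the orthogonality of the split is again ensured by the compactness of $\uh$.

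The main obstacle is this final, solvable, step: showing that no exotic case escapes the list and that the nilpotent and non-nilpotent pieces split off cleanly rather than mixing. The difficulty is that, unlike the Levi/de~Rham step which is fairly classical, the $\sigma$-stable structure of a solvable Lie algebra with compact isotropy is not purely formal; it is precisely here that the structural description of Types III and IV, including the role of the complex structure $J=\tfrac{1}{\sqrt{3}}1 + \tfrac{2}{\sqrt{3}}\sigma$ on $\bbV$, has to be invoked to rule out further irreducible building blocks.
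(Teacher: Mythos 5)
There is a genuine gap, and it sits exactly at your second step. The $\sigma$-invariant Levi decomposition $\ug=\s\ltimes\rad$ is a \emph{semidirect} product, and it does not induce a de Rham splitting $M=M_{\s}\times M_{\rad}$: for a Type III space one has $\ug=\VV\rtimes_{\rho}\mL$ with $\s=\mL$ simple acting nontrivially on $\rad=\VV$, and the paper shows (Theorem \ref{irred-R}) that every invariant metric on such a space is de Rham \emph{irreducible}, so no Type I factor splits off. Concretely, your argument fails because the cross-torsion does not vanish: $\tau(\mathcal{H},\VV)=\rho(\mathcal{H})\VV=\VV\neq 0$, so the distributions coming from $\bbV_{\s}$ and $\bbV_{\rad}$ are not invariant under the intrinsic torsion $\eta^g$ and hence not $\nabla^g$-parallel ($\D$-parallel plus $\uh$-invariant is not enough; one needs $\eta^g$-invariance, which in the paper is obtained from Lemma \ref{ideals} precisely because the relevant decomposition is a \emph{direct} product of Lie algebras). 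Relatedly, your claim that any $\uh$-invariant decomposition of $\bbV$ is automatically orthogonal for every compatible metric is too quick; the paper has to prove orthogonality (Proposition \ref{fix-2}) using $[\Kk,\mathcal{H}]=\mathcal{H}$, $\rho(\Kk)\VV=\VV$ and the vanishing of $\Hom_{\Kk}^{\prime}(\mathcal{H},\VV)$, not mere compactness of the isotropy.

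The second structural misunderstanding is in your third step: Types III and IV cannot be found by decomposing $M_{\rad}$, because they are not solvable — each is a semidirect product of a piece of the radical with a \emph{simple Hermitian symmetric} piece of the Levi factor. The missing key ideas, which are the heart of the paper's proof, are: (a) Theorem \ref{rad11}, identifying the radical of $\ug$ with the curvature nullity $W=\{v\in\bbV: R^{\D}(v,\bbV)=0\}\subseteq\bbV$ and showing it is of type II; (b) Theorem \ref{split-11}, which splits the Levi factor as $\mL_0\oplus\mL$ with $\mL_0=\ker\rho$ (this, not all of $\s$, is the Type I piece), proves $\mL$ is Hermitian symmetric because $(\Der(W),\Sigma_W)$ has vanishing torsion, splits $W=\n\oplus\VV$ into the $\mL$-fixed type II part and an admissible $\mL$-module, and keeps the indecomposable blocks $\VV_i\rtimes_{\rho_i}\mL_i$ (Types III/IV) intact; and (c) only after this direct-product decomposition does the $\eta^g$-invariance and metric orthogonality hold, allowing the de Rham argument you envisage. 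So the overall strategy (algebraize, split, de Rham) is right in outline, but the splitting you propose is the wrong one and would break on every Type III or IV space.
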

We refer the reader to the text for precise definitions of all objects.  

\begin{rem} \label{r1} While Theorem \ref{t1} is stated at the Riemannian level,  the  splitting is independent of the metric chosen. Namely, the Riemannian homogeneous space $M$ may always be decomposed as a product of Riemannian 3-symmetric spaces in such a manner that the transitive group of each factor is either semisimple, nilpotent, or solvable. In general the classification of 3-symmetric spaces without a compatible Riemannian metric is beyond the reach of our techniques. See Proposition \ref{gen-exa} for further details.  
\end{rem}
We now explain the result in Theorem \ref{t1} in some detail.
As already mentioned, 3-symmetric spaces with $\underline{\g}$ semisimple have been intensively studied  \cite{gw}, \cite{gw2}, but 
their complete classification remains an open problem; indeed those references determined instances when $\Aut^0(M,g)$ is semisimple and {\it{moreover}} preserves the almost complex structure $J$; see Theorem \ref{t1.5} which establishes this is indeed the case unless $(M,g)$ is Riemannian symmetric.

Classifying examples whose transvection algebra is a  2-step nilpotent Lie algebra is still wide open ( see  Proposition \ref{gen-exa}), though we can reduce it to an algebraic problem concerning 2-step nilpotent Lie algebra.  These are 3-symmetric spaces with flat Chern connection $\mathrm{D}$, a topic studied in \cite{discala1}.  

The new occurrences in the classification Theorem \ref{t1} are thus spaces of type III and IV; for those instances the transvection algebra is neither semisimple nor nilpotent. As homogeneous spaces those spaces are of the form 
$$ M = \underline{G}/ K = (\VV\rtimes_{\pi} G)/K
$$
with data entering the construction given by 
\begin{itemize}
\item[$\bullet$]a simply connected Hermitian symmetric space $G/K$ where the Lie algebra $\mL$ of $G$ is simple and has Cartan decomposition  $\mL= \mathfrak{k} \oplus \mathcal{H}$
\item[$\bullet$] a linear representation $\pi:G \to \GL(\VV)$ on a real vector space such that the tangent representation $\rho=(d \pi)_e: \mL \rightarrow  \mathfrak{gl}(\VV)$ is admissible; see Definition \ref{adm-ff} for full details.
\end{itemize}
In the construction above the transvection algebra of $M$ is given by the semi-direct product 
$ \underline{\mathfrak{g}}=\VV\rtimes_{\rho} \mL$. 

Spaces of type III are obtained when requiring the Hermitian symmetric pair $(\mL,\Kk)$ have {\it{non-compact}} type whilst spaces 
of type IV correspond to having $(\mL,\Kk)$ of {\it{compact}} type. Riemannian 3-symmetric spaces of Type IV are, at metric level, Riemannian products of flat complex space and Hermitian symmetric spaces of noncompact type.  However they do \emph{not} overlap with the first two  cases in Theorem \ref{t1} as the almost-complex structure is not the product one.

Theorem \ref{t1} provides a full classification result, up to the case of Riemannian symmetric spaces discussed above. Indeed, admissible 
representations of simple Lie algebras of non-compact have been fully classified by Satake \cite{Sat1}, see Table \ref{table1} in the paper; note that Satake's motivation was somewhat different, namely
the study of totally geodesic, complex submanifolds of the Hermitian symmetric space $\frac{Sp(m,\bbR)}{U(m)}$. Admissible representations of Hermitian Lie algebras of compact type can be obtained from Satake's list by an appropriate notion of duality, see Table \ref{table2}.

To end this section we briefly streamline some of the arguments used to prove Theorem \ref{t1}.
We start from the geometric interpretation of Riemannian $3$-symmetric spaces as Ambrose-Singer almost complex manifolds $(M^{2n},\D,J)$. Next we consider the transvection algebra $\ug$, built from the infinitesimal model for the Ambrose-Singer connection 
$\mathrm{D}$ and determine its radical $\mathfrak{r}(\ug)$. Using holonomy reduction techniques we establish that this radical 
is determined by the curvature nullity of $\mathrm{D}$, namely $\{X \in TM : R^{\D}(X, \cdot)=0\}$. The splitting of $\ug$ into 
algebraic types 
then follows from the Levi-Malcev theorem by representation theoretic arguments. Note that we also need to show that the study of admissible representations reduces to the case when $\mL$ is simple. The factors in the splitting of $\ug$ are then showed to correspond geometrically to de Rham factors, observation which leads to the proof of Theorem \ref{t1}.
\subsection{Geometric invariants of spaces of Type III} \label{giIII}
Our second main theorem discusses the structure of compatible $3$-symmetric metrics on spaces of type III. Remarkably, the study 
of such metrics reduces to the study of various group actions on the centraliser 
$\mathfrak{c}(\mL,\rho,\VV) $ of the defining admissible representation $\rho:\mL \to \gl(\VV)$. For the notation $\mathfrak{c}^{-}(\mL,\rho,\VV)$ in the statement below the reader is referred to Section \ref{prep-1}. 
\begin{thm}\label{t2}
Let $M=(\VV \rtimes_{\pi} G) \slash K$ be a Riemannian $3$-symmetric space of Type III.
\begin{rlist}
\item any $\underline{G}=V \rtimes_{\pi} G$-invariant metric in $\mathrm{Met}(M,J,\mathrm{D})$ is de Rham irreducible 
\item the set of $\underline{G}$-invariant metrics in $\mathrm{Met}(M,J,\mathrm{D})$ is in $1:1$ correspondence with 
$ \mathfrak{c}^{-}(\mL,\rho,\VV) \times \bbR_{>0}$
\item there is a bijective correspondence between irreducible Type III spaces and admissible representations of Lie algebras of non-compact type, all of which are classified in Table \ref{table1}
\item if $\rho$ is of real type, the moduli space of Riemannian 3-symmetric metrics on $M$ is a point (i.e. there is a unique Riemannian 3-symmetric metric)
\item  if $\rho$ is of complex type, the moduli space of Riemannian 3-symmetric metrics on $M$  can have arbitrarily large dimension
\item $M$ admits a unique $\underline{G}$-invariant  metric $g$  which is almost-K\"ahler with respect to $J$. This metric is an expanding Ricci soliton.  
\end{rlist}
\end{thm}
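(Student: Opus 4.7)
The plan is to reduce every assertion to infinitesimal data at the origin $o=eK$ by analysing $\underline{G}$-invariant tensors on $T_oM=\VV\oplus\H$ as $K$-modules. Here $\H$ carries the isotropy representation of the Hermitian symmetric pair $(\mL,\Kk)$, which is $K$-irreducible since $\mL$ is simple, and $\VV$ carries the $K$-restriction of $\pi$. A $\underline{G}$-invariant element of $\mathrm{Met}(M,J,\D)$ corresponds to a $K$-invariant, $J$-compatible, positive-definite inner product on $T_oM$, and by Schur's lemma applied to $\H$ and the isotypical decomposition of $\VV$ such a form is necessarily block-diagonal. On $\H$ this block is a positive scalar multiple of the Killing form, contributing the $\bbR_{>0}$ factor, while on $\VV$ the $J$-compatible $K$-invariant inner products are parametrized by the self-adjoint part of the $\rho(\mL)$-centralizer that anticommutes with $J$, which is precisely $\mathfrak{c}^-(\mL,\rho,\VV)$. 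This proves (ii), and (iii) follows by combining (ii) with the bijective dictionary between irreducible Type III spaces and their admissible transvection data $\VV\rtimes_{\rho}\mL$ already used in the proof of Theorem \ref{t1}.

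For (i) I would observe that any local de Rham factor is preserved by the Levi--Civita holonomy, hence by the isotropy $K$ which it contains. $K$-irreducibility of $\H$ places $\H$ in one de Rham block, and the bracket $[\H,\VV]\subseteq\VV$ provided by $\rho$, combined with admissibility (which forces $\rho$ to act nontrivially on each $K$-isotypical component of $\VV$), then pulls $\VV$ into the same block. For (iv)--(v) one applies Schur to $\rho$: in the real case the commutant of $\rho(\mL)$ reduces to $\bbR\cdot\mathrm{Id}$, so $\mathfrak{c}^-=0$ and the $\bbR_{>0}$ ambiguity of (ii) is killed by the homothety action defining the moduli, producing a single point. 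In the complex case the commutant contains a copy of $\bbC$ on each irreducible summand, and arbitrarily high-dimensional moduli arise by taking admissible representations that are direct sums of many mutually non-isomorphic complex-type irreducibles, whose existence is exhibited by inspection of Table \ref{table1}.

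Statement (vi) is the heart of the theorem. I would encode the almost-K\"ahler condition $\di\omega=0$ on a $\underline{G}$-invariant metric as an algebraic equation on the pair $(A,t)\in\mathfrak{c}^-(\mL,\rho,\VV)\times\bbR_{>0}$, derived from the identity $\nabla-\D=\eta$ with the intrinsic torsion $\eta$ written explicitly in terms of $(A,t)$. Because the $\H$- and $\VV$-blocks enter with different weights controlled by non-compactness of $(\mL,\Kk)$, this equation is affine and admits a unique solution $(A_0,t_0)$, producing the asserted almost-K\"ahler metric $g_{\mathrm{aK}}$. To exhibit $g_{\mathrm{aK}}$ as an expanding Ricci soliton, I would compute its Ricci tensor via the standard homogeneous Ricci formula, use the admissibility identities to show that $\Ric_{g_{\mathrm{aK}}}$ splits block-diagonally along $\H\oplus\VV$, and match it with $-\lambda\, g_{\mathrm{aK}}+\tfrac12\li_X g_{\mathrm{aK}}$ for the vector field $X$ generated by the $\bbR_{>0}$-action dilating the $\VV$-directions, with $\lambda>0$ forced by the signature of the Killing form of $\mL$ restricted to $\Kk$.

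The main obstacle will be (vi). Identifying the precise soliton vector field $X$ and controlling the sign of $\lambda$ require a delicate interplay between the indefinite Killing form of $\mL$ (since $\mL$ is noncompact) and the positive-definite metric on $\VV$, so the $\H$- and $\VV$-contributions to $\Ric$ must be separated carefully and then recombined under the soliton equation. A secondary subtlety lies in verifying within (ii) that $\mathfrak{c}^-(\mL,\rho,\VV)\times\bbR_{>0}$ parametrizes a genuinely open cone of positive-definite metrics on $T_oM$, rather than only a linear slice through the cone of symmetric forms.
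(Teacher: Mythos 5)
Your plan for parts (iii)--(v) and the soliton mechanism in (vi) (dilation of the $\VV$-directions as the derivation, expansion from non-compactness) is broadly in line with the paper, but there are two genuine gaps. The most serious is (i). First, your justification ``preserved by the Levi--Civita holonomy, hence by the isotropy $K$ which it contains'' is not valid: the Riemannian holonomy of a homogeneous space need not contain the isotropy (flat $\bbR^n$ already shows this); $K$-invariance of the de Rham splitting has to come from invariance of the de Rham decomposition under the connected isometry group, not from holonomy. More importantly, even granting $K$-invariance, your mechanism --- $K$-irreducibility of $\H$ plus $[\H,\VV]\subseteq\VV$ plus ``$\rho$ acts nontrivially on each isotypical component'' --- cannot prove irreducibility, because every one of these facts holds verbatim when $\mL$ has \emph{compact} type, and there the metric \emph{is} a Riemannian product with $\VV$ a flat factor (Theorem \ref{irred-Rc}); note also Remark \ref{non-split}: reducibility of the metric is not detected by the Lie bracket structure of $\ug$. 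The paper's proof of (i) necessarily uses non-compactness: it first shows the Riemann curvature nullity vanishes (Theorem \ref{irred-R}(i), using $\rho(\H)\subseteq\Sym^2(\VV,h_{\VV})$ and positivity of the trace form for simple non-compact $\mL$), and then applies the criterion of Proposition \ref{holR-red}, which requires a putative parallel splitting to be invariant under the intrinsic torsion $\eta$ as well as under $\uh$; the $\eta$-analysis, not the bracket, is what pulls $\VV$ and $\H$ into one block. Your proposal contains no substitute for either step.

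The second gap is in (ii). Block-diagonality of a $K$-invariant $J$-compatible inner product on $\VV\oplus\H$ is not ``Schur'': one must rule out a copy of $\H$ inside $\VV$ compatible with $J$, i.e.\ prove $\Hom^{\prime}_{\Kk}(\H,\VV)=0$, which the paper does via the eigenvalue analysis of $\rho(z)$ (Lemmas \ref{rho-z} and \ref{met-mix}, a Satake-type argument). Likewise, the identification of the $\VV$-block with centraliser data is not formal: $K$-invariance plus commutation with $J_{\VV}$ only gives $\Sym^2_{\Kk}(\VV,h_{\VV})\cap\gl_{J_{\VV}}(\VV)$, and showing this equals $\{A^{+}+A^{-}J_{\VV}\}$ with $A^{\pm}\in\mathfrak{c}^{\pm}(\mL,\rho,\VV)$ uses the Jordan identity \eqref{jordan} and the invertibility of the Casimir-type operator (Proposition \ref{iso-invm}); the further reduction to $\mathfrak{c}^{-}\times\bbR_{>0}$ needs the normalisation of $A^{+}$ by the centraliser group action (Proposition \ref{iso-invm1}). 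As written, your description is also off: by Lemma \ref{cent-cpx}(iii) the centraliser \emph{commutes} with $J_{\VV}$, so ``the self-adjoint part of the centraliser that anticommutes with $J$'' is zero, and $\mathfrak{c}^{-}$ is the skew-adjoint part. Finally, in (vi) the almost-K\"ahler condition $\di\omega=0$ constrains only the $\VV$-block (it forces $g_{\VV}$ to be a background metric, Proposition \ref{int-s}(iii)) and leaves the scale $t$ on $\H$ free, so there is no ``unique solution $(A_0,t_0)$ of an affine equation''; and the expansion constant comes from $t_{\rho}=\mu(\rho)\BK^{\mL}$ with $\mu(\rho)>0$ together with $\ric^{\H}=-\tfrac12\BK^{\mL}_{\vert\H}$ (Theorem \ref{sol-1}), not from the signature of $\BK$ restricted to $\Kk$.
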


Theorem \ref{t2}(iii) shows that, for any $d\in \mathbb{N}$, one can find a Type III space whose  moduli space of  compatible 3-symmetric metrics has dimension $d$. See Theorem \ref{cpx-ty2} for further details. This is remarkable as it means there are uncountably many $3$-symmetric metrics living on the same underlying manifold.   Another interesting consequence of the theorem is that  any admissible representation of a simple Lie group of non-compact type  at Lie algebra level yields an irreducible  Riemannian 3-symmetric space. This has the consequence that one can take direct sums of such admissible representations  to yield new irreducible Riemannian metrics.

The almost-K\"ahler metrics in part (vi) of Theorem \ref{t2} have additional curvature properties, in the sense they belong to 
the class $\mathcal{AK}_2$, studied in \cite{Ap1,Ap3,Chi-Na}. 
We have also discovered that almost-K\"ahler Type III spaces are also of central interest in geometric analysis, due to the fact that they furnish prototypical examples of a new, very general, construction of (homogeneous) Ricci solitons metrics which is given below. 
 
\begin{thm}\label{t3}
Let $\mL$ be a simple Lie algebra of noncompact type, $\mL= \mathfrak{k} \oplus \mathcal{H}$  be a Cartan decomposition, and $\rho: \mL\rightarrow \mathfrak{gl}(\VV)$ a faithful linear representation. Then  $(\VV\rtimes_{\pi} G)/K$ admits an expanding Ricci soliton metric. 
\end{thm}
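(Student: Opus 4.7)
The plan is to realise $M = (\VV \rtimes_{\pi} G)/K$ as a simply connected solvable Lie group $S$ carrying a left-invariant metric, and then verify the Ricci soliton equation via Lauret's algebraic criterion for left-invariant metrics on solvmanifolds. Take an Iwasawa decomposition $\mL = \Kk \oplus \A \oplus \n$, with $\A \subset \H$ maximal abelian and $\n$ the sum of positive restricted-root spaces. At the group level $G = KAN$, so the simply connected solvable Lie group $S := \VV \rtimes AN$ acts simply transitively on $M$. Its Lie algebra $\s := \VV \oplus \A \oplus \n$ is solvable with nilradical $\VV \oplus \n$: indeed $\n$ is generated by positive restricted-root vectors of $\mL$, and each such element acts nilpotently on $\VV$ in any finite-dimensional representation, so $\rho(\n) \subset \gl(\VV)$ consists of nilpotent endomorphisms.

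Equip $\s$ with the inner product $\langle\cdot,\cdot\rangle$ in which $\VV$ is orthogonal to $\A \oplus \n$; on $\A \oplus \n$ take the left-invariant metric corresponding, under $AN \cong G/K$, to the standard Einstein metric on the symmetric space $G/K$ induced from the Killing form of $\mL$; on $\VV$ take any $\Ad_K$-invariant positive-definite form, available since $K$ is compact. Extending by left-translation yields a left-invariant metric $g$ on $S \cong M$. Lauret's criterion says $(S,g)$ is a Ricci soliton if and only if $\Ric_g = c\,\bbI + D$ for some $c \in \bbR$ and $D \in \Der(\s)$. Decomposing $\Ric_g$ along $\s = \VV \oplus \A \oplus \n$: the $(\A \oplus \n)$-block equals the Einstein Ricci of $G/K$ modified by a trace-form correction coming from $\rho$; the $\VV$-block is a symmetric Casimir-type endomorphism $-\tfrac{1}{2}\sum_i \rho(e_i)^2|_\VV$ for an orthonormal basis $\{e_i\}$ of $\A \oplus \n$, up to lower-order skew contributions; the mixed blocks vanish by $\Ad_K$-invariance of $\langle\cdot,\cdot\rangle|_\VV$. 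The goal is then to identify an element $H_\rho \in \A$ whose adjoint action on $\s$ reproduces $\Ric_g - c\,\bbI$ block by block, built from the restricted-root grading on $\n$ together with the $\rho$-weight grading on $\VV$ under $\A$, possibly after rescaling $\langle\cdot,\cdot\rangle|_\VV$ along $\ad(\A)$-isotypic components of $\VV$.

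The main obstacle is showing that the endomorphism $D := \ad(H_\rho)|_\s$ is a derivation of $\s$ that exactly matches $\Ric_g - c\,\bbI$, not merely a symmetric map with the right eigenvalues; generic left-invariant metrics on solvmanifolds fail to be Ricci solitons. The enabling ingredients are (a) the simplicity of $\mL$, so that $\A$ is a Cartan subspace and the restricted-root grading on $\n$ is canonical; (b) the faithfulness of $\rho$, so that the $\A$-weights on $\VV$ refine the restricted-root grading compatibly with $[\n,\VV] \subset \VV$; and (c) compatibility of $\langle\cdot,\cdot\rangle$ with the Cartan involution. A final trace computation, using positivity of the restricted roots and of the $\rho$-weights on $\ad(\A)$-isotypic summands of $\VV$, yields $\Tr D > 0$ and hence $c < 0$, so the soliton is expanding.
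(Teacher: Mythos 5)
Your solvmanifold strategy is a reasonable-sounding outline, but the proof has a genuine gap exactly where the work lies: you never construct the metric for which the soliton identity holds, and you never verify $\Ric_g = c\,\bbI + D$ for an actual derivation $D$ of $\s$. Two specific problems. First, the metric on $\VV$ cannot be ``any $\Ad_K$-invariant positive-definite form'': for a generic isotropy-invariant choice the $\VV$-block of the Ricci tensor is the nonzero operator $Q=\sum_i[\rho^{-}(e_i),\rho^{+}(e_i)]$ and the resulting tensor is not of soliton type (indeed, on Type III spaces the paper shows the soliton metric is unique among $3$-symmetric metrics). The essential ingredient you are missing is Mostow's theorem: one must choose a \emph{background} metric $h_{\VV}$, i.e.\ one for which $\rho(\Kk)\subseteq\so(\VV,h_{\VV})$ and $\rho(\mathcal{H})\subseteq\Sym^2(\VV,h_{\VV})$; this is what makes $Q=0$, kills the mixed block, and reduces the Ricci tensor to a negative multiple of the metric on the $\mathcal{H}$-directions (via $t_\rho=\mu(\rho)\BK^{\mL}$ with $\mu(\rho)>0$) and zero on $\VV$. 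Your appeal to ``$\Ad_K$-invariance'' to kill the mixed block is also not available in the solvable presentation, since $K$ does not act on $S=\VV\rtimes AN$ in a way that obviously respects your splitting; in the paper this vanishing is an explicit curvature computation.

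Second, your ansatz for the derivation is wrong. With the correct (background) metric the Ricci endomorphism vanishes on $\VV$ and is a negative constant on the complement, so the required derivation in Lauret's criterion is $D=-c\,1_{\VV}$ extended by zero (a derivation because $\VV$ is an abelian ideal, and in fact $1_{\VV}\in\mathfrak{c}(\mL,\rho,\VV)\subseteq\Der(\g)$); it is \emph{not} of the form $\ad(H_\rho)$ with $H_\rho\in\A$, since any nonzero such $\ad(H_\rho)$ acts with nonzero restricted-root eigenvalues on $\n$ and hence cannot vanish on the $\A\oplus\n$ block. So the object your plan sets out to find does not exist, while the object that does the job is never identified. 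By contrast, the paper's proof avoids the Iwasawa decomposition entirely: it works with the reductive decomposition $\VV\rtimes_{\rho}\mL=\Kk\oplus(\VV\oplus\mathcal{H})$, takes $h=h_{\VV}+t\,\BK^{\mL}|_{\mathcal{H}}$ with $h_{\VV}$ a Mostow background metric, computes the Ricci tensor explicitly, and exhibits the algebraic soliton with derivation $1_{\VV}$, the negative cosmological constant (hence ``expanding'') coming from $\mu(\rho)+\tfrac12>0$. Your final trace argument for expansion likewise cannot be salvaged independently of the unproven soliton identity, and the $\A$-weights on $\VV$ need not be positive in any case.
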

Above $G$ is the simply connected Lie group with Lie algebra $\mL$ and $K \subseteq G$ is the connected Lie subgroup corresponding to the Lie algebra $K$; also $\pi:G \to \GL(\VV)$ is the representation exponentiating $\rho$.

 Every admissible representation is faithful, and the Ricci soliton metric this theorem produces on Type III spaces is exactly the almost-K\"ahler metric of Theorem \ref{t2}(iv).  It is worth noting the degree of generality in this construction: no assumption is made on the representation $\rho$. The theorem provides a systematic way to construct homogeneous Ricci solitons from \emph{arbitrary} representations of non-compact semisimple Lie groups. We refer the reader to \cite{LaLa,jab} for general facts regarding homogeneous 
Ricci solitons.
 
We also produce many examples of polar Riemannian  foliations on  Type III spaces. This might not seem as important as the other theorems, but it was our initial motivation to study such spaces and turns out to play a crucial  role in the proof of Theorem \ref{t2}, as well as inspiring the proof of Theorem \ref{t3} .  
 \begin{thm}\label{t4}
 Every  irreducible Riemannian 3-symmetric space of Type III  admits a polar Riemannian foliation. \end{thm}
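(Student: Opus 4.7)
The plan is to exhibit an explicit polar Riemannian foliation arising from the semidirect product structure $\underline{G} = \VV \rtimes_\pi G$. I would take $\mathcal{F}$ to be the foliation of $M = \underline{G}/K$ whose leaves are the orbits of the normal subgroup $\VV \subseteq \underline{G}$; concretely, the leaf through $[v_0,g_0]K$ is $\{[v,g_0]K : v \in \VV\}$, and the quotient $M/\VV \cong G/K$ realises $M$ as a vector-bundle-type fibration over the Hermitian symmetric space $G/K$. Since $\VV$ is abelian and acts freely and properly by isometries (easy check using $\VV \cap K = \{0\}$), the leaves have constant dimension $\dim \VV$ and the induced transverse metric on $M/\VV$ is well-defined, making $\mathcal{F}$ a regular Riemannian foliation whose leaves are intrinsically flat Euclidean spaces.

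My candidate section would be $\Sigma := \{[0,g]K : g \in G\} \cong G/K$, the $G$-orbit of the basepoint $x_0 = [0,e]K$. Solving $[v,g_0]K = [0,g]K$ inside the semidirect product forces $v = 0$ and $gK = g_0 K$, so $\Sigma$ meets every leaf of $\mathcal{F}$ transversally in exactly one point. At $x_0$ one has $T_{x_0}\Sigma = \mathcal{H}$ and $T_{x_0}\mathcal{F} = \VV$, which are orthogonal for any compatible $3$-symmetric metric because any metric in $\mathrm{Met}(M,J,\D)$ respects the splitting $\mathfrak{m} = \VV \oplus \mathcal{H}$; this is a direct consequence of the classification in Theorem \ref{t2}(ii), with $\VV$ being the summand on which the nilradical of $\underline{\g}$ acts nontrivially. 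Since both $\Sigma$ and $\mathcal{F}$ are $G$-invariant and $G$ acts by isometries, orthogonality propagates along $\Sigma$.

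To upgrade $\Sigma$ to a genuine polar section I would then show it is totally geodesic. Using the Nomizu description $\Lambda(X)Y = \tfrac12[X,Y]_\mathfrak{m} + U(X,Y)$ of the Levi-Civita connection attached to the reductive decomposition $\mathfrak{m} = \VV \oplus \mathcal{H}$, for $X,Y \in \mathcal{H}$ the bracket term vanishes because $[\mathcal{H},\mathcal{H}] \subseteq \mathfrak{k}$, while the symmetric correction $U(X,Y)$, characterised by $2\langle U(X,Y),Z\rangle = \langle [Z,X]_\mathfrak{m},Y\rangle + \langle X,[Z,Y]_\mathfrak{m}\rangle$, is annihilated on $Z \in \VV$ by $[\VV,\mathcal{H}] \subseteq \VV \perp \mathcal{H}$ and on $Z \in \mathcal{H}$ by $[\mathcal{H},\mathcal{H}]_\mathfrak{m} = 0$, so $U(X,Y) = 0$. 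Hence $(\nabla_X Y)_{x_0} = 0 \in \mathcal{H}$, and $G$-equivariance delivers that $\Sigma$ is totally geodesic. The only delicate step is confirming the orthogonality of $\VV$ and $\mathcal{H}$ for every metric in $\mathrm{Met}(M,J,\D)$, where Theorem \ref{t2}(ii) is invoked; the remaining verifications are algebraic identities already built into the definition of an admissible representation.
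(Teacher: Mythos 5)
Your proposal is correct, and it reaches the conclusion by a genuinely different, more group-theoretic route than the paper. The paper (Theorem \ref{polar}, built on Proposition \ref{it-semi}) never leaves the infinitesimal model: it observes that the $K$-invariant subspaces $\VV$ and $\mathcal{H}$ project to $\D$-parallel distributions, reads off $\eta_{\mathcal{H}}\mathcal{H}=0$ and the symmetry of $\eta$ on $\VV$ from the torsion formula, and concludes that the whole orthogonal distribution $\mathcal{H}$ is totally geodesic (hence integrable) and $\mathcal{V}$ integrable, which gives polarity in the sense used in Proposition \ref{tams-int}(ii); as a bonus it identifies the Bott connection of the splitting with $\D$. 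You instead work with the global homogeneous structure: leaves are $\VV$-orbits, the quotient map to $G/K$ is a Riemannian submersion because $\VV$ acts freely, properly and isometrically, and polarity is certified by the totally geodesic section $\Sigma=G\cdot x_0$, with the second fundamental form killed by the Nomizu formula ($[\mathcal{H},\mathcal{H}]\subseteq\Kk$, $[\VV,\mathcal{H}]\subseteq\VV$, $\VV\perp\mathcal{H}$). The underlying algebra is the same, but your packaging avoids the canonical connection and intrinsic torsion entirely, at the cost of leaning on the metric splitting $g(\VV,\mathcal{H})=0$ for every $\D$-parallel metric; the clean reference for that is Lemma \ref{met-mix} (via the vanishing of $\Hom_{\Kk}^{\prime}(\mathcal{H},\VV)$) rather than Theorem \ref{t2}(ii), and your phrase about ``the summand on which the nilradical acts nontrivially'' is off --- $\VV$ \emph{is} the (abelian) radical. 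The paper's distribution-level statement is slightly stronger in that it exhibits $\mathcal{H}$ as totally geodesic everywhere at once; in your setup you should add the one-line remark that translates of $\Sigma$ under $\VV\rtimes_{\pi}G$ provide sections through every point (equivalently, they integrate $\mathcal{V}^{\perp}$), and that the bundle-like property you assert for the orbit foliation is exactly what the Riemannian-submersion structure onto $G/K$ delivers. With those two sentences made explicit, your argument is complete.
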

\begin{rem} In the case that the Riemannian metric is the compatible almost K\"ahler one, a complex structure $I$ may be chosen so that  $g$ is K\"ahler and the foliation is complex with respect to $I$.  
In this case  we also produce a geometric interpretation of the theorem: namely $\VV = \text{ker}(\Ric^g)$. 
\end{rem}
\subsection{Non-uniqueness phenomena} \label{n-unq}
Another phenomena investigated is to  show that, in our setting, many different descriptions of $(M,g)$ as a homogeneous space are possible.   This echos the behavior in the semisimple case which we have already met in the example of the twistor space of $\mathbb{H}P^n.$  
Firstly, in Section \ref{iso-fin}, we explicitly produce an algorithm to compute the isometry group of any Riemannian Ambrose--Singer space, see Theorem \ref{embedd}. The latter theorem is perhaps more direct than other existing results, for instance \cite{GoWi}, since it takes into account the properties of the intrinsic torsion tensor. This algorithm is of independent interest, but we apply it to prove the following

\begin{thm}\label{t1.5}  If  an irreducible  Riemannian 3-symmetric space is not locally symmetric, then every isometry is holomorphic. 

\end{thm}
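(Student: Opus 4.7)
The strategy is to apply the algorithm from Theorem \ref{embedd} to the restricted list of possible spaces afforded by Theorem \ref{t1}. Combining de Rham irreducibility with the non-symmetric hypothesis, we reduce to three situations: Type I with $\ug$ semisimple but not corresponding to a symmetric pair, Type II with $[\ug,\ug]\ne 0$, and Type III. Type IV is immediately discarded since at the metric level it is a Riemannian product of flat complex space with a Hermitian symmetric space of non-compact type. In each of the three remaining cases the intrinsic torsion $\tau^{\D}\in \Lambda^1 M\otimes \gl_J^\perp TM$ is non-zero, which is the crucial non-degeneracy fact fueling the rest of the argument.

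The algorithm of Theorem \ref{embedd} computes $\mathfrak{iso}(M,g)$ starting from the transvection algebra $\ug$ by successively adjoining all elements of $\so(T_{x_0}M,g)$ that stabilize the Riemannian curvature $R^g$, its iterated covariant derivatives, and, crucially, also the intrinsic torsion $\tau^{\D}$. This last feature is the improvement of Theorem \ref{embedd} over classical treatments such as \cite{GoWi} and is the key input here. Since $J$ is reconstructed from the order-$3$ automorphism $\sigma$ via $J = \tfrac{1}{\sqrt 3}1_{|\bbV} + \tfrac{2}{\sqrt 3}\sigma_{|\bbV}$, and $\sigma$ in turn is recovered from the Lie bracket structure of $\ug$, which is encoded by $R^{\D}$ and $\tau^{\D}$, stabilization of $\tau^{\D}$ by an isometry forces preservation of the decomposition $\so(T_{x_0}M, g) = \mathfrak{u}(J)\oplus \mathfrak{u}(J)^\perp$, hence commutation with $J$ (up to an overall sign that needs to be pinned down separately by a simple orientation argument using the nonvanishing of $\tau^{\D}$). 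This is precisely the holomorphicity assertion.

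It then suffices to verify, case by case, that the 3-symmetric datum is intrinsically tied to $(M,g)$. For Type I this is a rigidity statement for order-$3$ automorphisms of a semisimple Lie algebra, once $\ug$ has been identified with the transvection subalgebra of $\mathfrak{iso}(M,g)$. For Type II one uses the canonical $g$-orthogonal decomposition $\bbV = \z(\ug)\oplus \z(\ug)^\perp$ together with the non-trivial bracket, which rigidifies $J$. The main obstacle will be Type III: in view of Theorem \ref{t2}, when the admissible representation $\rho$ is of complex type the moduli of compatible 3-symmetric metrics in $\mathrm{Met}(M,J,\D)$ can be arbitrarily large, so that a-priori many compatible almost complex structures $J'$ could sit alongside a single fixed $g$. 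To handle this case the plan is to exploit the simplicity and non-compact type of $\mL$, the admissibility of $\rho$, and the explicit description of the centralizer $\mathfrak{c}^{-}(\mL,\rho,\bbV)$ in order to prove that the normalizer of $\underline{G}$ inside $\mathrm{Isom}(M,g)$ lies inside the stabilizer of $J$, ruling out any isometry of $g$ that could interchange the given $J$ with a distinct compatible complex structure produced by a different element of $\mathfrak{c}^{-}(\mL,\rho,\bbV)$.
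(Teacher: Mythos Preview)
Your proposal contains a genuine gap rooted in a misreading of Theorem \ref{embedd}. You write that the algorithm computes $\aut(M,g)$ by adjoining elements that ``stabilize the Riemannian curvature $R^g$, its iterated covariant derivatives, and, crucially, also the intrinsic torsion $\tau^{\D}$''. This is not what the algorithm does. The tower of algebras is $\i^0=\mathfrak{stab}_{\so(\bbV,g)}(R^g)$ and $\i^{k+1}=\{F\in\i^k : l_v(F)\in\i^k\}$; by Remark \ref{gen-no} this equals $\mathfrak{stab}_{\so(\bbV,g)}((\nabla^g)^kR^g)$, and nowhere is stabilisation of $\tau^{\D}$ (equivalently $\eta^g$) imposed. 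Indeed the Nomizu algebra ${}^g\oh=\mathfrak{stab}(R^{\D})\cap\mathfrak{stab}(\tau^{\D})$ sits inside $\i$ precisely because $l_v({}^g\oh)=0$, but the reverse inclusion is exactly what is at stake: one must show that $\i$ cannot contain elements anti-commuting with $J$. Your sentence ``stabilization of $\tau^{\D}$ by an isometry forces \ldots\ commutation with $J$'' therefore assumes what has to be proved.

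The paper's actual argument is uniform, not case-by-case. Using the curvature identity $R^g(J\cdot,J\cdot,J\cdot,J\cdot)=R^g$ (Proposition \ref{G2}) one first shows that $\i$ is stable under $\gamma(F)=-JFJ$ and under $\ad_J$, so that $\i=\i^+\oplus\i^-$ with $\i^{\pm}$ the $J$-commuting and $J$-anticommuting parts. The decisive computation (Lemma \ref{3symm-1}(iii)) is that $\eta_v\in\i^-$ for every $v$ in $\bbV^-:=\spa\{\i^-\bbV\}$. If $\bbV^-$ is all of $\bbV$ then $[\eta_v,R^g]=0$ for every $v$ (since $\i^-\subseteq\mathfrak{stab}(R^g)$), whence $\nabla^gR^g=0$ and $(M,g)$ is locally symmetric. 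Otherwise irreducibility forces $\bbV^-=0$, hence $\i^-=0$, and then $l_v(\i)=0$ gives $\i={}^g\oh_J$, so every Killing field is holomorphic. None of your case analyses (rigidity of order-$3$ automorphisms for Type I, the centre decomposition for Type II, the normaliser argument for Type III) supplies this mechanism; in particular your Type III plan would still need to rule out elements of $\i^-$, and the paper's Theorem \ref{isoIII} for Type III in fact \emph{invokes} the general dichotomy of Theorem \ref{split-iso} rather than proving it independently.
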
 
A subtle  issue at this point is that a Riemannian 3-symmetric space can admit transitive isometric group actions which do not preserve the 3-symmetric structure (see \cite{MuNa} where this issue was first explored). Somewhat amusingly, the upshot is that transitive group actions on a Riemannian 3-symmetric space are now more poorly understood for the case of the  Hermitian symmetric spaces, despite them being the easiest examples of 3-symmetric spaces.   A major application of Theorem \ref{t1.5} is to show that the lists appearing in  Gray--Wolf \cite{gw} \cite{gw2} are the only examples of Riemannian 3-symmetric spaces of Type I (i.e. $\ug$ semisimple) which are not symmetric. Note that if the manifold is a compact locally symmetric space, one could apply the well-known work of Onishchik \cite{on} to complete the full classification by a case-by-case analysis. We leave this as a project for the future. 

Furthermore, recall that a  \emph{presentation} of $(M,g)$ is given by a transitive isometric  Lie group action  $G$ on $(M,g)$, with isotropy $H$, such that the pair $(G,H)$  satisfy the definition of a Riemannian 3-symmetric space. 
Given any presentation of $(M,g)$ as a Riemannian 3-symmetric space $G/H$, we characterize the possible choices of $G$ at Lie algebra level in Theorem \ref{thm-gen}. However it is not obvious how to determine all possible transitive group actions on $(M^n,g)$, even for Type III spaces.

We also record that for spaces of Type III,  Theorem \ref{t1.5} allows us to explicitly compute the full isometry group in terms of the centralizer of the representation, as explained in Theorem \ref{isoIII}. 

\subsection{Organization of the paper and further results} \label{org-p}
Owing to the length of this work and the interconnectedness between the main results, we offer a roadmap to the reader to guide their reading. We start with a prototypical example in Section \ref{sdp} and formulate a new interpretation of the definition of  Riemannian 3-symmetric spaces in Section \ref{riem3} which is more convenient for our purposes. In Theorem \ref{unq} we give several sufficient 
conditions for the uniqueness of the Ambrose-Singer connection $\D$; this extends to other torsion types the Skew-symmetric Holonomy 
Theorem proved in \cite{Na-prol}, see also \cite{O}.

We then move on to set notation and discuss Riemannian 3-symmetric Lie algebras, the issue of regularity of infinitesimal models, the definition of admissible representations, and the main splitting theorems at algebraic level  in Sections \ref{o3}--\ref{admr1}. The proof of Theorem \ref{t1} is given in Section  \ref{reginfpf1} and all occurrences on the list of admissible representations are discussed in Section  \ref{met-adm}. Theorem \ref{t3} is rephrased using  the language of background metrics  and proven in Theorem \ref{sol-1}. Next up is the proof of Theorem \ref{t1.5} which  appears in Section \ref{der-tv}, although a crucial part of the proof is postponed until Theorem \ref{split-iso}.

Theorem \ref{t2} (i) follows from (ii) in Theorem \ref{irred-R},  part (ii) is proved in Proposition \ref{iso-invm1}, part (iii) is Theorem \ref{claas-nc}, part (iv) appears in the text as Proposition \ref{r-type},
part (v) is Theorem \ref{cpx-ty} whilst part (vi) follows from combining Theorems  \ref{sol-1} and \ref{sol-2}. The proof of Theorem \ref{t3} is given in Theorem \ref{sol-1}. The proof of Theorem  \ref{t4} is restated and proven  in Section \ref{it-curv} as Theorem \ref{polar}. Finally the proof of Theorem \ref{t1.5} is given in Theorem \ref{split-iso} of the paper.

 
\subsection{Notations and conventions} \label{not-conv}
As this paper uses a blend of geometric and Lie algebraic techniques and notions 
we briefly set up notation to be used systematically in what follows. All Riemannian manifolds are simply connected, and hence connected, throughout.

All vector spaces are considered over the reals unless explicitly mentioned. 
When 
$\bbV$ is a vector space $\mathfrak{gl}(\bbV)$ denotes the space of linear endomorphisms of $\bbV$. If $J$ is a linear complex structure on $\bbV$ we consider the Lie algebra
$\mathfrak{gl}_J(\bbV):=\{f \in \mathfrak{gl}(\bbV):fJ=Jf\}$ together with its complement in $\gl(\bbV)$, given by
$\mathfrak{gl}^{\perp}_J(\bbV):=\{f \in \mathfrak{gl}(\bbV):fJ=-Jf\}$. If $g$ is a non-degenerate symmetric bilinear form on $\bbV$ we denote 
$$\mathfrak{so}(\bbV,g):=\{f \in \mathfrak{gl}(\bbV):g(fv,w)+g(v,fw)=0 \ \mbox{for all} \ v,w \in \bbV\}$$ and 
we also let $\Sym^2(\bbV,g):=\{S \in \mathfrak{gl}(\bbV):S \ \mbox{is symmetric with respect to} \ g \}$. We say that a linear complex structure $J$ is compatible with a pseudo-Riemannian metric $g$ provided that $g(J \cdot, J\cdot)=g$. If $\bbV$ carries such a structure  the Lie algebra preserving $(g,J)$ is $\mathfrak{u}
(\bbV,g,J):=\mathfrak{so}(\bbV,g) \cap \mathfrak{gl}_J(\bbV)$.

Whenever $\g$ is a Lie algebra  we indicate 
with $\z(\g)$ its center and   $\rad(\g)$ its radical.
The radical $\rad(\g)$ is the maximal solvable ideal in $\g$ and it is alternatively determined from $ B(\rad(\g),[\g,\g])=0$
where $B$ is the Killing form of $\g$. The radical of the Killing form, denoted 
by $\g^{\perp}$, is determined from 
$ B(\g^{\perp},\g)=0.$ Clearly $\g^{\perp}$ is an ideal in $\g$.

These ideals are related by the sequence of inclusions 

\begin{equation} \label{incl-1}
[\rad(\g),\rad(\g)] \subseteq \g^{\perp} \subseteq \rad(\g).
\end{equation}
(see e.g. \cite{Bourbaki}, page 49 for proof)

Whenever $\L$ is a Lie algebra and $\rho:\mL \to \mathfrak{gl}(\VV)$ is a linear representation we indicate with 
$t_{\rho}:\mL \times \mL \to \mathbb{R}$ its 
trace form, that is $t_{\rho}(F,G)=\Tr(\rho(F) \circ \rho(G))$. We also record that when $\mL$ is semisimple, any linear representation $\rho:\mL \to \mathfrak{gl}(\VV)$ satisfies  $\rho(\mL) \subseteq \Sl(\VV)$. This easily follows from having $\mL = [\mL, \mL]$. Furthermore, assuming that $\rho_i:\mL \to \gl(\VV_i), i=1,2$
are Lie algebra representations we denote 
$$\Hom_{\mL}(V_1,V_2)=\{ F:\VV_1 \to \VV_2 : F \rho_1(h)v=\rho_2(h)Fv, \ (h,v) \in \mL \times \VV_1 \}.$$
We will frequently use the analogous notation for Lie subalgebras $\Kk \subseteq \mL$.
\subsection*{Acknowledgements}
The research of Paul-Andi Nagy is supported by the Institute for Basic Science (IBS-R032-D1).

\section{Semi-direct products and homogeneous structures}\label{sdp}
We open with a discussion of   some basic material concerning semi-direct products. Such structures play a key r\^ole in the paper both in the classification results for 3-symmetric spaces.  Furthermore, in this section  they are also shown to be prototypical  examples of manifolds with many different homogeneous structures. 
\subsection{Semi-direct products of Lie algebras} \label{semi-Lie}
Let $\mL$ be a Lie algebra over $\mathbb{R}$ equipped with a linear representation $\rho:\mL \to \gl(\VV)$, where $\VV$ is some real vector space. We recall that $\rho$ is called faithful provided that $\ker(\rho)=0$.
A few notions from representation theory we will frequently deal with in this paper are the fixed point set 
respectively the centraliser of $\rho$, defined according to 
\begin{equation*}
\VV^{\mL}:=\{v \in \VV : \rho(\mL)v=0\} \ \mathrm{respectively} \ \mathfrak{c}(\mL,\rho,\VV):=\{f \in \gl(\VV):[\rho(\mL),f]=0\}.
\end{equation*}
Clearly the centraliser is a Lie subalgebra of $\gl(\VV)$. A canonical way to enlarge the representation $(\mL,\rho)$ is by means of its {\it{normal extension}} $\widetilde{\rho}:\widetilde{\mL} \to \gl(\VV)$ defined by 
\begin{equation*}
\widetilde{\mL}:=\mL \oplus \mathfrak{c}(\mL,\rho,\VV), \ \widetilde{\rho}(l+f):=\rho(l)+f.
\end{equation*}
Above $\widetilde{\mL}$ is equipped with the direct product Lie algebra structure. This construction is used below to describe the derivations of semi-direct products and will also appear naturally in the next section (see also Remark \ref{rmk-normiser} for more details).

The semi-direct product $\VV \rtimes_{\rho} \mL$ is the vector space $\VV \oplus \mL$ equipped with the Lie bracket 
\begin{equation} \label{semi-1}
[v_1,v_2]=0, \ [l,v_1]=\rho(l)v_1, \ [l_1,l_2]=[l_1,l_2]^{\mL}.
\end{equation}
Equivalently $\VV$ is an abelian ideal and $\mL$ is a subalgebra with $\VV \cap \mL=0$. 

From now on we assume that $\mL$ is semisimple which makes that the representation $\rho$ satisfies $\rho(\mL) \subseteq \Sl(\VV)$. We also assume that the representation $\rho$ is faithful and we describe some of the invariants of the Lie algebra 
$$\g:=\VV \rtimes_{\rho} \mL$$ of geometric relevance in this paper. Because $\mL$ is semisimple, the representation $\rho$ is completely reducible hence one may choose a $\mL$-invariant subspace $\VV_{\perp} \subseteq \VV$ such that 
\begin{equation} \label{weyl-1}
\VV=\VV^{\mL} \oplus \VV_{\perp}.
\end{equation}
Note that we have $\rho(\mL)\VV_{\perp}=\VV_{\perp}$ as well as  $(\VV_{\perp})^{\mL}=0$ and also that $\VV_{\perp}$ is uniquely determined from $\VV_{\perp}=\rho(\mL)\VV$.
\begin{propn} \label{cen-rad-p}
Let $\g:=\VV \rtimes_{\rho} \mL$ where $\mL$ is semisimple and $\rho:\mL \to \gl(\VV)$ is a faithful linear representation. The following hold
\begin{itemize}
\item[(i)] the radical $\rad(\g)=\VV$
\item[(ii)] the center $\z(\g)=\VV^{\mL}$ and $[\g,\g]=\mL \oplus \VV_{\perp}$
\item[(iii)] we have a Lie algebra isomorphism $\Der(\g) \cong \VV_{\!\perp} \rtimes_{\tilde{\rho}} \widetilde{\mL}$.
\end{itemize}
\end{propn}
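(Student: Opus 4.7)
The plan is to prove (i) and (ii) directly from \eqref{semi-1} and concentrate the effort on (iii).

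For (i), $\VV$ is an abelian (hence solvable) ideal, so $\VV \subseteq \rad(\g)$; since $\g/\VV \cong \mL$ is semisimple, the reverse inclusion follows. For (ii), if $v + l \in \z(\g)$ then bracketing with an arbitrary $v' \in \VV$ forces $\rho(l) = 0$, hence $l = 0$ by faithfulness and then $v \in \VV^{\mL}$. The commutator identity uses $[\mL, \mL] = \mL$, $[\mL, \VV] = \rho(\mL) \VV = \VV_{\perp}$, and $[\VV, \VV] = 0$.

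The bulk of the proof is (iii), and I first establish that every $D \in \Der(\g)$ preserves $\VV$. Computing the Killing form of $\g$ from the block-triangular form of $\ad(v+l)$ relative to $\g = \VV \oplus \mL$ gives $B_{\g}(v_1 + l_1, v_2 + l_2) = t_{\rho}(l_1, l_2) + B_{\mL}(l_1, l_2)$, which vanishes whenever either argument lies in $\VV$. Thus $\VV \subseteq \g^{\perp}$, and combining with $\g^{\perp} \subseteq \rad(\g) = \VV$ from (i) and \eqref{incl-1} yields $\g^{\perp} = \VV$; since derivations are skew for $B_{\g}$, they preserve $\VV$.

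Next I parameterize $D$ by writing $D|_{\VV} = A$ and $D|_{\mL} = B + E$ with $B : \mL \to \VV$ and $E : \mL \to \mL$. Applying the derivation identity on $[l_1, l_2]^{\mL}$ gives $E \in \Der(\mL)$ and that $B$ is a $1$-cocycle for $\rho$; semisimplicity yields $\Der(\mL) = \ad_{\mL}(\mL)$, and Whitehead's first lemma forces $B$ to be a coboundary. Hence $E = \ad_{\mL}(l_0)$ for a unique $l_0 \in \mL$ and $B l = \rho(l) v_0$ for some $v_0 \in \VV$ (determined modulo $\VV^{\mL}$). The derivation identity on $[l, v]$ then rearranges to $[A - \rho(l_0), \rho(l)] = 0$, so $A - \rho(l_0) \in \mathfrak{c}(\mL, \rho, \VV)$. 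Subtracting the inner derivation $\ad_{\g}(-v_0 + l_0)$ from $D$ therefore normalizes $D$ to act trivially on $\mL$ and as some $f \in \mathfrak{c}(\mL, \rho, \VV)$ on $\VV$. Conversely, for each such $f$ the map $D_f$ with $D_f|_{\VV} = f$ and $D_f|_{\mL} = 0$ is directly verified to be a derivation, providing a splitting of the projection $\Der(\g) \to \mathfrak{c}(\mL, \rho, \VV)$ whose kernel is $\ad_{\g}(\g)$.

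Finally, $\ad_{\g}(\g) \cong \g/\z(\g) \cong \VV_{\perp} \rtimes_{\rho} \mL$ by (ii), and the commutation identity $[D_f, \ad_{\g}(w + m)] = \ad_{\g}(f w)$ shows that $\mathfrak{c}(\mL, \rho, \VV)$ acts on this semidirect product by $f|_{\VV_{\perp}}$ on the $\VV_{\perp}$-factor and trivially on the $\mL$-factor. This matches precisely the product Lie algebra $\widetilde{\mL} = \mL \oplus \mathfrak{c}(\mL, \rho, \VV)$ acting on $\VV_{\perp}$ through $\widetilde{\rho}$, yielding the isomorphism $\Der(\g) \cong \VV_{\perp} \rtimes_{\widetilde{\rho}} \widetilde{\mL}$. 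The main subtlety is the stage proving $D(\VV) \subseteq \VV$; once this characteristic property is in place, the remainder is a clean application of the Whitehead lemmas together with bookkeeping modulo inner derivations.
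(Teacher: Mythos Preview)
Your proof is correct and follows essentially the same route as the paper: parameterize a derivation in block form relative to $\g=\VV\oplus\mL$, read off that $\zeta_0\in\Der(\mL)$ and that the off-diagonal block is a $1$-cocycle, invoke semisimplicity and Whitehead's lemma, and then identify the remaining freedom with $\mathfrak{c}(\mL,\rho,\VV)$ to obtain the semidirect product $\VV_{\perp}\rtimes_{\tilde\rho}\widetilde{\mL}$. The one genuine difference is your justification of $D(\VV)\subseteq\VV$: you compute the Killing form explicitly and use that derivations preserve $\g^{\perp}$, whereas the paper simply invokes part (i) together with the fact that derivations preserve the radical (a characteristic ideal). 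Both are standard one-line facts, but the paper's route is shorter since it recycles (i) directly without the Killing-form computation.
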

\begin{proof}
(i) Since $\VV$ is an abelian ideal in $\g$ we must have $\VV \subseteq \rad(\g)$. It follows that $\rad(\g)=\VV \oplus \mL_0$ as vector spaces, where $\mL_0=\rad(\g) \cap \mL$. As the latter is an ideal in $\mL$ it must be semisimple; because $\rad(\g)$ is solvable it cannot contain non-zero semisimple subalgebras thus $\mL_0=0$ and the claim is proved.\\
(ii) follows by combining the general inclusion $\z(\g) \subseteq \rad(\g)$, part (i) and the definition of the Lie bracket in \eqref{semi-1}.\\
(iii) Pick $\zeta \in \Der(\g)$. Then $\zeta$ preserves the radical of $\g$ and since 
$\rad(\g)=\VV$ by (i) it follows that $\zeta \VV \subseteq \VV$. Thus, with respect to the splitting $\g=\VV \oplus \mL$ we have 
$\zeta=\left ( \begin{array}{cc} f & 0 \\ c & \zeta_0 \end{array} \right )$ with $(f,\zeta_0,c) \in \gl(\VV)\oplus \gl(\mL) \oplus \Hom(\mL,\VV)$. From \eqref{semi-1} having $\zeta$ a derivation is easily seen to be equivalent to having
$\zeta_0 \in \Der(\mL),c$ a $1$-cocycle with values in $\VV$, that is $c[l_1,l_2]=\rho(l_1)
c(l_2)-\rho(l_2)c(l_1)$ as well as $[f,\rho(l)]=\rho(\zeta_0l)$ for all $l \in \mL$. Using that $\mL$ is semisimple and Whitehead's Lemma leads to $\zeta_0=\ad_{l_0}, c(l)=\rho(l)v_0$ for some uniquely determined $(l_0,v_0) \in \mL \times \VV_{\perp}$. It follows that $f=\rho(l_0)+g$ with $g \in \mathfrak{c}(\mL,\rho,\VV)$. Summarising 
\begin{equation} \label{der-gen2}
\zeta_{\vert \VV}=\rho(l_0)+g, \ \zeta l=\ad_{l_0}l+\rho(l)v_0, \ l \in \mL.
\end{equation}
This entails, after a short calculation, the claim on the Lie algebra structure in 
$\Der(\g)$.
\end{proof}
We end this section by explaining, in the semisimple case, our choice of terminology for the extension $(\widetilde{\mL},\widetilde{\rho})$.  
\begin{rem} \label{rmk-normiser}
Assume that $\mL$ is semisimple and that $\rho$ is faithful.
\begin{itemize}
\item[(i)] In this case, $\widetilde{\rho}(\widetilde{\mL})$ is equivalently described as the normaliser 
$$\mathfrak{n}_{\gl(\VV)}(\rho(\mL)):=\{f \in \gl(\VV) : [f,\rho(\mL)] \subseteq \rho(\mL)\}$$ of $\rho(\mL)$ in 
$\gl(\VV)$. The proof of this well known fact goes as follows. Consider the representation $\mL \times \Sl(\VV) \to \Sl(\VV)$ given by 
$(l,f) \mapsto [\rho(l),f]$. Because $\mL$ is semisimple this representation is completely reducible thus 
the invariant subspace $\rho(\mL)$ admits an invariant complement $\mathrm{E}$, that is $\Sl(\VV)=\rho(\mL) \oplus \mathrm{E}$. Splitting accordingly elements in the normaliser leads to the equality 
$$\mathfrak{n}_{\gl(\VV)}(\rho(\mL))=\rho(\mL) \oplus \mathfrak{c}(\mL,\rho,\VV).$$ 
\item[(ii)]
There is a canonically defined matrix Lie group $G^{\widetilde{\mL}}$ equipped with a representation $\pi:G^{\widetilde{\mL}} \to \GL(\VV)$ such that 
$G^{\widetilde{\mL}}$ has Lie algebra $\widetilde{\mL}$ and $(d \pi)_e=\widetilde{\rho}$. To see this take the normaliser of the Lie algebra $\rho(\mL)$ in $\GL(\VV)$; explicitly
\begin{equation} \label{normaliser}
G^{\widetilde{\mL}}:=\{g \in \GL(\VV): g\rho(\mL)g^{-1}=\rho(\mL) \} \ \mbox{and}  \ \pi(g)=g.
\end{equation}
\item[(iii)]$\widetilde{\mL}$ is reductive; this fact can be checked by using the structure of the centraliser with respect to the splitting of $\VV$ into 
isotypical components.
\end{itemize}
\end{rem}

\subsection{Homogeneous structures on semi-direct products.} \label{grp}
Here we consider in full generality the group counterpart of the semi-product construction for Lie algebras. The construction is based on the following data, starting from Lie groups $G_1$ and $G_2$ with closed subgroups $K_i \subseteq G_i$ for $i=1,2$. Given a smooth action $\pi:G_2 \to \Aut(G_1)$ such that 
$$\pi(G_2) \subseteq \Aut_{K_1}(G_1):=\{f \in \Aut(G_1):f(K_1) \subseteq K_1\}$$
we consider the semi-direct product $G_1 \rtimes_{\pi} G_2$. This is the Cartesian product $G_2 \times G_1$ equipped with the group structure 
$$(g_2,g_1)(g_2^{\prime},g_1^{\prime})=(g_2g_2^{\prime}, \left (\pi((g_2^{\prime})^{-1})g_1 \right )g_1^{\prime}).
$$
Then $G_1$ respectively $G_2$ are a normal subgroup respectively a subgroup in $G_1 \rtimes_{\pi} G_2$.
\begin{propn} \label{hom-semig}
Let $G_i$ be Lie groups with closed subgroups $K_i \subseteq G_i, i=1,2$. Assume 
that $\pi:G_2 \to \Aut(G_1)$ is a smooth action such that 
$\pi(G_2) \subseteq \Aut_{K_1}(G_1)$ and write $M_i=G_i \slash K_i, i=1,2$. 
\begin{itemize}
\item[(i)]The map $\varphi : (G_1 \rtimes_{\pi} G_2) \slash K \to M_2 \times M_1$ given by 
\begin{equation*}
\varphi((g_2,g_1)K):=(g_2K_2, \left ( \pi(g_2)g_1 \right ) K_1)
\end{equation*}
is a smooth diffeomophism.
\item[(ii)] The semidirect product $G_1 \rtimes_{\pi} G_2$ acts transitively on 
$M_2 \times M_1$ with isotropy
$K:=K_2 \times K_1$. Explicitly, 
\begin{equation*}
(g_2,g_1)(g_2^{\prime}K_2, g_1^{\prime}K_1)=(g_2g_2^{\prime}K_2,\pi(g_2)(g_1g_1^{\prime})K_1).
\end{equation*} 
\end{itemize}
\end{propn}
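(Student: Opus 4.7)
The plan is to prove (i) by exhibiting an explicit smooth two-sided inverse for $\varphi$, then derive (ii) by transporting the canonical left-translation action on $(G_1 \rtimes_{\pi} G_2)/K$ across this diffeomorphism. The map $\varphi$ and the candidate action formula in (ii) are mutually compatible, so taking them as a guiding \emph{Ansatz} every verification reduces to a direct use of the semi-direct product multiplication together with the hypothesis $\pi(G_2) \subseteq \Aut_{K_1}(G_1)$.

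For part (i), I first check that the assignment $\Phi(g_2,g_1) := (g_2 K_2, \pi(g_2)(g_1)K_1)$ is constant on right $K$-cosets. Using the group law $(g_2,g_1)(k_2,k_1) = (g_2 k_2, \pi(k_2^{-1})(g_1)\cdot k_1)$ one computes
\[
\Phi\bigl((g_2,g_1)(k_2,k_1)\bigr) \;=\; \bigl(g_2 k_2 K_2,\; \pi(g_2)(g_1)\cdot \pi(g_2 k_2)(k_1)\,K_1\bigr),
\]
and this equals $\Phi(g_2,g_1)$ because $\pi(g_2 k_2)$ preserves $K_1$ by hypothesis. Thus $\varphi$ is well-defined, and smoothness follows from the standard factorisation through the quotient. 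For the inverse, I would define $\psi(g_2,h_1) := (g_2,\pi(g_2^{-1})(h_1))K$. A symmetric check, using the element $(k_2,\pi(k_2^{-1}g_2^{-1})(k_1)) \in K$ to transport representatives, shows $\psi$ descends to a smooth map $M_2 \times M_1 \to (G_1 \rtimes_{\pi} G_2)/K$. A routine substitution then verifies $\psi \circ \varphi = \mathrm{id}$ and $\varphi \circ \psi = \mathrm{id}$.

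For part (ii), I would compute $\varphi\bigl((h_2,h_1)(g_2,g_1)K\bigr)$ by first multiplying in $G_1 \rtimes_{\pi} G_2$ and then applying $\Phi$. Using the identity $\pi(h_2 g_2)\pi(g_2^{-1}) = \pi(h_2)$, the result simplifies to $(h_2 g_2 K_2,\; \pi(h_2)(h_1)\cdot \pi(h_2 g_2)(g_1)\,K_1)$, which matches the claimed formula evaluated at $\varphi((g_2,g_1)K)$ with $g_2' = g_2$ and $g_1' = \pi(g_2)(g_1)$. Transitivity is then immediate since left translation on $(G_1 \rtimes_{\pi} G_2)/K$ is transitive and $\varphi$ is a bijection, and the isotropy at $(eK_2,eK_1)$ is seen from the formula to be exactly $K_2 \times K_1$.

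The only real technical nuisance is bookkeeping with the non-standard convention $(g_2,g_1)(g_2',g_1') = (g_2 g_2',\pi((g_2')^{-1})(g_1)\cdot g_1')$, and in particular ensuring that the $\Aut_{K_1}$-invariance of $K_1$ under \emph{all} of $\pi(G_2)$, not just $\pi(K_2)$, is systematically invoked whenever an element of $K_1$ is transported by some $\pi(g_2)$. Once this is done carefully there are no conceptual difficulties, and both parts follow from direct computation.
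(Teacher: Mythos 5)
Your proposal is correct and takes essentially the same route as the paper: the same explicit inverse $(g_2K_2,g_1K_1)\mapsto \bigl(g_2,\pi(g_2^{-1})g_1\bigr)K$ establishes (i), and (ii) is obtained by transporting left multiplication on $(G_1\rtimes_{\pi}G_2)\slash K$ through $\varphi$. The well-definedness, coset-transport and isotropy computations you spell out are exactly the routine verifications the paper leaves implicit.
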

\begin{proof}
(i) 
Due to the assumption $\pi(G_2)\subseteq \Aut_{K_1}(G_1)$ the map $\varphi$ is well defined. Its inverse is given by $(g_2K_2,g_1K_1) \mapsto (g_2K_2,\left ( \pi(g_2^{-1})g_1 \right )K_1)$. \\
(ii) follows from (i) by transporting the left action by multiplication of $G_1 \rtimes_{\pi} G_2$ on the quotient 
$(G_1 \rtimes_{\pi} G_2) \slash K$ to $M_2 \times M_1$ via $\varphi$.
\end{proof}
Below we discuss several instances of this general construction which are crucial for this work.

If 
$G$ is a Lie group with Lie algebra $\mL$ and $\pi:G \to \GL(\VV)$ is a linear representation with $(d\pi)_e=\rho$ one can define the semi-direct product Lie group $\VV \rtimes_{\pi} G$. This is the cartesian product $G \times \VV$ equipped with the group multiplication 
\begin{equation} \label{pp-rod1}
(g_1,v_1)(g_2,v_2)=(g_1g_2, (\pi(g_2^{-1})v_1)+v_2).
\end{equation} 
Here we identify $G$ with the subgroup $G \times \{0 \}$ and $\VV$ with the normal subgroup $\{e\} \times \VV$.
Clearly the Lie algebra of $\VV \rtimes_{\pi} G$ is $V \rtimes_{\rho} \mL$.
\begin{exo} \label{aff-int}
Let $A(V)=V \rtimes \GL(V)$ be the affine group. For convenience we record that 
$(f_1,v_1)(f_2,v_2)=(f_1 \circ f_2,f_2^{-1}(v_1)+v_2)$
in $A(V)$. The affine group acts transitively on the manifold $V$ via 
\begin{equation} \label{aff-act}
A(V) \times V \to V, (f_1,v_1)v:=f_1(v+v_1).
\end{equation}
In this way we can regard $V$ as the homogeneous manifold $A(V) \slash GL(V)$ by means of the evaluation map $A(V) \slash \GL(V) \to V, \ (f,v)\GL(V) \mapsto f(v)$; note that the latter map is equivariant with respect to the action \eqref{aff-act}. 
\end{exo}
Assume that $H$ is a closed subgroup of $G$ and consider 
the product manifold $M=(G \slash H) \times V$. This is diffeomorphic, by the remark above, to the homogeneous manifold $(G \times A(V)) \slash (H \times GL(V))$. 
We now explain how the representation $\pi$ gives rise to a second homogeneous 
structure on $M$ as follows. First we intepret $V \rtimes_{\pi} G$  as a closed subgroup of the product group $G \times A(V)$. Indeed 
$$\varepsilon:V \rtimes_{\pi} G \to G \times A(V), \ (g,v) \mapsto 
(g,(\pi(g),v)) $$
is easily seen to be an injective Lie group morphism. We denote by 
$G_{(\pi,V)}$ respectively $H_{(\pi,V)}$ the images of $V \rtimes_{\pi} G $ respectively $H$ under $\varepsilon$.

Secondly we have a natural diffeomorphism 
$$f:(V \rtimes_{\pi} G) \slash H \to M:=(G \slash H) \times V, \ 
(g,v)H \mapsto (gH, \pi(g)v).$$
Transporting, via $f$, the canonical left action of $V \rtimes_{\pi} G$ 
onto $M$ we obtain a transitive left action of $V \rtimes_{\pi} G $ on 
$M$ explicitly given by 
\begin{equation} \label{act-prod}
(g_1,v_1)(g_2H,v_2)=(g_1g_2H,\pi(g_1)(v_1+v_2)).
\end{equation}
Clearly the stabiliser of a point with respect to this action is isomorphic to $H$. Moreover, using \eqref{aff-act}, it follows that this action is the restriction to 
$G_{(\pi,V)}$ of the product action of $G\times A(V)$ on $M$. Summarizing, we have shown that the canonical inclusion 
$$ G_{(\pi,V)} \slash H_{{(\pi,V)}} \hookrightarrow (G \times A(V)) \slash (H \times \GL(V))
$$
is a diffeomorphism. The product manifold $M$ is, in this paper, the prototypical example of manifold admitting different homogeneous structures. Now we remark 
that depending on the structure of the representation $\pi$ the product manifold 
can be given many more {\it{different}} homogeneous structures, which are constructed as follows. The procedure simply consists in enlarging the representation $\pi:G \to \GL(V)$ by its centraliser. 

Indeed consider the centraliser 
\begin{equation} \label{cent-grp-d}
C(G,\pi,V):=
\{g \in \GL(V):g\pi(h)g^{-1}=\pi(h) \ \mbox{for all} \ h \in G\}.
\end{equation}
 Let $K \subseteq 
C(G,\pi,V)$ be a closed subgroup and consider the representation 
$$ \pi^K:G \times K \to \GL(V), (g,k) \to k\pi(g). $$
Note $\pi^K$ is effective if and only if the center $Z(G)=\{e\}$. By the previous discussion 
we have 
$$ (V \rtimes_{\pi^K} (G \times K))\slash (H \times K)=V \times (G \slash H).
$$
The explicit description of the transitive action of $V \rtimes_{\pi^K} (G \times K)$ follows directly from \eqref{act-prod}.
\begin{rem} 
We note the following:
\begin{itemize}
\item[(i)] In this set-up we have trivially enlarged the homogeneous structure on 
$G \slash H$ to $(G \times K) \slash (H \times K)$.
\item[(ii)] $V \times_{\pi} G$ is a normal subgroup in $V \rtimes_{\pi^K} (G \times K)$.
\end{itemize}
\end{rem}
In order to generalise the above set-up we make the following example.
\begin{exo} \label{flat-2}
Let $K \subseteq \mathrm{GL}(V)$ be a closed subgroup. Then $V \rtimes K$ acts transitively 
on $V$, via \eqref{aff-act}, with stabiliser $K$.
\end{exo}
In view of this example, the idea in the rest of the paper is to view the semidirect product $V \rtimes_{\pi^K} (G \times K)$ as $(V \rtimes K) \rtimes G$ with $G$ acting by automorphisms on the Lie group $V \rtimes K$. 

To conclude this section we explain the Riemannian counterpart of the considerations above. 
\begin{propn}\label{R-sem}
Let $G$ be a Lie group and let $H$ be a closed subgroup of $G$. Assume that $\pi:G \to \GL(V)$ is a linear representation on a real vector space $V$. Indicating with $p:G \to M:=G \slash H$ the canonical projection,  
the following hold
\begin{itemize}
\item[(i)] if $\mathbf{g}_M$ is a $G$-invariant metric on $M$ and $g_V$ is a linear scalar product on $V$ such that 
$\pi(H) \subseteq \mathrm{O}(V,g_V)$
the tensor 
\begin{equation} \label{met-ab}
(g,v) \mapsto (p^{\star}\mathbf{g}_{M})_g+(\pi(g^{-1}))^{\star}g_V
\end{equation}
defined on $G \times V$ projects onto a $V \rtimes _{\pi}G$-invariant metric on $M \times V$
\item[(ii)] any closed subgroup $K$ in 
\begin{equation} \label{met-cent}
{}^{g_V} C(G,\pi,V)):=C(G,\pi,V) \cap \mathrm{O}(V,g_V)
\end{equation}
 acts by isometries on $\mathbf{g}$
\item[(iii)] the metric $\mathbf{g}$ is invariant under the action of $V \rtimes_{\pi} (G \times {}^{g_V} C(G,\pi,V))$.
\end{itemize}
\end{propn}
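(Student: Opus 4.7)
The plan is to verify (i) by showing that the tensor
\[
\tau_{(g,v)}:=(p^{\star}\mathbf{g}_M)_g+(\pi(g^{-1}))^{\star}g_V
\]
on $G\times V$ descends to a Riemannian metric on $M\times V=(G\times V)/H$ with respect to the principal $H$-bundle structure $(g,v)\cdot h=(gh,v)$, and then to check its $V\rtimes_{\pi}G$-invariance via the formula \eqref{act-prod}. Parts (ii) and (iii) then reduce to short direct verifications using the explicit expression of $\mathbf{g}$.

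For the descent in (i), horizontality of $\tau$ is immediate since vertical vectors for $q\colon G\times V\to M\times V$, $(g,v)\mapsto(gH,v)$, lie in $\ker(dp_g)\oplus 0$, on which both summands vanish. Right $H$-invariance of the first summand is automatic from the $G$-invariance of $\mathbf{g}_M$; for the second, $\pi((gh)^{-1})=\pi(h^{-1})\pi(g^{-1})$ combined with the hypothesis $\pi(H)\subseteq\mathrm{O}(V,g_V)$ yields $(\pi((gh)^{-1}))^{\star}g_V=(\pi(g^{-1}))^{\star}g_V$. Hence $\tau$ descends to a metric $\mathbf{g}$ on $M\times V$ given by
\[
\mathbf{g}_{(gH,w)}=\mathbf{g}_M|_{gH}\oplus(\pi(g^{-1}))^{\star}g_V,
\]
an expression independent of the $V$-coordinate $w$, which is the crucial feature. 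To verify the $V\rtimes_{\pi}G$-invariance, I compute the differential of $(g_0,v_0)\colon(gH,w)\mapsto(g_0gH,\pi(g_0)(v_0+w))$ at $(gH,w)$: it equals $((L_{g_0})_{\ast},\pi(g_0))$. Pulling back $\mathbf{g}$ from $(g_0gH,\pi(g_0)(v_0+w))$ and using the $G$-invariance of $\mathbf{g}_M$ on the first factor together with $\pi((g_0g)^{-1})\circ\pi(g_0)=\pi(g^{-1})$ on the second recovers $\mathbf{g}_{(gH,w)}$.

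For (ii), an element $k\in{}^{g_V}C(G,\pi,V)$ acts by $(gH,w)\mapsto(gH,kw)$ with differential $(\mathrm{id},k)$; pulling $\mathbf{g}$ back from $(gH,kw)$ produces $\mathbf{g}_M|_{gH}\oplus(\pi(g^{-1})\circ k)^{\star}g_V$. The centralizer condition $\pi(g^{-1})k=k\pi(g^{-1})$ and the orthogonality $k^{\star}g_V=g_V$ together give $(\pi(g^{-1})\circ k)^{\star}g_V=(\pi(g^{-1}))^{\star}g_V$, proving that $k$ is an isometry. For (iii), the semi-direct product in the statement is, by the preceding discussion, the group $V\rtimes_{\pi^K}(G\times K)$ with $\pi^K(g,k):=k\pi(g)$ and $K={}^{g_V}C(G,\pi,V)$; it is generated by its subgroups $V\rtimes_{\pi}G$ and $K$, both of which act isometrically by (i) and (ii) respectively.

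The only genuine obstacle is tracking the different structures carried by the set $G\times V$, namely the direct product manifold (viewed as a principal $H$-bundle over $M\times V$) and the underlying manifold of the Lie group $V\rtimes_{\pi}G$; once the explicit formula for $\mathbf{g}$ on $M\times V$ is written down, all three invariance statements collapse into one-line identities built from the commutation and orthogonality properties of $\pi(g)$ and $k$.
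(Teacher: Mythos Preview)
Your proof is correct and follows essentially the same route as the paper: both arrive at the explicit formula $\mathbf{g}_{(gH,w)}=\mathbf{g}_M|_{gH}\oplus(\pi(g^{-1}))^{\star}g_V$ and then verify (ii) and (iii) by the identical centralizer--orthogonality computation. The only cosmetic difference is in (i): you prove the formula by a direct descent argument (showing $\tau$ is basic for the principal $H$-bundle $q\colon(g,v)\mapsto(gH,v)$ and then checking $V\rtimes_{\pi}G$-invariance), whereas the paper runs the logic in reverse, starting from an assumed left-invariant metric that splits at the base point and transporting it by $l_{g^{-1}}$ to recover the formula \eqref{met-ab}. Both arguments rest on the same two ingredients---the $G$-invariance of $\mathbf{g}_M$ and the identity $\pi((g_0g)^{-1})\pi(g_0)=\pi(g^{-1})$---so the distinction is purely one of presentation.
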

\begin{proof}
(i) Start with a Riemannian metric $\mathbf{g}$ on $M \times V$ which is invariant under the left action $g \in V \rtimes _{\pi}G \mapsto l_g$ on $M \times V$ given in \eqref{act-prod}. Assume that at $m:=(o,0)$ the metric splits as $\mathbf{g}_m=(\mathbf{g}_M)_{o} \oplus g_V$, with respect to 
$T_m(M \times V)=T_oM \oplus V$; here $(\mathbf{g}_M)_{o}$ respectively $h_V$ are inner products on $T_oM$ respectively 
$V$. By \eqref{act-prod} these inner products must be invariant under the action of $H$ on $T_{o}M$ respectively $V$ given by the isotropy representation respectively the restriction $\pi_{\vert H}$; in particular $(g_M)_o$ induces a $G$ 
invariant metric $\mathbf{g}_M$ on $M$. 

Having $\mathbf{g}$ left invariant means $\mathbf{g}_{gm}=\mathbf{g}_m((d l_{g^{-1}})_{gm} \cdot, (d l_{g^{-1}})_{gm} \cdot)$ for all $g$ in $G \times V$. Now take $g=(g_1,v_1)$ so that $g^{-1}=(g_1^{-1}, -\pi(g_1)v_1)$ and indicate the left action of $G$ on $M$ 
with $g_1 \in G \mapsto L_{g_1}$. Then 
\begin{equation*}
\begin{split}
& l_{g^{-1}}(g_2H,v_2)=(g_1^{-1}g_2H,-v_1+\pi(g_1^{-1})v_2)\\
& (d l_{g^{-1}})_{gm}(A,w)=((d L_{g^{-1}})_{g_1o}A, \pi(g_1^{-1})w)
\end{split}
\end{equation*} 
for all $(A,w) \in T_{g_1o}M \times V$. These facts together with the invariance of $\mathbf{g}$ ensure that the latter is the projection onto $M \times V$ of the tensor in \eqref{met-ab}. The statement in (i) is thus fully proved.\\
(ii) An element $k \in K$ acts on $M \times V$ according to $k(gH,v)=(gH,kv)$. The claim follows from the definition in \eqref{met-ab} by observing that having $k$ in the centraliser forces  
$g_V ( \pi(g^{-1}) \circ k \cdot, \pi(g^{-1}) \circ k \cdot)=g_V (k \circ  \pi(g^{-1}) \cdot, k\circ \pi(g^{-1})\cdot )=
g_V ( \pi(g^{-1}) \cdot, \pi(g^{-1}) \cdot)$ since $k$ is an orthogonal transformation with respect to $h_V$.\\
(iii) follows from (ii) and having $\mathbf{g}$ invariant under $V \rtimes _{\pi}G$.
\end{proof}

\section{Riemannian $3$-symmetric spaces: definitions and propaganda. } \label{riem3}
\subsection{Elements of quasi-K\"ahler geometry} \label{el-qK}
Throughout this paper whenever $\D$ is a linear connection on a manifold $M$ we denote with 
$\tau^{\D}$ respectively $R^{\D}$ the torsion respectively the curvature tensor of $D$. Explicitly $\tau^{\D}(X,Y)=\D_X\!Y-\D_Y\!X-[X,Y]$ and $R^{\D}(X,Y)=\D^2_{Y,X}-\D^2_{X,Y}$ whenever $X,Y \in \Gamma(TM)$.
 
Let $(M^{2m},g,J)$ be almost-Hermitian in the sense that $g$ is a Riemannian metric on $M$ and $J$ is an almost complex structure compatible with $g$, that is $g(J \cdot, J\cdot)=g$. The intrinsic torsion tensor $\eta^g:=
\frac{1}{2}(\nabla^gJ)J$, where $\nabla^g$ is the Levi-Civita connection of $g$, satisfies 
$\eta_U^g \in \mathfrak{u}^{\perp}(TM,g,J)$ and 
enters the following 
\begin{defn} \label{d-qK}
The almost-Hermitian manifold $(M,g,J)$ is quasi-K\"ahler provided that 
$$ \eta_{JU}^g=\eta_U^g \circ J \ \mathrm{for \ all} \ U \in TM.
$$
\end{defn}
A useful way to describe this type of structure is as follows. The space of forms of real type $(3,0)+(0,3)$ with respect to the almost complex structure $J$ will be denoted with 
$\lambda^3_JM:=\{\alpha \in \Lambda^3M : \alpha(J\cdot,J\cdot, \cdot)=-\alpha\}$. Also recall that the Nijenhuis tensor of $J$ is defined according to 
\begin{equation*}
N^J(X,Y)=[X,Y]-[JX,JY]+J([JX,Y]+[X,JY])
\end{equation*}  
for vector fields $X,Y$ on $M$.
Below we gather a few well-known facts from almost-Hermitian geometry, see e.g. \cite{PGaud} for proofs.
\begin{propn}\label{des-qK}Let $(M,g,J)$ be almost Hermitian. Then 
\begin{itemize}
\item[(i)] the following are equivalent \begin{itemize}
\item[(a)] $(g,J)$ is quasi-K\"ahler 
\item[(b)] the K\"ahler form $\omega:=g(J \cdot,\cdot)$ satisfies $\di\omega \in \lambda^3_JM$
\item[(c)] there exists a linear connection $\D$ on $TM$ with $\D\!g=\D\!J=0$ and torsion tensor $\tau^{\D}$ satisfying 
$\tau^{\D}(U, \cdot) \in \gl_J^{\perp}TM$ for all $U \in TM$
\end{itemize}
\item[(ii)] if $(g,J)$ is quasi-K\"ahler a linear connection $\D$ as in (c) is unique and given by 
$\D=\nabla^g+\frac{1}{2}(\nabla^gJ)J$. In addition the torsion tensor of $D$ reads $\tau^{\D}=-\frac{1}{4}N^J$.
\end{itemize}
\end{propn}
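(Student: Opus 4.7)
The plan is to establish (i)(a) $\Leftrightarrow$ (c) by directly constructing a candidate connection, then handle the K\"ahler-form characterisation (a) $\Leftrightarrow$ (b) as a standard type decomposition, and finally derive the explicit formula for the torsion in (ii) from the construction. The key object throughout is $\eta^g = \frac{1}{2}(\nabla^g J) J$, which lies in $\Lambda^1 M \otimes \mathfrak{u}^{\perp}(TM,g,J)$; in particular $\eta^g_U$ is $g$-skew and anticommutes with $J$ (the latter follows from $(\nabla^g_U J)J + J(\nabla^g_U J)=0$, which is the derivative of $J^2=-\mathbf{1}$).

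For (a) $\Rightarrow$ (c), I would set $\D := \nabla^g + \eta^g$. Skew-symmetry of $\eta^g_U$ gives $\D g = 0$ automatically, while the identity $[\eta^g_U, J] = -\nabla^g_U J$ (obtained by expanding $\eta^g_U = \frac{1}{2}(\nabla^g_U J)J$ and using the anticommutation above) gives $\D J = 0$. Next I would compute $\tau^{\D}(X,Y) = \eta^g_X Y - \eta^g_Y X$ and test the condition $\tau^{\D}(U, JV) = -J\tau^{\D}(U,V)$: using that each $\eta^g_U$ anticommutes with $J$, this reduces precisely to $\eta^g_{JV} = \eta^g_V \circ J$, which is the quasi-K\"ahler condition. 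The same computation run backwards yields (c) $\Rightarrow$ (a): if any connection $\D'$ satisfies the conditions of (c), write $\D' = \nabla^g + \eta^g + A$ with $A_X \in \mathfrak{u}(TM,g,J)$ (forced by $\D'g = \D'J = 0$ together with the uniqueness of the $\mathfrak{u}^\perp$-component already computed); then the torsion constraint forces $A = 0$.

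The uniqueness argument in (ii), which I expect to be the most delicate step, proceeds as follows. Suppose $A$ as above; the torsion condition $\tau^{\D + A}(U,\cdot) \in \gl_J^\perp TM$ combined with $A_X \in \gl_J(TM)$ gives a coupled linear identity $2JA_X Y = A_{JY}X + JA_Y X$. Substituting $X \leftrightarrow Y$ and then $X \mapsto JX$ and recombining yields the symmetry $A_X Y = A_Y X$. Setting $B(X,Y,Z) = g(A_X Y, Z)$, this tensor is then simultaneously symmetric in $(X,Y)$ and skew in $(Y,Z)$, and the standard permutation argument (as for Koszul's formula) forces $B \equiv 0$, so $A = 0$.

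For the torsion formula $\tau^{\D} = -\frac{1}{4} N^J$, I would expand $N^J(X,Y)$ using $[X,Y] = \nabla^g_X Y - \nabla^g_Y X$ and the Leibniz rule $\nabla^g_U(JV) = (\nabla^g_U J)V + J\nabla^g_U V$; after cancelling the Levi-Civita derivatives of $X,Y$ one obtains $N^J(X,Y) = -(\nabla^g_{JX} J)Y + (\nabla^g_{JY} J)X + J(\nabla^g_X J)Y - J(\nabla^g_Y J)X$. Applying the quasi-K\"ahler identity in the form $\nabla^g_{JU} J = -J(\nabla^g_U J)$ collapses this to $4(J\nabla^g_X J \cdot Y - J\nabla^g_Y J \cdot X)/2$, which equals $-4\tau^{\D}(X,Y)$ after rewriting $J(\nabla^g_U J) = -2\eta^g_U$. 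Finally, (a) $\Leftrightarrow$ (b) follows from the standard type decomposition of $d\omega$ together with the identity $d\omega(X,Y,Z) = \sum_{cyc} g((\nabla^g_X J)Y, Z)$, which matches the vanishing of the $(2,1)+(1,2)$-part against the quasi-K\"ahler condition on $\eta^g$.
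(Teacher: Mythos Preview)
The paper itself does not prove this proposition; it is recorded as a well-known fact and the reader is referred to \cite{PGaud} for proofs. So there is no ``paper's proof'' to compare against, and your outline stands or falls on its own merits.

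Your argument is essentially correct for (a)$\Rightarrow$(c), for (ii), for the torsion identity $\tau^{\D}=-\tfrac14 N^J$, and for (a)$\Leftrightarrow$(b). There is, however, a circularity in your (c)$\Rightarrow$(a). You write $\D' = \nabla^g + \eta^g + A$ with $A_X \in \mathfrak{u}(TM,g,J)$ (correctly forced by $\D'g=\D'J=0$) and then assert ``the torsion constraint forces $A=0$''. But the detailed uniqueness argument you give later for (ii) starts from the identity $2JA_XY = A_{JY}X + JA_YX$, which you obtain by assuming the base connection $\nabla^g+\eta^g$ \emph{already} has torsion of the required type---equivalently, that $(g,J)$ is already quasi-K\"ahler. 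Without that assumption the torsion condition on $\D'$ instead yields
\[
2JA_XY - A_{JY}X - JA_YX \;=\; \gamma_Y X, \qquad \gamma_Y := \eta^g_{JY}+J\eta^g_Y \in \mathfrak{u}^{\perp},
\]
and $\gamma=0$ is precisely the quasi-K\"ahler condition you are trying to prove. So an extra step is needed.

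Two clean fixes. First, one can push through directly: combining the displayed identity with its $X\mapsto JX$ transform gives $\gamma_YX = JA_XY - A_{JX}Y$ and $A_{JX}=-JA_X$; then the fact that $\gamma_Y$ is $g$-skew (since both $\eta^g_{JY}$ and $J\eta^g_Y$ are) forces $A_XZ=A_ZX$, after which your symmetric/skew permutation argument kills $A$ and hence $\gamma$. Second, and more economically, one can bypass (a) and prove (c)$\Rightarrow$(b) directly: since $\D'\omega=0$ one gets $d\omega(X,Y,Z)=\sum_{\mathrm{cyc}}\omega(\tau^{\D'}(X,Y),Z)$, and the torsion type $\tau^{\D'}(U,\cdot)\in\gl_J^{\perp}TM$ then gives $d\omega(J\cdot,J\cdot,\cdot)=-d\omega$ by a two-line check. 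Either route closes the gap; the rest of your outline is fine.
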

When $(M,g,J)$ is quasi-K\"ahler the connection $\nabla^g+\frac{1}{2}(\nabla^gJ)J$ coincides with the Chern 
connection, which is the unique Hermitian connection whose torsion has vanishing $(1,1)$ component. We will simply write $\tau$ for the torsion of $\D$, as it only depends on the almost complex structure. There are two distinguished sub-classes of quasi-K\"ahler structures. The first is when $(g,J)$ is almost-K\"ahler, that is $\di\!\omega=0$; in this case the metric $g$ picks up Riemannian features of symplectic geometry. Note that when $m=2$ any quasi-K\"ahler structure is almost-K\"ahler; this is due to having 
$\lambda^3_JM=0$ when $m=2$ and (b) in Proposition \ref{des-qK}. The second subclass consists in nearly-K\"ahler structures when $(g,J)$ is required to satisfy $\eta^g_UU=0, U \in TM$. The explicit classification of homogeneous nearly-K\"ahler structures is contained in \cite{GoCa,bu}.

In what follows we look at the class of quasi-K\"ahler structures with parallel torsion with respect to the Chern connection, that is  
$\D\!N^J=0$. Since $\eta^g$ is determined by $N^J$ and the metric according to 
\begin{equation} \label{it-expr-n}
2g(\eta^g_{v_1}v_2,v_3)=g(\tau(v_1,v_2),v_3)-g(\tau(v_2,v_3),v_1)+g(\tau(v_3,v_1),v_2)
\end{equation}
it follows this is equivalent to having $\D\!\eta^g=0$.

This class generalises the class of Riemannian $3$-symmetric spaces, see Definition \ref{R3s}, but 
is not limited to those, in view of the well known twistor examples in \cite{Ivanov}. We briefly explain that in this set-up the Riemann curvature tensor of $(g,J)$ has special algebraic symmetries, known as Gray's curvature conditions. An almost-Hermitian structure $(g,J)$ is said to belong to the class $\mathcal{AH}_k$ where $1 \leq k \leq 3$ provided its Riemann curvature tensor satisfies the condition $(G_k)$. These curvature requirements are as follows
\begin{itemize}
\item[$(G_1)$] $R^g(J\cdot, J\cdot, \cdot, \cdot)=R^g$
\item[$(G_2)$] $[J,R^g]=0$
\item[$(G_3)$] $R^g(J \cdot, J\cdot, J \cdot, J\cdot)=R^g$.
\end{itemize}
The algebraic action in the second condition is defined according to 
\begin{equation*}
\begin{split}
&[J,R^g](v_1,v_2,v_3,v_4)\\
=&R^g(Jv_1,v_2,v_3,v_4)+R^g(v_1,Jv_2,v_3,v_4)+R^g(v_1,v_2,Jv_3,v_4)+
R^g(v_1,v_2,v_3,Jv_4).
\end{split}
\end{equation*}
We prove below that quasi-K\"ahler structures with parallel torsion satisfy condition $(G_2)$; this also helps to set up some notation that will be used later on in the paper. First we have the comparaison formula 
\begin{equation*}
R^{\D}(U_1,U_2)=R^g(U_1,U_2)+[\eta_{U_1},\eta_{U_2}]-\eta_{\tau(U_1,U_2)}
\end{equation*}
which is due to $\D\!\tau=0$. 
\begin{propn} \label{G2}
Assume that $(M,g,J)$ is quasi-K\"ahler with parallel torsion, that is $\D\! \eta^g=0$. The following hold
\begin{itemize}
\item[(i)] we have $R^g(JX,JY,JZ,JU)=R^{g}(X,Y,Z,U)$
\item[(ii)] the tensor $(X,Y,Z,U) \mapsto g(\eta_{\tau(X,Y)}Z,U)$ is symmetric in pairs
\item[(iii)] $[J,R^g]=0$. 
\end{itemize}
\end{propn}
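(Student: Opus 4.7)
The plan is to exploit the comparison formula $R^{\D}(U_1, U_2) = R^g(U_1, U_2) + [\eta_{U_1}, \eta_{U_2}] - \eta_{\tau(U_1, U_2)}$ (valid since $\D\tau = 0$, equivalently $\D\eta = 0$) together with the behaviour of $\eta = \eta^g$ and $\tau = -\tfrac{1}{4}N^J$ under the $J$-action. Since $\eta_V \in \mathfrak{u}^{\perp}$ the operator $\eta_V$ anticommutes with $J$, and combining this with the quasi-K\"ahler identity yields $\eta_{JV} = \eta_V \circ J = -J\eta_V$. For $\tau$ one checks $\tau(JX, Y) = \tau(X, JY) = -J\tau(X, Y)$ and consequently $\tau(JX, JY) = -\tau(X, Y)$; so $\eta_{\tau(JX, JY)} = -\eta_{\tau(X, Y)}$, $[\eta_{JX}, \eta_{JY}] = [\eta_X, \eta_Y]$, and the last commutator is $J$-linear.

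The heart of the argument is the claim that $R^{\D}$ is of type $(1,1)$ in its first two arguments, i.e.\ $R^{\D}(JX, JY) = R^{\D}(X, Y)$ as endomorphisms. I would prove this by evaluating the first Bianchi identity for $R^{\D}$ (which, since $\D\tau = 0$, takes the clean form $\oint_{XYZ} R^{\D}(X, Y)Z = \oint_{XYZ} \tau(\tau(X, Y), Z)$) at both $(X, Y, Z)$ and $(JX, JY, Z)$ and subtracting: the $J$-transformations of $\tau\circ\tau$ recorded above, combined with the fact that $R^{\D}(A, B)$ commutes with $J$, isolate the vanishing of $R^{\D}(X, Y) - R^{\D}(JX, JY)$ after some cyclic bookkeeping. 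This Bianchi-based cancellation is the main technical obstacle of the proof.

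Once this is in hand, the three statements follow readily. For (iii) I apply $[J, \cdot]$ to the comparison formula: direct calculation using the identities of the first paragraph gives $[J, g([\eta_{\cdot}, \eta_{\cdot}]\cdot, \cdot)] = 0$ and $[J, g(\eta_{\tau(\cdot, \cdot)}\cdot, \cdot)] = 0$ (in each case the contributions from $J$ acting on the first two slots cancel pairwise via the quasi-K\"ahler identity, while those on the last two slots cancel by $J$-linearity, respectively $J$-antilinearity, of the relevant operators), so $[J, R^g] = [J, R^{\D}]$; the latter vanishes since $R^{\D}(X, Y) \in \mathfrak{u}(g, J)$ in the last two slots and $R^{\D}$ is of type $(1,1)$ in the first two. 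For (i), combine (iii) with the pair symmetry of $R^g$: expanding $[J, R^g](X, JY, Z, U) = 0$ and using $R^g(A, B, C, D) = R^g(C, D, A, B)$ produces $R^g(JX, JY, Z, U) = R^g(X, Y, JZ, JU)$; substituting $(Z, U) \mapsto (JZ, JU)$ yields (i). For (ii), formula \eqref{it-expr-n} rewrites
\[
2g(\eta_{\tau(X, Y)}Z, U) = g(\tau(\tau(X, Y), Z), U) - g(\tau(X, Y), \tau(Z, U)) - g(\tau(\tau(X, Y), U), Z);
\]
the middle term is manifestly symmetric under $(X, Y) \leftrightarrow (Z, U)$, and pair symmetry of the remaining combination follows from pair symmetry of $R^g$ and the first Bianchi identity for $R^{\D}$ via the standard algebraic scheme that recovers pair symmetry for metric connections with torsion.
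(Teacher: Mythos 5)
Your reductions are organized around the single claim that $R^{\D}(JX,JY)=R^{\D}(X,Y)$, and that claim is indeed true in this setting; your passage from it to (iii), and from (iii) to (i), is correct. But note that, by the comparison formula, $R^{\D}(X,Y)-R^{\D}(JX,JY)=R^g(X,Y)-R^g(JX,JY)-2\eta_{\tau(X,Y)}$, so (given the easy identities) the $(1,1)$--property of $R^{\D}$ in its first two slots is \emph{equivalent} to statement (ii): you have deferred the entire content of the proposition into the step you yourself flag as the main technical obstacle, and the Bianchi manipulation you sketch for it does not work. Subtracting $\mathfrak{S}_{XYZ}R^{\D}(X,Y)Z=\mathfrak{S}_{XYZ}\tau(X,\tau(Y,Z))$ evaluated at $(X,Y,Z)$ and at $(JX,JY,Z)$ gives, after the torsion transformations you list,
\[
\left(R^{\D}(X,Y)-R^{\D}(JX,JY)\right)Z+\left(R^{\D}(Y,Z)X-R^{\D}(JY,Z)JX\right)+\left(R^{\D}(Z,X)Y-R^{\D}(Z,JX)JY\right)=2\tau(Z,\tau(X,Y)),
\]
with a nonzero right-hand side and two leftover brackets in which only one of the first two arguments has been rotated; nothing here isolates $R^{\D}(X,Y)-R^{\D}(JX,JY)$, and the fact that $R^{\D}(A,B)$ commutes with $J$ does not absorb those mixed terms. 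A Bianchi route can be repaired, but it needs inputs you never invoke: compare instead with $(JX,JY,JZ)$, strip the outer $J$ using $[R^{\D}(\cdot,\cdot),J]=0$ and $\mathfrak{S}\tau(JX,\tau(JY,JZ))=J\,\mathfrak{S}\tau(X,\tau(Y,Z))$ to get $\mathfrak{S}_{XYZ}R^{\D}(JX,JY)Z=\mathfrak{S}_{XYZ}R^{\D}(X,Y)Z$, so the $(2,0)+(0,2)$ part of $R^{\D}$ in its first two arguments satisfies the torsion-free first Bianchi identity; then, using $\D g=0$ (so $R^{\D}(X,Y)\in\so(TM,g)$, not merely $\gl_J(TM)$), apply the classical lemma that a tensor skew in both pairs obeying the first Bianchi identity is symmetric in pairs, and kill it by the type clash between a $(2,0)+(0,2)$ first pair and a $(1,1)$ second pair. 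That completed argument is at least as heavy as the proposition itself.

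For comparison, the paper needs no Bianchi identity at all: from the comparison formula and $[R^{\D}(X,Y),J]=0$ it extracts $R^g(X,Y,JZ,U)+R^g(X,Y,Z,JU)=2g(\eta_{\tau(X,Y)}JZ,U)$, and then two substitutions plus pair symmetry of $R^g$ yield an identity whose left side is symmetric and whose right side is skew under the pair interchange, so both vanish — giving (i) and (ii) simultaneously, with (iii) a two-line consequence. Finally, your treatment of (ii) via an unspecified ``standard algebraic scheme'' is also a gap as written; once the key claim is actually proved, the cleaner route is to observe that it gives $R^g(X,Y)-R^g(JX,JY)=2\eta_{\tau(X,Y)}$ and to combine this with pair symmetry and (i).
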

\begin{proof}
(i)\&(ii) are proved at the same time, using the Hermitian symmetries of $\eta$ respectively $\tau$ which are 
\begin{equation} \label{H-sym}
\eta_{JX}=\eta_XJ=-J\eta_X \ \mathrm{respectively} \ \tau(JX, \cdot)=\tau(X,J\cdot)=-J\tau(X,\cdot).
\end{equation}
Because $\D\!J=0$ we have $[R^{\D}(U_1,U_2),J]=0$; since $[\eta_{U_1},\eta_{U_2}]$ commutes with $J$ we end up with 
\begin{equation} \label{star}
R^g(X,Y,JZ,U)+R^g(X,Y,Z,JU)=2g(\eta_{\tau(X,Y)}JZ,U)
\end{equation}
Operating $U \mapsto JU$ yields 
$R^g(X,Y,JZ,JU)-R^g(X,Y,Z,U)=-2g(\eta_{\tau(X,Y)}Z,U).$
The variable change $(X,Y,Z,U) \mapsto (Z,U,X,Y)$ shows that 
\begin{equation*}
R^g(Z,U,JX,JY)-R^g(Z,U,X,Y)=-2g(\eta_{\tau(Z,U)}X,Y).
\end{equation*}
Because the Riemann curvature tensor is symmetric in pairs we find, after substraction 
\begin{equation*}
R^g(X,Y,JZ,JU)-R^g(JX,JY,Z,U)=2(g(\eta_{\tau(Z,U)}X,Y)-g(\eta_{\tau(X,Y)}Z,U)).
\end{equation*}
Performing $(X,Y) \mapsto (JX,JY)$ and using \eqref{H-sym} finally entails 
$$ R^g(JX,JY,JZ,JU)-R^g(X,Y,Z,U)=2(g(\eta_{\tau(X,Y)}Z,U)-g(\eta_{\tau(Z,U)}X,Y)).
$$
Since the left hand side is symmetric in pairs whilst the right hand side is skew-symmetric in pairs they must vanish 
independently and the claims are proved.\\
(ii) From \eqref{star} we also get 
\begin{equation*}
R^g(JX,Y,Z,U)+R^g(X,JY,Z,U)=R^g(Z,U,JX,Y)+R^g(Z,U,X,JY)=2g(\eta_{\tau(Z,U)}JX,Y).
\end{equation*}
Thus we obtain 
\begin{equation*}
\begin{split}
\frac{1}{2}[J,R^g](X,Y,Z,U)=&g(\eta_{\tau(X,Y)}JZ,U)+g(\eta_{\tau(Z,U)}JX,Y)\\
=&g(\eta_{\tau(X,Y)}JZ,U)+g(\eta_{\tau(JX,Y)}Z,U)
\end{split}
\end{equation*}
 where 
in the last equality we have used the symmetry property in (ii). The vanishing of $[J,R^g]$ follows now from \eqref{H-sym}.
\end{proof}
When assuming that $(g,J)$ is almost-K\"ahler we have $g(\tau(X,Y),Z)=-g(\eta_ZX,Y)$ which easily entails that the symmetry property in (ii) of Proposition \ref{G2} is automatic.

Some additional motivation to study the almost-K\"ahler case is as follows. Almost-K\"ahler structures $(g,J)$ with Riemann curvature satisfying $(G_2)$ respectively $(G_3)$ are said to belong to the class $\mathcal{AK}_2$ respectively $\mathcal{AK}_3$. 

Concerning these classes, the relevance of Riemannian $3$-symmetric spaces of almost-K\"ahler type is thus 
to provide examples of complete metrics in the class $\mathcal{AK}_2$. In dimension $4$ structures 
in the class $\mathcal{AK}_2$ are symmetric by work in \cite{Ap1}; still in dimension $4$ the class $\mathcal{AK}_3$ has been fully described in \cite{Ap3}. In higher dimensions other examples of metrics 
in the class $\mathcal{AK}_3$ have been constructed in \cite{Chi-Na} but those are not necessarily complete.
\subsection{Intrinsic definition of $3$-symmetric spaces} \label{intr-defn}
Here we record for the reader's convenience the connection between  3-symmetric spaces, which are geometric objects, and their algebraic description, which are the primary object of study. We begin with some preliminary definitions. A linear connection $\D$ on a manifold 
$M$ is called Ambrose Singer if 
\begin{equation*}
\D\!\tau^{\D}=0, \D\!R^{\D}=0.
\end{equation*}

\begin{defn} \label{dd1}
Let $(M^{2m},J), m \geq 2$ be almost complex. It is called $3$-symmetric provided it admits an Ambrose-Singer connection $\D$ such that 
\begin{equation} \label{tor-def}
\D\!J=0 \ \mbox{and} \ \tau^{\D}(U, \cdot) \in \gl_J^{\perp}(TM), U \in TM.
\end{equation}
\end{defn}
This class of spaces is more general than we strictly need in this work. However examining issues such as uniqueness for $\D$ above will shed light on the structure of the various automorphism groups we will study in the Riemannian set-up. Below we gather a few observations in that direction before proving our first main results.
\begin{lemma}  \label{aut-gr}
The following hold
\begin{itemize}
\item[(i)] if $(M,J)$ is almost complex, a connection $\D$ as in \eqref{tor-def} satisfies 
$$\tau^{\D}=-\frac{1}{4}N^J$$
\item[(ii)] any other linear connection satisfying \eqref{tor-def} is of the form $\D+\xi$ where $\xi_X \in \gl_J(TM)$ and $\xi_XY=\xi_YX$ for all $X,Y \in TM$.
\end{itemize}
\end{lemma}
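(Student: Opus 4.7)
The plan is to prove (i) by a direct substitution of the general identity $[U,V]=\D_U V-\D_V U-\tau^{\D}(U,V)$ into the defining expression for $N^J$, and (ii) by a standard difference-of-connections argument.

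For (i), I would begin by writing out the four brackets in
$$N^J(X,Y)=[X,Y]-[JX,JY]+J([JX,Y]+[X,JY])$$
using the torsion identity. Because $\D J=0$, one may pull $J$ in and out of any covariant derivative, so $\D_U(JV)=J\D_U V$. Applying this to the expansions of $[JX,JY]$, $[JX,Y]$ and $[X,JY]$, all of the $\D$-terms enter twice with opposite signs and cancel. What remains is the purely torsion-theoretic identity
$$N^J(X,Y)=-\tau^{\D}(X,Y)+\tau^{\D}(JX,JY)-J\tau^{\D}(JX,Y)-J\tau^{\D}(X,JY).$$
Now I would invoke the hypothesis $\tau^{\D}(U,\cdot)\in\gl_J^{\perp}(TM)$, i.e.\ $\tau^{\D}(U,JV)=-J\tau^{\D}(U,V)$. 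Combined with the skew-symmetry $\tau^{\D}(U,V)=-\tau^{\D}(V,U)$, this also yields $\tau^{\D}(JU,V)=-J\tau^{\D}(U,V)$ in the first slot, and hence $\tau^{\D}(JX,JY)=-\tau^{\D}(X,Y)$. Substituting these relations collapses the four terms into $-4\tau^{\D}(X,Y)$, which gives $\tau^{\D}=-\frac{1}{4}N^J$.

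For (ii), I would write any second connection satisfying \eqref{tor-def} as $\D'=\D+\xi$ with $\xi\in\Omega^1(M,\mathrm{End}(TM))$. The condition $\D'\!J=0$ together with $\D J=0$ immediately gives $[\xi_X,J]=0$ for all $X$, that is $\xi_X\in\gl_J(TM)$. The torsion tensors are related by
$$\tau^{\D'}(X,Y)=\tau^{\D}(X,Y)+\xi_X Y-\xi_Y X,$$
and by part (i) both torsions equal $-\frac{1}{4}N^J$, since this quantity depends only on the almost complex structure $J$. Hence $\xi_X Y=\xi_Y X$, which is the claimed symmetry.

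The computation in (i) is essentially a bookkeeping exercise with no real obstruction; the only subtle point is making sure the $\D$-terms truly cancel, which relies on using $\D J=0$ consistently in the three brackets involving $J$. Part (ii) is then a one-line consequence of (i) and the standard description of the space of connections with prescribed preserved tensor.
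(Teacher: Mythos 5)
Your proposal is correct and follows essentially the same route as the paper: expand the brackets in $N^J$ via $[U,V]=\D_U V-\D_V U-\tau^{\D}(U,V)$, use $\D J=0$ to cancel the covariant-derivative terms, and then apply the torsion symmetries $\tau^{\D}(JX,JY)=-\tau^{\D}(X,Y)$ and $\tau^{\D}(X,JY)=\tau^{\D}(JX,Y)=-J\tau^{\D}(X,Y)$ to collapse the remaining terms to $-4\tau^{\D}$. Part (ii) is likewise the same difference-of-connections argument used in the paper, with $J$-invariance of $\xi$ from $(\D+\xi)J=0$ and symmetry of $\xi$ from equality of the two torsions.
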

\proof 
(i) Since $[X,Y]=\D_X\!Y-\D_Y\!X-\tau(X,Y)$ the Nijenhuis tensor reads, in terms of $\D$ and its torsion  
\begin{equation*}
\begin{split}
N^J(X,Y)=&-(\D_{JX}\!J)Y+(\D_{JY}\!J)X+J(D_XJ)Y-J(\D_Y\!J)X\\
&-\tau(X,Y)+\tau(JX,JY)-J\left (\tau(JX,Y)+\tau(X,JY) \right ).
\end{split}
\end{equation*}
Since $\D\!J=0$ and $\tau(JX,JY)=-\tau(X,Y),\tau(X,JY)=\tau(JX,Y)=-J\tau(X,Y)$ it follows that $N^J=-4\tau^{\D}$. 
Note that this statement is the metric-free version of (ii) in Proposition \ref{des-qK}.\\
(ii) By (i) any other connection satisfying \eqref{tor-def} has the same torsion as $\D$ hence 
it is of the form $\D+\xi$ where $\xi_XY=\xi_YX$. The $J$-invariance property for $\xi$ follows from requiring $J$ be parallel with respect to $\D+\xi$. \\
\endproof
In light of part (i) above, the torsion tensor of connections involved in Definition \ref{dd1} does not depend 
on the choice of connection and thus will be simply denoted by $\tau$ in what follows. 

Next we examine the stabiliser of the Nijenhuis tensor within 
$\gl_J(TM)$, that is the Lie algebra $\mathfrak{t}:=\mathfrak{stab}_{\gl_J(TM)}\tau=\{ F\in \gl_{J}TM : F\tau(x,y)-\tau(Fx,y)-\tau(x,Fy)=0\}$. We consider the second symmetric prolongation of $\mathfrak{t}$ which is given by 
$$\mathfrak{t}^{(2)}:=\{\xi :TM \to \mathfrak{t} : \xi_xy=\xi_yx\}.$$
Due to the complex type of $\tau$ this can be determined explicitly in terms of the distributions 
$$\mathscr{K}:=\ker(v \mapsto N^J(v,\cdot)) \ \mathrm{ respectively} \ \mathscr{N}:=\spa \{N^J(v,w) : v,w \in TM\}.$$
\begin{lemma} \label{proL}
We have 
$$\mathfrak{t}^{(2)}=\{\xi:TM \to \gl_JTM : \xi_xy=\xi_yx \ \mathrm{and} \  \xi_{TM}\mathscr{N}=0, \ \xi_xy \in \mathscr{K} \ \mbox{for all} \ x,y \in TM \}$$
at every point of $M$.
\end{lemma}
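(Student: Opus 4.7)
\noindent
\emph{Proof sketch.} The plan is to handle the two inclusions separately by complexifying and using that $\tau$ has pure type $(2,0)+(0,2)$.

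For $\supseteq$, suppose $\xi$ satisfies the stated conditions on the right. Fix $x \in TM$ and set $f := \xi_x$. To see $f \in \mathfrak{t}$, write the stabilizer identity $f\tau(y,z) - \tau(fy,z) - \tau(y,fz)$ and observe each term vanishes: $\tau(y,z) \in \mathscr{N}$ is killed by $f$ by the first condition, while the hypothesis $fy, fz \in \mathscr{K}$ makes the other two terms vanish directly. Together with symmetry of $\xi$, this gives $\xi \in \mathfrak{t}^{(2)}$.

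For $\subseteq$, take $\xi \in \mathfrak{t}^{(2)}$ and set $T(x,y) := \xi_x y$. Since $\xi_x \in \gl_J(TM)$, extending $T$ to $V_{\mathbb{C}} = T^{1,0} \oplus T^{0,1}$ by $\mathbb{C}$-bilinearity I would first argue that the mixed component vanishes: for $x \in T^{1,0}$ and $y \in T^{0,1}$, $J$-linearity of $\xi_x$ gives $T(x,y) \in T^{0,1}$, whereas symmetry together with $J$-linearity of $\xi_y$ gives $T(x,y) = T(y,x) \in T^{1,0}$, forcing $T(x,y)=0$. Hence $T$ splits as $T^{1,0}\colon S^2 T^{1,0} \to T^{1,0}$ together with its complex conjugate.

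I would then run a case analysis on the stabilizer equation $\xi_x\tau(y,z) = \tau(\xi_x y, z) + \tau(y,\xi_x z)$ according to the $J$-type of $x,y,z$, using that $\tau$ pairs trivially between $T^{1,0}$ and $T^{0,1}$:
\begin{itemize}
\item $x \in T^{0,1}$, $y,z \in T^{1,0}$: the right hand side collapses (cross components of $T$ vanish), leaving $T^{0,1}(x,\tau^{1,0}(y,z)) = 0$. Together with its complex conjugate ($x \in T^{1,0}$, $y,z \in T^{0,1}$), this yields $\xi_v \mathscr{N}_{\mathbb{C}} = 0$ for every $v \in V_{\mathbb{C}}$, hence $\xi_{TM}\mathscr{N} = 0$ by reality.
\item $x,y,z \in T^{1,0}$: the identity becomes $\tau^{1,0}(T^{1,0}(x,y),z) + \tau^{1,0}(y,T^{1,0}(x,z)) = 0$. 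Combined with the symmetry $T^{1,0}(x,y) = T^{1,0}(y,x)$ and antisymmetry of $\tau^{1,0}$, this implies the trilinear map
$$\phi(x,y,z) := \tau^{1,0}(T^{1,0}(x,y),z) \in T^{0,1}$$
is symmetric both in $(x,y)$ and in $(y,z)$, hence totally symmetric in all three arguments.
\item Mixed cases with exactly one index of opposite type give $\tau = 0$ on both sides and are automatic.
\end{itemize}
The main obstacle, and the step I expect to be the most delicate, is passing from the total symmetry of $\phi$ to its vanishing: this is what encodes the required property $T^{1,0}(x,y) \in \mathscr{K}^{1,0}$. The argument should exploit the interplay between the symmetry of $T^{1,0}$ and the antisymmetry of $\tau^{1,0}$; writing $\psi(x,y,z) := \tau^{1,0}(x,T^{1,0}(y,z))$ and using antisymmetry of $\tau^{1,0}$ identifies $\psi = -\phi$, and cycling the stabilizer identity so as to exploit the first Bianchi-type relation for the symmetric prolongation should collapse $\phi$ to zero. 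Passing from $T^{1,0}$ to $T$ via the reality condition $T^{0,1}=\overline{T^{1,0}}$ then yields $\xi_x y \in \mathscr{K}$ for all real $x,y \in TM$, completing the inclusion.
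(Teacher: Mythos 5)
Your easy inclusion is correct, and the first half of your hard inclusion is also correct, though it can be streamlined without complexifying: from $\xi_xy=\xi_yx$ and $\xi_y\in\gl_J(TM)$ you get $\xi_{Jx}=J\circ\xi_x$, so in the stabiliser identity $\xi_x\tau(y,z)=\tau(\xi_xy,z)+\tau(y,\xi_xz)$ the left side is $J$-linear in $x$ while the right side is $J$-antilinear in $x$; hence both sides vanish separately, giving $\xi_{TM}\mathscr{N}=0$ and $\tau(\xi_xy,z)+\tau(y,\xi_xz)=0$. Moreover these two identities, together with symmetry and $J$-invariance, are \emph{equivalent} to $\xi\in\mathfrak{t}^{(2)}$ -- they exhaust the hypothesis. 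Your $\phi(x,y,z)=\tau(\xi_xy,z)$ is then indeed totally symmetric, and it vanishes whenever an argument lies in $\mathscr{N}$.

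The gap is exactly the step you flag, and it is not merely delicate: it cannot be closed by any further manipulation of the identities above, because they do not imply $\xi_xy\in\mathscr{K}$. Take $V=\mathbb{C}^2$ with complex basis $e_1,e_2$, set $\tau(v,w)=\overline{\det(v,w)}\,e_1$ (this is the general shape of a nonzero tensor of Nijenhuis type in complex dimension two), and define $\xi_xy:=x_2y_2\,e_1$, where $x_2,y_2$ denote $e_2$-coordinates. Then $\xi$ is symmetric and complex bilinear, each $\xi_x$ is $J$-linear, $\xi_{TM}\mathscr{N}=0$ since $\mathscr{N}=\mathbb{C}e_1$, and $\tau(\xi_xy,z)+\tau(y,\xi_xz)=\overline{x_2y_2z_2}\,e_1-\overline{x_2y_2z_2}\,e_1=0$, so every $\xi_x$ stabilises $\tau$ and $\xi\in\mathfrak{t}^{(2)}$; yet $\mathscr{K}=0$ while $\xi_{e_2}e_2=e_1\neq0$, and $\phi(e_2,e_2,e_2)\neq0$ even though $\phi$ is totally symmetric and kills $\mathscr{N}$. (Your auxiliary relation $\psi=-\phi$ is just a restatement of total symmetry and the skewness of $\tau$, so it adds nothing.) Consequently the inclusion $\mathfrak{t}^{(2)}\subseteq\{\xi:\ \xi_xy\in\mathscr{K}\}$ is not a consequence of the pointwise multilinear algebra you propose; what your computation actually proves is the weaker description $\mathfrak{t}^{(2)}=\{\xi\ \mathrm{symmetric},\ \gl_J\mathrm{-valued}:\ \xi_{TM}\mathscr{N}=0,\ N^J(\xi_xy,z)+N^J(y,\xi_xz)=0\}$, which does give $\xi=0$ in the maximal-rank case $\mathscr{N}=TM$ used in Theorem \ref{unq}, but not in the non-degenerate case $\mathscr{K}=0$. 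Note also that the paper does not prove this lemma internally -- it refers to the proof of Theorem 2.1(i) in \cite{Kr0} -- so whatever additional input or hypothesis that argument relies on is precisely what is missing from your proposal; it cannot be recovered by cycling the stabiliser identity.
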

\begin{proof}
See \cite{Kr0}, proof of Theorem 2.1,(i). 
\end{proof}
These preliminaries enable isolating conditions, intrinsically phrased in terms of $J$, under which the connection $\D$ is unique. When $(M,J)$ is $3$-symmetric or more generally when only $\D\!\tau=0$ is required the distributions 
$\mathscr{K} \ \mathrm{and} \ \mathscr{N}$ are parallel with respect to $\D$ and hence have constant rank.

We shall say that the torsion tensor or equivalently the Nijenhuis tensor is non-degenerate 
respectively has maximal rank provided that $\mathscr{K}=0$ respectively $\mathscr{N}=TM$ at some (hence every ) 
point in $M$.
\begin{thm}\label{unq} Let $(M,J)$ be $3$-symmetric such that the Nijenhuis tensor is either non-degenerate 
or has maximal rank. Then
\begin{itemize}
\item[(i)] the Ambrose Singer connection $\D$ from Definition \ref{dd1} is unique 
\item[(ii)] we have  
\begin{equation} \label{JvsD}
 \Aut^0(M,J)=\Aut^0(M,\D).
\end{equation}
\end{itemize}
\end{thm}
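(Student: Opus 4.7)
The plan is first to prove the uniqueness of $\D$ in (i) via the second prolongation of the stabiliser of $\tau$, and then to deduce (ii) from (i) by a short variational computation.

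\textbf{Part (i).} By Lemma \ref{aut-gr}, any other connection $\D'$ satisfying \eqref{tor-def} is of the form $\D + \xi$ with $\xi_X \in \gl_J(TM)$ and $\xi_X Y = \xi_Y X$. Requiring $\D'\tau = 0$ (part of the Ambrose--Singer hypothesis on $\D'$) and combining with $\D\tau = 0$ forces $\xi_X \in \mathfrak{t}$, so $\xi \in \mathfrak{t}^{(2)}$. By Lemma \ref{proL}, any such $\xi$ satisfies $\xi_{TM}\mathscr{N} = 0$ and has image in $\mathscr{K}$. Non-degeneracy $\mathscr{K} = 0$ forces $\xi = 0$ through the image condition, while maximal rank $\mathscr{N} = TM$ forces it through the other; either way $\D' = \D$.

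\textbf{Part (ii).} One inclusion is immediate from (i): if $\varphi \in \Aut^0(M, J)$, then $\varphi^*\D$ is another Ambrose--Singer connection on $(M, J)$ satisfying \eqref{tor-def}, hence equals $\D$. For the reverse, fix $\varphi \in \Aut^0(M, \D)$ and set $\tilde J := \varphi^*J$. Then $\tilde J$ is a $\D$-parallel almost complex structure making $\tau$ of complex type, and Lemma \ref{aut-gr}(i) applied to $(M, \tilde J, \D)$ yields $N^{\tilde J} = -4\tau = N^J$, so $(M, \tilde J)$ satisfies the same hypothesis. Connecting $\varphi$ to the identity by a smooth path $\{\varphi_t\} \subset \Aut^0(M, \D)$ and differentiating $\tilde J_t := \varphi_t^*J$ at $t = 0$ produces $A := \dot{\tilde J}_0$, a $\D$-parallel endomorphism of $TM$ which anticommutes with $J$ and satisfies the linearised compatibility $\tau(AU, V) = -A\tau(U, V)$. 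The conclusion reduces to showing $A = 0$, which makes the family $\tilde J_t$ constant along the path and thus $\varphi \in \Aut^0(M, J)$.

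This rigidity step is the main substantive calculation, though it is short. Setting $B := JA$ gives a $\D$-parallel endomorphism anticommuting with $J$ and satisfying the cleaner identity $\tau(BU, V) = \tau(U, BV) = B\tau(U, V)$. Evaluating $\tau(BU, JV)$ in two different ways---once via the torsion-type identity $\tau(\cdot, JW) = -J\tau(\cdot, W)$, once via $BJ + JB = 0$ combined with the $B$-symmetry---yields $-JB\tau(U, V) = JB\tau(U, V)$, forcing $B\tau(U, V) = 0$ for all $U, V$. Hence $B\mathscr{N} = 0$ and $B(TM) \subseteq \mathscr{K}$, so either hypothesis collapses $B$ (and therefore $A = -JB$) to zero.
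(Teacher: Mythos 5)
Your proposal is correct and follows essentially the same route as the paper: part (i) is the identical prolongation argument via Lemma \ref{proL}, and part (ii) proves the nontrivial inclusion by linearizing the action on $J$, obtaining an endomorphism anticommuting with $J$ that intertwines $\tau$, and killing it with the complex type of $\tau$ plus the non-degeneracy/maximal-rank hypothesis, exactly as the paper does with $A=\li_X\!J$ for $X\in\aut(M,\D)$. The only cosmetic differences are that the paper works directly with the Lie algebra $\mathfrak{aut}(M,\D)$ instead of a path $\varphi_t$ (which also spares one having to remark that the vanishing argument must be run at every $t$, not just $t=0$), and that it performs the algebraic step on $A$ itself via a $J$-invariance versus $J$-anti-invariance comparison rather than on $B=JA$.
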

\begin{proof}
(i) Assume that $\D+\xi$ is another Ambrose-Singer connection satisfying \eqref{tor-def}. Then $\xi_xy=\xi_yx$ and 
$\xi_x \in \gl_J(TM)$ by part (ii) in Lemma \ref{aut-gr}. Since $\D+\xi$ and $\D$ have the same torsion tensor, having the latter parallel with respect to both connections amounts to saying that 
$ \xi \in \mathfrak{t}^{(2)}.$
Because $\tau^{\D}$ is proportional to $N^J$, Lemma \ref{proL} ensures that $\xi_{TM}\mathscr{N}=0$ and $\mathrm{Im} \xi \subseteq \mathscr{K}$ hence the assumptions on the Nijenhuis tensor make that $\xi=0$.\\
(ii) Choose $\varphi \in \Aut(M,J)$. Then $\varphi^{\star}\D$ satisfies all the conditions of Definition \ref{dd1}. Yet such connections are unique, hence $\varphi^{\star}\D =\D$, showing that $\Aut(M,J) \subseteq \Aut(\D)$. Now consider 
$X$ in the Lie-algebra $\mathfrak{aut}(\D)$ of $\Aut(\D)$. Since $X$ preserves $\D$ it preserves its torsion tensor, that is $\li_X\!\tau=0$. Differentiating in $\tau(J \cdot, \cdot)=-J\tau(\cdot, \cdot)$ it follows that 
the endomorphism $A:=\li_X\!J$ satisfies $\tau(Ax,y)=-A\tau(x,y)$ for all $x,y$ in $TM$. Since $AJ+JA=0$ and $\tau(Jx,Jy)=-\tau(x,y)$ the left hand side is $J$-invariant whilst the right hand side is $J$-anti-invariant.  It follows that 
$\tau(Ax,y)=A\tau(x,y)=0$ that is $A \mathscr{N}=0$ and $\mathrm{Im} A \subseteq \mathscr{K}$. The assumptions made on the Nijenhuis tensor then yield $A=0$, so $\mathfrak{aut}(M,\D) \subseteq \mathfrak{aut}(M,J)$ and the inclusion $\Aut^0(M,\D) \subseteq \Aut^0(M,J)$ is also proved. This shows the equality in \eqref{JvsD}.
\end{proof}
Examples of $3$-symmetric spaces with non-degenerate Nijenhuis tensor are given by the so-called class III and IV spaces, see Proposition \ref{int-s}, (ii), in section \ref{cpx-symp}; note however that those spaces do not have maximal rank. The equality in \eqref{JvsD} provides a quick way to determine explicitly, for these spaces, the group $\Aut(M,J)$.
\begin{rem} \label{p-tor}
Clearly Theorem \ref{unq} holds in the more general set-up when the assumption of having $\D$ an Ambrose-Singer connection is dropped. Indeed it is enough to require $\D$ satisfy \eqref{tor-def} and $\D\!\tau=0$. Other classes 
of connections with torsion which are uniquely determined by their holonomy and torsion type can be found in \cite{Na-prol}. Note however that no assumption on the holonomy representation of $\D$ is made in Theorem \ref{unq}. 
\end{rem}
\subsection{The Riemannian set-up} \label{su-R}
To move on to the Riemannian counterpart of Definition \ref{dd1} we recall that a connection $\D$ such that 
$\D\!J=0$ is called metrisable provided there exists a Riemannian metric $g$ such that $\D\!g=0$ and $g(J \cdot,J \cdot)=g$. As it is well known this is phrased intrinsically as follows.
\begin{lemma} \label{hol-t}
Let $(M,J)$ be almost complex with $M$ simply connected and assume that $\D\!$ is a linear connection on $M$ with $\D\!J=0$. Then $\D$ is metrisable iff its holonomy group is compact in $\GL_J(TM)$ at some(and hence every) point in $M$.
\end{lemma}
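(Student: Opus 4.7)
The plan is to establish each direction using standard holonomy arguments.

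For the forward implication, suppose $\D$ is metrisable, so there exists a Riemannian metric $g$ on $M$ with $\D g=0$ and $g(J\cdot,J\cdot)=g$. Parallel transport with respect to $\D$ along any loop based at a point $x\in M$ then preserves both $g_x$ and $J_x$, which places the holonomy group $\Hol_x(\D)$ inside the unitary group $\un(T_xM,g_x,J_x)$. Since the unitary group is compact and sits inside $\GL_J(T_xM)$, so does $\Hol_x(\D)$.

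For the converse, fix $x\in M$ and assume $\Hol_x(\D)\subseteq \GL_J(T_xM)$ is compact. First I would choose any $J_x$-compatible inner product $g_x^0$ on $T_xM$ (such a choice is always possible) and average it over $\Hol_x(\D)$ with respect to Haar measure, obtaining an inner product
\[
g_x(u,v):=\int_{\Hol_x(\D)} g_x^0(hu,hv)\,\di h.
\]
Because $\Hol_x(\D)\subseteq \GL_J(T_xM)$, every $h$ in the holonomy group commutes with $J_x$, so $J_x$-compatibility of $g_x^0$ is preserved under the averaging; thus $g_x$ is itself $J_x$-compatible and, by construction, $\Hol_x(\D)$-invariant.

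The next step is to propagate $g_x$ to a parallel tensor on all of $M$. Since $M$ is simply connected, the holonomy principle applies: a tensor at $x$ invariant under $\Hol_x(\D)$ extends uniquely to a $\D$-parallel tensor on $M$ by parallel transport along any path (path independence follows from simple connectedness together with $\Hol_x(\D)$-invariance at $x$). Applying this to $g_x$ yields a smooth $(0,2)$-tensor field $g$ on $M$ with $\D g=0$. Positive definiteness and symmetry are preserved under parallel transport (being conditions at a single point that pass pointwise through the isomorphism given by parallel transport), so $g$ is a Riemannian metric. Finally, $J$-compatibility $g(J\cdot,J\cdot)=g$ holds at $x$ and is preserved under parallel transport because $\D J=0$ and $\D g=0$, so it holds globally. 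This produces the desired compatible metric, and the equivalence is established.

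The only non-routine point is the invocation of the holonomy principle for a non-metric Ambrose--Singer--type connection, but this is purely a statement about parallel transport of tensors and requires only simple connectedness of $M$ and $\Hol_x(\D)$-invariance at the base point, both of which are available.
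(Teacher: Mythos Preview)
Your proof is correct and follows essentially the same approach as the paper: holonomy sits in the unitary group for the forward direction, and for the converse one produces a holonomy-invariant inner product at a point and extends it by parallel transport. The only cosmetic difference is that you secure $J$-compatibility \emph{before} averaging (starting from a $J_x$-compatible inner product and using $\Hol_x(\D)\subseteq\GL_J$), whereas the paper first extends an arbitrary holonomy-invariant metric and then symmetrises via $g\mapsto g+g(J\cdot,J\cdot)$ afterwards; both are equally valid.
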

\begin{proof} 
Pick $x \in M$ and let $\bbV=T_xM$. If the connection $D$ is metrisable from $\D\!g=0$ and $\D\!J=0$ we get $\Hol(\D)\subset \mathrm{U}(\bbV,g,J)$ and hence $\Hol(\D)$ is compact. Conversely, when $\Hol(\D)$ is compact, it preserves a metric at the point $x$. By parallel transport this is extended to a Riemannian metric $g$ on $M$ such that $\D\!g=0$. Since $\D\!J=0$, via $g \mapsto g+g(J \cdot, \cdot)$ we can assume that $J$ is orthogonal with respect to $g$. 
\end{proof}
In light of this fact the following definition makes sense. 
\begin{defn} \label{R3s}
Let $(M,J,\D)$ be $3$-symmetric, where $M$ is simply connected. It is called Riemannian $3$-symmetric provided 
$\mathrm{Hol}(\D)$ is compact in $\GL_J(TM)$. If this is the case, the space of compatible metrics, that is metrics with $g(J \cdot, J\cdot)=g$ and $\D\!g=0$, will be indicated with $\me(M,J,\D)$.
\end{defn}
When $J$ is integrable, that is $N^J=0$, we simply recover Hermitian symmetric spaces.
In the Riemannian set-up the counterpart of the previous results for $3$-symmetric spaces is as follows.
\begin{propn} \label{p-311}
Let $(M,J,\D)$ be Riemannian $3$-symmetric and let $g \in \me(M,J,\D)$. Then 
\begin{itemize}
\item[(i)] $(g,J)$ is quasi-K\"ahler and $\D=\nabla^g+\frac{1}{2}(\nabla^gJ)J$ where $\nabla^g$ denotes the Levi-Civita connection of $g$
\item[(ii)] we have 
\begin{equation} \label{aut-grD}
\Aut(M,g,J):=\Aut(M,g) \cap \Aut(M,J) \subseteq \Aut(M,\D)
\end{equation}
\item[(iii)] if $\me(M,J,\D)$ contains a locally irreducible almost K\"ahler metric $g$ then 
either $N^J=0$ or $N^J$ is non-degenerate. In the latter case we moreover have that $\D$ is the unique connection satisfying \eqref{tor-def} and $\Aut^0(M,J)=\Aut^0(M,\D)$.
\end{itemize}
\end{propn}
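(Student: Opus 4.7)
The plan is to dispatch the three items in turn, with most of the work going into (iii).

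Part (i) is a direct appeal to Proposition \ref{des-qK}: the data of a linear connection $\D$ with $\D g = 0$, $\D J = 0$ and $\tau^{\D}(U,\cdot) \in \gl_J^{\perp}(TM)$ is condition (c) of Proposition \ref{des-qK}(i), which is equivalent to $(g,J)$ being quasi-K\"ahler; uniqueness of $\D$ and the formula $\D = \nabla^g + \tfrac{1}{2}(\nabla^g J)J$ are then Proposition \ref{des-qK}(ii). Part (ii) is immediate from this formula: any $\varphi \in \Aut(M,g) \cap \Aut(M,J)$ preserves $\nabla^g$ as an isometry and commutes with $J$, so $\nabla^g J$ is $\varphi$-invariant, whence $\varphi^{\star}\D = \D$.

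For (iii) my starting point is the fact that in the almost-K\"ahler case the trilinear form $(X,Y,Z) \mapsto g(\eta^g_X Y, Z)$ is totally skew-symmetric. Skew-symmetry in $Y,Z$ comes from $\eta^g_X \in \mathfrak{so}(TM,g)$, while the cyclic identity $\sum_{\mathrm{cyc}} g(\eta^g_X Y, Z) = 0$ is obtained by expanding $d\omega = 0$ using $(\nabla^g_X J)Y = 2J\eta^g_X Y$ and the quasi-K\"ahler symmetry $\eta^g_{JU} = \eta^g_U J$. Combined with the torsion formula $\tau^{\D}(X,Y) = \eta^g_X Y - \eta^g_Y X$ this yields $g(\tau(X,Y),Z) = 2\,g(\eta^g_X Y, Z)$, so that the kernel distribution of $N^J$ admits the clean description $\mathscr{K} = \{V \in TM : \eta^g_Y V = 0 \text{ for all } Y \in TM\}$.

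The distribution $\mathscr{K}$ is already $\D$-parallel as the kernel of the $\D$-parallel tensor $\tau$. Writing $\nabla^g = \D - \eta^g$, any $V \in \Gamma(\mathscr{K})$ satisfies $\eta^g_Z V = 0$ for all $Z$, hence $\nabla^g_Z V = \D_Z V \in \Gamma(\mathscr{K})$. Thus $\mathscr{K}$ is also $\nabla^g$-parallel, and the de Rham decomposition applied to the locally irreducible metric $g$ forces $\mathscr{K} = 0$ or $\mathscr{K} = TM$, i.e.\ either $N^J$ is non-degenerate or $N^J = 0$. In the non-degenerate case both the uniqueness of $\D$ and the equality $\Aut^0(M,J) = \Aut^0(M,\D)$ are immediate consequences of Theorem \ref{unq}. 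The main point requiring care is the total skew-symmetry of $g(\eta^g_{\cdot}\,\cdot,\cdot)$ in the almost-K\"ahler setting; after that the two-connection bookkeeping is short.
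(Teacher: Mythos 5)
Parts (i) and (ii) are fine and coincide with the paper's argument. The problem is in (iii), in the step you yourself flag as "the main point requiring care": the claim that $(X,Y,Z)\mapsto g(\eta^g_XY,Z)$ is totally skew-symmetric in the almost-K\"ahler case is false, and the inference you use to get it is backwards. A trilinear form that is skew in its last two arguments and has vanishing cyclic sum has vanishing totally skew part (cyclic symmetrization acts as multiplication by $3$ on $\Lambda^3$), so the two facts you establish imply that $g(\eta^g_{\cdot}\cdot,\cdot)$ is totally skew only if it is zero. Total skewness of the intrinsic torsion is the nearly-K\"ahler condition $\eta^g_UU=0$, not the almost-K\"ahler one. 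Consequently the identity you deduce, $g(\tau(X,Y),Z)=2g(\eta^g_XY,Z)$, is the nearly-K\"ahler torsion formula and does not hold here; the correct almost-K\"ahler relation, which the paper records right after Proposition \ref{G2} and uses in its proof, is $g(\tau(X,Y),Z)=-g(\eta^g_ZX,Y)$. If both of your formulas held simultaneously one would get $\eta^g=0$, i.e. $(g,J)$ K\"ahler, which would trivialize exactly the dichotomy you are trying to prove; so as written the key algebraic step fails.

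The gap is local and easily repaired, because the cyclic identity you did derive from $d\omega=0$ and the quasi-K\"ahler symmetry, namely $g(\eta^g_XY,Z)+g(\eta^g_YZ,X)+g(\eta^g_ZX,Y)=0$, combined with $\eta^g_X\in\so(TM,g)$ and $\tau(X,Y)=\eta^g_XY-\eta^g_YX$, gives directly $g(\tau(X,Y),Z)=-g(\eta^g_ZX,Y)$, and hence the description $\mathscr{K}=\{v\in TM:\eta^g_{TM}v=0\}$ that your argument actually needs. From that point on your proof (the kernel $\mathscr{K}$ is $\D$-parallel, $\D$ and $\nabla^g$ agree on sections of $\mathscr{K}$ so it is $\nabla^g$-parallel, local irreducibility forces $\mathscr{K}=0$ or $\mathscr{K}=TM$, and Theorem \ref{unq} gives uniqueness of $\D$ and $\Aut^0(M,J)=\Aut^0(M,\D)$ in the non-degenerate case) is precisely the paper's proof of (iii).
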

\begin{proof}
(i) follows from Proposition \ref{des-qK}, (i).\\
(ii) follows from having $\D$ determined as in (i).\\
(iii) From $g(\tau(x,y),z)=-g(\eta_zx,y)$ we get that $\mathscr{K}=\{x \in TM : \eta_{TM}x=0\}$. It follows 
that $\D$ and $\nabla^g$ coincide on $\mathscr{K}$. As the latter is parallel with respect to $\D$ it thus must be parallel with respect to the Levi-Civita connection as well. Because $g$ is locally irreducible we either have $\mathscr{K}=0$ hence 
$N^J$ is non-degenerate or $\mathscr{K}=TM$, when $N^J=0$. The last part of the claim follows from Theorem \ref{unq}.
\end{proof}
Summarising, we record the various different descriptions of Riemannian 3-symmetric spaces possible, viz.;
\begin{itemize}
\item[(i)] The classical definition as given at the start of this paper. 
\item[(ii)] $(M, g, J) $ is a quasi-K\"ahler  manifold whose Chern connection $\D$ is Ambrose--Singer 
\item[(iii)] A Riemannian 3-symmetric space $(M,J,\D)$ in the sense of Definition \ref{R3s}.
\end{itemize}
We refer to \cite{bu} for further details explaining the equivalence between the first two definitions. The equivalence of (ii) and (iii) is contained in Proposition \ref{p-311}.
\begin{rem}
 In this work we choose to not use the classical definition (given first in the above list).  Attempts to obtain classifications by firstly obtaining a classification of transitive groups $G$ acting on a fixed Riemannian 3-symmetric space is too cumbersome, as is  evident from  Theorem \ref{t1.5}.  The second definition has the disadvantage of not separating the differential-geometric level from the Riemannian (i.e. a metric is bound up in the definition). As such we will    take the third definition as our definition of a (Riemannian) 3-symmetric space.    The advantage of this approach is that the metric is not intrinsic to the definition of a 3-symmetric space, and as such it works for every metric simultaneously. 
 \end{rem}
\subsection{Metric uniqueness and foliation aspects} \label{u-fol}
Next we wish to discuss, for a given Riemannian manifold $(M,g)$ uniqueness for almost complex structures 
$J$ compatible with $g$, such that $(g,J)$ is Riemannian $3$-symmetric in the sense that the Chern connection is Ambrose-Singer. 

Uniqueness with respect to the metric is not true on some Hermitian symmetric spaces, 
by work in \cite{MuNa}. In what follows we generalise the foliation set-up in that work in order to include 
spaces of type III and IV and also to explain the result therein in the present framework.
\begin{rem}
The motivation for addressing this question stems from the need to decide, later on in the paper, 
which are the classes of Riemannian $3$-symmetric spaces such that the inclusion  
\begin{equation} \label{incl-14}
\Aut(M,g,J) \subseteq \Aut(M,g)
\end{equation}
is strict. Indeed, if $\varphi \in \Aut(M,g)$ then $(g,\varphi_{\star}J)$ is again $3$-symmetric. In other words we have an injection 
$$ \Aut(M,g) \slash \Aut(M,g,J) \to \{J : (g,J) \ \mathrm{is \ Riemannian \ 3-symmetric }\}.
$$
For some symmetric spaces, \eqref{incl-14} is strict, see \cite{MuNa}; as mentioned in the introduction, this is the first example of how a fixed Riemannian 3-symmetric space can admit different presentations as a homogeneous Riemannian manifold and was the starting point of our investigations. Using very different techniques we will show in Theorem \ref{split-iso} that equality however holds, provided $g$ is not locally symmetric. 
\end{rem}
The following is a general recipe for constructing quasi-K\"ahler structures from K\"ahler manifolds with split tangent bundle in the sense made precised below. We recall that a sub-bundle $E \subseteq TM$ where $(M,I)$ is some complex manifold is called complex if $IE=E$.

Now assume that $TM=E \oplus F$ orthogonally with respect to $g$ where $IE=E$ and $IF=F$. Consider the projection $\bnabla$ of the Levi-Civita connection onto this splitting. Explicitly
$$ \bnabla=\nabla^g+\pi_E \circ \nabla^g \pi_E+\pi_F \circ \nabla^g \pi_F
$$ 
where $\pi_E:TM \to E$ and $\pi_F:TM \to F$ are the projection maps. Since $E$ and $F$ are complex and $g$-orthogonal, this connection is metric and Hermitian, $\bnabla g=0$ and $\bnabla I=0$. 
By analogy with the case of foliations we shall call this connection the Bott connection of the splitting.
\begin{propn} \label{fol-11}
Let $(M,g,I)$ be K\"ahler and equipped with a $g$-orthogonal and $I$-invariant splitting 
$$TM=E \oplus F$$
and consider the $g$-orthogonal almost complex structure $J:=-I_{\vert E}+I_{\vert F}$.
\begin{itemize}
\item[(i)] The following are equivalent 
\begin{itemize}
\item[(a)] $(g,J)$ is quasi-K\"ahler
\item[(b)] we have 
\begin{equation} \label{T-split}
(\li_{\Gamma E}I)\Gamma E \subseteq \Gamma E \ \mathrm{ and}  \ (\li_{\Gamma F}I)\Gamma F \subseteq \Gamma F
\end{equation} 
\end{itemize}
\item[(ii)] if $(g,J)$ is quasi-K\"ahler its Chern connection coincides with the Bott connection of the splitting $TM=E \oplus F$
\item[(iii)] $(g,J)$ is K\"ahler iff $E$ is parallel with respect to $\nabla^g$
\item[(iv)] $(g,J)$ is almost-K\"ahler iff both $E$ and $F$ are integrable.
\end{itemize}
\end{propn}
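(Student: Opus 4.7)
My plan is to reduce everything to properties of the single $(1,1)$-tensor-valued $1$-form $A:=\nabla^g\pi_E$. First I would record three basic facts: $A_X$ is self-adjoint (from $\pi_E^\ast=\pi_E$ and $\nabla^g g=0$), $A_X$ commutes with $I$ (from $I\pi_E=\pi_E I$ and $\nabla^g I=0$), and $A_X$ exchanges $E$ and $F$ (differentiating $\pi_E^2=\pi_E$ gives $A_X\pi_E+\pi_E A_X=A_X$). Setting $P:=\pi_E-\pi_F$ so that $J=I(\pi_F-\pi_E)=-IP$, these properties together with $P^2=1$ yield
\[
\nabla^g J=-2IA,\qquad \eta^g=\tfrac{1}{2}(\nabla^g J)J=-A\,P.
\]

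Parts (iii) and (ii) then come for free. For (iii), $\nabla^g J=0$ is equivalent to $A\equiv 0$, which is equivalent to $E$ (equivalently $F$) being $\nabla^g$-parallel. For (ii), a direct computation of $\bnabla-\nabla^g=\pi_E\nabla^g\pi_E+\pi_F\nabla^g\pi_F$ using the swap property of $A$ collapses to $-AP$; hence the Bott connection coincides with $\nabla^g+\eta^g$, which is the Chern connection when $(g,J)$ is quasi-K\"ahler.

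Part (i) is the substantive statement. Rewriting quasi-K\"ahler as $\eta^g_{JU}=\eta^g_U\circ J$ and using $[A_X,I]=0$ together with $J=-IP$ collapses the condition to the single bilinear identity $A_{JU}W=A_U(JW)$. Decomposing $U,W$ along $E\oplus F$ and using $J|_E=-I$, $J|_F=I$ splits this into four component equations indexed by which summand $U$ and $W$ belong to. The on-diagonal $(E,E)$ equation reads $\pi_F\nabla^g_{IX}Y=I\pi_F\nabla^g_X Y$ for $X,Y\in\Gamma E$, which by the easily verified identity
\[
(\li_X I)Y=I\nabla^g_Y X-\nabla^g_{IY}X
\]
(coming from torsion-freeness of $\nabla^g$ and $\nabla^g I=0$) is exactly $\pi_F(\li_Y I)X=0$, i.e.\ the first Lie derivative condition in (b); the $(F,F)$ equation matches the second. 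The off-diagonal $(E,F)$ and $(F,E)$ equations are then shown to follow from the on-diagonal ones by a metric-duality argument: testing the $(E,F)$ equation against a vector $Z\in\Gamma E$ and moving derivatives across $g$ using $\nabla^g g=0$, orthogonality of $E$ and $F$, and skew-adjointness of $I$, one obtains an equation which is identically the $(E,E)$ condition evaluated at $(X,Z)$. This dualization step is the main technical obstacle; everything else is bookkeeping.

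For (iv) I would write $\omega=\omega_F-\omega_E$ with $\omega_E:=\omega_I(\pi_E\cdot,\pi_E\cdot)$, use $d\omega_I=0$ to get $d\omega=-2\,d\omega_E$, and compute
\[
(\nabla^g_X\omega_E)(Y,Z)=\omega_I(A_X Y,\pi_E Z)+\omega_I(\pi_E Y,A_X Z).
\]
Because $A_X$ swaps $E$ and $F$ and $\omega_I(E,F)=0$, this vanishes whenever $Y,Z$ lie in the same summand, so $d\omega$ concentrates in the mixed $E\wedge E\wedge F$ and $E\wedge F\wedge F$ types. A short calculation using $\nabla^g g=0$ reduces the vanishing of the $E\wedge E\wedge F$ component to $\pi_F[X,Y]=0$ for $X,Y\in\Gamma E$, i.e.\ integrability of $E$; the $E\wedge F\wedge F$ component symmetrically gives integrability of $F$.
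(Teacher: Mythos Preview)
Your argument is correct and runs parallel to the paper's proof. Both hinge on the same tensor: the paper works with $\zeta:=\nabla^g-\bnabla$, you work with $A:=\nabla^g\pi_E$, and these are related by $\zeta=AP$ (with $P=\pi_E-\pi_F$). The key identity $\eta^g=-\zeta=-AP$ appears in the paper as equation \eqref{t-flip}, and from it parts (ii) and (iii) drop out immediately in both treatments. For (i), the paper splits the quasi-K\"ahler condition into on-diagonal equations \eqref{temp22} and off-diagonal equations \eqref{temp23} and asserts their equivalence via ``an orthogonality argument''; your metric-duality step, pairing the $(E,F)$ equation against $Z\in\Gamma E$ and using self-adjointness of $A_X$ together with skew-adjointness of $I$, is exactly that argument spelled out. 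The Lie-derivative identity $(\li_XI)Y=I\nabla^g_YX-\nabla^g_{IY}X$ you invoke is the paper's formula preceding \eqref{hol-s}.

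The only substantive difference is in (iv): the paper simply cites \cite{Apo4}, whereas you supply a self-contained computation via $d\omega=-2\,d\omega_E$ and the formula for $\nabla^g\omega_E$ in terms of $A$. Your reduction of the $E\wedge E\wedge F$ component to $g(I[X,Y],Z)$ for $X,Y\in\Gamma E$, $Z\in\Gamma F$ is clean and gives integrability of $E$ directly; this is a worthwhile addition over the paper's treatment.
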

\begin{proof}
First we prove a few general facts related to the splitting $TM=E \oplus F$. Below we indicate generic 
section of $E$ respectively $F$ with $E_1,E_2$ respectively $F_1,F_2$. The diference $\zeta:=\nabla^g-\bnabla$ belongs to $\Lambda^1 \otimes \so(TM)$ since 
both connections preserve $g$. From the definition of the Bott connection we have 
$$\zeta_{E}E \subseteq F, \ \zeta_{E}F \subseteq E, \ \zeta_{F}E \subseteq F, \zeta_{F}F \subseteq E $$
and in addition $g(\zeta_{E_1}F_1,F_2)=
-g(F_1,\zeta_{E_1}E_2)$ respectively $g(\zeta_{F_1}E_1,F_2)=-g(E_1,\zeta_{F_1}F_2)$. Since both $E$ and $F$ are complex and $\nabla^gI=0$ we get $\zeta_U \in \gl_I(TM)$ for all $U \in TM$. Straightforward computation based on the algebraic properties of $\zeta$ reveals 
that the intrinsic torsion tensor of $(g,J)$ is determined from  
\begin{equation} \label{t-flip}
\frac{1}{2}(\nabla^gJ)J=-\zeta.
\end{equation}
Because $(g,I)$ is K\"ahler we have $(\li_XI)Y=-\nabla^g_{IY}X+I\nabla^g_YX$ for all $X,Y \in \Gamma(TM)$. It follows that 
\begin{equation} \label{hol-s}
\begin{split}
&\pi_F (\li_{E_1}I)E_2 =-\zeta_{IE_2}E_1+I\zeta_{E_2}E_1, \ \left ( (\li_{E_1}I)F_1 \right)_F=-\zeta_{IF_1}E_1+I\zeta_{F_1}E_1\\
&\pi_E (\li_{F_1}I)E_1=-\zeta_{IE_1}F_1+I\zeta_{E_1}F_1, \ \left ( (\li_{F_1}I)F_2 \right)_E=-\zeta_{IF_2}F_1+I\zeta_{F_2}F_1
\end{split}
\end{equation}
(i) 
Start from the quasi-K\"ahler condition $\eta^g_{JU_1}JU_2=-\eta^g_{U_1}U_2$ with $U_1,U_2$ in $TM$. Using \eqref{t-flip} we see this is equivalent with the pairs of requirements
\begin{equation} \label{temp22}
\zeta_{IE_1}IE_2=-\zeta_{E_1}E_2, \ \zeta_{IF_1}IF_2=-\zeta_{F_1}F_2
\end{equation}
and 
\begin{equation} \label{temp23}
\zeta_{IE_1}IF_1=\zeta_{E_1}F_1, \ \zeta_{IF_1}IE_1=\zeta_{F_1}E_1.
\end{equation}
However, an orthogonality argument based on having $\zeta \in \Lambda^1 \otimes \mathfrak{u}(TM,g,J)$ and $g(E,F)=0$ as well as $IE=E, IF=F$ 
shows that \eqref{temp22} and \eqref{temp23} are equivalent. By \eqref{hol-s} we see that \eqref{temp22} is equivalent to the condition stated in (b). 
(ii)\&(iii) are direct consequences of \eqref{t-flip}.\\
(iv) is well-known, see \cite{Apo4}.
\end{proof}
Now we explain briefly how the results from \cite{MuNa} fit in the construction in Proposition \ref{fol-11}; in fact 
the latter applies to two important general classes of foliations on K\"ahler manifolds. Recall that 
on a complex manifold $(M,I)$ a complex sub-bundle $E \subseteq TM$ is called  
holomorphic if $(\li_XI)TM \subseteq E$ whenever $X \in \Gamma(E)$. 
\begin{propn} \label{tams-int}
Let $(M,g,I)$ be K\"ahler and let $\mathscr{V}$ be tangent to a foliation with
complex leaves. 
\begin{itemize}
\item[(i)] if $\scrV$ induces a totally geodesic Riemannian foliation we have
\begin{itemize}
\item[(a)] $\mathscr{H}:=\mathscr{V}^{\perp}$ is holomorphic and $(g,J:=-I_{\vert \mathscr{V}}+I_{\mathscr{\H}})$ is quasi-K\"ahler with parallel torsion
\item[(b)] the nearly-K\"ahler structure $(g_{nk}:=\frac{1}{2}g_{\scrV}+g_{\mathscr{H}},J)$ is $3$-symmetric iff $(g,J)$ is $3$-symmetric
\item[(c)] if $\nabla^gR^g=0$ then $(g,J)$ is Riemannian $3$-symmetric.
\end{itemize}
\item[(ii)] if $\scrV$ is a Riemannian foliation which is polar in the sense that $\scrH:=\scrV^{\perp}$ is integrable then $\scrV$ is holomorphic, $\scrH$ is totally geodesic and $(g,J)$ is almost-K\"ahler
\end{itemize} 
\end{propn}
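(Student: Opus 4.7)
The overall strategy is to reduce both parts to Proposition~\ref{fol-11} applied to the $g$-orthogonal, $I$-invariant splitting $TM = \scrV \oplus \scrH$, and then to extract the remaining rigidity statements from the foliation hypotheses via the O'Neill-type tensors of the splitting: the second fundamental form $h(V_1,V_2) := \pi_\scrH \nabla^g_{V_1} V_2$ of $\scrV$, the dual tensor $A_V(H) := \pi_\scrH \nabla^g_H V$, and the integrability tensor $B(H_1,H_2) := \pi_\scrV [H_1, H_2]$ of $\scrH$. Part (ii) is the most transparent and I would handle it first; part (i)(a) is the technical heart.

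For (ii), both $\scrV$ (a complex foliation by hypothesis) and $\scrH$ (the polar complement, automatically complex since $\scrH = \scrV^\perp$ and $I$ is $g$-orthogonal) are integrable. Hence the two bracket conditions in Proposition~\ref{fol-11}(i)(b) follow by direct expansion of $(\li_{X_1}I)X_2 = [X_1, IX_2] - I[X_1, X_2]$ with $X_1, X_2$ taken in either $\scrV$ or $\scrH$. Proposition~\ref{fol-11}(i) then yields quasi-K\"ahler, and \ref{fol-11}(iv) upgrades this to almost-K\"ahler. For the remaining claims, the Riemannian-foliation hypothesis translates into $A_V$ being skew-symmetric on $\scrH$, while the integrability of $\scrH$ (i.e.\ $B \equiv 0$) combined with the identity $g(\pi_\scrV \nabla^g_{H_1}H_2, V) = -g(A_V H_1, H_2)$ forces $A_V$ to be symmetric, hence $A_V = 0$. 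This simultaneously yields $\scrH$ totally geodesic (by the same duality) and $\scrV$ holomorphic, since $\pi_\scrH((\li_V I)H) = I A_V H - A_V(IH) = 0$ using $\nabla^g I = 0$.

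For (i)(a), the first bracket condition of Proposition~\ref{fol-11}(i)(b) is again automatic from $\scrV$ being complex and integrable. The second condition requires the totally-geodesic plus Riemannian-foliation hypotheses: expanding $\pi_\scrV((\li_{H_1}I)H_2)$ via $\nabla^g I = 0$ reduces the claim to a Hermitian compatibility relation between $A_V$ and the second fundamental form $h$, which is precisely what the hypotheses on $\scrV$ enforce. Holomorphicity of $\scrH$ follows by an analogous bracket-expansion argument. The delicate step is the parallel-torsion statement $\D\tau = 0$: by \ref{fol-11}(ii), $\D$ coincides with the Bott connection $\bnabla$, so $\tau$ is built algebraically from $h$, $A$ and $B$, and one needs these tensors to be $\bnabla$-parallel, which is an extended calculation using both foliation hypotheses together with $\nabla^g I = 0$. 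Part (b) reduces to identifying the Chern connections of $(g, J)$ and $(g_{nk}, J)$: both have torsion $-\tfrac{1}{4}N^J$, and a direct comparison of $\nabla^g$ and $\nabla^{g_{nk}}$ using the totally-geodesic plus Riemannian hypotheses shows that the $\gl_J$-valued correction term $\tfrac{1}{2}(\nabla^{g} J)J$ is insensitive to the rescaling of $g$ along the leaves; the two Chern connections thus agree, and the Ambrose--Singer property (a property of $\D$ alone) transfers. Part (c) follows from the comparison formula $R^\D = R^g + [\eta^g, \eta^g] - \eta^g_\tau$ in Section~\ref{el-qK}: since $\D\tau = \D\eta^g = 0$ by (a), it suffices to show $\D R^g = 0$, and writing $\D = \nabla^g + \tfrac{1}{2}(\nabla^g J)J$, this follows from $\nabla^g R^g = 0$ together with the Gray identity $[J, R^g] = 0$ of Proposition~\ref{G2}(iii) by a symmetry argument on the action of $\eta^g$ on $R^g$.

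The genuinely non-trivial step is the parallel-torsion claim in (i)(a): one must pin down the covariant derivatives of the O'Neill tensors of the splitting under the Bott connection, and it is here that the totally-geodesic and Riemannian hypotheses combine essentially with $\nabla^g I = 0$. Everything else reduces either to a direct computation or to an application of Proposition~\ref{fol-11} together with the curvature identities developed in Section~\ref{el-qK}.
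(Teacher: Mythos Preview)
Your strategy matches the paper's: both reduce everything to Proposition~\ref{fol-11} applied to the splitting $TM=\scrV\oplus\scrH$. The paper's own proof is in fact much terser than your sketch --- it verifies only the quasi-K\"ahler claim in (i)(a) directly (via the single observation $\zeta_{IX}IY=-\zeta_XY$, which follows from $\zeta_{\scrV}=0$, skewness of $\zeta$ on $\scrH$, and $[\zeta_U,I]=0$), then cites \cite{MuNa} for the parallel-torsion statement and for all of (ii); parts (i)(b) and (i)(c) are not argued at all. So where the paper defers, you are supplying content.

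Two remarks on your added arguments. For (i)(b), the cleanest route is not to compare $\nabla^g$ and $\nabla^{g_{nk}}$ directly but to observe that the Bott connection $\bnabla=\D$ preserves the splitting, hence preserves $P=\tfrac12 1_{\scrV}+1_{\scrH}$ and therefore $g_{nk}=g(P\cdot,\cdot)$; since $\D$ also preserves $J$ and has torsion $-\tfrac14 N^J$, uniqueness of the Chern connection gives $\D=\D_{nk}$, and the Ambrose--Singer condition transfers. For (i)(c), your outline is correct but the ``symmetry argument'' deserves one more line: differentiating $[J,R^g]=0$ with $\nabla^g R^g=0$ gives $[\nabla^g_UJ,R^g]=0$, hence $[J\eta_U,R^g]=0$; combined with $[J,R^g]=0$ this yields $[J,\eta_U\!\cdot\!R^g]=0$, while $R^g(J,J,J,J)=R^g$ forces $(\eta_U\!\cdot\!R^g)(J,J,J,J)=-\eta_U\!\cdot\!R^g$. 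A $\Lambda^2\otimes\Lambda^2$-tensor satisfying both constraints vanishes by type decomposition.

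One small imprecision: in your verification of the second \eqref{T-split} condition in (i)(a) you invoke a relation between $A_V$ and $h$, but $h\equiv 0$ by the totally-geodesic hypothesis; the condition actually reduces purely to the skewness of the O'Neill $A$-tensor on $\scrH$ (i.e.\ the Riemannian-foliation hypothesis) together with $[\zeta_U,I]=0$.
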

\begin{proof}
(i) In terms of the intrinsic torsion of the Bott connection the assumption on the foliation means that $\zeta_{\mathscr{V}}=0$ and $\zeta_XY+\zeta_YX=0$ for $X,Y \in \mathscr{H}$. It follows that $\zeta_{IX}IY=-\zeta_XY$ thus by using \eqref{hol-s} we see that the splitting $TM=\scrV\oplus \scrH$ satisfies the requirements in \eqref{T-split} hence $(g,J)$ is quasi-K\"ahler. According to Proposition \ref{fol-11} the Chern connection $\D$ of $(g,J)$ coincides with the Bott connection of the foliations. As $\zeta$ is parallel with respect to the Bott connection by \cite{MuNa} we get $D\eta^g=0$.\\
(ii) has been proved in \cite{MuNa}[Lemma 4.2].
\end{proof}

Instances as in (c) above have been described in \cite{MuNa}, even locally; for those the locally symmetric metric $g$ admits two distinct compatible $3$-symmetric structures namely $I$ and $J$. The set-up in (ii) occurs 
for the class of 
$3$-symmetric spaces of type $III$ which are studied in detail in section 
\ref{it-curv}.
\section{Automorphisms of order $3$} \label{o3}

\subsection{Riemannian 3-symmetric algebras} \label{o3R}We now define the main object of study in the paper, Riemannian 3-symmetric Lie algebras. We remind the reader of our convention that $\g$ always refers to such algebras. Suppose there exists $\sigma \in \Aut(\g)$ such that $\sigma^3=1$.We assume 
that $\sigma \neq 1_{\g}$ in what follows. Then $\g$ splits as 
\begin{equation} \label{can-red}
\g=\h \oplus \bbV
\end{equation}
where $\h:=\ker(\sigma-1)$ and $\bbV:=\ker(\sigma^2+\sigma+1)$. In particular 
$\h$ is a subalgebra and $\bbV$ admits a canonical almost complex structure $J$ determined from 
\begin{equation} \label{J-cons} 
\sigma_{\vert \bbV}=-\frac{1}{2}1_{\bbV}+\frac{\sqrt{3}}{2}J.
\end{equation}
We will refer to $\h$ 
as the isotropy Lie algebra and to \eqref{can-red} as the canonical reductive decomposition of $\g$. Using that $\sigma$ is a Lie algebra automorphism it is easy to see that $[\h,\bbV] \subseteq \bbV$ and that $J$ is $\ad_{\h}$-invariant. This yields a linear representation 
\begin{equation*}
\iota:\h \to \mathfrak{gl}_J(\bbV), \iota(F):=(\ad_F)_{\vert \bbV}
\end{equation*}
to be referred to as the isotropy representation. Let $\BK$ be the Killing form of $\g$. Since $\sigma$ is an automorphism 
$\sigma^{\star}\BK=\BK$ thus 
\begin{equation} \label{B-inv}
\BK(\h,\bbV)=0, \ \BK(J \cdot, J\cdot)=\BK \ \mbox{on} \ \bbV
\end{equation}
by using the structure of $\sigma$ outlined above. 
 \begin{defn} \label{3sL}
A Riemannian $3$-symmetric Lie algebra $(\g,\sigma)$ is a Lie algebra $\g$ such that 
\begin{itemize}
\item[(i)] $\sigma \in \Aut(\g)$ has order $3$
\item[(ii)] there exists a compact connected Lie group $\mathrm{H}$ with Lie algebra $\h$ together with an effective representation 
$\pi:\mathrm{H} \to \GL_{J}(\bbV)$ such that $(d\pi)_e=\iota$.
\end{itemize}
\end{defn}
We will refer to property (ii) above simply by saying that the isotropy representation $(\h, \iota)$ is {\it{compactly embedded}}; also a compact group $\mathrm{H}$ as above will be said to {\it{integrate}} the isotropy representation $\iota$.

A few well known consequences, purely at the Lie algebraic level, of having $(\h,\iota)$ compactly embedded are summarised below. The first is to have the isotropy representation {\it{metrisable}} in the sense of admitting an isotropy invariant metric.

\begin{defn} \label{metr}
An isotropy invariant metric on $(\g, \sigma)$, which by an abuse of notation is written  $g$,   is an $\Ad_{\mathrm{H}}$-invariant inner product on $\bbV$ such that 
$g(J \cdot, J \cdot)=g$.
\end{defn}

We will frequently write $(\g, \sigma, g)$ to indicate a Riemannian 3-symmetric Lie algebra endowed with an isotropy invariant metric.

Part (ii) in Definition \ref{3sL} provides an intrinsic way to pin down metrisability for the isotropy representation $(\h,\iota)$ as the following simple observation shows.
\begin{propn} \label{met-c}
Let $(\g,\sigma)$ be Riemannian $3$-symmetric. The following hold  
\begin{itemize}
\item[(i)] the isotropy representation of $\h$ is faithful and metrisable
\item[(ii)] the restriction $\BK_{\vert \h}:\h \times \h \to \mathbb{R}$ is negative definite.
\end{itemize}
\end{propn}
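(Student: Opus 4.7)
The plan is to prove (i) first, and then use it in (ii).

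For part (i), faithfulness comes directly from the hypothesis in Definition \ref{3sL}(ii). Since $\pi:\mathrm{H} \to \GL_J(\bbV)$ is effective and $\mathrm{H}$ is connected, $\ker(\pi)$ is trivial, so $\ker(\iota) = \ker((d\pi)_e) = \mathrm{Lie}(\ker\pi) = 0$. For metrisability, I would start with any inner product $g_0$ on $\bbV$ and average over the compact group $\mathrm{H}$ via $g_1(v,w) := \int_{\mathrm{H}} g_0(\pi(h)v,\pi(h)w)\,d\mu(h)$, where $\mu$ is Haar measure; this gives an $\mathrm{H}$-invariant inner product. Then, because $\pi(\mathrm{H}) \subseteq \GL_J(\bbV)$, the complex structure $J$ commutes with $\pi(\mathrm{H})$, so the symmetrisation $g(v,w) := \tfrac{1}{2}(g_1(v,w)+g_1(Jv,Jw))$ remains $\mathrm{H}$-invariant while also satisfying $g(J\cdot,J\cdot) = g$, as required.

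For part (ii), the key is to decompose the Killing form using the $\ad_{\h}$-invariant splitting $\g = \h \oplus \bbV$ from \eqref{can-red}. For $F \in \h$, the operator $\ad_F$ preserves both $\h$ and $\bbV$ (since $[\h,\h] \subseteq \h$ and $[\h,\bbV] \subseteq \bbV$), so
\begin{equation*}
\BK(F,F) \;=\; \Tr_{\g}(\ad_F^2) \;=\; \Tr_{\h}((\ad_F|_{\h})^2) + \Tr_{\bbV}(\iota(F)^2) \;=\; \BK_{\h}(F,F) + \Tr_{\bbV}(\iota(F)^2),
\end{equation*}
where $\BK_{\h}$ denotes the Killing form of $\h$ viewed as a Lie algebra in its own right.

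The compactness of $\mathrm{H}$ yields two separate negativity statements. First, averaging over $\mathrm{H}$ produces an $\Ad_{\mathrm{H}}$-invariant inner product on $\h$, with respect to which every $\ad_F|_{\h}$ is skew-symmetric; hence $\BK_{\h}(F,F) \leq 0$. Second, by part (i) we have an $\mathrm{H}$-invariant metric $g$ on $\bbV$, and $\iota(F) \in \so(\bbV,g)$ is skew-symmetric, so $\iota(F)^2$ is negative semi-definite and $\Tr_{\bbV}(\iota(F)^2) \leq 0$. Thus $\BK(F,F) \leq 0$.

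Finally I would check definiteness: if $\BK(F,F) = 0$ then both terms must vanish, and in particular $\Tr_{\bbV}(\iota(F)^2) = 0$ forces $\iota(F) = 0$. Faithfulness from part (i) then gives $F = 0$. No step here is a serious obstacle; the main point requiring care is to ensure that the averaging in (i) produces a metric compatible with $J$, which is why the two-step averaging (first over $\mathrm{H}$, then symmetrising with $J$) is needed.
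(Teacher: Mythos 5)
Your proposal is correct and follows essentially the same route as the paper: faithfulness of $\iota$ from effectiveness of $\pi$, metrisability by averaging over the compact group $\mathrm{H}$, and then the decomposition $\BK_{\vert \h}=\BK_{\h}+t_{\iota}$ with both summands nonpositive and $t_{\iota}$ negative definite by skew-symmetry of $\iota(\h)\subseteq\so(\bbV,g)$ plus faithfulness. The only cosmetic difference is that you establish $\BK_{\h}\leq 0$ by averaging to get an $\Ad_{\mathrm{H}}$-invariant product on $\h$, whereas the paper cites the embedding $\h\subseteq\so(\bbV)$; these are equivalent standard arguments.
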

\begin{proof}
(i) That $\iota$ is faithful follows by exponentiating and using that $\pi$ is effective. The second part of the claim is proved in the standard way by averaging any $J$-invariant innner product on $\bbV$ over the compact group $\mathrm{H}$.\\ 
(ii) We prove in fact a little more, namely that the claim follows from having $(\h,\iota)$ metrisable. This is a standard fact, see e.g. \cite{KN}. We indicate with $\BK_{\h}$ the Killing form of the Lie algebra $\h$. Then $\BK(F,G)=\BK_{\h}(F,G)+t_{\iota}(F,G)$ whenever 
$F,G \in \h$. 
 Since  $\h$ is identified with 
a subalgebra of $\mathfrak{so}(\mathbb{V})$ we have $\BK_{\h}(F,F) \leq 0$ for all 
$F \in \h$; equality holds if and only if $F \in \mathfrak{z}(\h)$, where the latter indicates the centre of $\h$. At the same time the trace form $t_{\iota}$ is negative definite since 
$\iota(\h) \subseteq \mathfrak{so}(\bbV,g)$ for some $g \in \mathrm{Met}_{\mathrm{H}}\bbV$ and the isotropy representation is faithful. The result follows. 
\end{proof}
 
\begin{defn} \label{met-not1}
We denote by $\me_{\mathrm{H}}(\bbV)$ the space of isotropy invariant metrics on $\bbV$. 
\end{defn}

At this stage we comment that the  main idea in this work is to determine the structure of 
$(\g,\sigma,\me_{H}(\bbV))$ simultaneously. In particular, fixing one metric will allow us to compute $\underline{\g}$ and from here determine the product structure of $M$. Knowing $(\underline{\g}, \sigma, g)$ will then be shown, in many cases, to determine all possible presentations of $M$ as a Riemannian 3-symmetric space.  The fact that  $\BK_{\vert \h}$ is non-degenerate will play a key role in the computation of the radical of $\g$(see Theorem \ref{rad11} in section \ref{rad}); this fact also allows to construct in a canonical way complements for ideals in $\h$.

\begin{lemma}\label{pass2local}  For $(M,g, \D,J)$ a Riemannian 3-symmetric space, choose $p\in M$ and let $T_pM = \bbV$. Letting $\uh = \mathrm{span}\{ R^{\D}(v_1,v_2) : v_i\in \bbV\}$ and $\ug = \uh \oplus \bbV$ define  $\sigma $ by
$$\sigma_{\vert \uh}=1_{\h}, \ \sigma_{\vert \bbV}=z_01_{\H}+z_1J$$
 where $(z_0 + \sqrt{-1} z_1)^3=1$. 
Then $(\ug, \sigma )$ is a Riemannian 3-symmetric Lie algebra.
\end{lemma}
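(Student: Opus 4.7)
The plan is to verify each of the three requirements of Definition \ref{3sL}: that $\ug$ is a Lie algebra, that $\sigma\in\Aut(\ug)$ has order three and is nontrivial, and that the resulting isotropy representation $\iota$ is compactly embedded.

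I equip $\ug=\uh\oplus\bbV$ with the Ambrose-Singer transvection bracket: $[h_1,h_2]$ is the commutator in $\gl(\bbV)$, $[h,v]:=hv$, and $[v_1,v_2]_{\ug}:=-\tau^{\D}(v_1,v_2)+R^{\D}(v_1,v_2)$. Since $\D R^{\D}=0$, the subspace $\uh$ coincides with the holonomy Lie algebra of $\D$ at $p$; in particular $\uh$ is closed under commutators and the Jacobi identity on $\ug$ reduces to the first and second Bianchi identities for $\D$, together with the fact that $\D$-parallel tensors are annihilated by the infinitesimal holonomy action. Viewing $\bbV$ as a complex vector space via $J$, $\sigma|_{\bbV}$ is scalar multiplication by $z:=z_0+\sqrt{-1}z_1$, a non-trivial cube root of unity; this at once yields $\sigma^3=1$ and $\sigma\neq 1$.

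For $\sigma\in\Aut(\ug)$, the bracket $[\uh,\uh]$ is preserved automatically since $\sigma$ fixes $\uh$ pointwise; the condition on $[\uh,\bbV]$ amounts to $[h,J]=0$ for $h\in\uh$, which is immediate from $\D J=0$ applied to $R^{\D}$. The nontrivial content is in the bracket $[v_1,v_2]_{\ug}$, which splits into a $\bbV$-component $\sigma\tau^{\D}(v_1,v_2)=\tau^{\D}(\sigma v_1,\sigma v_2)$ and a $\uh$-component $R^{\D}(v_1,v_2)=R^{\D}(\sigma v_1,\sigma v_2)$. Expanding $\sigma=z_0+z_1J$ and using the Hermitian symmetries $\tau^{\D}(JA,B)=\tau^{\D}(A,JB)=-J\tau^{\D}(A,B)$, which follow from $\tau^{\D}(v,\cdot)\in\gl_J^{\perp}(\bbV)$, the $\bbV$-condition reduces to the scalar identities $z_0^2-z_1^2=z_0$ and $-2z_0z_1=z_1$, precisely the defining equations of the non-trivial cube roots of unity.

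The principal obstacle is the $\uh$-component, equivalent to $R^{\D}(JX,JY)=R^{\D}(X,Y)$ together with $R^{\D}(JX,Y)+R^{\D}(X,JY)=0$. My strategy is to combine the comparison formula $R^{\D}(X,Y)=R^g(X,Y)+[\eta_X,\eta_Y]-\eta_{\tau(X,Y)}$ from Section \ref{el-qK} with the curvature identities of Proposition \ref{G2}. Using $\eta_{JA}=-J\eta_A$ and $\tau(JA,JB)=-\tau(A,B)$ one gets $[\eta_{JX},\eta_{JY}]=[\eta_X,\eta_Y]$ and $\eta_{\tau(JX,JY)}=-\eta_{\tau(X,Y)}$, so the first identity reduces to $R^g(JX,JY)-R^g(X,Y)=-2\eta_{\tau(X,Y)}$, which falls out of the proof of Proposition \ref{G2}(i) once the pair-symmetry established in Proposition \ref{G2}(ii) is invoked. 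The second identity reduces to $[J,R^g(X,Y)]=2J\eta_{\tau(X,Y)}$, read directly off equation \eqref{star}. Finally, compact embeddedness of $(\uh,\iota)$ is built into Definition \ref{R3s}: $\Hol(\D)\subseteq\GL_J(\bbV)$ is compact and, by Ambrose-Singer, has Lie algebra $\uh$; hence the identity component $H:=\Hol(\D)^0$ is a compact Lie group integrating $\iota:\uh\hookrightarrow\gl_J(\bbV)$.
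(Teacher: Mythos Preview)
Your argument is correct and in fact considerably more explicit than the paper's own proof, which is very terse: the paper simply writes down the bracket \eqref{bra-inf}, invokes the Ambrose--Singer theorem to identify $\uh$ with $\hol(\D)$, and then uses compactness of $\Hol(\D)$ to conclude. In particular the paper does not spell out the verification that $\sigma$ is a Lie algebra automorphism; you do, via the comparison formula and Proposition~\ref{G2}. Two small remarks are worth making.

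First, your sign convention $[v_1,v_2]_{\ug}=-\tau^{\D}(v_1,v_2)+R^{\D}(v_1,v_2)$ differs from the paper's \eqref{bra-inf}, which uses $+\tau^{\D}$. Both choices yield Lie algebras (Jacobi reduces to the same Bianchi identities either way), so this is harmless, but you should align with the paper's convention when citing \eqref{bra-inf} later.

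Second, there is a minor imprecision in your treatment of the $\uh$-component. The identity $R^{\D}(JX,Y)+R^{\D}(X,JY)=0$ reduces, via the comparison formula, to $R^g(JX,Y)+R^g(X,JY)=2J\eta_{\tau(X,Y)}$, which concerns the first two slots, whereas equation \eqref{star} gives $[J,R^g(X,Y)]=2J\eta_{\tau(X,Y)}$, a statement about the last two. The two are related by pair symmetry of $R^g$ together with Proposition~\ref{G2}(ii), so the argument still goes through, but the reduction is not as direct as you state. More to the point, the second identity is actually \emph{redundant}: once you have $R^{\D}(JX,JY)=R^{\D}(X,Y)$, substituting $Y\mapsto JY$ immediately gives $R^{\D}(JX,-Y)=R^{\D}(X,JY)$, i.e.\ the second identity. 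So your careful treatment of the first identity already suffices, and you could drop the second paragraph of that discussion entirely.
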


\proof 
The first step is to build the infinitesimal model of $(M,\D)$.  Define the Lie bracket on $\ug$ via
\begin{equation} \label{bra-inf}
\begin{split}
&[F,G]=F \circ G-G \circ F, \ \ [F,v_1]=Fv_1\\
&[v_1,v_2]=\tau^{\D}(v_1,v_2)+R^{\D}(v_1,v_2).
\end{split}
\end{equation}
where $F,G\in \uh$ and $v_i \in \bbV$.
The claim is that the  infinitesimal model  $\ug = \uh \oplus \bbV$ is 3-symmetric.  This follows from the Ambrose--Singer Theorem since
$$ \uh = \text{span} \lbrace R^{\D}(X,Y) : X,Y\in \bbV\rbrace  = \hol(\D).$$
Yet $\Hol(\D)$ is compact since $D$ is metrisable. This implies $(\g, \sigma)$ is a Riemannian 3-symmetric algebra as required, since $\uh$ clearly acts via the isotropy representation on $\bbV$.  \endproof

This  Lemma allows us pass from the differential-geometric level to an algebraic model. The reader should observe that, in general, there are many more 3-symmetric Lie algebras associated to one 3-symmetric space aside from  $\ug$: we will later undertake a  study of this issue. The salient point here is that starting with a 3-symmetric space this Proposition produces a 3-symmetric Lie algebra $\ug$ and this is essential for the proof of Theorem \ref{t1}.

To have complete equivalence in our definitions, it is necessary to go in the converse direction and, starting from an abstract Riemannian 3-symmetric algebra $(\g, \sigma, g)$, construct a Riemannian 3--symmetric space. It is obvious how to construct $J$ from $\sigma$, but care is needed to construct $\D$ and associate a Riemannian homogeneous space $M$ to $(\g, \sigma, g)$.    This will only be fully achieved after a long march through proving the main splitting Theorem of the paper at Lie algebra level (Theorem \ref{last-split} and then computing all possible 3-symmetric Lie algebras in Theorem \ref{thm-gen}).  However the special case when $\ug$ is semisimple is presented in the next section as it needed for the proof of the main splitting theorem (Theorem \ref{last-split}). 

\subsection{Properties of  $(\g, \sigma, g)$} \label{el-pp}

In this subsection we shall take a somewhat different starting point, and begin with $(\g, \sigma, g)$ a Riemannian 3-symmetric algebra.  Let  the skew-symmetric tensors $\tau : \bbV \times \bbV \to \bbV$ respectively 
$\mathfrak{R}:\bbV \times \bbV \to \h$ be defined by 
\begin{equation*}
\tau(v_1,v_2)=[v_1,v_2]_{\bbV} \ \mbox{respectively} \ \mathfrak{R}(v,w):=[v,w]_{\h}.
\end{equation*}
We record the following elementary (and well known) fact which essentially stems from the fact that $\sigma$ is a Lie algebra automorphism. See e.g. \cite[Propn.5.5]{gr1} for proof.
\begin{lemma} \label{basic1}
We have 
\begin{equation} \label{TJ1}
\tau(Jv,Jw)=-\tau(v,w), \ \tau(v,Jw)=-J\tau(v,w),
\end{equation}
as well as 
\begin{equation} \label{RJ1}
\mathfrak{R}(Jv,Jw)=\mathfrak{R}(v,w)
\end{equation}
whenever $v,w \in \bbV$. 
\end{lemma}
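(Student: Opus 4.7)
The plan is to exploit directly that $\sigma$ is a Lie algebra automorphism, which gives
$$\sigma[v,w] = [\sigma v,\sigma w] \quad \text{for all } v,w \in \bbV,$$
and then project both sides onto the canonical reductive decomposition $\g = \h \oplus \bbV$. The cleanest route I would take is to complexify: extend $\sigma$ and $J$ $\bbC$-linearly to $\g_{\bbC}$ and split
$$\bbV_{\bbC} = \bbV^{+} \oplus \bbV^{-},$$
where $\bbV^{\pm}$ is the $(\pm i)$-eigenspace of $J$. From the formula $\sigma_{|\bbV} = -\tfrac{1}{2}\,1_{\bbV} + \tfrac{\sqrt{3}}{2}J$, these are simultaneously the $\omega$- and $\bar{\omega}$-eigenspaces of $\sigma$, where $\omega = e^{2\pi i/3}$.

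The first key step is a pure eigenvalue bookkeeping: since $\sigma$ is an automorphism, the bracket $[\bbV^{+},\bbV^{+}]$ sits in the $\omega^{2} = \bar{\omega}$-eigenspace of $\sigma$, hence inside $\bbV^{-}$; symmetrically $[\bbV^{-},\bbV^{-}] \subseteq \bbV^{+}$; and $[\bbV^{+},\bbV^{-}]$ lies in the $1$-eigenspace, hence inside $\h_{\bbC}$. In particular, the $\bbV$-component $\tau$ vanishes on $\bbV^{+}\times\bbV^{-}$, while the $\h$-component $\mathfrak{R}$ vanishes on each of $\bbV^{\pm}\times\bbV^{\pm}$.

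The second step is to decompose $v = v^{+} + v^{-}$, $w = w^{+} + w^{-}$ with $v^{\pm},w^{\pm} \in \bbV^{\pm}$ and to expand $[v,w]$ and $[v,Jw]$ using these vanishings. For \eqref{TJ1}, one writes $Jw = i w^{+} - i w^{-}$ and observes that only the summands $[v^{+},w^{+}] \in \bbV^{-}$ and $[v^{-},w^{-}] \in \bbV^{+}$ survive the $\bbV$-projection; computing $J\tau(v,w)$ on those two pieces (where $J$ acts by $-i$ and $+i$ respectively) gives $\tau(v,Jw) = -J\tau(v,w)$ immediately, and the second identity of \eqref{TJ1} follows by applying this twice (or by noting both sides pick up a factor $i^{2}=-1$). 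For \eqref{RJ1}, the only surviving contributions to $\mathfrak{R}(v,w)$ come from $[v^{+},w^{-}]$ and $[v^{-},w^{+}]$, and on those the two factors of $J$ contribute $(+i)(-i) = 1$, giving $\mathfrak{R}(Jv,Jw) = \mathfrak{R}(v,w)$.

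I do not anticipate a genuine obstacle. The only mild care required is that $\bbV^{-} = \overline{\bbV^{+}}$ under complex conjugation, so that the identities derived over $\bbC$ restrict correctly to the real structure $\bbV \subset \bbV_{\bbC}$. A purely real alternative is to plug $\sigma_{|\bbV} = -\tfrac12 1 + \tfrac{\sqrt{3}}{2}J$ directly into $\sigma[v,w] = [\sigma v,\sigma w]$, separate $\h$- and $\bbV$-components, and solve the resulting two linear relations for $\tau(Jv,w)+\tau(v,Jw)$, $\tau(Jv,Jw)$ and analogously for $\mathfrak{R}$; this is strictly equivalent but less transparent than the complex eigenspace argument.
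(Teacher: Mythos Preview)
Your proposal is correct and follows exactly the approach the paper alludes to: the paper does not give its own argument but simply states that the lemma ``essentially stems from the fact that $\sigma$ is a Lie algebra automorphism'' and cites \cite{gr1}, Proposition~5.5. Your complexified eigenspace argument is the standard way to make this precise, and the purely real alternative you mention at the end is equivalent; either would serve as the omitted proof.
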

The skew-symmetric map $R^{\D}:\bbV \times \bbV \to \mathfrak{gl}_J(\bbV)$
$$ R^{\D}(v_1,v_2)v_3:=[\mathfrak{R}(v_1,v_2),v_3]$$
satisfies the algebraic Bianchi identity 
\begin{equation} \label{B1}
\begin{split}
\mathfrak{S}_{abc}R^{\D}(v_a,v_b)v_c=\mathfrak{S}_{abc}\tau(v_a,\tau(v_b,v_c))
\end{split}
\end{equation}
with cyclic permutation on the indices $abc$. This is a straightforward consequence of the Jacobi identity in $\g$.
\begin{rem}
Again from the Jacobi identity in $\g$ it follows that $R$ satisfies the differential Bianchi identity 
\begin{equation} \label{B2}
R^{\D}(\tau(v_1,v_2),v_3)+R^{\D}(\tau(v_2,v_3),v_1)+R^{\D}(\tau(v_3,v_1),v_2)=0.
\end{equation}
This fact will be however little used. We also record that the infinitesimal model for $3$-symmetric spaces obtained above is a substantial generalisation of nearly-K\"ahler infinitesimal models \cite{Na-NK,Na-ch}.
\end{rem}
The tensor 
$R^{\D}$ takes values in the subalgebra $\tilde{\h}:=\iota(\h) \subseteq \mathfrak{gl}_{J}(\bbV)$; that is 
\begin{equation*}
R^{\D}(v_1,v_2) \in \tilde{\h}
\end{equation*}
for all $v_1,v_2 \in \bbV$. Note $\tilde{\h}$ and $\h$ are isomorphic Lie algebras, 
since the isotropy representation is faithful. Using the Jacobi identity it is easy to see that $\tilde{\h}$ preserves both of the tensors $R^{\D}$ and $\tau$, in other words 
\begin{equation*}
\widetilde{\h} \subseteq \mathfrak{stab}(R^{\D}) \cap \mathfrak{stab}(\tau^{\D}) = : \overline{\h}.
\end{equation*}

Furthermore, define 
\begin{equation} \label{tans-v}
\mathfrak{t}(\bbV):=\bbV+[\bbV, \bbV] \subseteq \g.
\end{equation}
 From the definitions 
this is an ideal in $\g$; it will be referred to as the {\it{transvection}} algebra 
of $\g$. Since $\sigma \in \Aut(\g)$ preserves $\bbV$ we also have $\sigma(\mathfrak{t}(\bbV))=\mathfrak{t}(\bbV)$. 
\begin{lemma} \label{basic2}
Let $\ug:=\uh \oplus \bbV$ where 
\begin{equation*}
\uh=\spa \{\mathfrak{R}(v_1,v_2):v_1,v_2 \in \bbV\}.
\end{equation*}
We have $\mathfrak{t}(\bbV)=\ug$.
\end{lemma}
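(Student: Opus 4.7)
The plan is to simply unravel the definitions and exploit the reductive splitting $\g=\h\oplus\bbV$. Since $\sigma$ is a Lie algebra automorphism, we have $[\bbV,\bbV]\subseteq\g=\h\oplus\bbV$, and projecting onto the two summands yields the decomposition
\[
[v_1,v_2]=\mathfrak{R}(v_1,v_2)+\tau(v_1,v_2),\qquad v_1,v_2\in\bbV,
\]
by the very definition of $\mathfrak{R}$ and $\tau$ given at the start of this subsection. Consequently $[\bbV,\bbV]\subseteq \uh\oplus\bbV=\ug$, and therefore $\mathfrak{t}(\bbV)=\bbV+[\bbV,\bbV]\subseteq\ug$.

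For the reverse inclusion, $\bbV$ is trivially contained in $\mathfrak{t}(\bbV)$. For $\uh$, the identity above rewrites as
\[
\mathfrak{R}(v_1,v_2)=[v_1,v_2]-\tau(v_1,v_2)\in [\bbV,\bbV]+\bbV=\mathfrak{t}(\bbV),
\]
so every generator of $\uh$ lies in $\mathfrak{t}(\bbV)$, whence $\uh\subseteq\mathfrak{t}(\bbV)$. Combining these gives $\ug=\uh+\bbV\subseteq\mathfrak{t}(\bbV)$, and the two inclusions together yield the claim $\mathfrak{t}(\bbV)=\ug$.

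I do not anticipate any obstacle: the statement is essentially a bookkeeping exercise about the components of brackets in the reductive decomposition. The only tacit point worth recording is that $\uh\cap\bbV=0$ (since $\uh\subseteq\h$ and $\bbV$ is complementary to $\h$), so the sum $\uh\oplus\bbV$ defining $\ug$ is genuinely a direct sum of subspaces, matching the conclusion.
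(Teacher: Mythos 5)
Your proposal is correct and follows essentially the same argument as the paper: decompose $[v_1,v_2]=\mathfrak{R}(v_1,v_2)+\tau(v_1,v_2)$ to get $\mathfrak{t}(\bbV)\subseteq\ug$, then rewrite $\mathfrak{R}(v_1,v_2)=[v_1,v_2]-\tau(v_1,v_2)$ for the reverse inclusion. The extra remark that $\uh\cap\bbV=0$ is a harmless addition.
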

\begin{proof}
Pick $v_1,v_2 \in \bbV$. Then $[v_1,v_2]=\mathcal{R}(v_1,v_2)+\tau(v_1,v_2)$ belongs to $\uh \oplus \bbV$ thus $[\bbV,\bbV]$ (and hence $\mathfrak{t}(\bbV)$) is contained in $\ug$. Conversely, $\mathcal{R}(v_1,v_2)=[v_1,v_2]-\tau(v_1,v_2)$ belongs to $[\bbV,\bbV]+\bbV$ showing that $\uh$(and hence $\ug$) is contained in 
$\mathfrak{t}(\bbV)$.
\end{proof}
In particular $\uh$ is an ideal in $\h$; it will be referred to as the {\it{holonomy}} 
algebra of $(\g,\sigma)$. As the notation suggests, this algebra $\ug$ agrees with 
the transvection algebra $\ug$ we constructed in the preceding subsection. 
Moreover the algebraic tensors $R^{\D}$ and $\tau$ agree with their geometric counterparts constructed from a 3-symmetric space $(M,J,\D)$.

We now move on to discuss Riemannian 3-symmetric algebras endowed with an isotropy invariant metric. Choose $g \in \me_{H}(\bbV)$. Then $R^{\D}(v_1,v_2) \in \so(\bbV,g)$ for all $v_1,v_2 \in \bbV$. The metric $g$ can be used to define further objects of geometric 
relevance as follows. The intrinsic torsion tensor $\eta^g, v\in \bbV \mapsto \eta^g_v$ 
is uniquely determined from the requirements
$$ \eta^g:\bbV \to \mathfrak{so}(\bbV,g), \ \eta^g_vw-\eta^g_wv=\tau(v,w).
$$
By Lemma \ref{basic1} we have 
\begin{equation} \label{iti}
\eta^g_{Jv}=\eta^g_vJ=-J\eta^g_v.
\end{equation}
The following chain of inclusions has special geometric relevance. 
\begin{equation} \label{sandwich}
\uh \subseteq \h \subseteq {}^g\oh:=\mathfrak{stab}_{\mathfrak{so}(\bbV,g)}(R^{\D}) \cap \mathfrak{stab}_{\so(\bbV,g)}(\tau).
\end{equation}

Note that, from the definitions, $\uh$ is an {\it{ideal}} in both $\h$ and ${}^g\oh$.
The inclusions in \eqref{sandwich} make precise the degree of freedom in choosing the isotropy algebra 
$\h$. We also consider the Nomizu algebra  
\begin{equation} \label{Nom}
{}^g\og={}^g\oh \oplus \bbV
\end{equation}
with Lie bracket given by \eqref{bra-inf}. Clearly $\ug$ is an ideal in ${}^g\og$.
\begin{rem}
The notation $\uh, {}^g\oh$ is a bit misleading since these algebras do not depend on $\h$. Also note that $\uh$ does not depend on the metric.
\end{rem}
\subsection{Some important examples} \label{sym-eg}
Here we record that basic examples of Riemannian $3$-symmetric algebras are provided by the following well known class of Lie algebras.
\begin{defn} \label{hsd}
A semisimple Lie algebra $\mL$ is called Hermitian symmetric provided that 
\begin{itemize}
\item[(i)] it admits a reductive decomposition $\mL=\Kk \oplus \mathcal{H}$ where 
$$[\Kk,\Kk] \subseteq \Kk, \ [\Kk,\mathcal{H}] \subseteq \mathcal{H}, \ [\mathcal{H},\mathcal{H}] \subseteq \Kk$$
\item[(ii)] the isotropy representation $\Kk \to \gl_{J_{\mathcal{H}}}(\mathcal{H})$ is compactly embedded with corresponding isotropy group $K$
\item[(iii)] there exists an $\ad_{\Kk}$-invariant linear complex structure $J_{\mathcal{H}}$ on $\mathcal{H}$ with the property that 
\begin{equation} \label{jinv}
[J_{\mathcal{H}}x,J_{\mathcal{H}}y]=[x,y] \ \mathrm{ whenever} \ x,y \in \mathcal{H}.
\end{equation}
\end{itemize}
\end{defn}
Recall the following well known
\begin{fct}Let $\mL=\Kk \oplus \mathcal{H}$ be an Hermitian symmetric Lie algebra. We have 
\begin{equation} \label{z-HSL}
(\ad_z)_{\vert \mathcal{H}}=J_{\H}
\end{equation}
for some uniquely determined $z\in \z(\Kk)$. When $\mL$ is simple we moreover have $\z(\Kk)=\mathbb{R}z $.
\end{fct}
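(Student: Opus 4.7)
My plan is to exhibit $z$ by extending the complex structure on $\mathcal{H}$ to an inner derivation of $\mL$. Set $\tilde{J} \colon \mL \to \mL$ by $\tilde{J}_{\vert \Kk}=0$ and $\tilde{J}_{\vert \mathcal{H}}=J_{\mathcal{H}}$. First I would verify $\tilde{J}\in \Der(\mL)$ by checking the Leibniz rule on pure types. On $\Kk\times \Kk$ it is automatic. On $\Kk\times \mathcal{H}$ it is exactly the $\ad_{\Kk}$-invariance of $J_{\mathcal{H}}$ from (ii). On $\mathcal{H}\times \mathcal{H}$ the required identity is $[J_{\mathcal{H}}x,y]+[x,J_{\mathcal{H}}y]=0$, which I would deduce from (iii) by substituting $y\mapsto J_{\mathcal{H}}y$ in $[J_{\mathcal{H}}x,J_{\mathcal{H}}y]=[x,y]$ and using $J_{\mathcal{H}}^2=-1$.

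Since $\mL$ is semisimple, every derivation is inner, so $\tilde{J}=\ad_z$ for some $z\in \mL$; moreover $z$ is unique because $\z(\mL)=0$. Decomposing $z=z_{\Kk}+z_{\mathcal{H}}$ with respect to $\mL=\Kk\oplus\mathcal{H}$ and splitting the identity $\ad_z(\Kk)=0$ into its $\Kk$- and $\mathcal{H}$-components yields $z_{\Kk}\in \z(\Kk)$ and $[z_{\mathcal{H}},\Kk]=0$. The condition $\ad_z(\mathcal{H})\subseteq \mathcal{H}$ combined with $[z_{\mathcal{H}},\mathcal{H}]\subseteq \Kk$ forces $[z_{\mathcal{H}},\mathcal{H}]=0$, so $z_{\mathcal{H}}$ centralises all of $\mL$ and therefore vanishes. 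Thus $z\in \z(\Kk)$. Uniqueness of $z$ subject to $(\ad_z)_{\vert \mathcal{H}}=J_{\mathcal{H}}$ within $\z(\Kk)$ follows by applying the same argument to the difference of two candidates.

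For the final claim assume $\mL$ simple and pick $z^{\prime}\in \z(\Kk)$. Since $\z(\Kk)$ is abelian, $[\ad_{z^{\prime}},\ad_z]=\ad_{[z^{\prime},z]}=0$, so $A:=(\ad_{z^{\prime}})_{\vert \mathcal{H}}$ commutes with $J_{\mathcal{H}}$ and with the $\Kk$-action. I would then invoke the classical fact that for $\mL$ simple the isotropy representation $\Kk\to \gl(\mathcal{H})$ is irreducible over $\mathbb{R}$; any $\mathbb{C}$-submodule (for the $J_{\mathcal{H}}$-structure) would also be a real submodule, so $\mathcal{H}$ is in fact irreducible as a complex $\Kk$-module. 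Schur's lemma over $\mathbb{C}$ gives $A=a\cdot 1+bJ_{\mathcal{H}}$ with $a,b\in \mathbb{R}$. Because $\Kk$ is compactly embedded, $\mathcal{H}$ carries a $\Kk$-invariant inner product with respect to which $A$ is skew-symmetric; since $1$ is symmetric and $J_{\mathcal{H}}$ is skew, necessarily $a=0$. Hence $(\ad_{z^{\prime}-bz})_{\vert \mathcal{H}}=0$, and combined with $z^{\prime}-bz\in \z(\Kk)$ this places $z^{\prime}-bz\in \z(\mL)=0$, proving $\z(\Kk)=\mathbb{R}z$.

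The main obstacle I expect is the last step: the reliance on the $\Kk$-irreducibility of $\mathcal{H}$ when $\mL$ is simple, which is a standard but non-trivial result in the theory of symmetric pairs and which I would cite rather than reprove. The derivation check is routine once the manipulation of (iii) is observed, and passing from an inner derivation to an element of $\z(\Kk)$ is a clean $\mL=\Kk\oplus\mathcal{H}$ component-chase using $\z(\mL)=0$.
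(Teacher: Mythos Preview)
Your argument is correct. The paper does not prove this fact at all; it simply cites Helgason \cite{He}, page 378. Your approach---extending $J_{\mathcal{H}}$ by zero on $\Kk$ to a derivation of $\mL$, invoking $\Der(\mL)=\ad(\mL)$ for semisimple $\mL$, and then locating $z$ in $\z(\Kk)$ via the component-chase using $\z(\mL)=0$---is the standard argument and is essentially what one finds in Helgason. For the simple case, your Schur-type reduction is also the expected route; the only external ingredient is the real irreducibility of the isotropy representation $(\Kk,\mathcal{H})$ when $\mL$ is simple, which the paper itself invokes freely elsewhere (e.g.\ Remark~\ref{sigm-R}, Proposition~\ref{i-sub}) as a standard fact about symmetric pairs. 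One small point worth making explicit: to conclude that $J_{\mathcal{H}}$ is skew with respect to a $\Kk$-invariant inner product you should note that such a product can always be replaced by its $J_{\mathcal{H}}$-average $g+g(J_{\mathcal{H}}\cdot,J_{\mathcal{H}}\cdot)$, which remains $\Kk$-invariant since $J_{\mathcal{H}}$ is.
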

See \cite{He}, page 378 for details. The above normalisation of the center of $\Kk$ for $\mL$ simple
will be used throughout this paper. Property \eqref{z-HSL} ensures that \eqref{jinv} follows from the $\ad_{\Kk}$-invariance of $J_{\H}$ and the Jacobi identity in $\mL$.

The set of isotropy invariant 
metrics on $\H$ will be denoted, in accordance with \eqref{met-not1}, by 
$ \me_{K}(\H).$ See also Definition \ref{metr}.

Any Hermitian symmetric Lie algebra $\mL$ is naturally equipped with an automorphism $\sigma^{\mL}$ of order $3$ defined by 
\begin{equation} \label{aut-symm}
\sigma^{\mL}_{\vert \h}=1_{\h}, \ \sigma^{\mL}_{\vert \H}=z_01_{\H}+z_1J
\end{equation}
where $(z_0+iz_1)^3=1, z_0 \neq 1$.
Thus $(L,\sigma^L)$ becomes a Riemannian $3$-symmetric algebra in the sense of Definition \ref{3sL}. These examples describe $3$-symmetric Lie algebras of Riemannian type satisfying $\tau=0$. For the vanishing of the torsion is equivalent with $[\bbV,\bbV] \subseteq \h$, in the notation of section \ref{o3}.

 \section{Regularity of models with $\ug$ semisimple}\label{fix-s} 

\subsection{Preliminaries} 

Our definition of a Riemannian 3-symmetric Lie algebra reproduces exactly the algebraic properties of the transvection algebra of a Riemannian $3$-symmetric space $(M,D,J, g)$ at a given point in $M$.  To establish this, we will need to recall the  concept of regularity for infinitesimal models. These are designed to measure up to which extent an homogeneous space can be assigned to an algebraic model. In fact, they were originally introduced by Cartan to pass from his algebraic description of a symmetric space to the geometric level.  In this paper we only look at this notion in the context of Riemannian $3$-symmetric algebras where we show this can be done in a canonical way.  

\begin{defn}\label{def-nomizu}  Given $\uh\subseteq \h\subseteq {}^g\oh$, the infinitesimal model of  $\g = \h \oplus \bbV$ is defined by restricting the Lie brackets of the Nomizu algebra (defined in Equation \ref{Nom}) to $\g$. \end{defn}

\begin{defn} \label{def-reg1}
A  Lie algebra $\g$ with $\sigma\in \text{Aut}(\g): \sigma^3 =1$  is said to be regular if there exists a Lie group 
$G$ with Lie algebra $\g$ and a closed subgroup $\mathrm{H} \subseteq G$ which is compact and connected and such that $\bbV$ is $\Ad(\mathrm{H})$-invariant. Moreover we require $\Ad(\mathrm{H})$ to act effectively on $\bbV$.
\end{defn}
If it exists, a pair $(G,H)$ as above will be called a regular pair for $(\g,\sigma)$. In this setting one can conclude that the infinitesimal model $\g = \h \oplus \bbV$ arises as the tangent space to a Riemannian 3-symmetric space.   

\subsection{Semisimple regularity}\label{ss-reg}

Whenever $G$ is a Lie group and $x \in G$ we indicate with $G_x$ the fixed point set of the inner automorphism $g \mapsto xgx^{-1}$.
\begin{propn} \label{reg1}
Let $\g$ be semisimple and $\sigma\in \Aut (\g)$ satisfy $\sigma^3=1$. Consider the splitting $\g = \h \oplus \bbV$ induced by $\sigma$. The following are equivalent: 
\begin{itemize}
\item[(i)] $(\g, \sigma)$ is a Riemannian 3-symmetric algebra, and there exists a regular pair $(G,H)$ for $(\g,\h)$ such that $H=G_x$ for some $x \in G$ with $x^3=1$. 
\item[(ii)] $\bbV$ admits an $\ad_{\h}$-invariant inner product. 
\end{itemize}

\end{propn}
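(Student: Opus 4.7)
The implication (i) $\Rightarrow$ (ii) is immediate: the compact group $H$ from Definition \ref{3sL} acts on $\bbV$ through the effective representation $\pi$, and averaging any Euclidean inner product on $\bbV$ against Haar measure on $\pi(H)$ yields the desired $\Ad(H)$-invariant (hence $\ad_\h$-invariant) metric, exactly as in Proposition \ref{met-c}.

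For (ii) $\Rightarrow$ (i), the heart of the proof is verifying that $(\g,\sigma)$ is Riemannian 3-symmetric. The first step is to show that the isotropy representation $\iota$ is faithful. Given $F \in \h$ with $[F,\bbV]=0$, the Jacobi identity forces $F$ to centralise the ideal $\mathfrak{t}(\bbV)=\bbV+[\bbV,\bbV]$. Decomposing $\g$ as a sum of simple ideals permuted by $\sigma$ in orbits of length $1$ or $3$, one verifies that $\mathfrak{t}(\bbV)$ is exactly the sum of those simple factors on which $\sigma$ acts non-trivially; the centraliser of $\mathfrak{t}(\bbV)$ inside these factors is trivial, so $F$ concentrates on $\sigma$-fixed simple factors, which in turn act as zero on $\bbV$ and may be discarded (the relevant context being $\g=\mathfrak{t}(\bbV)$, which is automatic for transvection algebras). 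The hypothesis (ii) then places $\iota(\h) \subseteq \so(\bbV,g)$ inside a compact Lie algebra, so $\h$ itself is compact. Standard compact-group structure theory now produces a compact connected Lie group $H$ with Lie algebra $\h$ and an effective representation $\pi:H \to \mathrm{SO}(\bbV,g)$ integrating $\iota$, furnishing the compact embedding required by Definition \ref{3sL}.

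To realise the regular pair with $H=G_x$ and $x^3=1$, I would lift $\sigma$ to a Lie group automorphism $\tilde\sigma$ of a Lie group $\tilde G$ with Lie algebra $\g$, form the semi-direct product $G:=\tilde G \rtimes \langle \tilde\sigma\rangle$ -- which retains $\g$ as its Lie algebra since $\langle\tilde\sigma\rangle$ is discrete -- and set $x:=(e,\tilde\sigma)$. Then $x^3=e$ and $\Ad(x)|_\g=\sigma$, so the Lie algebra of the centraliser $G_x$ equals $\ker(\sigma-1)=\h$. When $\sigma$ is already inner on $\g$ one works inside the simply connected group directly, writing $\sigma=\Ad(x)$ with $x^3$ central and hence trivialisable after passing to a suitable finite cover.

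The principal obstacle will be securing the literal equality $H=G_x$ rather than $H=(G_x)_0$. In the outer-$\sigma$ case the semi-direct centraliser acquires extra components from $\langle x\rangle$, and realising $H$ as the full centraliser requires refining $G$ so that the fixed-point set of conjugation by $x$ is connected -- a Steinberg-type phenomenon that is delicate for real, non-simply-connected groups. Establishing this, together with the compactness of $G_x$ (which follows from compactness of $\h$), is the main bookkeeping of the argument; the genuinely new algebraic input is the faithfulness of $\iota$ combined with the passage from an invariant inner product on $\bbV$ to a compact embedding of the isotropy.
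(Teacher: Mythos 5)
Your direction (i)$\Rightarrow$(ii) is fine, but the essential content of the proposition is (ii)$\Rightarrow$(i), and there your route does not close. The decisive step is not that $\iota(\h)\subseteq\so(\bbV,g)$ is of compact type, but that $\iota$ actually \emph{integrates} to an effective representation of a \emph{compact} connected group: the connected subgroup of $\mathrm{SO}(\bbV,g)$ generated by $\exp(\iota(\h))$ must be shown to be closed, and for an abstract subalgebra of $\so(\bbV,g)$ this can fail (a central torus direction can wind densely, and then no compact group integrates the given representation). Your appeal to ``standard compact-group structure theory'' skips exactly this point. The paper resolves it by a choice you never make: since $\g$ is semisimple, take $G:=\Aut(\g)$, whose Lie algebra is $\ad(\g)\cong\g$, and observe that $\sigma$ itself is an element $x\in G$ with $x^3=1$; then $H:=G_\sigma$ is closed in $\Aut(\g)$ with Lie algebra $\ad_{\h}$, an automorphism commuting with $\sigma$ is determined by its restriction to $\bbV$, so the restriction map $H^0\to \mathrm{SO}(\bbV,g)$ is injective with closed image, hence $H^0$ is compact. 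This single observation produces the regular pair, the element $x$ of order $3$, and the compact embedding simultaneously.

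By contrast, your semidirect-product construction $G:=\widetilde G\rtimes\langle\widetilde\sigma\rangle$ creates the very problems the paper's choice avoids, and you leave them open: you concede that $H=G_x$ versus $(G_x)^0$ is ``delicate,'' and your claim that compactness of $G_x$ ``follows from compactness of $\h$'' is false in general --- with $\widetilde G$ simply connected of non-compact type and $\z(\h)\neq 0$ (the typical Hermitian/$3$-symmetric situation), the fixed-point group of $\widetilde\sigma$ has a non-compact $\mathbb{R}$-factor, so no amount of bookkeeping inside that $G$ yields a compact isotropy. (A smaller point: your treatment of faithfulness, discarding the $\sigma$-fixed simple ideals by invoking $\g=\mathfrak{t}(\bbV)$, amounts to adding a hypothesis; the paper's proof tacitly uses the same faithfulness of $\ad_{\h}$ on $\bbV$, so this is not the main defect, but it should be stated rather than waved through.) In short, the missing idea is the identification $x=\sigma\in\Aut(\g)$ together with the closed-image argument for the restriction to $\bbV$; without it the proposal does not establish (ii)$\Rightarrow$(i).
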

\begin{proof}
Assuming (i) holds, (ii) is a trivial consequence of the definitions.\\
Assume now that (ii) holds. Consider the Lie group $G:=\Aut(\g)$; its Lie algebra $\Der(\g)$ is isomorphic, via the adjoint representation $\ad$ to $\g$ since $\g$ is semisimple. 
Letting 
\begin{equation*}
H_{\sigma}:=\{f\in G:f \circ \sigma=\sigma \circ f\}
\end{equation*}
certainly yields a closed subgroup 
in $G$. Its Lie algebra is isomorphic, via $\ad$, to 
$$\mathfrak{h}_{\sigma}=\{X \in \g:\ad_X  \circ \sigma=\sigma \circ \ad_X \}.$$ 
Equivalently $X \in \mathfrak{h}_{\sigma}$ if and only if $[X,\sigma(Y)]=\sigma[X,Y]$ for all $Y \in \g$. This is equivalent to $[X-\sigma(X), \sigma(\g)]=0$; since 
$\g$ is semisimple it has vanishing center thus $\sigma(X)-X=0$. We have shown that $\mathfrak{h}_{\sigma}=\ad_{\h}$. Because $\bbV=\ker(\sigma^2+\sigma+1)$ any
$h \in H_{\sigma}$ satisfies $h(\bbV)=\bbV$. Moreover $h \circ \ad_v \circ h^{-1}=\ad_{hv}$
for all $v \in \bbV$. The adjoint representation $\Ad(H_{\sigma})$ of $H_{\sigma}$ 
on $\bbV$ is thus identified (via the isomorphism $\ad:\g \to \Der(\g)$) to $(h,v) \mapsto hv$. To see this is effective let 
$h\in H_{\sigma}$ satisfy $h_{\vert \bbV}=1_{\bbV}$. Since $h$ is an automorphism it follows that $[hF-F,v]=0$ for all $(F,v) \in \h \times \bbV$.Thus $h=1_{\g}$ by using that $\ad_{\h}$ acts faithfully on $\bbV$.

Now consider the map $H_{\sigma}^0 \to O(\bbV,g), h \mapsto h_{\vert \bbV}$. This is well defined since the Lie algebra $\ad_{\h}$ of $\h_{\sigma}$ preserves $g$; it is moreover injective since $H_{\sigma}$ acts effectively on $\bbV$. Thus $H_{\sigma}^0$ is isomorphic with 
a closed and  therefore compact subgroup of $SO(\bbV,g)$ and the result follows. 
\end{proof}
We close this section with the following 
\begin{defn} \label{typeI}
A Riemannian $3$-symmetric space $(M,g,J)$ is said to be of type I, provided its transvection algebra $\ug$, computed at some 
point $x \in M$, is semisimple.
\end{defn}
Since $\D\!R^{\D}=0$ it is easy to see that this definition is independent of the choice of point in $M$.

\section{Structure of the radicals} \label{rad}
Following our convention, let $(\g,\sigma)$ denote  a Riemannian $3$-symmetric Lie algebra.

Let $\underline{\BK}$ be the Killing form of $\ug$; since the latter is an ideal 
in $\g$ we have $\underline{\BK}=\BK_{\vert \ug}$. As $\sigma$ clearly preserves $\ug$ we let $\underline{\sigma}:=\sigma_{\vert \ug} \in \Aut(\ug)$.
The first step towards understanding the structure of the radicals of $\g$ respectively $\ug$ is the following 
\begin{lemma} \label{rad-10}
The following hold
\begin{itemize}
\item[(i)] the restriction 
\begin{equation} \label{ngu}
\underline{\BK}_{\vert \uh}: \uh \times \uh \to \mathbb{R} \ \mbox{is non-degenerate},
\end{equation}
 
\item[(ii)] we have 
\begin{equation*}
\g^{\perp}=\ug^{\perp}=W
\end{equation*}
where the $\h$-invariant subspace $W \subseteq \bbV$ is given by 
\begin{equation*}
W:=\{v \in \bbV: \BK(v, \cdot)=0\}.
\end{equation*}
\end{itemize}
\end{lemma}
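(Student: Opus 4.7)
The plan is to leverage two pieces of data already at hand: the reductive splitting $\g = \h \oplus \bbV$ is orthogonal with respect to the Killing form by \eqref{B-inv}, and $\BK_{\vert \h}$ is negative definite by Proposition \ref{met-c}(ii). I would also use the standard fact that, since $\ug$ is an ideal in $\g$, the Killing form of $\ug$ coincides with the restriction $\BK_{\vert \ug \times \ug}$; this is because, for $X \in \ug$, $\ad_X$ preserves $\ug$, so the trace of $\ad_X \circ \ad_Y$ for $X,Y \in \ug$ can be computed inside $\ug$ (the block acting on a complement contributes nothing thanks to the ideal condition).

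For part (i), the identification above reduces the claim to showing that $\BK_{\vert \uh \times \uh}$ is non-degenerate. But $\uh \subseteq \h$, and the restriction of a negative definite form to any subspace remains negative definite, hence non-degenerate.

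For part (ii), I would first verify that $W$ is $\h$-invariant: if $F \in \h$ and $v \in W$, the $\ad$-invariance of $\BK$ combined with $[F,\bbV] \subseteq \bbV$ yields $\BK([F,v],w) = -\BK(v,[F,w]) = 0$ for all $w \in \bbV$, so $[F,v] \in W$. To compute $\g^{\perp}$ I would exploit the $\BK$-orthogonal splitting to write $\g^{\perp} = (\h \cap \g^{\perp}) \oplus (\bbV \cap \g^{\perp})$: an element $F \in \h$ lies in $\g^{\perp}$ precisely when $\BK(F,\h) = 0$ (its pairing with $\bbV$ vanishing automatically), which by the definiteness of $\BK_{\vert \h}$ forces $F = 0$; and $v \in \bbV$ lies in $\g^{\perp}$ iff $\BK(v,\bbV) = 0$, i.e.\ $v \in W$. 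Hence $\g^{\perp} = W$.

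The computation of $\ug^{\perp}$ would run identically, now applied to the orthogonal splitting $\ug = \uh \oplus \bbV$, with part (i) playing the role of the non-degeneracy on $\h$; one concludes again that $\ug^{\perp} = W$. The main obstacle is mostly cosmetic: the only fact that is not already packaged in the preceding preparation is the identification $\underline{\BK} = \BK_{\vert \ug \times \ug}$, and once that is invoked everything reduces to bookkeeping with the two orthogonal decompositions.
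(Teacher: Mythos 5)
Your proposal is correct and follows essentially the same route as the paper: the identification $\underline{\BK}=\BK_{\vert\ug}$ (stated in the text just before the lemma), negative definiteness of $\BK_{\vert\h}$ from Proposition \ref{met-c}(ii) for part (i), and the $\BK$-orthogonality $\BK(\h,\bbV)=0$ to split an element of $\g^{\perp}$ (resp.\ $\ug^{\perp}$) into its $\h$- and $\bbV$-components for part (ii). Your extra remarks (the trace argument for the ideal property and the one-line check that $W$ is $\h$-invariant) are correct fillers of details the paper takes for granted, not a different method.
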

\begin{proof}
(i) one uses that $\underline{\BK}_{\vert \uh}=\BK_{\vert \uh}$ is negative definite, thus non-degenerate.\\
(ii) from the definitions $W \subseteq \g^{\perp}$. Pick $v+F \in \g^{\perp}$; from 
$\BK(v+F,\h)=0$ we get $\BK(F,\h)=0$ by using $\BK(\h,\bbV)=0$. Since $\BK_{\vert \h}$ is non-degenerate by \eqref{rad-1n} it follows that $F=0$. We are left with $\BK(v,\bbV)=0$ which yields $v \in W$ thus $\g^{\perp} \subseteq W$. The proof of $\ug^{\perp}=W$ is entirely similar and based solely on \eqref{ngu}.
\end{proof}

Let $\underline{\rad}$ be the radical of $\ug$; as the latter is an ideal in $\g$ we have 
$$ \underline{\rad}=\rad \cap \ug.
$$
In particular $\underline{\rad}$ is an ideal in $\g$ as well. 
\begin{lemma} \label{tr1}
We have $\Tr(\tau_v)=0$ for all $v \in \bbV$.
\end{lemma}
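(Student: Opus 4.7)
The plan is to deduce this directly from the Hermitian symmetry of the torsion tensor established in Lemma \ref{basic1}. Recall that $\tau_v : \bbV \to \bbV$ denotes the endomorphism $w \mapsto \tau(v,w)$, and from the identity $\tau(v, Jw) = -J\tau(v,w)$ we read off that $\tau_v \circ J = -J \circ \tau_v$, i.e.\ $\tau_v \in \gl_J^{\perp}(\bbV)$.

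The key observation, which I would isolate and apply, is the elementary fact that any linear endomorphism of a complex vector space which anti-commutes with the complex structure has vanishing trace. Indeed, for such an endomorphism $A$ the relation $AJ = -JA$ gives $A = -J A J^{-1}$, so by conjugation invariance of the trace
\begin{equation*}
\Tr(A) = \Tr(-J A J^{-1}) = -\Tr(A),
\end{equation*}
forcing $\Tr(A) = 0$. Applied to $A = \tau_v$, this yields the claim.

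There is no real obstacle here: the statement is purely a consequence of the complex anti-linearity of $\tau_v$ recorded in Lemma \ref{basic1}, and the argument amounts to a single line once that anti-linearity is invoked. The lemma is presumably recorded separately (rather than folded into Lemma \ref{basic1}) because it will be used in the subsequent computation of the radical, where vanishing of $\Tr(\tau_v)$ is the input that allows one to control the solvable part of $\ug$ via the Killing form.
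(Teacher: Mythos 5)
Your argument is correct and is essentially identical to the paper's proof, which likewise combines conjugation invariance of the trace with the anti-commutation $\tau_v J=-J\tau_v$ drawn from \eqref{TJ1} to get $\Tr(\tau_v)=-\Tr(\tau_v)$. Nothing is missing.
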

\begin{proof}
Using the properties of the trace and \eqref{TJ1} we have 
$$\Tr(\tau_v)=\Tr(J^{-1}\tau_v J)=-\Tr(J\tau_vJ)=-\Tr(\tau_v)
$$
and the claim follows.
\end{proof}
First structure results for the radical $\ur$ are derived below, based on the observation that $W$ can be interpreted geometrically, 
namely as
the nullity of the non-Riemannian curvature tensor $R^D$. \begin{defn} \label{nil-def} 
A $3$-symmetric Lie algebra $(\g,\sigma)$ is of type II provided that $\h=0$.
\end{defn}
\begin{thm} \label{rad11}Let $(\g,\sigma)$ be Riemannian $3$-symmetric. Then we have the following:
\begin{itemize}
\item[(i)]The radical $W$ satisfies \begin{equation} \label{geo-int}
W=\{v \in \bbV: R^{\D}(v,\bbV)=0\}.
\end{equation}
\item[(ii)] The radical $\ur$ of the transvection algebra $\ug$ satisfies 
\begin{equation} \label{equa}
\underline{\rad}=W \subseteq \bbV.
\end{equation}
Moreover $W$ is $3$-symmetric of type II.
\end{itemize}
\end{thm}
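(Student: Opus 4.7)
I set up with the easy inclusion of (i). Lemma \ref{rad-10}(ii) identifies $W = \underline{\g}^{\perp}$, and \eqref{incl-1} applied to $\underline{\g}$ gives $W \subseteq \underline{\rad}$. In particular $W$ is an ideal of $\underline{\g}$ contained in $\bbV$, so $[\bbV, W] \subseteq W \subseteq \bbV$; reading off the $\uh$-component of this bracket, $\mathfrak{R}(\bbV, W) = 0$, and injectivity of $\iota$ (Proposition \ref{met-c}(i)) translates this to $R^{\D}(W, \bbV) = 0$, giving the inclusion $W \subseteq \{v \in \bbV : R^{\D}(v, \cdot) = 0\}$ of (i). Once $\underline{\rad} = W$ is established, the same computation applied to $W \times W$ shows $\mathfrak{R}(W, W) = 0$; combined with the $J$-invariance of $W$ from \eqref{B-inv} (and hence its $\sigma$-invariance), this shows $(W, \sigma_{|W})$ satisfies Definition \ref{nil-def} and is of type II.

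The second step is $\underline{\rad} \cap \uh = 0$. Maximality of $\underline{\rad}$ as a solvable ideal forces $\sigma$-invariance, hence $\underline{\rad} = \underline{\rad}_{\uh} \oplus \underline{\rad}_{\bbV}$. Running the argument of Proposition \ref{met-c}(ii) inside $\underline{\g}$ shows $\underline{\BK}_{|\uh}$ is negative definite; in particular $\uh$ is reductive and $\underline{\rad}_{\uh} \subseteq \z(\uh)$. For $F \in \underline{\rad}_{\uh}$, the inclusion $[F, \bbV] \subseteq \underline{\rad}_{\bbV}$ combined with $[\underline{\rad}, \underline{\rad}] \subseteq W$ (from \eqref{incl-1}) yields $\iota(F)^{2} \bbV \subseteq W$. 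Since the isotropy is compactly embedded, $\iota(F)$ is semisimple with purely imaginary eigenvalues; hence $\ker \iota(F) = \ker \iota(F)^{2}$ and $\iota(F) \bbV = \iota(F)^{2} \bbV \subseteq W$. A direct check using $\sigma$-invariance of $\underline{\BK}$ gives $\underline{\BK}(\bbV, \uh) = 0$, and $\ad$-invariance then yields
\begin{equation*}
\underline{\BK}(\mathfrak{R}(v, w), F) = \underline{\BK}([v, w], F) = \underline{\BK}(v, [w, F]) = -\underline{\BK}(v, \iota(F) w) = 0
\end{equation*}
for all $v, w \in \bbV$, the final equality because $\iota(F) w \in W = \underline{\g}^{\perp}$. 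Since $\uh = \spa\{\mathfrak{R}(v, w) : v, w \in \bbV\}$, this forces $\underline{\BK}(\uh, F) = 0$, and thus $F = 0$ by non-degeneracy.

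With $\underline{\rad} \subseteq \bbV$, for any $F \in \underline{\rad}$ and $v \in \bbV$ the bracket $[v, F]$ lies in the ideal $\underline{\rad} \subseteq \bbV$, forcing $\mathfrak{R}(v, F) = 0$. Hence $\underline{\rad} \subseteq \{v : R^{\D}(v, \cdot) = 0\}$, and it remains only to show that the curvature nullity is contained in $W$, which simultaneously closes (i) and (ii). For this I would compute $\underline{\BK}(v, u)$ for $v, u \in \bbV$ by writing $\ad_{v}, \ad_{u}$ blockwise relative to $\underline{\g} = \uh \oplus \bbV$ and cycling traces. This produces
\begin{equation*}
\underline{\BK}(v, u) = -2\, \rho^{\D}(v, u) + \Tr_{\bbV}(\tau_{v} \tau_{u}), \qquad \rho^{\D}(v, u) := \Tr_{\bbV}\bigl(w \mapsto R^{\D}(v, w) u\bigr),
\end{equation*}
with $\rho^{\D}$ symmetric thanks to \eqref{B1}, Lemma \ref{tr1}, and the inclusion $\uh \subseteq \mathfrak{u}(\bbV)$. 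When $R^{\D}(v, \cdot) = 0$, the Ricci-type term $\rho^{\D}(v, \cdot)$ vanishes identically and the problem reduces to the trace identity $\Tr_{\bbV}(\tau_{v} \tau_{u}) = 0$ for every $u \in \bbV$. I would extract this from the Bianchi-derived relation $R^{\D}(u_{1}, u_{2}) v = [\tau_{v}, \tau_{u_{1}}](u_{2}) - \tau_{\tau(v, u_{1})}(u_{2})$ (valid when $R^{\D}(v, \cdot) = 0$), combined with the Hermitian symmetries \eqref{H-sym} of $\tau$ and the vanishing of traces of curvature endomorphisms; this final trace identity is where I expect the main technical obstacle to lie.
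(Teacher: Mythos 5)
Your reductions are largely sound and in places genuinely different from the paper: you get $W\subseteq\{v\in\bbV:R^{\D}(v,\bbV)=0\}$ directly from the fact that $W=\ug^{\perp}$ is an ideal contained in $\bbV$ (so the $\uh$-component of any bracket with $\bbV$ vanishes), you kill $\ur\cap\uh$ via $\sigma$-invariance of the radical, the image identity $\iota(F)\bbV=\iota(F)^{2}\bbV\subseteq W$ and non-degeneracy of $\underline{\BK}_{\vert\uh}$, instead of the paper's shorter computation with $\underline{\BK}(\ur,[\ug,\ug])=0$, and your Killing-form identity $\underline{\BK}(v,u)=-2\rho^{\D}(v,u)+\Tr_{\bbV}(\tau_{v}\tau_{u})$ is correct: the block $\uh\to\uh$, $G\mapsto\mathfrak{R}(v,\iota(G)u)$, has the same trace as $w\mapsto R^{\D}(v,w)u$ because the two are the compositions, in either order, of the same pair of maps $\bbV\to\uh$, $\uh\to\bbV$, and the symmetry of $\rho^{\D}$ does follow from \eqref{B1}, Lemma \ref{tr1} and $\Tr R^{\D}(u,v)=0$. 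The type II statement also only needs $W\subseteq\bbV$, $\sigma(W)=W$ and $[W,W]\subseteq W$, which you have.

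However, the decisive inclusion $\{v\in\bbV:R^{\D}(v,\bbV)=0\}\subseteq W$, which is what closes both (i) and (ii), is left unproven: you reduce it to $\Tr_{\bbV}(\tau_{v}\tau_{u})=0$ and explicitly defer that identity. This is a genuine gap, and it is not closed by the trace manipulations you indicate: tracing your Bianchi-derived relation over $u_{2}$ yields $0=0$ (the left side traces to $\rho^{\D}(u_{1},v)$, which vanishes by symmetry and the nullity hypothesis, and the right side to traces of a commutator and of $\tau_{\tau(v,u_{1})}$, which vanish by Lemma \ref{tr1}). The missing idea is the paper's $J$-parity split of the Bianchi identity: when $R^{\D}(v,\bbV)=0$ one has $R^{\D}(u_{1},u_{2})v=\tau(v,\tau(u_{1},u_{2}))+\tau(u_{1},\tau(u_{2},v))+\tau(u_{2},\tau(v,u_{1}))$, and under $(u_{1},u_{2})\mapsto(Ju_{1},Ju_{2})$ the left side and the last two terms are invariant by \eqref{RJ1} and \eqref{TJ1}, while $\tau(v,\tau(u_{1},u_{2}))$ changes sign; hence $\tau_{v}\circ\tau_{u}=0$ for all $u\in\bbV$, which is \eqref{compt}. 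This pointwise vanishing gives your trace identity immediately, and once it is in hand your Killing-form formula is no longer needed, since one can trace $\ad_{v}\circ\ad_{u}$ directly as the paper does. Without \eqref{compt} or an equivalent input your chain of inclusions never closes, so the argument as written does not prove the theorem.
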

\begin{proof}
(i) Pick $v \in \bbV$ such that $R^{\D}(v,\bbV)=0$.
Therefore, by the algebraic Bianchi identity \eqref{B1} for $R^D$ we obtain 
\begin{equation}
\label{B1null}
R^{\D}(v_1,v_2)v=\tau(v,\tau(v_1,v_2))+\tau(v_1,\tau(v_2,v))+\tau(v_2,\tau(v,v_1))
\end{equation}
whenever $v_1,v_2 \in \bbV$. By Lemma \ref{basic1} 
the second summand is $J$-anti-invariant in $(v_1,v_2)$ whilst the remaining summands 
are $J$-invariant in the same variables. It follows that 
\begin{equation} \label{compt}
\tau_{v} \circ \tau_{v_1}=0.
\end{equation}
Let $v_1,v_2 \in \bbV$; we compute 
\begin{equation*}
\begin{split}
(\ad_v \circ \ad_{v_1})v_2=&\ad_{v}(\tau(v_1,v_2)+R^{\D}(v_1,v_2))=\tau(v,\tau(v_1,v_2))+R^{\D}(v,\tau(v_1,v_2))-R^{\D}(v_1,v_2)v\\
=&-R^{\D}(v_1,v_2)v
\end{split}
\end{equation*} 
by using \eqref{compt} and $R^{\D}(v,\bbV)=0$. Updating \eqref{B1null} by means of \eqref{compt} we get further that 
\begin{equation*}
(\ad_v \circ \ad_{v_1})_{\vert \bbV}=\tau_{v_1} \circ \tau_v+\tau_{\tau(v,v_1)}.
\end{equation*}
At the same time, using again that $R^D(v,\bbV)=0$ makes it easy to check that 
$$(\ad_{v} \circ \ad_{v_1})\h \subseteq \bbV.$$ Since in particular $(\ad_{v} \circ \ad_{v_1})\bbV \subseteq \bbV$ it follows that $$\mathrm{B}(v,v_1)=\Tr(\tau_{v_1} \circ \tau_{v})+\Tr(\tau_{\tau(v,v_1)})=\Tr(\tau_{v_1} \circ \tau_{v})$$
by taking Lemma \ref{tr1} into account. 
But $\Tr(\tau_{v_1} \circ \tau_{v})=\Tr(\tau_{v} \circ \tau_{v_1})=0$ by \eqref{compt}  
thus $\mathrm{B}(v,\bbV)=0$. We have shown that 
\begin{equation} \label{incl1} 
\{v \in \bbV : R^{\D}(v,\bbV)=0\} \subseteq W.
\end{equation}
Equality will be proved during the proof of (ii) below.\\
(ii) Pick $v+F \in \ur$, with $v \in \bbV$ and $F \in \uh$.  
From $0=\underline{\mathrm{B}}(v+F,[\uh,\bbV])=\mathrm{B}(v,[\uh,\bbV])$ we get by successive use of $\mathrm{B}(\bbV,\h)=0$ and the $\h$-invariance of $B$ that
$\mathrm{B}([v,\bbV],\uh)=0$. Thus $\mathcal{R}(v,\bbV)=0$ by \eqref{ngu}. Taking $v_1,v_2 \in \bbV$ we have 
$$0=\underline{\mathrm{B}}(v+F,[v_1,v_2])=\mathrm{B}(v,\tau(v_1,v_2))+\mathrm{B}(F,\mathcal{R}(v_1,v_2))
$$
after using once more that $\mathrm{B}(\h,\bbV)=0$. Taking into account the $J$-invariance 
properties of $\mathcal{R}$ respectively $\tau$ from Lemma 
\ref{basic1} this decouples as $\mathrm{B}(v,\tau(v_1,v_2))=0$ respectively $\mathrm{B}(F,\mathcal{R}(v_1,v_2))=0$. In other words 
$\mathrm{B}(F,\uh)=0$ hence \eqref{ngu} yields $F=0$. 
The upshot is that   
\begin{equation} \label{gdes1} \ur \subseteq \{v \in \bbV : \mathcal{R}(v,\bbV)=0 \}.
\end{equation}
Equality in both of \eqref{geo-int} and \eqref{equa} follows from \eqref{incl1} and $W \subseteq \ur$.
That $W$ is nilpotent as indicated in Definition \ref{nil-def} follows from \eqref{compt} and the first part of the claim.
\end{proof}
As a direct consequence we have the following 
\begin{cor} \label{II-nil}
Assume that $(\g,\sigma)$ is $3$-symmetric of type II. Then $\g$ is nilpotent and moreover $[[\g,\g],\g]=0$.
\end{cor}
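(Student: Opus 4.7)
The plan is to unpack what type II means and reuse the key computation already contained in the proof of Theorem~\ref{rad11}. By Definition~\ref{nil-def}, type II says $\h=0$. Since $\uh\subseteq\h$ (as $\uh$ is an ideal in $\h$), we obtain $\uh=0$, so the tensor $\mathfrak{R}:\bbV\times\bbV\to\h$ vanishes identically. In particular the infinitesimal curvature $R^{\D}(v_1,v_2)=(\ad_{\mathfrak{R}(v_1,v_2)})_{\vert\bbV}$ is zero on all of $\bbV$, so the nullity space
\[
W=\{v\in\bbV:R^{\D}(v,\bbV)=0\}
\]
coincides with $\bbV$. As a sanity check this matches Theorem~\ref{rad11}(ii), since $\ug=\bbV$ is its own radical.

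Now I would invoke the computation carried out in the proof of Theorem~\ref{rad11}(i): starting from the Bianchi identity~\eqref{B1} together with the $J$-equivariance of $\tau$ recorded in Lemma~\ref{basic1}, the conclusion $\tau_v\circ\tau_{v_1}=0$ was obtained for every $v$ with $R^{\D}(v,\bbV)=0$ and every $v_1\in\bbV$. Since $W=\bbV$ in our setting, this identity holds for all $v,v_1\in\bbV$:
\[
\tau_v\circ\tau_{v_1}=0\quad\text{for all}\ v,v_1\in\bbV.
\]

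Finally, because $\mathfrak{R}=0$, the bracket in $\g=\bbV$ is simply $[v_1,v_2]=\tau(v_1,v_2)$, so for any $v,v_1,v_2\in\g$ one has
\[
[v,[v_1,v_2]]=\tau_v\bigl(\tau_{v_1}(v_2)\bigr)=0.
\]
This is exactly $[\g,[\g,\g]]=0$, equivalently $[[\g,\g],\g]=0$ by antisymmetry. Nilpotency of $\g$ follows immediately since the lower central series already terminates at the second step: $\g\supseteq[\g,\g]\supseteq[[\g,\g],\g]=0$.

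There is really no serious obstacle: all the nontrivial work was absorbed into the derivation of the identity $\tau_v\circ\tau_{v_1}=0$ inside the proof of Theorem~\ref{rad11}. The only thing to notice is that for type II algebras the hypothesis $R^{\D}(v,\bbV)=0$ needed there is automatic on all of $\bbV$, which is what lifts the pointwise conclusion to a global 2-step nilpotency statement.
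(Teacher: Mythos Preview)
Your proof is correct and follows essentially the same approach as the paper: both note that $\h=0$ forces $R^{\D}=0$, then invoke the derivation of \eqref{compt} from the proof of Theorem~\ref{rad11} to conclude $\tau_{v_1}\circ\tau_{v_2}=0$ for all $v_1,v_2\in\bbV$, which gives $[\g,[\g,\g]]=0$.
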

\begin{proof}
Since $\h=0$ we must have $R^{\D}=0$. Thus the same argument as the one used for deriving \eqref{compt} shows that $\tau_{v_1} \circ \tau_{v_2}=0$ for all $v_1,v_2 \in \bbV$ and the claim is proved.
\end{proof}
For future use we derive a few consequences of Theorem \ref{rad11} pertaining to the structure of the space fixed vectors $\bbV^{\uh}:=\{v \in \bbV : [\uh, v]=0\}$. The following Corollary will be needed for establishing structure results in section \ref{split-r1}. We indicate with $[\h,\bbV]^{\perp}$ the orthogonal complement in $\g$ of $[\h,\bbV]$ with respect to the Killing form $\mathrm{B}$.
\begin{cor} \label{null-simp}
Let $(\g,\sigma)$ be Riemannian $3$-symmetric. The following hold
\begin{itemize}
\item[(i)] $\bbV^{\h} \subseteq W$
\item[(ii)] $[\h,\bbV]^{\perp}=\h \oplus W$ 
\item[(iii)] if $\g$ is semisimple we have $[\h,\bbV]=\bbV$ and $\bbV^{\h}=0$.
\end{itemize}
\end{cor}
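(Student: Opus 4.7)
The strategy is to prove (i) first as the main technical step, then deduce (ii) from it via complete reducibility of $\bbV$ as an $\h$-module, and finally extract (iii) as a clean consequence when $\g$ is semisimple. The key tools are the non-degeneracy of $\BK|_{\h}$ (Lemma \ref{rad-10}(i)) and the identification $W = \g^{\perp}$ (Lemma \ref{rad-10}(ii)), both already in hand.

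For (i), I would fix $v \in \bbV^{\h}$; by Theorem \ref{rad11}(i) it suffices to show $\mathfrak{R}(v,v') = 0$ for every $v' \in \bbV$. Since $\mathfrak{R}(v,v') \in \uh \subseteq \h$ and $\BK|_{\h}$ is non-degenerate, this reduces to $\BK(\mathfrak{R}(v,v'),F) = 0$ for every $F \in \h$. The identity $\BK(\mathfrak{R}(v,v'),F) = \BK([v,v'],F)$ follows from $\BK(\bbV,\h)=0$ together with $[v,v'] = \mathfrak{R}(v,v') + \tau(v,v')$. Combining the skew-symmetry of the bracket with $\ad$-invariance of $\BK$, one obtains
\[
\BK([v,v'],F) \;=\; -\BK([v',v],F) \;=\; -\BK(v',[v,F]) \;=\; \BK(v',[F,v]),
\]
which vanishes because $v \in \bbV^{\h}$.

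For (ii), the inclusion $\h \oplus W \subseteq [\h,\bbV]^{\perp}$ is immediate from $\BK(\h,\bbV)=0$ and $W=\g^{\perp}$. For the reverse inclusion, take $X=F+v \in [\h,\bbV]^{\perp}$ with $F \in \h$, $v \in \bbV$; then $\ad$-invariance of $\BK$ converts $\BK(v,[\h,\bbV])=0$ into $[\h,v]\subseteq W$. Since the isotropy representation is compactly embedded, $\bbV$ is completely reducible as an $\h$-module, so there is an $\h$-stable splitting $\bbV = W \oplus W'$. Writing $v = w + w'$ accordingly and using the $\h$-invariance of each summand, $[\h,v]\subseteq W$ forces $[\h,w']\subseteq W \cap W' = 0$, whence $w' \in (W')^{\h} \subseteq \bbV^{\h} \cap W' = 0$ by (i), and $v = w \in W$.

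Part (iii) is then a clean corollary: if $\g$ is semisimple, $\BK$ is non-degenerate, so $W = \g^{\perp} = 0$ and (i) forces $\bbV^{\h}=0$. Identity (ii) collapses to $[\h,\bbV]^{\perp} = \h$, and since $\BK|_{\bbV}$ is also non-degenerate in the semisimple case this yields $[\h,\bbV]=\bbV$. I expect the main obstacle to be the chain of manipulations in (i): the order must be chosen so that $[F,v]$, and not $[F,v']$ or $[v',F]$, appears in the final expression, since only the former vanishes by the hypothesis $v \in \bbV^{\h}$.
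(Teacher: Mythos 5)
Your proposal is correct, and parts (i) and (iii) essentially coincide with the paper's argument: your chain $\BK(\mathfrak{R}(v,v'),F)=\BK([v,v'],F)=\BK(v',[F,v])$ is exactly the invariance identity \eqref{van0} that the paper isolates, and your dimension count in (iii) is the same as the paper's injectivity-of-$v\mapsto\BK(v,\cdot)_{\vert[\h,\bbV]}$ argument. Where you genuinely diverge is in (ii): the paper simply applies \eqref{van0} a second time, reading off that for $v\in\bbV$ the condition $\BK(v,[\h,\bbV])=0$ is equivalent to $\BK(\mathfrak{R}(\bbV,v),\h)=0$, hence (by non-degeneracy of $\BK_{\vert\h}$) to $\mathfrak{R}(v,\bbV)=0$, i.e.\ $v\in W$ by \eqref{geo-int}; this settles the reverse inclusion in one line with no extra input. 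You instead convert $\BK(v,[\h,\bbV])=0$ into $[\h,v]\subseteq\g^{\perp}=W$, choose an $\h$-invariant complement $W'$ of $W$ (legitimate, since the isotropy representation is metrisable by Proposition \ref{met-c}, and $W$ is $\h$-invariant by Lemma \ref{rad-10}), and then feed the resulting $(W')^{\h}$ back into part (i). Both routes are sound; the paper's is shorter and uses only the non-degeneracy of $\BK_{\vert\h}$ already in play, while yours trades the second use of the curvature identity for complete reducibility, which is harmless here but is an additional (Riemannian) ingredient. One cosmetic point: in the reverse inclusion of (ii) you should note explicitly that $\BK(F,[\h,\bbV])=0$ holds automatically from $[\h,\bbV]\subseteq\bbV$ and $\BK(\h,\bbV)=0$, so that the hypothesis $F+v\in[\h,\bbV]^{\perp}$ really does reduce to $\BK(v,[\h,\bbV])=0$.
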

\begin{proof}
(i) using that the Killing form is invariant under $\g$ we obtain the general identity 
\begin{equation} \label{van0}
\BK(\mathcal{R}(v_1,v_2),F)=\BK(v_2,[F,v_1])
\end{equation}
with $F \in \h$ and $v_1,v_2 \in \bbV$. If $[\h,v_0]=0$ it follows that $\BK(\mathcal{R}(v_0,\bbV),\h)=0$ 
thus $v_0 \in \{v \in \bbV : R^{\D}(v,\bbV)=0\}=W$.\\
(ii) again from \eqref{van0} it follows that $[\h,\bbV]^{\perp}=\h \oplus \{v \in \bbV : R^{\D}(v,\bbV)=0\}$ and we conclude as above. \\
(iii)since $\g$ is semisimple we have $W=0$ thus $\bbV^{\h}=0$ and $[\h,\bbV]^{\perp}=\h$. In other words the map 
$\bbV \to \Lambda^1[\h,\bbV], v \mapsto \BK(v,\cdot)_{\vert [\h,\bbV]}$ is injective, thus $\bbV=[\h,\bbV]$ by a dimension argument.
\end{proof}

\section{Splitting the transvection algebra} \label{split-r1}
Let $\g$ be a Lie algebra. A Levi subalgebra is a a semisimple subalgebra $\Ll \subseteq\g$ such that $\g=\Ll \oplus \rad(\g)$, as a direct sum of vector spaces. Levi subalgebras exist, according to the Levi-Malcev theorem, however they are not necessarily unique. Their main use 
is to identify 
\begin{equation} \label{LE-S}
\g=\rad(\g) \rtimes_{\rho} \Ll
\end{equation}
where 
\begin{equation} \label{re-lev}
\rho:\Ll \to \Der(\rad(\g)), \ \  \rho(l)w:=[l,w]
\end{equation}
is the representation given by the Lie bracket.
\subsection{General observations} \label{gen-obs}
In this section we gather a few general facts and examples regarding $3$-symmetric Lie algebras $(\g,\sigma)$.These will in particular work for the subclass of Riemannian $3$-symmetric algebras.
Consider the finite(hence compact) subgroup $\mathbb{Z}_3:=\{1,\sigma,\sigma^2\}$ of $\Aut(\g)$. 
According to general results in \cite{taft1,taft2}(see also \cite{KN}, page 327) there exists a Levi subalgebra 
$\Ll \subseteq \g$ invariant under $\mathbb{Z}_3$ that is $\sigma(\Ll)=\Ll.$ 
Uniqueness for $\Ll$ will be addressed later on in the paper. Choose such a Levi subalgebra and let $\sigma_{\Ll}:=\sigma_{\vert \Ll}$. Since 
$\sigma \in \Aut(\g)$ it must preserve the radical thus $\sigma(W)=W$. Denote $\sigma_W:=\sigma_{\vert W}$; because $\sigma$ is an automorphism, using \eqref{re-lev} leads to 
\begin{equation} \label{c1-rep}
\rho \circ \sigma_{\Ll}=\sigma_W\rho \sigma_{W}^{-1}
\end{equation}
that is $\rho(\sigma_{\Ll}l)=\sigma_W \circ \rho(l) \circ \sigma_W^{-1}$ whenever 
$l \in \Ll$. In Lie theoretic language this can be interpreted as follows. Let $\Sigma_{W}$ be the automorphism of $\Der(W)$ defined according to
$\Sigma_W(f):=\sigma_W \circ f \circ \sigma_W^{-1}$. This satisfies $\Sigma_W^3=1_{\Der(W)}$ and \eqref{c1-rep} simply says that $\rho$ is a morphism of $3$-symmetric Lie algebras
\begin{equation} \label{3-m}
\rho \circ \sigma_{\Ll}=\Sigma_W \circ \rho.
\end{equation}
This is easily seen to be equivalent to 
\begin{equation} \label{type-3}
\begin{split}
&\rho(\ker(\sigma_{\Ll}-1)) \subseteq \ker(\Sigma_W-1)\ \mbox{and} \ \rho(\ker(\sigma_{\Ll}^2+\sigma_{\Ll}^2+1)) \subseteq \ker(\Sigma_W^2+
\Sigma_W+1)\\
& \rho(J_{\sigma_{\Ll}}x)=J_{\Sigma_W}(\rho(x)), \ x \in \ker(\Sigma_W^2+
\Sigma_W+1).
\end{split}
\end{equation}
Here the linear complex structures $J_{\sigma_{\LL}}$ respectively $J_{\Sigma_W}$ are constructed from the automorphisms $\sigma_{\Ll}$ respectively $\Sigma_W$ according to \eqref{J-cons}. 

Indicate with $T_W$ the torsion tensor of $(\Der(W),\Sigma_W)$. According to \eqref{type-3} the map $\rho$ preserves 
the canonical reductive decomposition of $(\Ll,\sigma_{\Ll})$ respectively $(\Der(W),\Sigma_W)$ thus having $\rho$ a morphism of $3$-symmetric Lie algebras entails 
\begin{equation} \label{TT3}
\rho(\tau(x,y))=T_W(\rho(x),\rho(y))
\end{equation} 
for all $x,y \in \ker(\sigma_{\Ll}^2+\sigma_{\Ll}^2+1)$.

Without making further assumptions on the radical \eqref{3-m} is not a particularly demanding constraint on the representation $\rho$ as the following  very important example  makes clear.
\begin{propn} \label{gen-exa}
Let $\pi:G \to \Aut(S)$ be a group representation, where $G$ is semisimple and $S$ solvable; indicate with $\rho:\g \to \Der(\s)$ the induced tangent representation.  Then 
\begin{equation*}
(\s \rtimes_{\rho} \g, \Ad(g)+\pi(g))
\end{equation*}
is $3$-symmetric whenever $g \in G$ satisfies $g^3=1$. 
\end{propn}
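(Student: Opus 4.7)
\medskip

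\noindent\textbf{Proof plan for Proposition \ref{gen-exa}.} The strategy is a direct verification at the Lie algebra level: write $\sigma := \Ad(g) \oplus \pi(g)$ as a block‐diagonal linear endomorphism of $\s \rtimes_{\rho} \g = \s \oplus \g$, and check that it has order $3$ and preserves the semi‐direct bracket \eqref{semi-1}. Here I abuse notation and also denote by $\pi(g):\s \to \s$ the differential at the identity of the Lie group automorphism $\pi(g)\in\Aut(S)$; this differential is a Lie algebra automorphism of $\s$ since $\pi(g)$ is a Lie group automorphism of $S$.

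The order condition $\sigma^3 = 1$ is immediate: on $\g$, $\Ad(g)^3 = \Ad(g^3) = \Ad(e) = 1_{\g}$; on $\s$, $\pi(g)^3 = \pi(g^3) = \pi(e) = 1_{\s}$ since $\pi$ is a group morphism. (If one insists on $\sigma \neq 1$ for the definition of $3$‐symmetric, one simply assumes $g \neq e$, and faithfulness of $\rho$ or $\Ad$ will prevent a nontrivial $g$ of order $3$ from yielding $\sigma = 1$.)

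The bracket preservation splits into three cases. On the ideal $\s$ and on the subalgebra $\g$ separately, $\sigma$ acts as $\pi(g)$ and $\Ad(g)$ respectively, each of which is already a Lie algebra automorphism of its factor; so
\[
\sigma[s_1,s_2]_{\s} = \pi(g)[s_1,s_2]_{\s} = [\pi(g)s_1,\pi(g)s_2]_{\s},
\qquad
\sigma[X_1,X_2]_{\g} = [\Ad(g)X_1,\Ad(g)X_2]_{\g}.
\]
The only nontrivial case is the mixed bracket $[X,s] = \rho(X)s$. For this, the key identity is the equivariance
\[
\pi(g)\circ \rho(X)\circ \pi(g)^{-1} \;=\; \rho(\Ad(g)X), \qquad X\in\g,
\]
which follows by differentiating the relation $\pi(g\exp(tX)g^{-1}) = \pi(g)\pi(\exp(tX))\pi(g)^{-1}$ at $t=0$ (a consequence of $\pi$ being a group homomorphism). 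Using this identity:
\[
\sigma[X,s] \;=\; \pi(g)\rho(X)s \;=\; \rho(\Ad(g)X)\pi(g)s \;=\; [\Ad(g)X, \pi(g)s] \;=\; [\sigma X,\sigma s],
\]
as required.

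There is no real obstacle in this argument: the proposition is essentially a formal consequence of the functoriality of the semi‐direct product construction under equivariant pairs of automorphisms. The hypothesis that $G$ is semisimple and $S$ is solvable plays no role at this stage — it only enters later, via the Levi–Malcev identification \eqref{LE-S}, in recognising that every Riemannian $3$‐symmetric algebra with nontrivial radical arises in this semi‐direct form and justifying why the hypotheses of Proposition \ref{gen-exa} constitute the generic situation the remark is meant to illustrate.
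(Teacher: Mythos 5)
Your proposal is correct and follows essentially the same route as the paper: the paper's proof consists precisely of noting the equivariance identity $\rho\circ\Ad(g)=\pi(g)\,\rho\,\pi(g)^{-1}$ and observing that this is condition \eqref{c1-rep} with $\sigma_{\Ll}=\Ad(g)$, $\sigma_W=\pi(g)$, which is exactly the mixed-bracket check you carry out explicitly (the order-$3$ property being immediate in both treatments). Your version merely spells out the bracket verification that the paper delegates to the earlier discussion around \eqref{c1-rep}.
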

\begin{proof}
We have   $\rho \circ \Ad(g)=\pi(g) \rho \pi(g^{-1})$ for all $g \in G$. If $g \in G$ is such that $g^3=1$ equation \eqref{c1-rep} is satisfied when taking $\sigma_{\Ll}=\Ad(g)$ and $\sigma_W=\pi(g)$.
\end{proof}

\begin{rem}
The significance of Proposition \ref{gen-exa} is as follows. As the Killing form restricted to $\h$ is degenerate for all these examples, it is not possible to equip them with any compatible Riemannian 3-symmetric metric. We conclude that the class of 3-symmetric spaces is far more general  than Riemannian 3-symmetric spaces and that the techniques developed in this paper will not be applicable to tackle the classification problem for 3-symmetric spaces in general. \end{rem}

\subsection{Algebras of type II} \label{typII}
As established in Theorem \ref{rad11} the radical $W$ of a Riemannian $3$-symmetric algebra is an algebra of type II; in particular it is nilpotent by Corollary \ref{II-nil}. Thus letting $\g$ be an algebra of type II, we first recall that having $\h=0$ yields
$\g=\bbV=W$. 

The aim in this section is to describe the $3$-symmetric algebra $(\Der(W),\Sigma_W)$ in some detail. In turn, this is needed to understand the main properties of the representation $\rho$ above. A crucial piece of information, which follows from having $\ker(\sigma_W-1)=0$ is that 
\begin{equation} \label{sigW}
\sigma_W=z_0+z_1J_W.
\end{equation}

Define 
\begin{equation*}
\begin{split}
&\Der_{J_W}(W):=\Der(W) \cap \gl_{J_W}(W), \ \Der_{J_W}^{\perp}(W):=\Der(W) \cap \gl_{J_W}^{\perp}(W).\\ 
\end{split}
\end{equation*}
A key ingredient needed for proving the main splitting result in this section is the following 

\begin{lemma} \label{DER0} 
Let $W$ be $3$-symmetric of type II. We have
\begin{equation*}
\begin{split}
&\ker(\Sigma_W-1)=\Der_{J_W}W\\
&\ker(\Sigma_W^2+
\Sigma_W+1)= \Der^{\perp}_{J_W}W\\
&T_W=0.
\end{split}
\end{equation*} 
\end{lemma}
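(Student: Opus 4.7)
My starting point is the explicit formula \eqref{sigW}: in the type II case $\h=0$ forces $\ker(\sigma_W-1)=0$, so $\sigma_W=z_0\mathbf{1}_W+z_1J_W$ with $z_1\neq 0$, and in particular $\sigma_W$, $\sigma_W^{-1}$ and $J_W$ pairwise commute. Since $\sigma_W^3=1$ while $\sigma_W\neq 1$ has no nonzero fixed vector, the minimal polynomial of $\sigma_W$ as an endomorphism of $W$ is $x^2+x+1$. This algebraic rigidity is what drives the whole argument.

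To prove the first two identities I would compare two splittings of $\gl(W)$. On one hand $\Sigma_W$ extends naturally to all of $\gl(W)$, still satisfying $\Sigma_W^3=1$, yielding the abstract eigenspace decomposition
\[
\gl(W)=\ker(\Sigma_W-1)\oplus\ker(\Sigma_W^2+\Sigma_W+1).
\]
On the other hand there is the concrete $J_W$-decomposition $\gl(W)=\gl_{J_W}(W)\oplus\gl_{J_W}^{\perp}(W)$. For $f\in\gl_{J_W}(W)$, $f$ commutes with $\sigma_W$, so $\Sigma_W(f)=f$. For $f\in\gl_{J_W}^{\perp}(W)$, a one-line computation based on $fJ_W=-J_Wf$ gives $\sigma_Wf=f\sigma_W^{-1}$, hence $\Sigma_W(f)=f\sigma_W^{-2}=f\sigma_W$; since right-multiplication by $\sigma_W\in\gl_{J_W}(W)$ preserves $\gl_{J_W}^{\perp}(W)$, iteration produces $\Sigma_W^k(f)=f\sigma_W^k$ and then
\[
(\Sigma_W^2+\Sigma_W+1)f=f(\sigma_W^2+\sigma_W+1)=0.
\]
The two decompositions therefore coincide. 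Intersecting with $\Der(W)$, which is $\Sigma_W$-stable because $\sigma_W\in\Aut(W)$, yields the claimed formulas for $\ker(\Sigma_W-1)$ and $\ker(\Sigma_W^2+\Sigma_W+1)$.

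For $T_W=0$, I use that the torsion at the infinitesimal level is the projection of the bracket onto the $\bbV$-component, and by the previous step $\bbV=\Der_{J_W}^{\perp}(W)$ here. For $f,g\in\Der_{J_W}^{\perp}(W)$, the identity
\[
J_W(fg)=(J_Wf)g=-fJ_Wg=fg\,J_W
\]
shows that both $fg$ and $gf$ commute with $J_W$; hence the Lie-algebra commutator $[f,g]=fg-gf$, a derivation of $W$, lies in $\Der_{J_W}(W)$, so its $\Der_{J_W}^{\perp}(W)$-component $T_W(f,g)$ vanishes.

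I do not anticipate any serious obstacle: once the explicit shape of $\sigma_W$ in the type II setting is in hand, every step is a direct calculation. The only mild subtlety is remembering that $\gl_{J_W}^{\perp}(W)$ is not a subalgebra of $\gl(W)$ but a two-sided module over $\gl_{J_W}(W)$, which is precisely what justifies the iteration $\Sigma_W^k(f)=f\sigma_W^k$.
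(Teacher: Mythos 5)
Your proof is correct and takes essentially the same route as the paper: the first two identities are precisely the ``direct algebraic computation based on $\sigma_W=z_0+z_1J_W$'' that the paper invokes, which you merely organize as a comparison of the eigenspace decomposition of $\gl(W)$ under $\Sigma_W$ with the splitting $\gl(W)=\gl_{J_W}(W)\oplus\gl^{\perp}_{J_W}(W)$ before intersecting with the $\Sigma_W$-stable subalgebra $\Der(W)$. Your torsion argument is exactly the paper's observation that $[\Der^{\perp}_{J_W}W,\Der^{\perp}_{J_W}W]\subseteq\Der_{J_W}W$.
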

\begin{proof}
The first two equalities above follow by direct algebraic computation based on \eqref{sigW}. The vanishing of the torsion is granted 
by $[\Der^{\perp}_{J_W}W, \Der^{\perp}_{J_W }W] \subseteq  \Der_{J_W}W$.
\end{proof}
In particular the canonical reductive decomposition of $(\Der(W),\Sigma_W)$ reads 
\begin{equation} \label{der-nnJ}
\Der(W)=\Der_{J_W}W\oplus \Der^{\perp}_{J_W}W.
\end{equation}
Yet another piece of information needed in what follows is contained in the following 
\begin{lemma} \label{DER} 
Let $W$ be $3$-symmetric of type II. We have
\begin{equation} \label{der-nnJ1}
\Der^{\perp}_{J_W}W=\{f \in \gl^{\perp}_{J_W}W : fW \subseteq \mathfrak{z}(W) \ \mbox{and} \ f[W,W]=0\}.
\end{equation}
\end{lemma}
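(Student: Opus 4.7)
The inclusion $\supseteq$ is immediate. Suppose $f \in \gl^{\perp}_{J_W}W$ satisfies $fW \subseteq \z(W)$ and $f[W,W]=0$. Then for any $v,w \in W$, we have $f\tau(v,w)=0$ since $\tau(v,w) \in [W,W]$, while $\tau(fv,w)=\tau(v,fw)=0$ since $fv,fw \in \z(W)$. Hence the derivation identity $f\tau(v,w)=\tau(fv,w)+\tau(v,fw)$ holds trivially and $f \in \Der(W)$.

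For the reverse inclusion, the plan is to exploit the interaction between the anti-commutation property $fJ_W=-J_Wf$ and the Hermitian symmetries of $\tau$ from Lemma \ref{basic1}. First I note that \eqref{TJ1} immediately implies also $\tau(J_Wv,w)=-J_W\tau(v,w)$. Now, pick $f \in \Der^{\perp}_{J_W}W$ and apply the derivation identity once to the pair $(v,w)$, then once to $(J_Wv,w)$. The second identity reads
\begin{equation*}
f\tau(J_Wv,w)=\tau(fJ_Wv,w)+\tau(J_Wv,fw).
\end{equation*}
Using $\tau(J_Wv,\cdot)=-J_W\tau(v,\cdot)$ on the left and right hand sides, together with $fJ_W=-J_Wf$ in the first summand, this simplifies to
\begin{equation*}
f\tau(v,w)=\tau(fv,w)-\tau(v,fw).
\end{equation*}
Comparing with the original derivation identity $f\tau(v,w)=\tau(fv,w)+\tau(v,fw)$ forces $\tau(v,fw)=0$ for all $v,w \in W$, i.e., $fW \subseteq \z(W)$. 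Substituting this back into the derivation identity yields $f\tau(v,w)=0$, hence $f[W,W]=0$.

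This argument is essentially computational; the only subtlety is keeping track of signs in the Hermitian identities. No use of the Jacobi identity for $W$ or of the $2$-step nilpotency is required beyond what is already encoded in the fact that $\tau$ is the bracket of $W$. Thus both conditions on $f$ drop out of the single observation that the two derivation identities, obtained by the substitution $v \mapsto J_Wv$, must coincide.
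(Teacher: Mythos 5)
Your proposal is correct and takes essentially the same route as the paper: both proofs decouple the derivation identity $f\tau(w_1,w_2)=\tau(fw_1,w_2)+\tau(w_1,fw_2)$ using the Hermitian symmetries of $\tau$ from \eqref{TJ1} together with $fJ_W+J_Wf=0$ — you substitute $v\mapsto J_Wv$ in one slot, while the paper first uses the substitution in both slots and then in one, but it is the same sign-bookkeeping computation yielding $\tau(\cdot,f\cdot)=0$ and $f[W,W]=0$. The only cosmetic difference is that you spell out the easy inclusion $\supseteq$, which the paper leaves implicit.
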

\begin{proof}
Pick $f \in \Der_{J_W}^{\perp}(W)$. Because $\tau(Jw_1,Jw_2)=-\tau(w_1,w_2)$ and $fJ_W+J_Wf=0$ the equation 
$f\tau(w_1,w_2)=\tau(fw_1,w_2)+\tau(w_1,fw_2)$ decouples as $f\tau(w_1,w_2)=0$ and $\tau(fw_1,w_2)+\tau(w_1,fw_2)=0$. The first equation means  
that $f[W,W]=0$.  Using that $\tau(w_1,J_Ww_2)=-J_W\tau(w_1,w_2)$ and again $fJ_W+J_Wf=0$ shows that the second equation is equivalent with 
$\tau(fw_1,w_2)=0$ and the claim is proved. 
\end{proof}
This will also be used in section \ref{der-tv} to show how $3$-symmetric Riemannian algebras can be recovered from their transvection algebra.
\begin{rem} \label{split-nilc}
Assume that $\n$ is a Lie algebra with $[\n,[\n,\n]]=0$. If $[\n,\n]=\z(\n)$ we have $\Der(\n,g_{\n})=\Der(\n,J_{\n},g_{\n})$ by \eqref{der-nnJ}. Choosing
a $J_{\n}$-invariant Riemannian metric $g_{\n}$ on $\n$, one can always split $\n=\tilde{\n} \oplus \mathfrak{a}$, orthogonally with respect to $g_{\n}$, where $[\tilde{\n},\tilde{\n}]=\z(\tilde{\n})$ and $\mathfrak{a}$ is abelian with $[\tilde{\n},\mathfrak{a}]=0$. However such a splitting depends on the choice of the metric so it is not canonical.
\end{rem}
\subsection{Abelian radical} \label{abrad}
At this stage it is convenient to summarise the main algebraic properties of representations $\rho$ satisfying \eqref{c1-rep} 
under the additional assumptions that the radical $W$ is abelian with $\sigma_W=z_0+z_1J_W$ and 
$L$ is Hermitian symmetric with order $3$-automorphism given by \eqref{aut-symm}. In this situation $\Der(W)=\gl(W)$ thus the canonical reductive decomposition of $(\Der(W),\Sigma_W)$ is simply the usual Cartan decomposition 
$\gl(W)=\gl_{J_W}(W) \oplus \gl_{J_W}^{\perp}(W)$ with respect to which $J_{\Sigma_W}(f)=f \circ J_W $ for $f \in \gl_{J_W}^{\perp}(W)$. 
From \eqref{type-3} it thus follows that in this set-up, representations satisfying \eqref{type-3} are in $1:1$ correspondence with linear representations satisfying (ii) and (iii) in the Definition below.
\begin{defn} \label{adm-ff}
Let $\mL$ be a Hermitian symmetric Lie algebra with reductive decomposition $\mL=\Kk \oplus \mathcal{H}$ and let $\VV$ be a real vector space equipped with a linear complex structure $J_{\VV}$. A faithful linear representation $\rho:\mL \to \mathfrak{sl}(\VV)$ is called admissible provided that 
\begin{itemize}
\item[(i)] the set of fixed vectors $\VV^{\mL}=0,$
\item[(ii)] we have 
\begin{equation} \label{adm-inv}
\rho(\Kk) \subseteq \gl_{J_{\VV}}(\VV), \rho(\mathcal{H}) \subseteq \gl^{\perp}_{J_{\VV}}(\VV), 
\end{equation}
and
\item[(iii)]  
\begin{equation} \label{ad-ma}
\rho(J_{\mathcal{H}}x)=\rho(x)\circ J_{\VV}
\end{equation}
for all $x \in \mathcal{H}$. 
\end{itemize}
\end{defn}
The reason for making the technical assumption $\VV^{\mL}=0$ will be clarified later on. Admissible representations will play a pivotal r\^ole in this paper. One of their important features is to split in a canonical way according to the splitting of $\mL$ into simple ideals. This will be proved below based on the following preliminary Lemma which is proved by direct computation based on \ref{ad-ma}.

\begin{lemma} 
Let $(\mL, \rho,\VV)$ be admissible. We have 
\begin{equation} \label{jordan}
2\rho(x)\circ \rho(y)=J_V \circ \rho([J_{\mathcal{H}}x,y])+\rho([x,y])
\end{equation}
for all $x,y \in \mathcal{H}$.
\end{lemma}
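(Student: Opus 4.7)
The plan is to establish \eqref{jordan} by a direct manipulation of the three defining properties of an admissible representation, exploiting the fact that $\rho$ is a Lie algebra morphism. Since $[x,y] \in [\H,\H] \subseteq \Kk$ and $[J_\H x, y] \in \Kk$, both $\rho([x,y])$ and $\rho([J_\H x, y])$ commute with $J_\VV$ by \eqref{adm-inv}; this will be used at the end.

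First I would rewrite the symmetrisation $\rho(x)\rho(y)+\rho(y)\rho(x)$ using \eqref{ad-ma} and the fact that $\rho(\H) \subseteq \gl^{\perp}_{J_\VV}(\VV)$. Specifically, from \eqref{ad-ma} applied with $x$ replaced by $J_\H x$ together with $J_\H^2 = -1_{\H}$, one gets $\rho(J_\H x)\circ J_\VV = -\rho(x)$, i.e.\ $\rho(x) = -\rho(J_\H x)\circ J_\VV$. Anti-commuting $J_\VV$ past $\rho(y) \in \gl^{\perp}_{J_\VV}(\VV)$, I would compute
\begin{equation*}
\rho(J_\H x)\rho(y) = \rho(x)J_\VV\rho(y) = -\rho(x)\rho(y)J_\VV,
\qquad
\rho(y)\rho(J_\H x) = \rho(y)\rho(x)J_\VV,
\end{equation*}
so that
\begin{equation*}
\rho([J_\H x, y]) \;=\; [\rho(J_\H x),\rho(y)] \;=\; -\bigl(\rho(x)\rho(y)+\rho(y)\rho(x)\bigr)J_\VV.
\end{equation*}

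Since $[J_\H x, y] \in \Kk$, the endomorphism $\rho([J_\H x, y])$ commutes with $J_\VV$ by \eqref{adm-inv}; multiplying the displayed identity on the right by $J_\VV$ and using $J_\VV^2 = -1_\VV$ yields
\begin{equation*}
J_\VV \circ \rho([J_\H x, y]) \;=\; \rho(x)\rho(y)+\rho(y)\rho(x).
\end{equation*}
Adding the standard morphism identity $\rho([x,y]) = \rho(x)\rho(y)-\rho(y)\rho(x)$ gives the desired \eqref{jordan}. The argument is entirely mechanical; the only point that requires care is making sure the sign arising from $J_\H^2 = -1$ on $\H$ is tracked correctly against the anti-commutation of $J_\VV$ with $\rho(\H)$, and that one uses at the end the $J_\VV$-invariance of $\rho(\Kk)$ to move $J_\VV$ across $\rho([J_\H x,y])$. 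There is no real obstacle here — the lemma is essentially the Jordan-triple-type identity that encodes admissibility as a $\mathbb{Z}_2$-graded quadratic relation on $\rho$.
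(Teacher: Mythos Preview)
Your proof is correct and is precisely the direct computation based on \eqref{ad-ma} that the paper indicates; the paper omits the details but your argument is the intended one.
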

We also record the following standard facts. 
\begin{lemma} \label{ricci}
Let $\mL=\Kk \oplus \mathcal{H}$ be a Hermitian symmetric Lie algebra. The set $\mathcal{H}^{\Kk}:=\{x \in \mathcal{H} : [\Kk,x]=0\}$ vanishes identically. Moreover  $[\Kk,\mathcal{H}]=\mathcal{H}$ and  $[\mathcal{H},\mathcal{H}]=\Kk$.
\end{lemma}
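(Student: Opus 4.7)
\textbf{Proof plan for Lemma \ref{ricci}.}

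For the vanishing of $\mathcal{H}^{\Kk}$, the plan is to use the distinguished element $z \in \z(\Kk)$ from the Fact following Definition \ref{hsd}, which satisfies $(\ad_z)_{\vert \mathcal{H}} = J_{\mathcal{H}}$. If $x \in \mathcal{H}^{\Kk}$, then in particular $[z,x]=0$, so $J_{\mathcal{H}} x = 0$; since $J_{\mathcal{H}}$ is a linear complex structure on $\mathcal{H}$, it is invertible and therefore $x = 0$. (Alternatively, Corollary~\ref{null-simp}(iii) applies verbatim to $\mL$ viewed as a Riemannian $3$-symmetric algebra via \eqref{aut-symm}, immediately yielding both $\mathcal{H}^{\Kk}=0$ and $[\Kk,\mathcal{H}]=\mathcal{H}$.)

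For $[\Kk,\mathcal{H}] = \mathcal{H}$, I would either invoke Corollary~\ref{null-simp}(iii) as above, or reason directly as follows. The isotropy representation of $\Kk$ on $\mathcal{H}$ is compactly embedded (Definition~\ref{hsd}(ii)), hence completely reducible, so $\mathcal{H} = \mathcal{H}^{\Kk} \oplus [\Kk,\mathcal{H}]$ as a $\Kk$-module; combined with $\mathcal{H}^{\Kk}=0$ this gives $[\Kk,\mathcal{H}] = \mathcal{H}$.

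The substantive part is showing $[\mathcal{H},\mathcal{H}] = \Kk$. Set $\Kk_0 := [\mathcal{H},\mathcal{H}] \subseteq \Kk$. Using Jacobi together with $[\Kk,\mathcal{H}] \subseteq \mathcal{H}$ one checks that $[\Kk,\Kk_0]\subseteq \Kk_0$, so $\Kk_0 \oplus \mathcal{H}$ is an ideal of $\mL$. Since $\mL$ is semisimple its Killing form $B$ is non-degenerate, and by the automorphism property of $\sigma^{\mL}$ we have $B(\Kk,\mathcal{H})=0$ (see \eqref{B-inv}); in particular $B$ restricts non-degenerately to each of $\Kk$ and $\mathcal{H}$. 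Therefore the $B$-orthogonal complement of the ideal $\Kk_0 \oplus \mathcal{H}$ inside $\mL$ equals $\Kk' := \Kk_0^{\perp} \cap \Kk$, which is again an ideal of $\mL$ (semisimplicity). Since $\Kk' \subseteq \Kk$ is an ideal of $\mL$, we get $[\Kk',\mathcal{H}] \subseteq \mathcal{H} \cap \Kk' = 0$.

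The closing step is to invoke faithfulness: by Definition~\ref{hsd}(ii) the isotropy representation $\iota:\Kk \to \gl_{J_{\mathcal{H}}}(\mathcal{H})$ integrates to an effective representation of a compact connected group, hence $\iota$ is injective. Since $[\Kk',\mathcal{H}]=0$ means $\iota(\Kk')=0$, we conclude $\Kk' = 0$ and thus $\Kk = \Kk_0 = [\mathcal{H},\mathcal{H}]$. The main obstacle I anticipate is precisely this last step: one must identify a $\Kk$-invariant complement to $[\mathcal{H},\mathcal{H}]$ inside $\Kk$ which is itself an ideal of $\mL$, in order to bring faithfulness of the isotropy representation to bear. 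The Killing-form argument above is clean because $B$ is already block-diagonal with respect to the decomposition $\mL = \Kk \oplus \mathcal{H}$.
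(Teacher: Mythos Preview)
Your proof is correct and aligns with the paper's own approach: the paper simply cites Helgason, Chapter~V, or points to Corollary~\ref{null-simp}(iii) applied to $\mL$ with the automorphism $\sigma$ from \eqref{aut-symm}, which is exactly the alternative you mention for $\mathcal{H}^{\Kk}=0$ and $[\Kk,\mathcal{H}]=\mathcal{H}$. For $[\mathcal{H},\mathcal{H}]=\Kk$ the paper gives no argument beyond the Helgason reference; your Killing-form/ideal argument, closed off via faithfulness of the isotropy representation (Proposition~\ref{met-c}(i)), is the standard one and is complete as written.
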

\begin{proof}
See \cite{He}, Chap.V or use Corollary \ref{null-simp},(iii) with $\sigma$ given by \eqref{aut-symm}.
\end{proof}
We can now make the following splitting result which will be used in the proof of the main structure theorem in the next section.
\begin{thm} \label{split-lasty}
Let $(\mL,\rho,\VV)$ be admissible. Then 
$(\mL,\rho,\VV)$ decomposes as a direct sum representation 
\begin{equation*}
(\mL,\rho,\VV)=(\mL_1,\rho_1,\VV_1) \oplus \ldots \oplus (\mL_r,\rho_r,\VV_r)
\end{equation*}
where $(\mL_i,\rho_i,\VV_i), 1 \leq i \leq r$ are admissible representations of simple 
Hermitian symmetric Lie algebras.
\end{thm}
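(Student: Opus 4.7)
The plan is to peel off the simple factors of $\mL$ by identifying, inside $\VV$, the subspaces on which all but one simple factor act trivially. First I would decompose $\mL=\mL_1\oplus\cdots\oplus\mL_r$ into simple Hermitian symmetric ideals, inheriting $\Kk=\bigoplus_i\Kk_i$, $\mathcal{H}=\bigoplus_i\mathcal{H}_i$, $J_{\mathcal{H}}=\bigoplus_i J_{\mathcal{H}_i}$ (this is the standard decomposition of a Hermitian symmetric Lie algebra). Writing $\mathcal{H}_{\neq i}:=\bigoplus_{j\neq i}\mathcal{H}_j$, the candidate summands are
\[
\VV_i:=\ker\rho(\mathcal{H}_{\neq i})=\{v\in\VV : \rho(y)v=0\ \text{for all}\ y\in\mathcal{H}_{\neq i}\}.
\]
The target is to show $\VV=\bigoplus_i\VV_i$, that $\mL_{\neq i}$ acts trivially on $\VV_i$, and that each restriction is admissible in the sense of Definition \ref{adm-ff}.

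The key computational input is the vanishing identity $\rho(x)\rho(y)=0$ whenever $x\in\mathcal{H}_i$ and $y\in\mathcal{H}_j$ with $i\neq j$. This follows in one line from \eqref{jordan}: since $[\mL_i,\mL_j]=0$, we have both $[J_{\mathcal{H}}x,y]=[J_{\mathcal{H}_i}x,y]=0$ and $[x,y]=0$. Two consequences are immediate. First, $\rho(\mathcal{H}_i)\VV\subseteq\VV_i$, since for any $y\in\mathcal{H}_j$ with $j\neq i$ we have $\rho(y)\rho(x)v=0$, hence $\rho(x)v\in\ker\rho(\mathcal{H}_{\neq i})=\VV_i$. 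Second, $\VV_i$ is $\mL$-invariant: $\rho(\mL_i)$ preserves $\VV_i$ because $\mL_i$ commutes with $\mathcal{H}_{\neq i}$, while $\rho(\mathcal{H}_{\neq i})$ annihilates $\VV_i$ by definition, and then $\rho(\Kk_{\neq i})=\rho([\mathcal{H}_{\neq i},\mathcal{H}_{\neq i}])$ also annihilates $\VV_i$.

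Next I would prove that the sum $\sum_i\VV_i$ is direct. If $\sum_i v_i=0$ with $v_i\in\VV_i$, then for any $i_0$ every $v_j$ with $j\neq i_0$ lies in $\ker\rho(\mathcal{H}_{i_0})$ (since $\mathcal{H}_{i_0}\subseteq\mathcal{H}_{\neq j}$), so applying $\rho(\mathcal{H}_{i_0})$ yields $\rho(\mathcal{H}_{i_0})v_{i_0}=0$. Combined with $v_{i_0}\in\ker\rho(\mathcal{H}_{\neq i_0})$ this places $v_{i_0}$ in $\ker\rho(\mathcal{H})$, and Lemma \ref{ricci} (via $\Kk=[\mathcal{H},\mathcal{H}]$) forces $v_{i_0}\in\VV^{\mL}=0$. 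To see that $\VV$ is exhausted, complete reducibility of $\rho$ (available since $\mL$ is semisimple) gives an $\mL$-invariant complement $\VV=\bigoplus_i\VV_i\oplus\VV''$. The inclusion $\rho(\mathcal{H}_i)\VV''\subseteq\VV_i\cap\VV''=0$ obtained above forces $\rho(\mathcal{H})\VV''=0$ and thus $\rho(\mL)\VV''=0$, so $\VV''\subseteq\VV^{\mL}=0$.

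The admissibility of each $(\mL_i,\rho|_{\mL_i},\VV_i)$ is then routine bookkeeping: $J_{\VV}$ preserves $\VV_i$ because $\rho(y)$ anticommutes with $J_{\VV}$ for $y\in\mathcal{H}_{\neq i}$, so $\rho(y)v=0$ entails $\rho(y)J_{\VV}v=0$; the fixed-vector condition $\VV_i^{\mL_i}=0$ reduces to $\VV^{\mL}=0$ since $\mL_{\neq i}$ acts as zero on $\VV_i$; faithfulness of the restriction is immediate because $\mL_j$ already acts trivially on $\VV_i$ for $j\neq i$, so any $x\in\mL_i$ acting trivially on $\VV_i$ would act trivially on all of $\VV$; and conditions \eqref{adm-inv}, \eqref{ad-ma} are inherited by restriction. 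I do not anticipate a genuine obstacle: the whole argument pivots on the one-line identity $\rho(x)\rho(y)=0$ extracted from \eqref{jordan}, after which everything is a direct consequence of complete reducibility and the hypothesis $\VV^{\mL}=0$.
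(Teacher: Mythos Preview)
Your proof is correct and essentially the same as the paper's: both pivot on the identity $\rho(x)\rho(y)=0$ for $x\in\mathcal{H}_i$, $y\in\mathcal{H}_j$ with $i\neq j$, obtained from \eqref{jordan}. The only difference is organizational---the paper defines the summands as images $\VV_1=\rho(\mathcal{H}_1)\VV$ and peels off one simple factor at a time by induction, whereas you define them as kernels $\VV_i=\ker\rho(\mathcal{H}_{\neq i})$ and handle all factors simultaneously.
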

\begin{proof}
It is enough to show that if the isotropy representation $(\Kk,\mathcal{H})$ of the symmetric Lie algebra $\mL$ splits so does $\rho$. Therefore  assume that $\mathcal{H}_1 \subseteq \mathcal{H}$ is a $\Kk$-invariant subspace such that $(\Kk,\mathcal{H}_1)$ irreducible. From the general theory of Hermitian symmetric algebras we have an $\Kk$-invariant splitting $\mathcal{H}=\mathcal{H}_1 \oplus \mathcal{H}^{\prime}$ where $\mathcal{H}^{\prime}$ is the orthogonal complement of $\mathcal{H}_1$, with respect to the restriction of the Killing form $B_{\mL}$ of $\mL$ to $\mathcal{H}$. 
 
In addition we have that $\mathcal{H}_1$ and $\mathcal{H}^{\prime}$ are $J_{\mathcal{H}}$-invariant and satisfy $[\mathcal{H}_1,\mathcal{H}^{\prime}]=0$; moreover $\Kk=\Kk_1 \oplus \Kk^{\prime}$, a direct sum of ideals satisfying $\Kk_1=[\mathcal{H}_1,\mathcal{H}_1], \Kk^{\prime}=[\mathcal{H}^{\prime},\mathcal{H}^{\prime}]$ as well as 
$[\Kk_1,\mathcal{H}^{\prime}]=[\Kk^{\prime},\mathcal{H}_1]=0$. 

Define linear subspaces 
of $\VV$ according to  
\begin{equation} \label{dec-1}
\VV_1=\rho(\mathcal{H}_1)\VV, \ \VV^{\prime}=\rho(\mathcal{H}^{\prime})\VV.
\end{equation}
Because $\rho(\mathcal{H}) \subseteq \mathfrak{gl}^{\perp}_{J_{\VV}}(\VV)$ the subspaces $\VV_1$ and $\VV^{\prime}$ are $J_{\VV}$-invariant and moreover 
\begin{equation*} \label{dec-2}
\rho(\mathcal{H}_1)\VV_1 \subseteq \VV_1, \ \rho(\mathcal{H}^{\prime})\VV^{\prime}\subseteq \VV^{\prime}
\end{equation*}
by \eqref{dec-1}. Since $[\mathcal{H}_1,\mathcal{H}^{\prime}]=0$ formula \eqref{jordan} ensures that $\rho(x) \circ \rho(y)=0$
for all $x \in \mathcal{H}_1, y \in \mathcal{H}^{\prime}$. Thus 
\begin{equation*} \label{dec-3}
\rho(\mathcal{H}_1)\VV^{\prime}=\rho(\mathcal{H}^{\prime})\VV_1=0.
\end{equation*}
Because $\rho$ is a representation and $[\mathcal{H}_1,\mathcal{H}_1]=\mathfrak{k}_1, [\mathcal{H}^{\prime},\mathcal{H}^{\prime}]=\mathfrak{k}^{\prime}$ these relations lead to 
$$ \rho(\Kk_1)\VV_1 \subseteq \VV_1, \ \rho(\Kk^{\prime})\VV^{\prime} \subseteq \VV^{\prime}
, \rho(\Kk_1)\VV^{\prime}=\rho(\Kk^{\prime})\VV_1=0.$$
Summarising, 
$$ \rho(\mL_1)\VV_1 \subseteq \VV_1, \rho(\mL^{\prime} )\VV^{\prime} \subseteq \VV^{\prime} , \ \rho(\mL_1)\VV^{\prime} =\rho(\mL^{\prime} )\VV_1=0
$$
where $\mL_1:=\mathfrak{k}_1 \oplus \mathcal{H}_1$ and $\mL^{\prime}:=\Kk^{\prime} \oplus \mathcal{H}^{\prime}$. Taking into account that 
$\mL=\mL_1 \oplus \mL^{\prime}$, a direct sum of Lie algebras it follows that $\VV_1 \cap V^{\prime} \subseteq \VV^{\mL}=0$. From $\rho(\mL)\VV=\VV$ we get that $\VV=\VV_1 \oplus \VV^{\prime}$ hence 
$\rho=\rho_1 \oplus \rho^{\prime}$ where $\rho_1:=\rho_{\vert \mL_1}$ and 
$\rho^{\prime}:=\rho_{\vert \mL^{\prime}}$. As it is well known, the Hermitian symmetric Lie algebra $\mL_1$ is simple since its isotropy representation $(\Kk_1,\mathcal{H}_1)$ is irreducible. The claim follows now by induction.
\end{proof}
\subsection{The splitting result} \label{rre}
In this section we assume throughout that $\g$ is a transvection Riemannian $3$-symmetric algebra, i.e. $\g=\underline{\g}$. 
\begin{thm} \label{split-11}
Let $(\g,\sigma)$ be Riemannian $3$-symmetric with $\ug=\g$. We have a direct product splitting of Lie algebras 
\begin{equation} \label{split-dir}
\g=\mL_0 \oplus \n \oplus (\VV \rtimes_{\rho} L) 
\end{equation}
where 
\begin{itemize}
\item[(i)]$(\mL_0,\sigma_0)$
is a semisimple Riemannian $3$-symmetric algebra 
\item[(ii)] $\n$ is a type II algebra
\item[(iii)] $\mL_i=\Kk_i \oplus \mathcal{H}_i$ are simple Hermitian symmetric Lie algebras, $\rho_i:\mL_i \to \Sl(\VV_i)$ with $1 \leq i \leq r$ are admissible representations 
and $(\mL,\rho,\VV)=\bigoplus \limits_{i=1}^r (\mL_i,\rho_i,\VV_i)$ is the product representation.

\end{itemize}
With respect to \eqref{split-dir} we have 
\begin{equation} \label{split-hv}
\begin{split}
&\h=\Kk_0 \oplus \Kk, \ \bbV=\mathcal{H}_0 \oplus \n \oplus (\VV \oplus \mathcal{H})\\
&W=\n \oplus \VV
\end{split}
\end{equation}
with $\Kk_0=\ker(\sigma_{0}-1_{L_0})$ and $\mathcal{H}_0=\ker(\sigma_{0}^2+\sigma_{0}+1_{L_0})$ as well as $\Kk=\bigoplus \limits_{i=1}^r \Kk_i, \mathcal{H}=\bigoplus \limits_{i=1}^{r}\mathcal{H}_i$ and 
$\VV=\bigoplus \limits_{i=1}^r \VV_i$.
\end{thm}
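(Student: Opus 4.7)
The plan is to peel off the $\rho$-kernel as the Type I summand $\mL_0$, identify the remaining semisimple part $\Ll^{\prime}$ as Hermitian symmetric, and split the radical $W$ into its trivial $\Ll^{\prime}$-isotypic component (the Type II piece $\n$) and its faithful part (the abelian piece $\VV$ carrying an admissible representation).

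First I apply the Taft--Mostow theorem recalled at the start of Section \ref{gen-obs} to obtain a $\sigma$-invariant Levi subalgebra $\Ll \subseteq \g$. By Theorem \ref{rad11} the radical is $W \subseteq \bbV$, so $\g = W \rtimes_\rho \Ll$ with $\rho(\ell)w = [\ell,w]$. The intertwining \eqref{3-m} makes $\ker\rho$ a $\sigma$-invariant ideal of $\Ll$; setting $\mL_0 := \ker\rho$ and taking $\Ll^{\prime}$ to be the sum of the simple ideals of $\Ll$ not contained in $\mL_0$ (necessarily $\sigma$-invariant) gives $\Ll = \mL_0 \oplus \Ll^{\prime}$ as a direct sum of Lie algebras. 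Since $[\mL_0, W] = 0$ by definition of $\ker\rho$ and $[\mL_0, \Ll^{\prime}] = 0$, this yields the coarse splitting $\g = \mL_0 \oplus (W \rtimes_\rho \Ll^{\prime})$. Then $(\mL_0, \sigma|_{\mL_0})$ is a semisimple Riemannian 3-symmetric algebra, the compactness of $\Kk_0 := \mL_0 \cap \h$ being inherited as an ideal of the compactly embedded $\h$.

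Next I show $(\Ll^{\prime}, \sigma|_{\Ll^{\prime}})$ is Hermitian symmetric. Lemma \ref{DER0} gives $T_W = 0$, so \eqref{TT3} forces $\rho(\tau(x, y)) = 0$ for all $x, y \in \mathcal{H}_{\Ll^{\prime}}$; faithfulness of $\rho|_{\Ll^{\prime}}$ yields $\tau|_{\mathcal{H}_{\Ll^{\prime}} \times \mathcal{H}_{\Ll^{\prime}}} = 0$, that is, $[\mathcal{H}_{\Ll^{\prime}}, \mathcal{H}_{\Ll^{\prime}}] \subseteq \Kk_{\Ll^{\prime}}$, showing $\Ll^{\prime}$ is symmetric. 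Writing $\sigma|_{\mathcal{H}_{\Ll^{\prime}}} = -\tfrac{1}{2} + \tfrac{\sqrt{3}}{2} J_{\mathcal{H}_{\Ll^{\prime}}}$ and combining $\sigma[x, y] = [\sigma x, \sigma y] = [x, y]$ with the analogous identity for $\sigma^2$, adding the two resulting equations cancels the $J$-linear terms and produces $[J x, J y] = [x, y]$, which is precisely \eqref{jinv}. Hence $\Ll^{\prime}$ is Hermitian symmetric and decomposes into simple Hermitian symmetric ideals.

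For the radical I decompose $W$ as an $\Ll^{\prime}$-module via complete reducibility: $W = W^{\Ll^{\prime}} \oplus \rho(\Ll^{\prime}) W$, and set $\n := W^{\Ll^{\prime}}$, $\VV := \rho(\Ll^{\prime}) W$. Lemma \ref{DER} gives $\rho(\mathcal{H}_{\Ll^{\prime}}) W \subseteq \z(W)$, and because $\Kk_{\Ll^{\prime}} = [\mathcal{H}_{\Ll^{\prime}}, \mathcal{H}_{\Ll^{\prime}}]$ (Lemma \ref{ricci}) and derivations preserve the center, we get $\rho(\Ll^{\prime}) W = \VV \subseteq \z(W)$. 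Hence $\VV$ is abelian, $[\n, \VV] = 0$, and $W = \n \oplus \VV$ is a Lie algebra direct sum; both summands are $J_W$-invariant since $J_W$ commutes with $\rho(\Kk_{\Ll^{\prime}})$ and anticommutes with $\rho(\mathcal{H}_{\Ll^{\prime}})$. Then $\n$ inherits a type II $3$-symmetric structure, while $(\Ll^{\prime}, \rho|_\VV, \VV)$ meets all three conditions of Definition \ref{adm-ff}: (i) $\VV^{\Ll^{\prime}} = 0$ by construction of $\VV$, (ii)--(iii) are precisely \eqref{type-3}. Applying Theorem \ref{split-lasty} decomposes $(\Ll^{\prime}, \rho, \VV) = \bigoplus_{i=1}^r (\mL_i, \rho_i, \VV_i)$ into admissible simple factors, producing \eqref{split-dir}; the matching decompositions \eqref{split-hv} of $\h$ and $\bbV$ follow by intersecting each summand with $\ker(\sigma - 1)$ and $\ker(\sigma^2 + \sigma + 1)$ respectively. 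The main obstacle I expect is identifying $\Ll^{\prime}$ as Hermitian symmetric: while the symmetric condition comes for free from $T_W = 0$ and faithfulness, the $J$-invariance \eqref{jinv} requires the explicit cube-root-of-unity arithmetic, and one must remember that the Hermitian constraint arises precisely because $\Ll^{\prime}$ acts faithfully on $W$, in contrast to the unconstrained $\mL_0$.
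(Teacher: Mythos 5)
Your outline follows the paper's strategy step for step (Taft's $\sigma$-invariant Levi factor, peeling off $\mL_0=\ker\rho$, using $T_W=0$ together with \eqref{TT3} to get $[\mathcal{H}',\mathcal{H}']\subseteq\Kk'$, splitting $W$ via Lemma \ref{DER}, and finishing with Theorem \ref{split-lasty}), but there is a genuine gap: you never use the hypothesis $\ug=\g$, and the statement is false without it. The hypothesis enters exactly where you pass from ``$[\mathcal{H}',\mathcal{H}']\subseteq\Kk'$ and \eqref{jinv}'' to ``$\Ll'$ is Hermitian symmetric''. Definition \ref{hsd} requires the isotropy representation $(\Kk',\mathcal{H}')$ to be compactly embedded, hence faithful, which for semisimple $\Ll'$ amounts to $[\mathcal{H}',\mathcal{H}']=\Kk'$. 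That equality is not automatic: a simple compact ideal $\mathfrak{s}\subseteq\Kk'$ fixed pointwise by $\sigma$, acting trivially on $\mathcal{H}'$ but nontrivially on $W$ by $J_W$-commuting derivations (for instance inside the centraliser of an admissible representation) is compatible with everything you have established --- the full isotropy representation $(\h,\bbV)$ remains faithful because $\mathfrak{s}$ acts nontrivially on $W\subseteq\bbV$ --- yet such an $\mathfrak{s}$ fits into none of the factors $\mL_0$, $\n$, $\VV_i\rtimes_{\rho_i}\mL_i$, so \eqref{split-dir} fails. This is exactly the phenomenon quantified later in Theorem \ref{thm-gen}, where a general Riemannian $3$-symmetric $\g$ is $\ug\rtimes\mathfrak{s}$ with $\mathfrak{s}$ inside $\Der(\n,g_{\n},J_{\n})\oplus\mathfrak{c}(\mL,\rho,g_{\VV})$; the hypothesis $\ug=\g$ is precisely what excludes such summands. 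The paper closes this point by observing that, $W$ being an ideal, $\uh=[\mathcal{H}_0,\mathcal{H}_0]_{\Kk_0}\oplus[\mathcal{H},\mathcal{H}]$, so $\uh=\h$ forces $[\mathcal{H},\mathcal{H}]=\Kk$; you need this (or some equivalent use of $\ug=\g$) before you may call $\Ll'$ Hermitian symmetric or decompose it into simple Hermitian symmetric ideals.

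The gap then propagates into your treatment of the radical: to get $\rho(\Ll')W\subseteq\z(W)$ you quote Lemma \ref{ricci} for $\Kk'=[\mathcal{H}',\mathcal{H}']$, but that lemma is a statement about Hermitian symmetric Lie algebras, so invoking it here is circular --- the equality $\Kk'=[\mathcal{H}',\mathcal{H}']$ is exactly what must first be extracted from $\uh=\h$. Once that is done, the remainder of your argument (including the faithfulness of $(\Kk_0,\mathcal{H}_0)$, which indeed follows from faithfulness of $(\h,\bbV)$ since $\Kk_0$ annihilates $W\oplus\mathcal{H}$, and the cube-root-of-unity computation for \eqref{jinv}, which reproduces \eqref{RJ1}) goes through as in the paper.
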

\begin{proof}
Choose a $\sigma$-invariant Levi factor $\Ll$ as explained in section \ref{gen-obs}. By Theorem \ref{rad11} the radical of $\g$ then satisfies $\mathfrak{r}(\g)=W$, see also \eqref{geo-int} for the definition of the latter.
The proof continues in several steps as follows.\\
{\it{Step 1: Reduction to $(\Ll,\rho,W)$ faithful.}}
 Consider the ideal $\mL_0:=\ker(\rho)$ of $\Ll$; since $\Ll$ is semisimple we can split 
$ \Ll=\mL_0 \oplus \mL$ orthogonally with respect to the Killing form $\BK_{\Ll}$ of $\Ll$. Then $\mL$ is an ideal in $\Ll$ in particular $[\mL_0,\mL]=0, \ [\mL,\mL]=\mL. $
By \eqref{c1-rep} we have $ \sigma_{\Ll}(\mL_0) \subseteq \mL_0. $ 
Since $\sigma_{\Ll} \in \Aut(\Ll)$ we have $\sigma_{\Ll}^{\star}\BK_{\Ll}=\BK_{\Ll}$ thus $ \sigma_{\Ll}(\mL) \subseteq \mL$
as well. Consider the automorphisms $\sigma_{0} \in \Aut(\mL_0)$ respectively $\sigma_{\mL} \in \Aut(\mL)$ given by $\sigma_{0}:=(\sigma_{\Ll})_{\vert \mL_0}$ respectively $\sigma_{\mL}:=(\sigma_{\Ll})_{\vert \mL}$. Since $\sigma_{0}^3=1_{\mL_0}$ and $\sigma_{\mL}^3=1_{\mL}$ we can further split 
$$ \mL_0=\Kk_0 \oplus \mathcal{H}_0, \ \mL=\Kk \oplus \mathcal{H}
$$
where the isotropy algebras $\Kk_0:=\ker(\sigma_{0}-1_{\mL_0}), \Kk:=\ker(\sigma_{\mL}-1_{\mL})$ and the spaces $\mathcal{H}_0:=\ker(\sigma_{0}^2+\sigma_{0}+1_{\mL_0}), \mathcal{H}:=\ker(\sigma_{\mL}^2+\sigma_{\mL}+1_{\mL})$. Thus 
\begin{equation} \label{ppsp}
\h=\mathfrak{k}_0 \oplus \Kk \ \mathrm{and} \ \bbV=W \oplus \mathcal{H}_0 \oplus \mathcal{H}.
\end{equation}
The claimed Lie algebra splitting is now straightforward as $[\mL_0,\mL]=0$ and $\rho(\mL_0)W=0$. Furthermore, the restriction to $\mathcal{H}_0$ of any metric in $\me_{\mathrm{H}}(\bbV)$ is $\Kk_0$-invariant; since $\mL_0$ is semisimple, Proposition \ref{reg1} thus allows concluding that the isotropy representation $(\Kk_0, \mathcal{H}_0)$ is compactly embedded. It follows that the $3$-symmetric 
Lie algebra $\mL_0$ has Riemannian type. See also Corollary \ref{cor-grp} below for a more constructive proof.\\
%
{\it{Step 2: $\mL$ is Hermitian symmetric.}} As explained above $\rho:(\mL,\sigma_{\mL}) \to (\Der(W),\Sigma_{W})$ is a morphism of $3$-symmetric algebras. 
 
Because $W$ is a type II algebra the $3$-symmetric algebra $(\Der(W),\Sigma_W)$ has vanishing torsion by Lemma \ref{DER}. Combining this fact with \eqref{TT3} 
yields $\rho(\tau(x,y))=0$ for all $x,y \in \mathcal{H}$. As $\rho$ is faithful on $\mL$ it follows that $\tau(\mathcal{H},\mathcal{H})=0$ that is 
$[\mathcal{H},\mathcal{H}] \subseteq \Kk$. Since $W$ is an ideal in $\g$ from \eqref{ppsp} we get $\uh=[\mathcal{H},\mathcal{H}]\oplus [\mathcal{H}_0,\mathcal{H}_0]_{\Kk_0}$. The assumption 
$\uh=\h$ thus forces $[\mathcal{H},\mathcal{H}]=\Kk$; since $\mL$ is semisimple this is easily seen to be equivalent with having the isotropy 
representation $(\Kk,\mathcal{H})$ faithful.

In particular the cubic automorphism $\sigma_{\mL}$ is of the form \eqref{aut-symm} with respect to the reductive splitting $\mL=\Kk \oplus \mathcal{H}$. The restriction $J_{\mathcal{H}}:=J_{\vert \mathcal{H}}$ is clearly $\Kk$-invariant and moreover $[J_{\mathcal{H}}x,J_{\mathcal{H}}y]=
[x,y]$ whenever $x,y \in \mathcal{H}$, by \eqref{RJ1}. Clearly, the restriction to $\mathcal{H}$ of any metric in $\me_{\mathrm{H}}(\bbV)$ is $\Kk$-invariant 
thus the isotropy representation $(\Kk, \mathcal{H})$ is compactly embedded, which finishes the proof of the claim. \\
{\it{Step 3: Splitting of $W$.}} Using the notation in section \ref{semi-Lie} let $\n:=W^{\mL}$ be the subspace of $W$ on which $L$ acts trivially; this is a subalgebra in $W$ since $\mL$ acts on the latter by derivations.   
Because $\mL$ is semisimple the representation $(\mL,\rho,W)$ is completely reducible by Weyl's Theorem thus 
$ W=\mathfrak{n} \oplus \VV$
a direct sum of vector spaces with $\rho(\mL)V \subseteq \VV$. Note that $\VV$ is uniquely determined from $\VV=\rho(\mL)W$ and satisfies $\VV^{\mL}=0$; see also \eqref{weyl-1} for considerations of this type. 
Combining \eqref{type-3} and Lemma \ref{DER0} yields 
\begin{equation} \label{pp-rr}
\rho(\h) \subseteq \Der_{J_W}W, \ \rho(\mathcal{H}) \subseteq \Der_{J_W}^{\perp}W.
\end{equation}
Thus Lemma \ref{DER} ensures that $\rho(x)W \subseteq \mathfrak{z}(W)$ for all $x \in \H$. Since $[\mathcal{H},\mathcal{H}]=\Kk$ this leads, after taking commutators, to $\rho(\mL)W \subseteq \mathfrak{z}(W)$, that is $\VV \subseteq \mathfrak{z}(W)$. It follows that $W=\n \oplus \VV$ is a Lie algebra splitting with $\VV$ abelian and $\n$ a Lie algebra of type $II$ . As $\mL$ acts trivially on $\n$ the representation $(\mL,\rho,\VV)$ is easily seen to be 
admissible by using \eqref{pp-rr} together with \eqref{c1-rep}.\\
{\it{Step 4: Splitting of $(\mL,\rho,\VV)$.}} This is done according to Theorem \ref{split-lasty}.\\
\end{proof}
We may also prove directly that any compact subgroup integrating the isotropy representation splits canonically; this provides an alternate proof of the fact that the factors $\mL_0$ respectively $\VV_i \rtimes_{\rho_i} \mL_i, 1 \leq i \leq r$ in Theorem 
\ref{split-11} are Riemannian $3$-symmetric. This argument 
is based on the following elementary
\begin{lemma} \label{grp-rep}
Let $\mathrm{H}$ be a connected and compact Lie group equipped with an effective representation $\pi:\mathrm{H} \to \GL(\VV)$ onto 
a vector space $\VV$. Assume that the representation $\rho=(d \pi)_e:\h\to \gl(V)$ of the Lie algebra $\h$ of $\mathrm{H}$ is a direct product representation, that is $\h=\Kk_0 \oplus \Kk$ is a direct product of Lie algebras and $\VV=\VV_1 \oplus \VV_2$ is a direct sum with 
$\rho(\Kk_0)\VV_2=0$ and $\rho(\Kk)\VV_1=0$. Then we have a Lie group isomorphism 
$$ \mathrm{H} \cong K_0 \times K_1
$$
where the Lie groups $K_0$ and $K_1$ are compact and connected with Lie algebras $\Kk_i, i=0,1$. Moreover $\pi$ is a direct product
$\pi=\pi_1 \times \pi_2$ where the representations $\pi_i : K_i \to \GL(\VV_i)$ are effective and satisfy $(d \pi_i)_e=\rho_i$ for $i=0,1$.
\end{lemma}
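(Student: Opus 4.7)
The plan is to construct $K_0$ and $K_1$ as identity components of kernels of natural subrepresentations on $\VV_1$ and $\VV_2$, then assemble them into a direct product decomposition of $\mathrm{H}$ via the multiplication map. The effectiveness hypothesis on $\pi$ enters crucially at one step to rule out the kernel of this map.

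First I would note that the hypotheses make $\VV_1$ and $\VV_2$ into $\h$-invariant subspaces, hence $\pi(\mathrm{H})$-invariant because $\mathrm{H}$ is connected. This yields auxiliary representations $\tilde{\pi}_1:\mathrm{H}\to\GL(\VV_1)$ and $\tilde{\pi}_2:\mathrm{H}\to\GL(\VV_2)$ with differentials $\rho_i:=(d\tilde{\pi}_i)_e$. Since $\pi$ is effective, $\rho$ is injective, and a short decomposition argument using $\h=\Kk_0\oplus\Kk$ upgrades this to $\ker\rho_1=\Kk$ and $\ker\rho_2=\Kk_0$.

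Next define $K_0:=(\ker\tilde{\pi}_2)_0$ and $K_1:=(\ker\tilde{\pi}_1)_0$, the identity components. Each is a closed, connected, normal subgroup of the compact group $\mathrm{H}$, hence itself compact, with Lie algebra $\Kk_0$ and $\Kk$ respectively. Since $[\Kk_0,\Kk]=0$ and both subgroups are connected, the commutator $[K_0,K_1]$ is a connected Lie subgroup with trivial Lie algebra, so $K_0$ and $K_1$ commute pointwise. The multiplication map
\[
\varphi:K_0\times K_1\to \mathrm{H},\qquad (k_0,k_1)\mapsto k_0k_1,
\]
is therefore a Lie group homomorphism whose differential at the identity is the vector space isomorphism $\Kk_0\oplus\Kk\xrightarrow{\sim}\h$. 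It is a local diffeomorphism, and connectedness of $\mathrm{H}$ forces surjectivity.

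The main obstacle is injectivity of $\varphi$, equivalently $K_0\cap K_1=\{e\}$. Only discreteness of the intersection is automatic (its Lie algebra is $\Kk_0\cap\Kk=0$), and in principle such an intersection can be a nontrivial finite group. This is exactly where effectiveness of $\pi$ is essential: any $k\in K_0\cap K_1$ satisfies $\tilde{\pi}_1(k)=1$ (as $k\in K_1\subseteq\ker\tilde{\pi}_1$) and $\tilde{\pi}_2(k)=1$ (as $k\in K_0\subseteq\ker\tilde{\pi}_2$), so $\pi(k)$ acts as the identity on $\VV=\VV_1\oplus\VV_2$, forcing $k=e$. Hence $\varphi$ is an isomorphism of Lie groups. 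Finally, setting $\pi_1:=\tilde{\pi}_1|_{K_0}$ and $\pi_2:=\tilde{\pi}_2|_{K_1}$ furnishes the direct product decomposition $\pi=\pi_1\times\pi_2$; effectiveness of each $\pi_i$ follows by the same argument just used to trivialise $K_0\cap K_1$, and $(d\pi_i)_e=\rho_i$ by construction.
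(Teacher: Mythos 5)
Your proposal is correct and follows essentially the same route as the paper: you take the identity components of the subgroups fixing $\VV_2$ and $\VV_1$ pointwise, identify their Lie algebras as $\Kk_0$ and $\Kk$ using effectiveness of $\pi$, note they commute since $[\Kk_0,\Kk]=0$ and are connected, use effectiveness again to get trivial intersection, and conclude $K_0K_1=\mathrm{H}$ by a Lie algebra/connectedness argument. The only cosmetic difference is that you phrase the surjectivity step via the multiplication map being a local diffeomorphism, while the paper argues that $K_0K$ is a connected subgroup with full Lie algebra; these are the same argument.
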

\begin{proof}
 Consider the closed(hence compact) Lie subgroups of $\mathrm{H}$ given by 
\begin{equation*}
\begin{split}
&\mathrm{H_1}:=\{h \in \mathrm{H} : \pi(h)v=v \ \mathrm{for \ all} \ v \ \in \VV_2\},\ \mathrm{H_2}:=\{h \in \mathrm{H} : \pi(h)v=v \ \mathrm{for \ all} \ v \ \in \VV_1 \}.
\end{split}
\end{equation*}
First record that the representations $(\Kk_i,V_i), i=0,1$ induced by $\rho$ are faithful; this can be established by exponentiating and using that $\pi$ is effective.  It follows 
that the Lie algebras of $H_i$ equal $\Kk_i$ for $i=0,1$. Now let $K_0:=H_1^0$ and $K=H_2^0$ where the superscript indicates the connected component through the unit element. Elements of $K_0$ and $K$ mutually commute since $[\Kk_0,\Kk_1]=0$ and 
$K_0$ and $K$ are connected. 
Since $\mathrm{H}$ acts effectively on $V$ it follows that $K_0 \cap K=\{e\}$. Therefore $K_0K$ is a connected subgroup of $\mathrm{H}$ with the same Lie algebra hence $K_0K=\mathrm{H}$. It follows that $\mathrm{H}$ is isomorphic to 
$K_0 \times K$ and since $K_0$ respectively $K$ are compact and act effectively on $\VV_0$ respectively $\VV_1$ the claim is proved
\end{proof}
Note that having $\mathrm{H}$ connected is essential above; if $\mathrm{H}$ is not connected we may have an irreducible 
representation of $\mathrm{H}$ that is reducible as a representation of its Lie algebra $\h$; see \cite{CS} for more details.
\begin{cor} \label{cor-grp}Let $(\g,\sigma)$ be Riemannian $3$-symmetric and let $\mathrm{H}$ be a connected and compact Lie group 
integrating the isotropy representation. Then we have a Lie group isomorphism
$$ \mathrm{H} \cong K_0 \times K_1 \times \ldots \times K_r
$$
where the Lie groups $K_0, K_i$ are compact and connected and integrate the isotropy representations of the Riemannian $3$-symmetric Lie algebras $\mL_0$ respectively 
$\VV_i \rtimes_{\rho_i} \mL_i$ for $1 \leq i \leq r$.
\end{cor}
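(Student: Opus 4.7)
The plan is to bootstrap from the algebraic splitting of Theorem \ref{split-11} to the group level via a repeated application of Lemma \ref{grp-rep}. First, recall that Theorem \ref{split-11} gives $\h=\Kk_0\oplus\Kk_1\oplus\cdots\oplus\Kk_r$ as a direct product of Lie algebras, together with a decomposition $\bbV=\mathcal{H}_0\oplus\n\oplus\bigoplus_{i=1}^r(\VV_i\oplus\mathcal{H}_i)$. The first thing I would verify is that the isotropy representation $\iota:\h\to\gl_J(\bbV)$ splits as a direct product compatible with this data: namely $\iota(\Kk_0)$ acts on $\mathcal{H}_0$ and kills all other summands, each $\iota(\Kk_i)$ ($i\ge 1$) acts on $\VV_i\oplus\mathcal{H}_i$ and kills all other summands, while $\n$ is annihilated by all of $\h$. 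This is immediate from the Lie-bracket structure: the direct-product splitting in \eqref{split-dir} forces $[\Kk_i,\Kk_j]=0$ and $[\Kk_i,\bbV_j]=0$ for $i\neq j$ (here I use $\bbV_i$ for the obvious summand of $\bbV$), and the Type~II ideal $\n$ is central with respect to the $\mL_i$ action by construction.

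Next I would apply Lemma \ref{grp-rep} iteratively. A single application splits off $K_0$: write $\h=\Kk_0\oplus(\Kk_1\oplus\cdots\oplus\Kk_r)$ and $\bbV=\mathcal{H}_0\oplus(\n\oplus\bigoplus_i(\VV_i\oplus\mathcal{H}_i))$; the hypotheses of the lemma are met by the previous paragraph, so $\mathrm{H}\cong K_0\times \mathrm{H}'$ where $K_0$ and $\mathrm{H}'$ are compact connected with Lie algebras $\Kk_0$ and $\Kk_1\oplus\cdots\oplus\Kk_r$ respectively, and moreover $\pi$ splits accordingly with $K_0$ acting effectively on $\mathcal{H}_0$ and $\mathrm{H}'$ acting effectively on the complementary summand. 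Iterating the same argument on $\mathrm{H}'$ (splitting off $K_1$, then $K_2$, etc.) yields the claimed decomposition $\mathrm{H}\cong K_0\times K_1\times\cdots\times K_r$ with each $K_i$ compact, connected, and acting effectively on its corresponding invariant subspace.

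Finally I would check that each factor indeed integrates the isotropy representation of the associated Riemannian $3$-symmetric Lie algebra factor. For $i\ge 1$, the invariant subspace for $K_i$ is $\VV_i\oplus\mathcal{H}_i$, which is precisely the ``$\bbV$-part'' of the reductive decomposition of $\VV_i\rtimes_{\rho_i}\mL_i$ (with isotropy $\Kk_i$); by construction the derived representation agrees with $\iota_{\vert \Kk_i}$, so this is exactly the integration required by Definition \ref{3sL}. The case $i=0$ is identical with invariant subspace $\mathcal{H}_0$.

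The only subtle point, and the one I expect to be the main obstacle, is ensuring that each factor $K_i$ that emerges from the iterated application of Lemma \ref{grp-rep} acts \emph{effectively} on its target summand, since the statement of the lemma requires this as a hypothesis and not merely as a conclusion. The issue is resolved by observing that at each inductive step the ambient group $\mathrm{H}'$ still acts effectively on the complementary summand $\bigoplus_{j\ge i}(\VV_j\oplus\mathcal{H}_j)$; this effectiveness is inherited from the effectiveness of $\pi$ itself, because a kernel element acting trivially on one of the complementary blocks would, together with the trivial action on $\mathcal{H}_0,\ldots,\mathcal{H}_{i-1}$ and on $\n$, produce a nontrivial element of $\ker\pi$, contradicting the definition of a Riemannian $3$-symmetric Lie algebra.
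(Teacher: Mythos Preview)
Your proposal is correct and follows essentially the same approach as the paper: invoke the algebraic splitting of the isotropy representation from Theorem~\ref{split-11} and then apply Lemma~\ref{grp-rep}. The paper's proof is a two-liner doing exactly this, while you have (correctly) spelled out the iteration needed to pass from the two-factor statement of Lemma~\ref{grp-rep} to $r+1$ factors.

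One small remark on your final paragraph: the ``subtle point'' you flag is less subtle than you suggest. Lemma~\ref{grp-rep} has effectiveness of the ambient representation as a \emph{hypothesis} and effectiveness of the factor representations as a \emph{conclusion}, so the iteration is self-feeding: the conclusion at step $i$ is precisely the hypothesis at step $i+1$. Your workaround via the global effectiveness of $\pi$ is correct but unnecessary.
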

\begin{proof}
By Theorem \ref{split-11} the isotropy representation $\h$ on $\bbV$ is a direct product with $\h=\Kk_0 \oplus \bigoplus_i \Kk_i$ and 
$\bbV=\H_0 \oplus \n \oplus \bigoplus \limits_{i=1}^r (\H_i \oplus \VV_i)$. The claim follows now from Lemma \ref{grp-rep}.
\end{proof}
For future reference, we note that $\mathrm{H}$ acts trivially on the nilpotent factor $\n$ in \eqref{split-dir}.

Type distinction between the factors in Theorem \ref{split-11} which are neither semisimple nor nilpotent of type II is made according to the following 
\begin{defn} \label{3+4}
A Lie algebra of the form $\g=\VV \rtimes_{\rho} \mL$ where $(\mL, \rho,\VV)$ is an admissible representation with 
$\mL$ simple is of type III provided $\mL$ has non-compact type. It is called of type IV provided $\mL$ has compact type.
\end{defn}
An implicit consequence of Theorem \ref{split-11} is to have the representation $(\mL,\rho,\VV)$ metrisable; that is it admits isotropy invariant metrics in the sense of the following 
\begin{defn}\label{def-iso-m} Let $(\mL,\rho,\VV)$ be an admissible representation. An isotropy invariant metric is an inner product on $\VV$ such that 
$\rho(\Kk) \subseteq \un(\VV,g_{\VV},J_{\VV})$.
\end{defn}
Their existence is in fact granted by general representation theoretic considerations, see section \ref{backgm}.
The set of isotropy invariant metrics will be denoted by
$\me_{\Kk}(\VV).$ How $\me_{\mathrm{H}}(\bbV)$ relates to this is explained in the next section.
\section{Metrisability and regularity} \label{admr1}
\subsection{Background metrics and Satake's work} \label{backgm} 
We outline here the specific types of isotropy invariant metrics supported by linear representations of semisimple Lie algebras. Existence, together with specific properties in the admissible case, is established with the aid of a classical result of Mostow.
\begin{defn} \label{bgr-m}
Let $(\mL,\rho,\VV)$ be a faithful linear representation where $\mL=\Kk \oplus \mathcal{H}$ is a semisimple 
symmetric Lie algebra. An inner product $h_{\VV}$ on $\VV$ is called a background metric provided that 
\begin{itemize}
\item[(i)] $h_{\VV}$ is isotropy invariant, $\rho(\Kk) \subseteq \so(\VV,h_{\VV})$
\item[(ii)] we have $\rho(\mathcal{H}) \subseteq \so(\VV,h_{\VV})$ (respectively 
$\rho(\mathcal{H}) \subseteq \Sym^2(\VV,h_{\VV})$) if $\mL$ has compact  (respectively non-compact) type. 
\end{itemize}
\end{defn}
The proof of existence is outlined below.
\begin{propn} \label{exist-b}
Let $(\mL,\rho,\VV)$ be a faithful representation where $\mL$ is a simple Lie algebra. 
Then $\VV$ carries a background metric.
\end{propn}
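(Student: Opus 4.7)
The plan is to split the argument according to whether the simple Lie algebra $\mL$ has compact or non-compact type, since these cases require essentially different constructions. In both cases the starting point is to exponentiate the representation $\rho$ to a smooth Lie group representation $\pi:G \to \GL(\VV)$, where $G$ denotes the simply connected Lie group with Lie algebra $\mL$, and to then produce an inner product on $\VV$ compatible with the Cartan decomposition $\mL=\Kk \oplus \mathcal{H}$.

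If $\mL$ is of compact type, then $G$ is itself compact, and averaging any chosen positive definite inner product on $\VV$ over $G$ against Haar measure produces a $\pi(G)$-invariant metric $h_{\VV}$. Differentiating the resulting orthogonality of $\pi(G)$ yields $\rho(\mL)\subseteq \so(\VV,h_{\VV})$; in particular both $\rho(\Kk)$ and $\rho(\mathcal{H})$ lie in $\so(\VV,h_{\VV})$, verifying Definition \ref{bgr-m} in this case.

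If $\mL$ is of non-compact type then Haar averaging over $G$ no longer makes sense, and averaging only over the maximal compact subgroup $K\subseteq G$ integrating $\Kk$ controls $\rho(\Kk)$ but gives no information on $\rho(\mathcal{H})$. The appropriate tool here is Mostow's decomposition theorem, which asserts that for any smooth representation of a connected semisimple real Lie group there exists an inner product on the target space relative to which the maximal compact subgroup acts by orthogonal transformations while $\exp(\mathcal{H})$ acts by positive-definite self-adjoint operators. Differentiating these two conditions at the identity simultaneously delivers $\rho(\Kk)\subseteq \so(\VV,h_{\VV})$ and $\rho(\mathcal{H})\subseteq \Sym^2(\VV,h_{\VV})$, which is precisely the defining property of a background metric in the non-compact case.

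The main obstacle is purely the non-compact case: without Mostow's theorem, $K$-averaging alone cannot force $\rho(\mathcal{H})$ to act by symmetric operators, and one would be left with elements of $\rho(\mathcal{H})$ whose skew part in the averaged inner product is unconstrained. Note that the faithfulness assumption on $\rho$ is not strictly needed to run the construction, since Mostow's theorem applies to arbitrary smooth representations; one could instead first pass to the quotient $\mL/\ker\rho$, which remains semisimple, and then invoke Mostow there. The hypothesis that $\mL$ is simple is used only to guarantee that $\mL$ is either entirely of compact or entirely of non-compact type, so that the dichotomy in Definition \ref{bgr-m} applies consistently.
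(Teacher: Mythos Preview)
Your proposal is correct and follows essentially the same approach as the paper: averaging in the compact case and invoking Mostow's result in the non-compact case. The only difference is one of formulation---the paper phrases Mostow's theorem as the extension of the Cartan involution $\rho(1_{\Kk}-1_{\mathcal{H}})$ of $\rho(\mL)$ to a Cartan involution $f\mapsto -f^T$ of $\mathfrak{sl}(\VV)$, whereas you state the equivalent group-level version in which $\pi(K)$ acts orthogonally and $\pi(\exp\mathcal{H})$ acts by positive self-adjoint operators; both lead immediately to $\rho(\Kk)\subseteq\so(\VV,h_{\VV})$ and $\rho(\mathcal{H})\subseteq\Sym^2(\VV,h_{\VV})$.
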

\begin{proof}
When $\mL$ has compact type this follows by the standard averaging trick over the compact connected group $G$ with Lie algebra $\mL$.
When $\mL$ has non-compact type the simple Lie subalgebra $\rho(\mL) \subseteq \mathfrak{sl}(\VV)$ has a Cartan involution given by 
$\rho(1_{\Kk}-1_{\mathcal{H}})$. By a classical result of Mostow(see \cite{Mo}, Thm.6) there exists a Cartan involution $\sigma^{\prime}$ of $\mathfrak{sl}(\VV)$ with $\sigma^{\prime}_{\vert 
\rho(\mL)}=\rho(1_{\Kk}-1_{\H})$. However any Cartan involution of $\mathfrak{sl}(V)$, in particular $\sigma^{\prime}$,
is of the form $f \mapsto -f^T$ where the transpose is taken with respect to some Riemannian 
metric $h_{\VV}$ on $\VV$. The latter metric thus satisfies $\rho(\Kk) \subseteq \so(\VV,h_{\VV}), 
\rho(\mathcal{H}) \subseteq \Sym^2(\VV,h_{\VV})$. 
\end{proof}
The space of isotropy invariant metrics in the sense of Definition \ref{bgr-m},(i) is thus non-empty and shall be denoted by $\me_{\Kk}(\VV).$
For the subclass of admissible representations we need the following analogous 
\begin{defn} \label{bgr-madm}
Let $(\mL,\rho,\VV)$ be an admissible representation where $\mL=\Kk \oplus \mathcal{H}$ is simple. 
An inner product $h_{\VV}$ on $\VV$, compatible with $J_{\VV}$ is called a background metric provided that 
\begin{itemize}
\item[(i)] $h_{\VV}$ is isotropy invariant, $\rho(\Kk) \subseteq \un(\VV,h_{\VV},J_{\VV})$
\item[(ii)] we have $\rho(\mathcal{H}) \subseteq \un(\VV,h_{\VV},J_{\VV})$ (respectively 
$\rho( \mathcal{H}) \subseteq \un^{\perp}(\VV,h_{\VV},J_{\VV})$) if $\mL$ has compact (respectively non-compact) type. 
\end{itemize}
\end{defn}
Existence follows from replacing a background metric $h_V$ as provided by Proposition \ref{exist-b} with $h_V+h_V(J_V\cdot, J_V\cdot)$. Thus the space of isotropy invariant metrics is not empty. We will always assume, without loss of generality, that if $\rho$ is admissible we are working with a background metric  in this modified sense, i.e. one which satisfies these additional properties with respect to $J_V$. We will lightly abuse notation by continuing to denote this set as $\me_{\Kk}(V)$.

Having thus recalled how metrisability works for representations of semisimple Lie algebras we can relate, at least partially, 
admissible representations, to a class of representations studied in \cite{Sat1}. 

Let $(\VV,J_{\VV})$ be a real vector space with linear complex structure $J_V$. With respect to some $J_{\VV}$-invariant inner product 
$h_{\VV}$ on $\VV$ we consider the symplectic form given by $\omega_{\VV}=h_{\VV}(J_{\VV} \cdot, \cdot)$. The Cartan splitting of the corresponding symplectic Lie algebra is 
\begin{equation} \label{cart-symp}
\mathfrak{sp}(\VV,\omega_{\VV})=\mathfrak{u}(\VV,h_{\VV},J_{\VV}) \oplus \mathfrak{u}^{\perp}(\VV,h_{\VV},J_{\VV})
\end{equation}
where $\mathfrak{u}^{\perp}(\VV,h_{\VV},J_{\VV})=
\Sym^2(\VV,h_{\VV}) \cap \gl^{\perp}(\VV,J_{\VV})$. In particular $z^{\prime}=-\frac{1}{2}J_{\VV}$ is a normalised generator of the center of $\mathfrak{u}(\VV,h_{\VV},J_{\VV})$.

\begin{propn} \label{symp1}
Let $(\mL,\rho,\VV)$ be an admissible representation where $\mL$ is of non-compact type. Whenever $h_{\VV}$ is a background metric on $\VV$ denote $\omega_{\VV}=h_{\VV}(J_{\VV} \cdot, \cdot)$. We have 
\begin{equation} \label{symp2}
\rho(\mL) \subseteq \mathfrak{sp}(\VV,\omega_{\VV}).
\end{equation}
The induced Lie algebra morphism satisfies 
\begin{equation} \label{symp3}
\rho \circ \ad_z=\ad_{z^{\prime}} \circ \rho 
\end{equation}
where $z^{\prime}$ denotes the normalised generator of the center of $\mathfrak{u}(\VV,h_{\VV},J_{\VV})$.
\end{propn}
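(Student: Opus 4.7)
The plan is to deduce both assertions directly from the Cartan decomposition \eqref{cart-symp} of $\mathfrak{sp}(\VV,\omega_\VV)$ together with the three defining properties of an admissible representation and the background metric property.

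For \eqref{symp2}, the first step is to observe that the Cartan splitting \eqref{cart-symp} already identifies $\mathfrak{sp}(\VV,\omega_\VV)$ as the direct sum of $\mathfrak{u}(\VV,h_\VV,J_\VV)$ and its orthogonal complement $\mathfrak{u}^\perp(\VV,h_\VV,J_\VV) = \Sym^2(\VV,h_\VV)\cap \gl^\perp_{J_\VV}(\VV)$. (The inclusion $\mathfrak{u}^\perp \subseteq \mathfrak{sp}$ is a one-line check: if $f$ is $h_\VV$-symmetric and anti-commutes with $J_\VV$, then $\omega_\VV(fu,v)+\omega_\VV(u,fv) = h_\VV(J_\VV fu, v) + h_\VV(J_\VV u, fv) = -h_\VV(J_\VV u, fv) + h_\VV(J_\VV u, fv)=0$.) By the background metric hypothesis in Definition \ref{bgr-madm}(i)--(ii), $\rho(\Kk)$ sits in $\mathfrak{u}(\VV,h_\VV,J_\VV)$ and $\rho(\H)$ sits in $\mathfrak{u}^\perp(\VV,h_\VV,J_\VV)$, so $\rho(\mL) \subseteq \mathfrak{sp}(\VV,\omega_\VV)$.

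For \eqref{symp3}, I verify the identity separately on the two factors of the reductive decomposition $\mL = \Kk \oplus \H$. On $\Kk$ both sides vanish: the left-hand side because $z \in \z(\Kk)$ gives $\ad_z|_\Kk = 0$, and the right-hand side because $\rho(\Kk) \subseteq \gl_{J_\VV}(\VV)$ commutes with $J_\VV$ and hence with $z' = -\tfrac12 J_\VV$. On $\H$, the normalisation $(\ad_z)_{\vert \H} = J_\H$ from \eqref{z-HSL} combined with the admissibility relation \eqref{ad-ma} gives $\rho(\ad_z x) = \rho(J_\H x) = \rho(x)\circ J_\VV$; meanwhile, for $f = \rho(x) \in \gl^\perp_{J_\VV}(\VV)$ one computes $[z',f] = -\tfrac12(J_\VV f - f J_\VV) = f \circ J_\VV$, matching the left-hand side.

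Neither step presents a real obstacle: the content is packaged in the three axioms of admissibility and the definition of the background metric, and the main work is organisational, namely recognising \eqref{symp2} as an immediate consequence of the Cartan decomposition and reading off \eqref{symp3} from the normalisation $(\ad_z)_{\vert \H} = J_\H$ together with \eqref{ad-ma}.
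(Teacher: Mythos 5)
Your proof is correct and follows essentially the same route as the paper: \eqref{symp2} is obtained by combining the admissibility conditions, the background metric property $\rho(\H)\subseteq \Sym^2(\VV,h_{\VV})$ and the Cartan splitting \eqref{cart-symp}, while \eqref{symp3} is read off from the normalisation \eqref{z-HSL} and the identity \eqref{ad-ma}. The paper states this in one terse line; you merely spell out the verifications (the inclusion $\un^{\perp}(\VV,h_{\VV},J_{\VV})\subseteq \mathfrak{sp}(\VV,\omega_{\VV})$ and the check of \eqref{symp3} on $\Kk$ and $\H$ separately), all of which are accurate.
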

\begin{proof}
Property \eqref{symp2} follows by putting together \eqref{adm-inv}, \eqref{cart-symp} and $\rho(\mathcal{H}) \subseteq \Sym^2(\VV,h_{\VV})$. Equation \eqref{symp3} is clearly equivalent to 
\eqref{ad-ma}. 
\end{proof}
Lie algebra morphisms $\rho:\mL \to \mathfrak{sp}(\VV,\omega_{\VV})$(or equivalently symplectic representations) satisfying \eqref{symp3} were fully classified by Satake in \cite{Sat1} in case $\mL$ is simple of non-compact type and $\rho$ is irreducible; note that therein \eqref{symp3} is referred to as property $(H_1)$. Satake's classification of admissible representations will be explained in section \ref{met-adm} of the paper. 
\begin{rem} \label{dual1}
\begin{itemize}
\item[(i)]Note that Satake does not treat admissible representations where $\mL$ has compact type. Instead he considers $\mL=\mL_0 \oplus  \mL_1$ where 
$\mL_0$ has compact type, the Hermitian symmetric Lie algebra $\mL_1=\Kk_1 \oplus \mathcal{H}_1$ has only simple factors of non-compact type and the representation $(\mL,\rho,V)$ satisfies 
$\rho(\mL_0 \oplus \Kk_1) \subseteq \mathfrak{u}(V,g_V,J_V), \ \rho(\mathcal{H}_1) \subseteq \mathfrak{u}^{\perp}(V,g_V,J_V)$ together with 
\eqref{ad-ma} on $\mathcal{H}_1$. Adding in the factor $\mL_0$ is irrelevant geometrically for then the isotropy representation of $\mL=(\mL_0 \oplus \Kk_1) \oplus \mathcal{H}_1$ is no longer faithful.
\item[(ii)] A weaker version of Theorem \ref{split-lasty} has been established in \cite{Sat1}[Theorem 2]. When $\mL$ is non-compact Theorem 
\ref{split-lasty} has been proved in \cite{Iha2}[Proposition1]; however the proof heavily uses root systems as well as previous work from \cite{Iha} and is particularly involved.
\item[(iii)] In section \ref{sat} we will show how to recover explicitely admissible representations of Lie algebras of compact type from those of non-compact, via a specific kind of duality principle.
\end{itemize}
\end{rem}
\subsection{Structure of the metrics} \label{split-r}
Our next aim is to determine the structure of metrics $g \in \me_{\mathrm{H}}(\bbV)$ with respect to the splitting of $\bbV$ in \eqref{split-hv}. We open by establishing the following preliminary 
\begin{lemma} \label{fix-2}
Whenever  $(\mL=\Kk\oplus \mathcal{H}, \rho,\VV)$ is admissible we have $\rho(\Kk)\VV=\VV$.
\end{lemma}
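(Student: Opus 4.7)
My plan is to reduce the equality $\rho(\Kk)\VV = \VV$ to showing that the fixed subspace $\VV^{\Kk} := \{v \in \VV : \rho(\Kk)v = 0\}$ is trivial. Since $\Kk$ is compactly embedded, the background metric supplied by Proposition \ref{exist-b} (see also Definition \ref{bgr-madm}) makes $\rho\vert_{\Kk}$ unitary, so the $\Kk$-representation on $\VV$ is completely reducible. Decomposing $\VV$ into $\Kk$-isotypical components, the trivial ones assemble into $\VV^{\Kk}$, while on every non-trivial irreducible $\Kk$-subspace $U_\alpha$ one has $\rho(\Kk) U_\alpha = U_\alpha$ (by irreducibility, the subrepresentation $\rho(\Kk) U_\alpha$ being non-zero). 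Consequently $\VV = \VV^{\Kk} \oplus \rho(\Kk)\VV$, and the claim reduces to $\VV^{\Kk} = 0$.

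The key computational input is the Jordan identity \eqref{jordan}. Fix $v \in \VV^{\Kk}$ and $x, y \in \mathcal{H}$. Both brackets $[J_{\mathcal{H}} x, y]$ and $[x, y]$ lie in $[\mathcal{H},\mathcal{H}] \subseteq \Kk$ by Lemma \ref{ricci}, hence annihilate $v$; applying \eqref{jordan} then yields $2\rho(x) \rho(y) v = 0$. Setting $W := \rho(\mathcal{H}) \VV^{\Kk}$, this says $\rho(\mathcal{H}) W = 0$. Using the bracket generation $\Kk = [\mathcal{H}, \mathcal{H}]$ from the same lemma, for any $x, y \in \mathcal{H}$ and $w \in W$ one has $[\rho(x),\rho(y)] w = \rho(x)(\rho(y) w) - \rho(y)(\rho(x) w) = 0$ since $\rho(\mathcal{H}) w \subseteq \rho(\mathcal{H}) W = 0$. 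Therefore $\rho(\Kk) W = \rho([\mathcal{H},\mathcal{H}])W = 0$, and together with $\rho(\mathcal{H}) W = 0$ this means $W \subseteq \VV^{\mL} = 0$. Hence $\rho(\mathcal{H})\VV^{\Kk} = 0$, and combined with the defining property $\rho(\Kk)\VV^{\Kk} = 0$ we obtain $\rho(\mL)\VV^{\Kk} = 0$; the admissibility hypothesis $\VV^{\mL} = 0$ then forces $\VV^{\Kk} = 0$, as desired.

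I do not anticipate any real obstacle: the argument cleanly chains together the three defining ingredients of admissibility, namely the Jordan-type quadratic relation \eqref{jordan}, the Hermitian symmetric bracket identity $[\mathcal{H},\mathcal{H}] = \Kk$, and the no-fixed-vectors condition $\VV^{\mL} = 0$. The only non-trivial conceptual move is the initial translation of ``$\rho(\Kk)\VV = \VV$'' into ``$\VV^{\Kk} = 0$'' via complete reducibility of the unitary $\Kk$-representation, which is a direct consequence of compact embeddability of $\Kk$.
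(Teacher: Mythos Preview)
Your proof is correct and follows essentially the same route as the paper: reduce to $\VV^{\Kk}=0$, use the Jordan identity \eqref{jordan} to get $\rho(\mathcal{H})\rho(\mathcal{H})\VV^{\Kk}=0$, set $W=\rho(\mathcal{H})\VV^{\Kk}$ and use $[\mathcal{H},\mathcal{H}]=\Kk$ to conclude $W\subseteq\VV^{\mL}=0$, then bootstrap to $\VV^{\Kk}\subseteq\VV^{\mL}=0$. The paper dispatches the final implication $\VV^{\Kk}=0\Rightarrow\rho(\Kk)\VV=\VV$ in one phrase (``standard orthogonality argument with respect to a background metric''), which is exactly your complete-reducibility remark spelled out.
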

\begin{proof}
First we show that $\VV^{\Kk}:=\{v \in \VV: \rho(\Kk)v=0\}$ vanishes identically. Clearly $J_{\VV} \VV^{\Kk}=\VV^{\Kk}$, thus \eqref{jordan} leads to 
$$ \rho(x)\rho(y)\VV^{\Kk}=0
$$
for all $x,y \in \mathcal{H}$. To interpret this in an invariant way consider the linear space $\VV_1:=\rho(\mathcal{H})\VV^{\Kk}$.
From $\rho(\mathcal{H})\VV_1=0$ and $[\mathcal{H},\mathcal{H}]=\Kk$(see Lemma \ref{ricci}, (iii)) it follows that $\rho(\Kk)\VV_1=0$ hence $\VV_1$ is acted on trivially by $\mL$. That is $\VV_1=0$ since $\VV^{\mL}=0$. According to the definition of $\VV_1$ we thus have 
$\rho(\mathcal{H})\VV^{\Kk}=0$; repeating the above argument leads to $\rho(\mL)\VV^{\Kk}=0$ and further to
$\VV^{\Kk} \subseteq \VV^{\mL}=0$. The claim follows by a standard orthogonality argument with respect to a background metric.\\
\end{proof}
The next technical ingredient we need is related to the eigenvalue structure of $\rho(z)J_{\VV}$, as follows.
\begin{lemma} \label{rho-z}
Assume that $(\mL,\rho,\VV)$ is admissible and denote $\VV_{\lambda}=\ker(\rho(z)+(\tfrac{1}{2}-\lambda)J_{\VV})$ whenever 
$\lambda \in \bbR$. Then 
$$ 2\lambda(\dim \VV_{\lambda}+\dim \VV_{-\lambda})=\dim \VV_{\lambda}-\dim \VV_{-\lambda}.
$$ 
\end{lemma}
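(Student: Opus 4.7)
The plan is to exploit the Hodge-type complexification of $(\VV, J_{\VV})$ together with an $\mathfrak{sl}_2$-like calculation involving a single element of $\mathcal{H}$ inside $\mL^{\mathbb{C}}$, reducing matters to a trace-cyclicity identity. By Theorem \ref{split-lasty} the admissible representation decomposes into a direct sum over simple Hermitian symmetric factors; as $\rho(z)$ splits accordingly and each $\VV_{\lambda}$ decomposes compatibly, it is enough to prove the identity when $\mL$ is simple with $\mathcal{H} \neq 0$. Complexify $\VV^{\mathbb{C}} = \VV^{1,0} \oplus \VV^{0,1}$, the $\pm i$-eigenspaces of $J_{\VV}$. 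Since $\rho(z)$ commutes with $J_{\VV}$, every $\VV_\lambda$ is $J_{\VV}$-invariant and splits as $\VV_\lambda^{1,0} \oplus \VV_\lambda^{0,1}$ with each summand of complex dimension $(\dim \VV_\lambda)/2$; moreover $\rho(z)$ acts by the scalar $i(\lambda - \tfrac{1}{2})$ on $\VV_\lambda^{1,0}$ and by $-i(\lambda - \tfrac{1}{2})$ on $\VV_\lambda^{0,1}$.

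Fix a nonzero $x \in \mathcal{H}$ and set $x^{\pm} := x \mp i J_{\mathcal{H}} x$; these lie in the $\pm i$-eigenspaces of $J_{\mathcal{H}}$ in $\mathcal{H}^{\mathbb{C}}$. Using $\rho(J_{\mathcal{H}} x) = \rho(x) J_{\VV}$ one computes $\rho(x^+) = 2 \rho(x) P^{1,0}$ and $\rho(x^-) = 2\rho(x) P^{0,1}$, where $P^{1,0}, P^{0,1}$ are the $\mathbb{C}$-linear projections onto $\VV^{1,0}, \VV^{0,1}$. Since $\rho(x) \in \mathfrak{gl}_{J_{\VV}}^{\perp}(\VV)$ swaps $\VV^{1,0}$ with $\VV^{0,1}$, it follows that $\rho(x^+)$ annihilates $\VV^{0,1}$ and restricts to a map $\VV^{1,0}_{\lambda} \to \VV^{0,1}_{-\lambda}$, while $\rho(x^-)$ annihilates $\VV^{1,0}$ and restricts to $\VV^{0,1}_{-\lambda} \to \VV^{1,0}_{\lambda}$. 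An elementary Killing-form computation using invariance and $\mathrm{ad}_z|_{\mathcal{H}} = J_{\mathcal{H}}$ yields $B_{\mL}([x, J_{\mathcal{H}} x], z) = B_{\mL}(x, x) \neq 0$; combined with the $B_{\mL}$-orthogonal splitting $\Kk = \mathbb{R} z \oplus [\Kk, \Kk]$ (valid for $\mL$ simple), this writes $[x, J_{\mathcal{H}} x] = \alpha z + k_0$ with $\alpha = B_{\mL}(x, x)/B_{\mL}(z, z) \neq 0$ and $k_0 \in [\Kk, \Kk]$. In particular $[x^+, x^-] = 2i(\alpha z + k_0)$.

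The final step is to compute $\Tr \rho([x^+, x^-])$ on $\VV^{1,0}_{\lambda}$ and on $\VV^{0,1}_{-\lambda}$ in two different ways. Directly, the scalar action of $\rho(z)$ together with the tracelessness of $\rho(k_0)$ (as $[\Kk, \Kk]$ is semisimple) give $-\alpha(\lambda - \tfrac{1}{2})\dim \VV_\lambda$ on $\VV^{1,0}_{\lambda}$ and $-\alpha(\lambda + \tfrac{1}{2})\dim \VV_{-\lambda}$ on $\VV^{0,1}_{-\lambda}$. Alternatively, the bipartite vanishings $\rho(x^+)|_{\VV^{0,1}} = 0$ and $\rho(x^-)|_{\VV^{1,0}} = 0$ reduce $[\rho(x^+), \rho(x^-)]$ to $-\rho(x^-)\rho(x^+)$ on $\VV^{1,0}_{\lambda}$ and to $\rho(x^+)\rho(x^-)$ on $\VV^{0,1}_{-\lambda}$; trace-cyclicity applied to the pair of maps $\rho(x^+)\colon \VV^{1,0}_{\lambda} \to \VV^{0,1}_{-\lambda}$ and $\rho(x^-)\colon \VV^{0,1}_{-\lambda} \to \VV^{1,0}_{\lambda}$ then forces these two traces to be negatives of one another. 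Equating the two computations and dividing by the nonzero $\alpha$ yields $(\lambda - \tfrac{1}{2})\dim\VV_\lambda + (\lambda + \tfrac{1}{2})\dim\VV_{-\lambda} = 0$, which is equivalent to the asserted identity.

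The main obstacle is recognizing that trace-cyclicity becomes informative only thanks to the bipartite vanishings $\rho(x^+)|_{\VV^{0,1}} = 0$ and $\rho(x^-)|_{\VV^{1,0}} = 0$, and that the nonzero $\mathbb{R} z$-coefficient $\alpha$ of $[x^+, x^-]$ must be extracted in order for the identity to be nontrivial; once these two ingredients are in place, the remaining bookkeeping is routine.
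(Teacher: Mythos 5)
Your proof is correct. Note, though, that the paper does not actually prove this lemma: it cites the end of the proof of Proposition 1 in Satake \cite{Sat1} and only records the key ingredient, namely that $\VV_{\lambda}\oplus\VV_{-\lambda}$ is $\mL$-invariant with $\rho(\H)\VV_{\pm\lambda}\subseteq\VV_{\mp\lambda}$. Your argument is a self-contained implementation of exactly this circle of ideas: the bipartite mapping properties of $\rho(x^{\pm})$ are the complexified form of the recorded swap property, and the endgame is a trace identity. Two steps you state without proof deserve a line each: the fact that $\rho(x^{+})$ shifts $\VV^{1,0}_{\lambda}$ into $\VV^{0,1}_{-\lambda}$ (and $\rho(x^{-})$ back) is not a consequence of the bigrading swap alone, but of the commutation relations $[\rho(z),\rho(x^{\pm})]=\pm i\,\rho(x^{\pm})$ coming from $(\ad_z)_{\vert\H}=J_{\H}$; and to make sense of the restricted traces and to conclude $\Tr\bigl(\rho(k_0)_{\vert\VV^{1,0}_{\lambda}}\bigr)=0$ you need that $\VV^{1,0}_{\lambda}$ is $\rho(\Kk)$-invariant, which holds because $z\in\z(\Kk)$ and $\rho(\Kk)\subseteq\gl_{J_{\VV}}(\VV)$. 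Both checks are routine, so this is bookkeeping rather than a gap. Finally, your endgame can be shortened: once you know $\rho(\mL)$ preserves $W:=\VV^{1,0}_{\lambda}\oplus\VV^{0,1}_{-\lambda}$, the identity is just the vanishing of the complex trace of $\rho(z)_{\vert W}$, since $z\in[\mL,\mL]$ ($\mL$ being semisimple) makes this restriction a sum of commutators of endomorphisms of $W$; this avoids choosing a particular $x$, the Killing-form extraction of the coefficient $\alpha$, and the cyclicity step, and is presumably closer to Satake's original argument. What your longer route buys is an explicit, elementary computation that never appeals to perfectness of $\mL$, only to the single bracket $[x,J_{\H}x]$ having a nonzero $z$-component.
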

\begin{proof}
When $\mL$ has non-compact type this has been proved in \cite{Sat1}, see end of proof of Proposition 1. Since the argument applies to any Hermitian algebra $\mL$ it will not be reproduced here. For further use we just record that the main ingredient in the proof consists in  
showing that $\VV_{\lambda} \oplus \VV_{-\lambda}$ is $\mL$-invariant and $\rho(\H) \VV_{\pm \lambda} \subseteq \VV_{\mp \lambda}$.
\end{proof}
We also need to make the following 
\begin{lemma} \label{met-mix}
Whenever $(\mL=\Kk\oplus \mathcal{H},\rho,\VV)$ is an admissible representation the $\Kk$-module 
$\Hom_{\Kk}^{\prime}(\mathcal{H},\VV):=\{F \in \Hom_{\Kk}(\mathcal{H},\VV):FJ_{\mathcal{H}}=J_{\VV}F\}$ 
vanishes.
\end{lemma}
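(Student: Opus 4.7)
The plan is to exploit the tight interplay between the $\Kk$-equivariance of $F$ and the complex structures $J_{\mathcal{H}}$ and $J_{\VV}$, reducing the statement to a dimension count governed by Lemma \ref{rho-z}.

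First, I recall that, by the Hermitian symmetric structure of $\mL$, there exists $z \in \z(\Kk)$ with $(\ad_z)_{\vert \mathcal{H}}=J_{\mathcal{H}}$. Writing out $\Kk$-equivariance at this particular element yields, for every $x \in \mathcal{H}$,
\[
F(J_{\mathcal{H}}x)=F([z,x])=\rho(z)F(x).
\]
Combining this with the defining property $FJ_{\mathcal{H}}=J_{\VV}F$ of elements in $\Hom_{\Kk}^{\prime}(\mathcal{H},\VV)$ forces $\rho(z)F(x)=J_{\VV}F(x)$ for all $x \in \mathcal{H}$, i.e.\ $\mathrm{Im}(F)\subseteq \ker(\rho(z)-J_{\VV})$.

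Next, I identify this kernel in the notation of Lemma \ref{rho-z}: writing $\ker(\rho(z)-J_{\VV})=\ker(\rho(z)+(\tfrac12-\lambda)J_{\VV})$ gives $\lambda=\tfrac32$, so $\mathrm{Im}(F)\subseteq \VV_{3/2}$. Now I feed $\lambda=\tfrac32$ into the eigenvalue identity of Lemma \ref{rho-z}:
\[
3\bigl(\dim \VV_{3/2}+\dim \VV_{-3/2}\bigr)=\dim \VV_{3/2}-\dim \VV_{-3/2},
\]
which rearranges to $2\dim \VV_{3/2}+4\dim \VV_{-3/2}=0$. Since dimensions are non-negative this forces $\dim \VV_{3/2}=0$, hence $F=0$.

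I expect no serious obstacle here; the only point requiring care is to make sure the identity of Lemma \ref{rho-z} applies in the generality needed (its proof, as noted after its statement, goes through for any Hermitian $\mL$ once admissibility is assumed, which is exactly our hypothesis).
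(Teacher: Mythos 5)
Your proposal is correct and follows essentially the same route as the paper: both use the $\Kk$-equivariance of $F$ at the central element $z$ (so $FJ_{\mathcal{H}}=\rho(z)F$), combine it with $FJ_{\mathcal{H}}=J_{\VV}F$ to land $\mathrm{Im}(F)$ inside $\VV_{3/2}$, and then kill $\VV_{3/2}$ via the dimension identity of Lemma \ref{rho-z}. Your side remark about the generality of Lemma \ref{rho-z} is exactly the caveat the paper itself records, so there is no gap.
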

\begin{proof}
First consider the space $\VV_{\lambda}, \lambda=\tfrac{3}{2}$, in the notation of Lemma \ref{rho-z}, which guarantees that 
$3(\dim \VV_{\lambda}+\VV_{-\lambda})=\dim \VV_{\lambda}-\VV_{-\lambda}$. It follows that 
$\dim \VV_{\lambda}=0$ hence $\VV_{\lambda}=0$. 
Whenever $F \in \Hom_{\Kk}^{\prime}(\mathcal{H},\VV)$ we have $F[h,x]=\rho(h)Fx$ for $(h,x) \in \Kk \times \mathcal{H}$.  Taking $h=z$ yields  
$FJ_{\mathcal{H}}=\rho(z)F$. As $FJ_{\mathcal{H}}=J_{\VV}F$ we obtain that 
$\IM(F) \subseteq \VV_{\lambda}=0$
and the claim is proved.
\end{proof}
 
When $\mL=\Kk\oplus \mathcal{H}$ is a semisimple $3$-symmetric Lie algebra we denote with $\me_{\Kk}(\mathcal{H},J_{\mathcal{H}})$ the set of Riemannian inner products $g_{\mathcal{H}}$ on $\mathcal{H}$, compatible with $J_{\mathcal{H}}$ and such that 
$\ad_{\Kk}(\mathcal{H}) \subseteq \un(\mathcal{H},g_{\mathcal{H}},J_{\mathcal{H}})$.  

\begin{propn} \label{fix-2} Let $g$ belong to $\me_{\mathrm{H}}(\bbV)$. With respect to the splitting \eqref{split-hv},
\begin{equation*}
g=g_{\mathcal{H}_0}+g_{\n}+\bigoplus \limits_{i=1}^r(g_{V_i}+g_{\mathcal{H}_i}),
\end{equation*}
where 
\begin{itemize}
\item[(i)]  $g_{\mathcal{H}_0}\in \me_{\Kk_0}(\mathcal{H}_0)$
\item[(ii)]$g_{\n}$ is a $J_{\n}$-invariant metric on $\n$ 
\item[(iii)]$(g_{V_i},g_{\mathcal{H}_i})\in \me_{\Kk_i}(\VV_i) \times \me_{\Kk_i}(\mathcal{H}_i)$ for $1 \le i \le r$.
\end{itemize}
\end{propn}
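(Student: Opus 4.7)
The plan is to interpret the metric $g$ as an $\h$-equivariant and $J$-intertwining isomorphism $T_g : \bbV \to \bbV^{\ast}$ and, with respect to the splitting in \eqref{split-hv}, show that every off-diagonal block between two distinct direct summands must vanish. The on-diagonal blocks will then automatically satisfy the required invariance properties because each summand is preserved by $J$ and by the appropriate factor of $\h = \Kk_0 \oplus \bigoplus_i \Kk_i$.

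First I would treat all off-diagonal pairs which involve at least one factor on which some $\Kk_j$ acts trivially. Recall from the proof of Theorem \ref{split-11} and Corollary \ref{cor-grp} that $\Kk_0$ acts non-trivially only on $\mathcal{H}_0$, each $\Kk_j$ ($j \geq 1$) acts non-trivially only on $\VV_j \oplus \mathcal{H}_j$, and $\h$ acts trivially on $\n$. For any two distinct summands $U_1, U_2$ in \eqref{split-hv}, there is always some $\Kk_j$ that acts non-trivially on exactly one of them (say $U_2$) and trivially on the other. The off-diagonal block of $g$ corresponds to a $\Kk_j$-equivariant linear map $F : U_1 \to U_2$. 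Since $\Kk_j$ acts trivially on $U_1$, the image of $F$ lies in the $\Kk_j$-fixed set of $U_2$. But the $\Kk_j$-fixed set vanishes: for $U_2 = \mathcal{H}_0$ or $\mathcal{H}_j$ this is Corollary \ref{null-simp}(iii) respectively Lemma \ref{ricci}, while for $U_2 = \VV_j$ this is the first part of the (earlier) Lemma \ref{fix-2}, which states $\VV_j^{\Kk_j}=0$. Hence $F=0$ in all these cases.

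The only remaining pair is the off-diagonal block between $\mathcal{H}_i$ and $\VV_i$ for $i \geq 1$, and this is the main obstacle. Here both factors are acted on non-trivially by the same ideal $\Kk_i$ so the $\Kk_i$-fixed argument above does not apply. Fix a background metric $g_{\VV_i} \in \me_{\Kk_i}(\VV_i)$ (which exists by the discussion after Definition \ref{bgr-madm}) and encode the off-diagonal block as a linear map $F : \mathcal{H}_i \to \VV_i$ via $g(x,v) = g_{\VV_i}(Fx,v)$. The $\Kk_i$-invariance of $g$ immediately gives $F \in \Hom_{\Kk_i}(\mathcal{H}_i,\VV_i)$. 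The compatibility $g(J\cdot, J\cdot)=g$ together with the $J_{\VV_i}$-compatibility of $g_{\VV_i}$ gives, after a short calculation, $F \circ J_{\mathcal{H}_i} = J_{\VV_i} \circ F$. Thus $F \in \Hom_{\Kk_i}^{\prime}(\mathcal{H}_i,\VV_i)$, and Lemma \ref{met-mix} forces $F = 0$.

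Having established that all off-diagonal blocks vanish, the metric splits as a direct orthogonal sum according to \eqref{split-hv}. Each diagonal block inherits the $J$-compatibility of $g$ and, by the action tables recalled above, the required isotropy-invariance: $g_{\mathcal{H}_0}$ is $\Kk_0$-invariant and $J_{\mathcal{H}_0}$-compatible, each $g_{\VV_i}, g_{\mathcal{H}_i}$ is $\Kk_i$-invariant and compatible with the respective complex structure, while $g_{\n}$ is only required to be $J_{\n}$-invariant since $\h$ acts trivially on $\n$. This yields precisely the claimed description.
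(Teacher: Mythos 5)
Your proposal is correct and follows essentially the same route as the paper: the off-diagonal blocks are killed by $\h$-invariance combined with the vanishing of fixed vectors (equivalently $[\Kk_j,U]=U$, via Corollary \ref{null-simp}(iii), Lemma \ref{ricci} and $\VV_i^{\Kk_i}=0$), and the $\mathcal{H}_i$--$\VV_i$ block is handled exactly as in the paper by identifying it with an element of $\Hom_{\Kk_i}^{\prime}(\mathcal{H}_i,\VV_i)$ and invoking Lemma \ref{met-mix}. The only cosmetic differences are your dual ``fixed-point set of an equivariant map'' phrasing and the use of a background metric instead of $g|_{\VV_i}$ to encode the cross block.
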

\begin{proof}
From the structure of the Lie bracket in $\g$ with respect to \eqref{split-dir} we have 
$$ [\Kk, \mathcal{H}_0]=[\Kk_0,W]=[\Kk,\n]=0.
$$
The $\h$-invariance of $g$ thus leads to 
\begin{equation*} 
\begin{split}
&g(\mathcal{H}_0,[\Kk, \mathcal{H}])=g(W,[\Kk_0, \mathcal{H}_0])=g(\n, [\Kk,\mathcal{H}])=0\\
&g(\n,\rho(\Kk)\VV)=0.
\end{split}
\end{equation*}
Because both of $\mL=\Kk\oplus \H$ and $\mL_0=\Kk_0 \oplus \mathcal{H}_0$ are semisimple and $3$-symmetric we have $[\Kk,\mathcal{H}]=\mathcal{H}$ and $[\Kk_0, \mathcal{H}_0]=\mathcal{H}_0$ by (iii) in Corollary \ref{null-simp}; at the same time we have $\rho(\Kk)\VV=\VV$ by Lemma \ref{fix-2}. We conclude that  
$$g(\mathcal{H}_0,\mathcal{H})=g(W,\mathcal{H}_0)=g(\n,\mathcal{H})=g(\n,\VV)=0.$$
In an entirely similar fashion, from $[\Kk_i,\mathcal{H}_i]=\mathcal{H}_i$ and $[\Kk_i,\mathcal{H}_j]=0$ we get 
$$g(\mathcal{H}_i,\mathcal{H}_j)=0, 1 \leq i \neq j \leq r.$$
The restriction $g_{\VV}$ of $g$ to $\VV$ clearly belongs to $\me_{\Kk}(\VV)$. Thus we may parametrise 
$g_{\vert \mathcal{H} \times \VV}=g_{\VV}(F \cdot, \cdot)$ with $F \in \Hom_{\Kk}^{\prime}(\mathcal{H},\VV)$. As the latter space vanishes by Lemma \ref{met-mix}, we get 
$$g(\mathcal{H},\VV)=0.$$
Finally consider the splitting $\VV=\bigoplus \limits_{i=1}^r\VV_i$ provided by Theorem \ref{split-lasty} and pick $1 \leq j \leq r$. 
Since the metric $g$ is $\Kk$-invariant and 
$\mathfrak{k}_j$ acts trivially on $\widehat{\VV}_j:=\bigoplus \limits_{i=1,i \neq j }^r\VV_i$ we obtain $g(\rho(\mathfrak{k}_j)\VV_j,\widehat{\VV}_j)=0$. As Lemma \ref{fix-2} grants that $\rho(\mathfrak{k}_j)\VV_j=\VV_j$ we have shown that $g(\VV_j,\widehat{\VV}_j)=0$, and the claim is fully proved. 
\end{proof}
In addition, we record that after splitting 
$$ \mathrm{H} \cong K_0 \times K_1 \times \ldots \times K_r
$$
according to Corollary \ref{cor-grp} and going through the properties of that splitting we obtain after exponentiating 
$$ g_{\mathcal{H}_0}\in \me_{K _0}(\mathcal{H}_0) \ \mathrm{and} \ (g_{\VV_i},g_{\mathcal{H}_i})\in \me_{K_i}(\VV_i) \times \me_{K_i}(\mathcal{H}_i)$$
for $1 \leq i \leq r$. This fully describes the product structure of the space $\mathrm{Met}_{\mathrm{H}}\bbV$.

In order to pin down further properties of the splitting of metrics $g \in \me_{\Kk}(\bbV)$ from Proposition \ref{fix-2} above 
we make the following 
\begin{defn} \label{inv-it}
A linear subspace $\mathbb{L} \subseteq \bbV$ is invariant under the intrinsic torsion tensor $\eta^g$ for some $ g \in \me_{\mathrm{H}}(\bbV)$ provided that $\eta^g_{\bbV}\mathbb{L} \subseteq \mathbb{L}$.
\end{defn}
The geometric content of this definition will be clarified in section \ref{loc-red}. 
\begin{lemma} \label{ideals}
Assume that $\bbV=\bbV_1 \oplus \bbV_2$ orthogonally with respect to $g \in \me_{H}(\bbV)$ and that we moreover have $\tau(\bbV_1,\bbV_1) \subseteq \bbV_1, \ \tau(\bbV_2,\bbV_2) \subseteq \bbV_2, \ \tau(\bbV_1,\bbV_2)=0$. Then $\bbV_1$ and $\bbV_2$ are invariant under the intrinsic torsion tensor $\eta^g$. 
\end{lemma}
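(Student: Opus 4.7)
The plan is to use the formula \eqref{it-expr-n} expressing $\eta^g$ in terms of the torsion $\tau$ and the metric $g$, namely
\[
2g(\eta^g_{v_1}v_2,v_3)=g(\tau(v_1,v_2),v_3)-g(\tau(v_2,v_3),v_1)+g(\tau(v_3,v_1),v_2),
\]
and then simply to check that, under the splitting hypothesis, each of the three terms on the right vanishes when $v_2\in\bbV_1$ and $v_3\in\bbV_2$. Indeed, since $g(\bbV_1,\bbV_2)=0$ by orthogonality, it is enough to show that $g(\eta^g_v w, u)=0$ for every $v\in\bbV$, $w\in\bbV_1$ and $u\in\bbV_2$.

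To do this I write $v=v^1+v^2$ with $v^i\in\bbV_i$. Then by the assumptions $\tau(\bbV_i,\bbV_i)\subseteq\bbV_i$ and $\tau(\bbV_1,\bbV_2)=0$:
\begin{itemize}
\item $\tau(v,w)=\tau(v^1,w)+\tau(v^2,w)=\tau(v^1,w)\in\bbV_1$, so $g(\tau(v,w),u)=0$;
\item $\tau(w,u)=0$ directly, so $g(\tau(w,u),v)=0$;
\item $\tau(u,v)=\tau(u,v^1)+\tau(u,v^2)=\tau(u,v^2)\in\bbV_2$, so $g(\tau(u,v),w)=0$.
\end{itemize}
Summing via \eqref{it-expr-n} gives $g(\eta^g_v w,u)=0$, hence $\eta^g_v w\in(\bbV_2)^\perp=\bbV_1$, which proves the $\eta^g$-invariance of $\bbV_1$. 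The argument for $\bbV_2$ is identical upon interchanging the roles of $\bbV_1$ and $\bbV_2$.

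There is no real obstacle: the statement is a direct algebraic consequence of the explicit formula relating $\eta^g$ to $\tau$ together with the orthogonality of the splitting. The only thing to be careful about is to split the first argument of $\eta^g$ and to use all three hypotheses on $\tau$ (the two ``block-diagonal'' inclusions and the vanishing of the off-diagonal term), as each is needed for exactly one of the three terms in \eqref{it-expr-n}.
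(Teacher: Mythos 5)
Your proof is correct and follows essentially the same route as the paper: both apply the explicit formula $2g(\eta^g_{v_1}v_2,v_3)=g(\tau(v_1,v_2),v_3)-g(\tau(v_2,v_3),v_1)+g(\tau(v_3,v_1),v_2)$ and kill each term using the block conditions on $\tau$ together with $g(\bbV_1,\bbV_2)=0$. The only cosmetic difference is that you treat an arbitrary $v\in\bbV$ at once by splitting $v=v^1+v^2$, while the paper handles the cases $v_1\in\bbV_1$ and $v_1\in\bbV_2$ separately and gets $\bbV_2$-invariance from skew-symmetry of $\eta^g_v$; both variants are complete.
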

\begin{proof}
From the definitions we have 
\begin{equation} \label{it-expr}
2g(\eta^g_{v_1}v_2,v_3)=g(\tau(v_1,v_2),v_3)-g(\tau(v_2,v_3),v_1)+g(\tau(v_3,v_1),v_2)
\end{equation}
for $v_i \in \bbV, 1 \leq i \leq 3$. Pick now $v_1,v_2 \in \bbV_1, v_3 \in \bbV_2$; 
since $\tau(\bbV_1,\bbV_2)=0$ we get $2g(\eta^g_{v_1}v_2,v_3)=g(\tau(v_1,v_2),v_3)$. The latter summand vanishes since $\tau(\bbV_1,\bbV_1) \subseteq V_1$ and $g(\bbV_1,\bbV_2)=0$ thus $\eta^g_{\bbV_1}\bbV_1 \subseteq \bbV_1$. When 
$v_1 \in \bbV_2, v_2 \in \bbV_1, v_3 \in \bbV_2$ formula \eqref{it-expr} reads 
$2g(\eta^g_{v_1}v_2,v_3)=g(\tau(v_3,v_1),v_2)$ by means of $\tau(\bbV_1,\bbV_2)=0$. That $\tau(\bbV_2,\bbV_2) \subseteq \bbV_2$ leads to $\eta^g_{\bbV_2}\bbV_1 \subseteq \bbV_1$. Therefore $\eta^g_{\bbV}\bbV_1 \subseteq \bbV_1$ and the claim on $\bbV_2$ 
follows by orthogonality. Notice that actually one can show that $\eta^g_{\bbV_2}\bbV_1=\eta^g_{\bbV_1}\bbV_2=0$.
\end{proof}
These facts allow proving the main splitting result for Riemannian 
$3$-symmetric algebras in this work, which is summarised below.
\begin{thm} \label{last-split} 
Let $(\g,\sigma)$ be Riemannian $3$-symmetric. Then 
\begin{equation} \label{smain-g}
\ug=\mL_0 \oplus \n \oplus \bigoplus \limits_{i=1}^r(\VV_i \rtimes_{\rho_{i}} \mL_i)
\end{equation}
a direct product of Lie algebras. The factors of the corresponding splitting 
\begin{equation} \label{can-bbV}
\bbV=\mathcal{H}_0 \oplus \n \oplus \bigoplus \limits_{i=1}^r (\VV_i \oplus \mathcal{H}_i)
\end{equation}
are
\begin{itemize}
\item[(i)] $J$-invariant and $g$-orthogonal whenever $g \in \me_{\mathrm{H}}(\bbV)$
\item[(ii)] 
invariant under the intrinsic torsion tensor $\eta^g$ with $g \in \me_{\mathrm{H}}(\bbV)$.
\end{itemize}
\end{thm}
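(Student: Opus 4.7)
The Lie algebra splitting \eqref{smain-g} is obtained by applying Theorem \ref{split-11} to the pair $(\ug,\underline{\sigma})$ itself. Indeed, the transvection algebra of $\ug$ equals $\ug$ (since $\bbV + [\bbV,\bbV]_{\bbV} \subseteq \bbV$ while $[\bbV,\bbV]_{\uh} = \uh$ by definition of $\uh$), and $(\ug,\underline{\sigma})$ is Riemannian $3$-symmetric: the isotropy subalgebra $\ker(\underline{\sigma}-1) = \uh$ is compactly embedded because the corresponding Lie group is precisely the closed subgroup $\Hol(\D)$ of the compact group integrating $\h$. With this setup, Theorem \ref{split-11} delivers both \eqref{smain-g} and \eqref{can-bbV}.

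For (i), I would argue $J$-invariance factor by factor by showing that each summand in \eqref{can-bbV} arises as the $\bbV$-part of a $\underline{\sigma}$-invariant ideal of $\ug$: the semisimple factor $\mL_0 = \ker(\rho)$ is characterised intrinsically and hence $\underline{\sigma}$-stable, and so is its $B$-orthogonal complement $\mL$; the radical $W$ is $\underline{\sigma}$-stable automatically; the subspaces $\n = W^{\mL}$ and $\VV = \rho(\mL)W$ are $\underline{\sigma}$-stable by virtue of the equivariance \eqref{c1-rep}; finally each pair $(\VV_i,\mathcal{H}_i)$ is $\underline{\sigma}$-stable by the $J_{\VV}$- and $J_{\mathcal{H}}$-invariance built into the construction in Theorem \ref{split-lasty}. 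Since $J$ is determined by $\underline{\sigma}$ on $\bbV$ via \eqref{J-cons}, all factors are $J$-invariant. The $g$-orthogonality is exactly the content of Proposition \ref{fix-2}.

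For (ii), I would apply Lemma \ref{ideals} to each pair $(\bbV_1,\bbV_2)$ where $\bbV_1$ is one factor of \eqref{can-bbV} and $\bbV_2$ is the sum of the remaining ones. Part (i) guarantees $g$-orthogonality. The cross-torsion hypothesis $\tau(\bbV_1,\bbV_2) = 0$ is immediate from the direct product structure of $\ug$, since brackets between distinct factors vanish. The within-factor closure $\tau(\bbV_i,\bbV_i) \subseteq \bbV_i$ is verified factor by factor: $\tau(\mathcal{H}_0,\mathcal{H}_0) = 0$ because $[\mathcal{H}_0,\mathcal{H}_0] \subseteq \Kk_0$ in the Hermitian symmetric algebra $\mL_0$; $\tau(\n,\n) = [\n,\n] \subseteq \n$ since $\n \subseteq \bbV$ is a subalgebra; and inside $\VV_i \oplus \mathcal{H}_i$ we have $\tau(\VV_i,\VV_i) = 0$ (abelian), $\tau(\mathcal{H}_i,\VV_i) = \rho_i(\mathcal{H}_i)\VV_i \subseteq \VV_i$, and $\tau(\mathcal{H}_i,\mathcal{H}_i) = 0$ (Hermitian symmetric). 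Lemma \ref{ideals} then yields $\eta^g_{\bbV}\mathbb{L} \subseteq \mathbb{L}$ for each factor $\mathbb{L}$.

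The heavy lifting --- Theorems \ref{split-11} and \ref{split-lasty}, Proposition \ref{fix-2}, and Lemma \ref{ideals} --- is already done in the preceding sections, so this theorem is mostly an assembly. The only mildly subtle point is ensuring that each ideal in the Lie algebra decomposition is genuinely $\underline{\sigma}$-invariant, because only then does the Lie algebra splitting induce the asserted splitting on $\bbV$; this is what allows the $J$- and $g$-orthogonality statements to transfer cleanly to the $\eta^g$-invariance via Lemma \ref{ideals}.
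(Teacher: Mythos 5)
Your proposal is correct and follows essentially the same route as the paper: the Lie algebra splitting comes from applying Theorem \ref{split-11} to $(\ug,\underline{\sigma})$, part (i) from the $\underline{\sigma}$-stability of the factors together with Proposition \ref{fix-2}, and part (ii) from Lemma \ref{ideals} using that \eqref{smain-g} is a direct product, exactly as in the text. One small correction: $\mL_0$ is only a semisimple Riemannian $3$-symmetric algebra, not Hermitian symmetric, so your claim $\tau(\mathcal{H}_0,\mathcal{H}_0)=0$ is not justified (Type I factors generally have nonzero torsion); what you actually need, and what holds trivially because $\mL_0$ is a direct factor with $\mathcal{H}_0=\mL_0\cap\bbV$, is the containment $\tau(\mathcal{H}_0,\mathcal{H}_0)\subseteq\mathcal{H}_0$, which is all that Lemma \ref{ideals} requires.
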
 
\begin{proof}
The splitting at the Lie algebra level as well as (i) have been established previously.\\
(ii) since \eqref{smain-g} is a direct product of Lie algebras the subspaces $\mathcal{H}_0, \n, \VV_i \oplus \mathcal{H}_i, 1 \leq i \leq r$ of $\bbV$ satisfy the assumptions in Lemma \ref{ideals} which grants the claim.
\end{proof}
\subsection{Regularity of infinitesimal models for $\ug$ and the proof of Theorem \ref{t1}. }\label{reginfpf1}

Finally we can explain the regularity of each factor coming from Theorem \ref{last-split} in the following manner.
\begin{enumerate} 
\item (Type III and IV)
As a consequence of the above result 
one can canonically associate a Riemannian homogeneous space to the 
transvection algebra $\ug$ above. 

If $\ug=\VV \rtimes_{\rho} \mL$ we proceed as follows. Let $G$ be the simply connected Lie group with Lie algebra $\mL$; from the general theory of symmetric 
spaces it has a canonical involution with connected fixed point set $K$; recall 
that the latter is a compact subgroup in $G$. Because $G$ is simply connected there exists a unique Lie group representation $\pi:G \to GL(\VV)$ 
with $(d\pi)_e=\rho$. The desired homogeneous manifold is $M=(\VV \rtimes_{\pi} G \slash K$. 

\item (Semisimple Case) See Section \ref{fix-s}, where this was already established. 

\item (Type II) This is immediate, as $\uh$ vanishes. 
\end{enumerate}

We can now present  the Proof of Theorem \ref{t1}} as follows.
\proof Consider a connected and simply connected Riemannian $3$-symmetric space $(M,J)$ together with its canonical connection $D$. We indicate with $H$ the holonomy group of $D$ at a fixed point $x \in M$, which is therefore connected. Moreover, consider the infinitesimal model algebra $\g$ at $x$ and split the transvection algebra $\ug$ according to 
\eqref{smain-g}. Pick $g \in \mathrm{Met}(M,J,D)$; because the splitting of $T_xM$ in \eqref{can-bbV} given in 
\eqref{can-bbV} is $\mathrm{H}$ invariant and also intrinsic torsion invariant it is invariant under the connected component of the holonomy group of $g$ computed at $x$. By parallel transport and using deRham's splitting theorem 
we obtain a Riemannian splitting 
$$ (M,g) =(M_0,g_0) \times (N,g_N) \times (M_1, g_1) \times \ldots \times (M_r, g_r).
$$
The factors of this splitting are Riemannian $3$-symmetric spaces with corresponding transvection algebras 
at $x$ isomorphic to $\mL_0, \n$ respectively $\VV_i \rtimes_{\rho_i} \mL_i$ for $1 \leq i \leq r$. 

To conclude, we proceed as follows. Given a simply connected $3$-symmetric space $(M,D)$ it is well known, see \cite{kow}[Proposition II.33], that 
$M=\underline{G} \slash \mathrm{H}$ where the connected Lie group $\underline{G}$ has Lie algebra $\ug$ and 
$\mathrm{H}=\mathrm{Hol}(D)$. In fact $\mathrm{\underline{G}}$ occurs as the holonomy bundle of the linear connection $D$. Therefore we may write $M_i=\mathrm{\underline{G}}^i \slash \mathrm{H}_i$ where the Lie algebra of $\mathrm{\underline{G}}^i$ is isomorphic 
to $\VV_i \rtimes_{\rho_i} \mL_i$, for $1 \le i \leq r$. Exponentiating yields a Lie group isomorphism $\varphi_i :\mathrm{\underline{G}}^i \to \VV_i \rtimes_{\pi_i} \mathrm{G}_i$ such that $\varphi_i(\mathrm{H}_i) \subseteq K_i$; thus 
$M_i$ is diffeomorphic to $(\VV_i \rtimes_{\pi_i} \mathrm{G}_i) \slash K_i$ for $1 \leq i \leq r$  
and the preceding arguments suffice to establish the splitting at manifold level. \endproof


\section{Structure of admissible representations} \label{met-adm}
The aim of this section is to obtain a full classification of admissible 
representations $(\mL,\rho,\VV)$ together with the set of isotropy invariant metrics. 
The factorisation in Theorem \ref{split-lasty} ensures that we may, without loss of generality, assume that $\mL$ is simple. In order to be able to classify admissible 
we first recall  that in Section \ref{admr1} it was proven that $V$ admits background metrics. The fact these are isotropy invariant is then an automatic consequence.  

\subsection{Standard examples and classification} \label{sat}
First we present, in some detail, the examples of admissible representations, starting with the non-compact case. The following conventions will be used below.
The canonical metric on $\mathbb{R}^{2n}$ will be denoted by $g_{2n}$; with respect to 
the canonical splitting $\mathbb{R}^{2p+2q}=\mathbb{R}^{2p} \oplus \mathbb{R}^{2q}$, 
we write $g_{2p,2q}=-g_{2p}+g_{2q}$. Note that for brevity we write $n=p+q$.  The standard complex structure on $\mathbb{R}^{2n}$ is denoted by $J_n$. We also consider $I_{p,q}:=J_p-J_q$ on $\mathbb{R}^{2n}$.

Below we outline the main features of the Hermitian symmetric Lie algebras of relevance for this work.

\begin{itemize}
\item[1)] Let $\mathfrak{sp}(m,\mathbb{R})$ be the Lie subalgebra of $\gl(\mathbb{R}^{2m})$ which preserves  
the symplectic form $\omega_m:=g_{2m}(J_m \cdot, \cdot)$. Then  $\mathfrak{sp}(m,\mathbb{R})=\Kk \oplus \H$ with $\Kk=\mathfrak{u}(m)$ and $\H=\Sym^2\mathbb{R}^{2m} 
\cap \gl^{-}_{J_m}(\mathbb{R}^{2m})$. A normalised generator for $\z(\Kk)$ is 
$z=-\frac{1}{2}J_m$ thus $J_{\H}(S)=S \circ J_m$. Therefore the representation 
$\rho^{\spr}:\spr(m,\bbR) \to \gl_m(\bbR)$ obtained by matrix multiplication is admissible with respect to $J_m$.
\bigskip

\item[2)] We let $\mathfrak{su}(p,q)$ be the Lie subalgebra of $\gl(\mathbb{R}^{2(p+q)})$ which preserves the pair $(g_{2p,2q},J_{p+q})$. 
The isotropy algebra is $\Kk=\mathfrak{su}(p) \oplus \mathfrak{su}(q) \oplus \bbR z$ with center generated by $z=\tfrac{1}{p+q}(pJ_q-qJ_p)$ and where 
$ \mathfrak{su}(p) \oplus \mathfrak{su}(q)$ is embedded diagonally, via 
$(A,B) \mapsto \left(\begin{array}{cc} A & 0 \\ 0 & B \end{array}\right)$. Then we have $\mathfrak{su}(p,q)=\Kk \oplus \H$ where $\H=\{f \in \mathfrak{su}(p,q) : f(\bbR^{2p}) \subseteq \bbR^{2q}\}$. This entails, after also taking into account that $\H \subseteq \mathfrak{gl}_{J_{p+q}}\bbR^{2p+2q}$, that the complex structure $J_{\H}=\ad_z$ reads 
$J_{\H}f=-I_{p,q} \circ f=f \circ I_{p,q}$ whenever $f \in \H$. Since $\H \subseteq \Sym^2(\bbR^{2(p+q)},g_{2p+2q})$ it is easy to conclude that the representation $(\mathfrak{su}(p,q),\bbR^{2p+2q})$ is admissible with respect to $(g_{2(p+q)},I_{p,q})$. 
%
\bigskip

\item[(3)] One of  the most interesting examples on Satake's list are the skew-symmetric representations of $\mathfrak{su}(1,n)$.  Consider the representation 
  $\rho: \mathfrak{su}(1,n)\rightarrow \mathfrak{gl}(\mathbb{R}^{2n+2})$ given by matrix multiplication. We split $\bbR^{2n+2}=\bbR^2 \oplus \bbR^{2n}$ where the factor $\bbR^2=\mathrm{span} \{e_1,e_2=J_2e_1\}$. With respect to $g_{2,n}$ we obtain a linear isomorphism 
  \begin{equation} \label{id-sun}
  \mathfrak{su}(1,n) \cong \mathfrak{su}(n) \oplus \bbR z \oplus \H 
  \end{equation}
 where $z=\tfrac{1}{n+1}(J_n-nJ_2)$ and $\H=\bbR^{2n}$. The isotropy algebra $\mathfrak{k}:=\mathfrak{su}(n) \oplus \bbR z$ acts on $\bbR^{2n+2}$ via matrix multiplication, whereas elements 
 $v \in \H$ act via 
$$v \in \H \mapsto e^1 \otimes v+e^2 \otimes Jv+v^{\flat} \otimes e_1-Jv^{\flat} \otimes e_2.$$ Here the metric dual is taken with respect to $g_{2n}$. The complex structure $J_{\H}=\ad_z$ on $\H$ then reads $J_{\H}=J_n$.

Consider the space $\lambda^p\mathbb{R}^{2n+2}=\{\alpha \in \Lambda^p\bbR^{2n+2} : \alpha(J_{n+1} \cdot ,J_{n+1}\cdot, \ldots, \cdot)=-\alpha\}$ for $p \geq 2$; when $p=1$ we let $\lambda^1\mathbb{R}^{2n+2}:=\Lambda^1\bbR^{2n+2}.$ These spaces admit an $\mathfrak{su}(1,n)$-invariant complex structure, still to be noted by $J_{n+1}$ and given by $\alpha \in \lambda^p\mathbb{R}^{2n+2} \mapsto J_{n+1}\alpha:=\alpha(J_{n+1} \cdot, \ldots, \cdot)$. We have a linear isomorphism 
\begin{equation} \label{ext-split}
\lambda^p\mathbb{R}^{2n+2} \cong \lambda^{p}\mathbb{R}^{2n} \oplus \lambda^{p-1}\mathbb{R}^{2n}
\end{equation}
where the first factor is embedded via inclusion. The second factor is embedded according to $\gamma \in \lambda^{p-1}\mathbb{R}^{2n}\mapsto 
e^1 \wedge \gamma+e^2 \wedge J_n\gamma$.
Note that the canonical form inner product on $\lambda^p\mathbb{R}^{2n+2}$ becomes $g \oplus 2g$, after the identification in \eqref{ext-split}. Here we indicate with $g$, by a slight abuse of notation, the canonical form inner product on $\lambda^{\star}\bbR^{2n}$ induced by $g_{2n}$.

Let now $\rho$ be the representation by derivations of $\mathfrak{su}(1,n)$ on $\VV:=\lambda^p\mathbb{R}^{2n+2}$.
Taking into account the identification in \eqref{id-sun}, a straightforward computation shows that with respect to \eqref{ext-split} we have
\begin{equation*}
\begin{split}
\rho(v)\beta=&v \lrcorner \beta \ \mathrm{when} \ \beta \in \lambda^{p}\mathbb{R}^{2n}\\
\rho(v) \gamma=&v^{\flat} \wedge \gamma-J_nv^{\flat} \wedge J_n \gamma \ \mathrm{when} \ \gamma \in \lambda^{p-1}\mathbb{R}^{2n}
\end{split}
\end{equation*}
and whenever $v \in \mathcal{H}=\mathbb{R}^{2n}$; in particular $\rho(\mathcal{H}) \subseteq \Sym^2(V, g\oplus 2g)$. Similarly, with respect to \eqref{ext-split} we have 
$$ \rho(A)(\beta+\gamma)=[A,\beta]+[A,\gamma] \ \mathrm{and} \ \rho(z)(\beta+\gamma)=\tfrac{p}{n+1}J_n\beta+\tfrac{n+1-p}{n+1}J_n\gamma
$$
where $[A,\cdot]$ is the standard action by derivations on $\lambda^{\star}\bbR^{2n}$; in particular we have $\rho(\mathfrak{k}) \subseteq \mathfrak{so}(\VV, g \oplus 2g)$. Note $\rho$ is faithful if and only if $p \leq n$.

We define a linear complex structure $J_{\VV}: \VV \to \VV$ 
$$ J_{\VV}=J_n \ \mathrm{on} \ \lambda^p\bbR^{2n} \ \mathrm{respectively} \ J_{\VV}=-J_n \ \mathrm{on} \ \lambda^{p-1}\bbR^{2n}
$$
with respect to the splitting \eqref{ext-split} 
It follows that $\rho(J_nv)=\rho(v) \circ J_{\VV}=-J_{\VV} \circ \rho(v)$ for all $v \in \bbR^{2n}$. Also, we have 
$\rho(\mathfrak{k}) \subseteq \gl_{J_{\VV}}\VV$. 

Summarising, we have showed that the representation 
$(\mathfrak{su}(1,n),\rho, \lambda^p\bbR^{2n+2})$ where $1 \leq p \leq n$ is admissible with respect to $J_{\VV}$. Further recording that $\rho$ commutes with $J_{n+1}$ shows that this representation is an example of an admissible representation of complex type.

\bigskip

\item[4)] Let $\so^{\star}(2m):=\{f \in \mathfrak{su}(m,m):[f,I^1]=0\}$. Here 
$I^1(v_1,v_2)=(-v_2,v_1)$ with respect to $\bbR^{4m}=\bbR^{2m} \oplus \bbR^{2m}$. 
Note that $(I_{2m,2m},I^1,I^2:=I_{2m,2m}\circ I^1)$ is a quaternionic triple. This description of $\so^{\star}(2m)$ agrees with the duality principle in Proposition \ref{symp1-c} in the next section, which also ensures this representation is admissible.

\bigskip

\item[5)] Let $\so(2,n) \subseteq \gl(\bbR^{n+2})$ consist of endomorphisms  which are skew-symmetric with respect to $g_2-g_n$. Consider the splitting $\bbR^{n+2}=\bbR^2 \oplus \bbR^n$ where $\bbR^2$ is spanned by $e_1,e_2$ with $g_2(e_i,e_j)=\delta_{ij}$. With respect to this splitting we have $\so(2,n)=\Kk \oplus \H$ as follows. The isotropy algebra $\Kk=\bbR z \oplus \so(n)$ where $z(e_1)=e_2, z(e_2)=-e_1$ and $z$ acts trivially on $\bbR^n$; moreover $\so(n)$ preserves $\bbR^n$ and acts trivially on $\bbR^2$. We identify $\bbR^{2n}$ and $\H$ via 
$$(x,y) \mapsto g_n(x, \cdot) \otimes e_1+g_2(e_1, \cdot) \otimes x+ g_n(y, \cdot) \otimes e_2+g_2(e_2, \cdot) \otimes y.$$
The complex structure $J_{\H}=(\ad_z)_{\vert_{\H}}$ thus reads $J_{\H}(x,y)=(-y,x)$ and the Lie bracket within $\H$ is given by 
$$ [(x_1,y_1),(x_2,y_2)]=x_2 \wedge x_1+y_2 \wedge y_1+\left (g_n(x_2,y_1)-g_n(x_1,y_2) \right) z.
$$
%
\end{itemize}
Below we briefly outline the construction of admissible representations of $\so(2,n)$. We show those are in bijective correspondence 
with Clifford algebra representations $\rho_n:Cl_{0,n} \to \GL(\Sigma)$, thus satisfying 
$\rho_n(x) \circ \rho_n(x)=g_n(x,x)1_{\Sigma}$ for all $x \in \bbR^{n}$.
\begin{lemma} \label{spin-explt}
Consider the symmetric pair $(\so(2,n),\bbR z \oplus \so(n) )$.
Any admissible representation $(\rho,\Sigma)$ of this pair is uniquely determined from a Clifford algebra representation $\rho_n:Cl_{0,n} \to \GL(\Sigma)$ such that 
\begin{equation} \label{spin-pp}
\rho_n(x)J_{\Sigma}+J_{\Sigma} \rho_n(x)=0 \ \mathrm{for \ all \ } \ x \in \bbR^n 
\end{equation}
where $J_{\Sigma}$ is a linear complex structure on $\Sigma$. Explicitly 
\begin{equation*}
\begin{split}
& \rho(x,y)=\frac{1}{2}(\rho_n(x)+\rho_n(y)J_{\Sigma}), \ 
\rho(z)=-\frac{1}{2}J_{\Sigma}, \ \rho(x \wedge y)=-\frac{1}{2}\rho_n(x \wedge y)
\end{split}
\end{equation*}
whenever $(x,y) \in \H=\bbR^n \oplus \bbR^n$. 
\end{lemma}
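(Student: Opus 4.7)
The plan is to exhibit the correspondence explicitly. Given an admissible $\rho$, the identity $J_{\mathcal{H}}(x,0)=(0,x)$ combined with $\rho(J_{\mathcal{H}}v)=\rho(v)J_\Sigma$ forces $\rho(0,x)=\rho(x,0)J_\Sigma$, so setting $\rho_n(x):=2\rho(x,0)$ for $x\in\mathbb{R}^n$ gives the announced formula $\rho(x,y)=\tfrac{1}{2}(\rho_n(x)+\rho_n(y)J_\Sigma)$. The anti-commutation relation \eqref{spin-pp} is immediate from $\rho(\mathcal{H})\subseteq\gl^\perp_{J_\Sigma}(\Sigma)$. Applying $\rho$ to the bracket identity $[(x,0),(0,y)]=-g_n(x,y)z$ and expanding the left-hand side with the aid of $J_\Sigma\rho(y,0)=-\rho(y,0)J_\Sigma$ yields
\[
-g_n(x,y)\rho(z)=\{\rho(x,0),\rho(y,0)\}J_\Sigma=\tfrac{1}{4}\{\rho_n(x),\rho_n(y)\}J_\Sigma,
\]
so the Clifford relation $\{\rho_n(x),\rho_n(y)\}=2g_n(x,y)\cdot 1_\Sigma$ is equivalent to the normalization $\rho(z)=-\tfrac{1}{2}J_\Sigma$. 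The formula $\rho(x\wedge y)=-\tfrac{1}{2}\rho_n(x\wedge y)$ then follows in the same fashion by applying $\rho$ to $[(x,0),(y,0)]=y\wedge x$ and invoking the Clifford relation, with $\rho_n(x\wedge y):=\tfrac{1}{2}[\rho_n(x),\rho_n(y)]$ the standard extension of $\rho_n$ to $\so(n)$.

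The key remaining step is to pin down $\rho(z)=-\tfrac{1}{2}J_\Sigma$. The plan is to combine the representation theory of $\Sl(2,\mathbb{R})$ with Lemma \ref{rho-z}. The triple $\{z,(x,0),(0,x)\}$ for a unit vector $x\in\mathbb{R}^n$ spans a subalgebra isomorphic to $\Sl(2,\mathbb{R})$ inside $\so(2,n)$; after complexification, the element $h:=-2iz$ satisfies the standard relation $[h,e]=2e$ with $e:=\tfrac{1}{2}((x,0)-i(0,x))$, so the eigenvalues of $\rho(h)$ on any finite-dimensional representation are integers, whence those of $\rho(z)$ lie in $\tfrac{i}{2}\mathbb{Z}$. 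Recast in terms of the subspaces $V_\lambda:=\ker(\rho(z)+(\tfrac{1}{2}-\lambda)J_\Sigma)$ from Lemma \ref{rho-z}, this restricts $\lambda$ to $\tfrac{1}{2}\mathbb{Z}$, and the dimension identity $2\lambda(\dim V_\lambda+\dim V_{-\lambda})=\dim V_\lambda-\dim V_{-\lambda}$ eliminates all $|\lambda|\ge 1$ as well as $\lambda=-\tfrac{1}{2}$. Finally $\rho(\mathcal{H})$ sends $V_{1/2}$ into $V_{-1/2}=0$, so $V_{1/2}$ is annihilated by $\mathcal{H}$, hence by $[\mathcal{H},\mathcal{H}]=\Kk$, and therefore $V_{1/2}\subseteq\Sigma^{\mL}=0$. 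This leaves $\Sigma=V_0$, equivalently $\rho(z)=-\tfrac{1}{2}J_\Sigma$.

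For the converse, given data $(\Sigma,J_\Sigma,\rho_n)$ with $\rho_n$ a Clifford representation satisfying \eqref{spin-pp}, define $\rho$ by the three formulas of the statement. Checking that $\rho$ is a Lie algebra morphism reduces to verifying compatibility with the brackets $[\rho(z),\rho(x,y)]$, $[\rho(x_1\wedge y_1),\rho(x_2,y_2)]$, and $[\rho(x_1,y_1),\rho(x_2,y_2)]$; each is a short calculation from the Clifford and anti-commutation relations. Admissibility then follows by inspection of the explicit formulas. The main obstacle is precisely the uniqueness of the normalization $\rho(z)=-\tfrac{1}{2}J_\Sigma$ established above; this is the only step that relies on non-trivial representation-theoretic input, whereas all remaining verifications are routine algebraic manipulations with the defining relations of $\so(2,n)$ and of the Clifford algebra.
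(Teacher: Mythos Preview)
Your argument is correct and follows the same overall architecture as the paper's proof: set $\rho_n(x)=2\rho(x,0)$, read off $\rho(0,x)=\rho(x,0)J_\Sigma$ from admissibility, derive the Clifford-type relation from the bracket in $\so(2,n)$, and reduce everything to pinning down $\rho(z)=-\tfrac{1}{2}J_\Sigma$. Your treatment of the residual case $V_{1/2}$ via $\rho(\mathcal H)V_{1/2}\subseteq V_{-1/2}=0$ and $\Sigma^{\mL}=0$ is in fact cleaner than the paper's somewhat terse handling of that eigenvalue.

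The one genuine difference is in how the normalisation of $\rho(z)$ is obtained. You invoke $\mathfrak{sl}(2,\bbR)$ representation theory to force $\lambda\in\tfrac{1}{2}\mathbb Z$ before applying Lemma~\ref{rho-z}. The paper bypasses this entirely: from the bracket $[J_{\mathcal H}(x,0),(x,0)]=g_n(x,x)z$ and \eqref{jordan} it gets $\rho_n(x)^2=2g_n(x,x)J_\Sigma\rho(z)$, which on $V_\lambda$ reads $\rho_n(x)^2=(1-2\lambda)g_n(x,x)1$. Hence for every real $\lambda\neq\tfrac{1}{2}$ the map $\rho_n(x):V_\lambda\to V_{-\lambda}$ is injective, so $\dim V_\lambda=\dim V_{-\lambda}$ (for $\lambda\neq\pm\tfrac{1}{2}$), and Lemma~\ref{rho-z} forces $\lambda=0$. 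This avoids the integrality input altogether and is more self-contained; your $\mathfrak{sl}(2,\bbR)$ step, while valid, is not needed once the square relation is available.
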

\begin{proof}
This has been also proved in \cite{Sat1}, below we give a simple direct proof. Assume that $\rho$ is admissible with respect to 
$J_{\Sigma}$ and  pick $x \in \bbR^n$. Since $J_{\H}(x,0)=(0,x)$, from 
$[J_{\H}(x,0), (x,0)]=g_n(x,x)z$ and \eqref{jordan} we get 
\begin{equation} \label{cliff-m}
\rho_n(x) \circ \rho_n(x)=g_n(x,x)2J_{\Sigma}\rho(z)
\end{equation}
after setting $\rho_n(x)=2\rho(x,0)$. Now consider the spaces $\VV_{\lambda}, \lambda \in \bbR$, see Lemma \ref{rho-z} and its proof. If 
$\lambda=\frac{1}{2}$ we have $\VV_{\lambda}=0$ by Lemma \ref{rho-z}; otherwise the map $\rho_n : \VV_{\lambda} \to \VV_{-\lambda}$ is injective by \eqref{cliff-m}, so $\dim \VV_{\lambda}=\dim \VV_{-\lambda}$. Then Lemma \ref{rho-z} ensures that $\lambda=0$, hence $\rho(z)+\tfrac{1}{2}J_{\Sigma}=0$. Furthermore, we also get $\rho(0,x)=\rho J_{\H}(x,0)=\tfrac{1}{2}\rho_n(x)J_{\Sigma}$, because $\rho$ is admissible. 
Therefore $\rho_n$ extends to a representation of $Cl_{0,n}$ satisfying \eqref{spin-pp}. The expression of $\rho$ on $\so(n)$ thus follows from the Lie bracket structure in $\so(2,n)$.
\end{proof}
See \cite{LM} or \cite{Sat1} for details concerning Clifford algebra representations satisfying \eqref{spin-pp}.
Now consider the following table. 
{\tiny{
 \begin{table}[h!]
\centering
\begin{tabular}{||c | c | c | c||} 
 \hline
 L & $\Kk$ & $\rho$ & $\VV$ \\ 
 [0.5ex] 
 \hline\hline
 $\mathfrak{sp}(m,\mathbb{R})$  & $\mathfrak{u}(m)$ & $d \rho^{\spr}$ & $d \bbR^{2m}$\\ 
 $\mathfrak{su}(p,q), p \neq 1$ or $q \neq 1$ & $\mathbb{R}z \oplus \mathfrak{su}(p) \oplus \mathfrak{su}(q)$ & $d  \rho_1$& $d  \bbR^{2p+2q}$\\
 $\mathfrak{su}(p,1), p\geq 2$ & $\mathbb{R}z\oplus \mathfrak{su}(p)$  & $\oplus_{i=1}^{l}(d_i  \rho_i)$& $\oplus_{i=1}^l d_i  \lambda^i\bbR^{2p+2}$  \\
 $\so^{\star}(2m)$ & $\mathfrak{u}(m) $ & $d  \rho^{\so^{\star}}$ & $d  \bbR^{4m}$\\
$\mathfrak{so}(2,n)$ & $\mathfrak{so}(2)\oplus \so(n)$ & representations of $Cl_{0,n}$ as in \eqref{spin-pp}
& $\Sigma$\\
 [1ex]
    \hline
\end{tabular}
\caption{Admissible representations with $\mL$ of non-compact type.}
\label{table1}
\end{table}
}}
\begin{thm} \label{claas-nc}\cite{Sat1}
Let  $\mL$ be simple of non-compact type.  Then $(\mL, \rho,\VV)$ is an admissible representation if, and only if, it is one of the entries in Table \ref{table1}.
\end{thm}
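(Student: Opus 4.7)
The plan is to prove both directions using the reduction already built in the paper. Throughout the proof, fix a background metric $h_{\VV}$ as in Definition \ref{bgr-madm}, so that $\rho(\Kk)\subseteq \un(\VV,h_{\VV},J_{\VV})$ and $\rho(\mathcal{H})\subseteq \un^{\perp}(\VV,h_{\VV},J_{\VV})$, and consider the symplectic form $\omega_{\VV}=h_{\VV}(J_{\VV}\cdot,\cdot)$. By Proposition \ref{symp1}, any admissible $(\mL,\rho,\VV)$ yields a symplectic Lie algebra morphism $\rho:\mL\to \mathfrak{sp}(\VV,\omega_{\VV})$ satisfying the normalisation $\rho\circ \ad_z = \ad_{z'}\circ\rho$, that is, Satake's condition $(H_1)$ of \cite{Sat1}. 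Conversely, any morphism $\rho:\mL\to \spr(\VV,\omega_{\VV})$ satisfying $(H_1)$ and with $\VV^{\mL}=0$ automatically satisfies \eqref{adm-inv} and \eqref{ad-ma} once one splits according to the Cartan decomposition \eqref{cart-symp}. Hence admissible representations of $\mL$ are in bijection with faithful symplectic $(H_1)$-representations of $\mL$ with trivial fixed subspace.

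First I would handle the forward direction. Since $\mL$ is simple, it suffices to classify the irreducible admissible representations and then sum them up; arbitrary admissibles are obtained as direct sums (any invariant subspace automatically carries the restriction of $J_{\VV}$ because $\rho(z)=-\tfrac{1}{2}J_{\VV}$ on each isotypical component, as follows from Lemma \ref{rho-z}). Satake's theorem in \cite{Sat1} classifies the irreducible symplectic $(H_1)$-representations of a simple Hermitian Lie algebra of non-compact type. For each such $\mL$ I would then check, by case inspection, that Satake's list reduces to exactly the rows of Table \ref{table1}: the defining representations for $\spr(m,\bbR)$, $\mathfrak{su}(p,q)$, $\so^{\star}(2m)$, the exterior powers $\lambda^i\bbR^{2p+2}$ for $\mathfrak{su}(p,1)$, and the spin-type representations for $\so(2,n)$. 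For $\mathfrak{su}(p,1)$ the centraliser contains the extra complex structure $J_{n+1}$, which allows multiplicities (hence the factors $d_i$ in the table); for all other rows the irreducible representation is of real type and only the multiplicity $d$ appears. Satake also shows the exceptional Hermitian symmetric algebras $E_{6(-14)}$ and $E_{7(-25)}$ admit no nontrivial admissible representation, so they do not appear.

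Next I would handle the reverse direction by verifying directly that each entry of Table \ref{table1} is admissible. The examples $(1)$--$(5)$ of Section \ref{sat} carry out exactly this computation for one copy of each irreducible representation (the calculation reduces to checking \eqref{adm-inv} and \eqref{ad-ma}, and using Lemma \ref{ricci} to get $\VV^{\mL}=0$). Admissibility is preserved under direct sums and scaling of multiplicities, so the entries of Table \ref{table1} are admissible in full generality. For $\so(2,n)$ the content of admissibility is precisely the Clifford relation \eqref{cliff-m} together with \eqref{spin-pp}, which is the statement of Lemma \ref{spin-explt}.

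The main obstacle will be the case-by-case matching with Satake's list, particularly for $\mathfrak{su}(p,1)$ and for $\so(2,n)$. For $\mathfrak{su}(p,1)$ one must identify the irreducible summands inside $\lambda^i\bbR^{2p+2}$ as primitive parts; the issue is that on the level of complex representations $\lambda^i\bbR^{2p+2}$ decomposes under $\mathfrak{su}(p,1)$, but as a \emph{real} $\mL$-module (with respect to the complex structure $J_{\VV}$ built by flipping sign on one factor of \eqref{ext-split}) it remains irreducible and of complex type. For $\so(2,n)$, one has to translate Satake's classification in terms of half-spin modules into the Clifford-theoretic description of Lemma \ref{spin-explt}, which requires tracking the real/complex/quaternionic type of the spin representation according to $n \bmod 8$; this is where the possibilities for $(\Sigma,J_{\Sigma})$ enumerate themselves.
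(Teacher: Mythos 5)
Your proposal follows essentially the same route as the paper: Proposition \ref{symp1} identifies admissible representations of non-compact $\mL$ with Satake's symplectic $(H_1)$-representations, the classification itself is then quoted from \cite{Sat1} (the paper offers no independent proof of Theorem \ref{claas-nc}), and the converse direction is the direct verification carried out in the examples of Section \ref{sat} together with Lemma \ref{spin-explt} for $\so(2,n)$. One correction: your justification that $\mL$-invariant subspaces inherit $J_{\VV}$ via ``$\rho(z)=-\tfrac{1}{2}J_{\VV}$ on each isotypical component'' is false in general (in the paper's Example 3 one has $\rho(z)=\tfrac{p}{n+1}J_{\VV}$ on one summand and $-\tfrac{n+1-p}{n+1}J_{\VV}$ on the other); the correct argument is via Lemma \ref{cent-cpx}, since $\mathscr{C}=\tfrac{\Lambda}{2}J_{\VV}\rho(z)$ and $\rho(z)$ is invertible, so both preserve any invariant subspace and hence so does $J_{\VV}$ — likewise, not all non-$\mathfrak{su}(p,1)$ rows are of real type (the $\mathfrak{su}(p,q)$ and $\so^{\star}(2m)$ entries are complex, respectively quaternionic), though this does not affect the matching with Table \ref{table1}.
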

\subsection{When $\mL$ has compact type} \label{type1-reps}
We finish this section by observing that describing admissible representations with $\mL$ of compact type 
is equivalent, via the duality principle for symmetric spaces, with the non-compact type situation(see also Remark \ref{dual0} below). We proceed in a slightly different way, in order to keep track on the representation. If $\mL=\Kk \oplus \mathcal{H}$ is a symmetric Lie algebra recall that its dual Lie algebra $(\mL^{\star}, [\cdot, \cdot]^{\star}$ is defined by requiring $\mL^{\star}=\mL$ as vector spaces and the Lie bracket 
\begin{equation*}
\begin{split}
&[F,G]^{\star}=[F,G], \ [F,x]^{\star}=[F,x]\\
&[x,y]^{\star}=-[x,y].
\end{split}
\end{equation*}
Straightforward algebraic computation leads to the proof of the following  
\begin{propn} \label{symp1-c} 
Let $(\mL,\rho,\VV)$ be a linear representation where $\mL=\Kk \oplus \mathcal{H}$ is a simple symmetric Lie algebra. The following hold
\begin{itemize}
\item[(i)] the linear map $\rho^{\star}: \mL^{\star} \to \gl_{I}(\VV \oplus \VV)$ given by 
\begin{equation*}
\rho^{\star}(F)=\biggr ( \begin{array}{lr} 
\rho(F) & 0 \\ 
0 & \rho(F)
\end{array} \biggl ) \ \mbox{if} \ F \in \Kk \ \mbox{respectively} \  \rho^{\star}(x)=\biggr ( \begin{array}{lr} 
0 & \rho(x) \\ 
-\rho(x) & 0
\end{array} \biggl ) \ \mbox{if} \ x \in \mathcal{H}
\end{equation*}
is a well defined Lie algebra representation where $I(v_1,v_2)=(-v_2,v_1)$
\item[(ii)] if $h_{\VV}$ is a background metric for $(\mL,\rho,\VV)$ then $h_{\VV} \oplus h_{\VV}$ is a background metric for 
$(\mL^{\star},\rho^{\star},\VV \oplus \VV)$
\item[(iii)] if $(\mL,\rho,\VV)$ is admissible with respect to $J_{\VV}$ then $(\mL^{\star}, \rho^{\star},\VV \oplus \VV)$ is admissible with respect to $J_{\VV} \oplus J_{\VV}$.
\end{itemize}
\end{propn}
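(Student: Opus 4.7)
The proof consists of routine but systematic verifications organized around the three claims; I would proceed as follows.

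For (i), the plan is to verify the Lie algebra homomorphism property by checking it on pairs of elements drawn from $\Kk\oplus\Kk$, $\Kk\oplus\mathcal{H}$, and $\mathcal{H}\oplus\mathcal{H}$ separately, since $\mL^\star$ and $\mL$ coincide as vector spaces with bracket flipped only on the $\mathcal{H}\times\mathcal{H}$ slot. The $\Kk\oplus\Kk$ case is immediate because $\rho^\star$ is block-diagonal. The $\Kk\oplus\mathcal{H}$ case unfolds by direct matrix multiplication, where the off-diagonal block collects into $[\rho(F),\rho(x)]=\rho([F,x])$ on the $(1,2)$ entry and its negative on the $(2,1)$ entry, matching $\rho^\star([F,x])=\rho^\star([F,x]^\star)$. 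The crucial case is $\mathcal{H}\oplus\mathcal{H}$: squaring the off-diagonal block matrix produces diagonal blocks $-\rho(x)\rho(y)$, so the commutator yields $-[\rho(x),\rho(y)]$ on the diagonal, which equals $-\rho([x,y])=\rho([x,y]^\star)$ because the bracket is reversed in $\mL^\star$. The sign flip under duality is exactly what makes the dual representation close up. The commutation with $I$ follows by direct block computation: the diagonal blocks commute trivially with $I$, while for $\rho^\star(x)$ with $x\in\mathcal{H}$ both $I\rho^\star(x)$ and $\rho^\star(x)I$ send $(v_1,v_2)$ to $(\rho(x)v_1,\rho(x)v_2)$.

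For (ii), I would split by the type of $\mL$. Background metric is defined by $\rho(\Kk)\subseteq\so(\VV,h_\VV)$ together with $\rho(\mathcal{H})\subseteq\so(\VV,h_\VV)$ or $\Sym^2(\VV,h_\VV)$ depending on type. Since $\rho^\star(F)$ is diagonal with entries $\rho(F)\in\so(\VV,h_\VV)$, it lies in $\so(\VV\oplus\VV,h_\VV\oplus h_\VV)$. For $x\in\mathcal{H}$, compute
\begin{equation*}
(h_\VV\oplus h_\VV)(\rho^\star(x)(v_1,v_2),(w_1,w_2))=h_\VV(\rho(x)v_2,w_1)-h_\VV(\rho(x)v_1,w_2).
\end{equation*}
If $\rho(x)$ is $h_\VV$-skew (so $\mL$ is compact type), transposing both terms produces the same expression as $(h_\VV\oplus h_\VV)((v_1,v_2),\rho^\star(x)(w_1,w_2))$ after one sign flip, giving $\rho^\star(x)\in\Sym^2$; this matches the fact that $\mL^\star$ is non-compact. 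If $\rho(x)$ is $h_\VV$-symmetric (so $\mL$ is non-compact), the same computation yields that the form vanishes upon symmetrization, so $\rho^\star(x)\in\so$, matching $\mL^\star$ being compact. The type duality $\mL\leftrightarrow\mL^\star$ is precisely accounted for by the placement of $\pm\rho(x)$ in the off-diagonal blocks.

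For (iii), I would first check that $\mL^\star$ is itself Hermitian symmetric with the same generator $z$ and same complex structure $J_\mathcal{H}$: $\ad_z$ on $\mathcal{H}$ is unchanged by duality, and the identity $[J_\mathcal{H} x,J_\mathcal{H} y]^\star=-[J_\mathcal{H} x,J_\mathcal{H} y]=-[x,y]=[x,y]^\star$ restores property \eqref{jinv}. The admissibility of $\rho^\star$ with respect to $J_\VV\oplus J_\VV$ now reduces to three easy block computations: $\rho^\star(F)$ commutes with $J_\VV\oplus J_\VV$ since $\rho(F)$ commutes with $J_\VV$; $\rho^\star(x)$ anti-commutes with $J_\VV\oplus J_\VV$ since $\rho(x)$ does; and the intertwining identity $\rho^\star(J_\mathcal{H} x)=\rho^\star(x)\circ(J_\VV\oplus J_\VV)$ becomes, on the $(1,2)$-block, the identity $\rho(J_\mathcal{H} x)=\rho(x)J_\VV$, which holds by admissibility of $\rho$. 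Faithfulness transfers directly because the blocks live in independent matrix entries, and the vanishing $(\VV\oplus\VV)^{\mL^\star}=0$ follows from $\VV^\Kk=0$, which is a consequence of $\VV^\mL=0$ via Lemma \ref{fix-2}.

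The only non-routine point is keeping track of the sign flip in the $\mathcal{H}\times\mathcal{H}$ bracket; everything else is elementary block-matrix algebra. No serious obstacle is expected, but care is needed to ensure the duality preserves the Hermitian symmetric structure on $\mL^\star$, which is what allows (iii) to be phrased in terms of the same complex structure $J_\mathcal{H}$.
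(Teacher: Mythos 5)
Your proposal is correct and coincides with the paper's (unwritten) argument: Proposition \ref{symp1-c} is justified there only by the phrase ``straightforward algebraic computation,'' and your case-by-case block-matrix checks — the sign flip on the $\mathcal{H}\times\mathcal{H}$ bracket, the symmetric/skew swap under $h_{\VV}\oplus h_{\VV}$ matching the compact/non-compact duality, and the admissibility identities for $J_{\VV}\oplus J_{\VV}$ — are exactly that computation spelled out. The only remark worth making is that your appeal to Lemma \ref{fix-2} for $\VV^{\Kk}=0$ is legitimate because it is applied to the admissible $\rho$ (not to $\rho^{\star}$), so there is no circularity.
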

\begin{rem} \label{dual0}
What is really underpinned by the explicit construction for $\rho^{\star}$ is the Lie algebra embedding $\so^{\star}(2m) \to \mathfrak{sp}(m,\bbR)$.
\end{rem}

\begin{rem} \label{typeI-int}
From the point of view of submanifold geometry admissible and irreducible representations of simple Lie algebras of compact type correspond to totally geodesic, complex submanifolds of the Hermitian symmetric space $\frac{SO(2m)}{U(m)}$. Such objects have been classified by Ihara 
in \cite{Iha,Iha2}, although those references do not exhibit an explicit list.
\end{rem}
We recall now the construction of the following symmetric pairs of compact type.
\begin{itemize}
\item[$\bullet$] the pair $(\mathfrak{sp}(m),\mathfrak{u}(m))$. Here $\mathfrak{sp}(m)$ is the Lie subalgebra of $\so(4m)$ preserving the quaternion structure $\{ I, I_1(v_1,v_2)=(J_nv_2,J_nv_1), I_2=II_1\}$ constructed with respect to the splitting $\bbR^{4m}=\bbR^{2m} \oplus \bbR^{2m}$. The isotropy algebra $\mathfrak{u}(m)$ is realised as the subalgebra of $\mathfrak{sp}(m)$ preserving $J(v_1,v_2) \:=(J_nv_1,J_nv_2)$. The matrix representation $(\mathfrak{sp}(m),\bbR^{4m})$ is then admissible with respect to $J$.
\item[$\bullet$] the pair $(\so(n+2), \Kk=\so(n) \oplus \so(2))$. This is constructed in complete analogy with example 5 in subsection \ref{sat}. With respect to the splitting $\bbR^{n+2}=\bbR^n \oplus \bbR^2$ we have $\so(n+2)=\bbR z \oplus \so(n) \oplus \H$ with 
$\H \cong \bbR^{2n}$. The complex structure reads $J_{\H}(x,y)=(-y,x)$ whilst the Lie bracket 
$$ [(x_1,y_1), (x_2,y_2)]=x_1 \wedge y_1+x_2 \wedge y_2+\left (g_n(x_1,y_2)-g_n(x_2,y_1) \right )z.
$$
\end{itemize}

Now consider the following table. 
{\tiny{
 \begin{table}[h!]
\centering
\begin{tabular}{||c | c | c | c||} 
 \hline
 L & $\Kk$ & $\rho$ & $\VV$ \\ 
 [0.5ex] 
 \hline\hline
 $\mathfrak{sp}(m)$  & $\mathfrak{u}(m)$ & $d \rho^{\spr}$ & $d \bbR^{4m}$\\ 
 $\mathfrak{su}(p+q), p \neq 1$ or $q \neq 1$ & $\mathbb{R}z \oplus \mathfrak{su}(p) \oplus \mathfrak{su}(q)$ & $d  \rho_1$& $d  \bbR^{2p+2q}$\\
 $\mathfrak{su}(p+1), p\geq 2$ & $\mathbb{R}z\oplus \mathfrak{su}(p)$  & $\oplus_{i=1}^{l}(d_i  \rho_i)$& $\oplus_{i=1}^l d_i  \lambda^i\bbR^{2p+2}$  \\
 $\so(2m)$ & $\mathfrak{u}(m) $ & $d  \rho^{\so}$ & $d  \bbR^{2m}$\\
$\mathfrak{so}(n+2)$ & $\mathfrak{so}(2)\oplus \so(n)$ & representations of $Cl_{n}$ as in \eqref{spin-pp}
& $\Sigma$\\
 [1ex]
    \hline
\end{tabular}
\caption{Admissible representations with $\mL$ of compact type.}
\label{table2}
\end{table}
}}
\begin{rem} \label{rep-com}
The skew-symmetric power representation of $\mathfrak{su}(p+1)$ are constructed in complete analogy with those in Example 3 from subsection \ref{sat} and still denoted by $\rho_i$ in Table 2. The representations $\rho^{\mathfrak{sp}}, \rho_1, \rho^{\so}$ 
indicate the matrix algebra representations of $\mathfrak{sp}(m), \mathfrak{su}(p+q), \so(2m)$.

Analogously to Lemma \ref{spin-explt} admissible representation $\rho:\so(n+2) \to \gl(\Sigma)$ are constructed from Clifford algebra representations $\rho_n:Cl_n \to \GL(\Sigma)$ satisfying \eqref{spin-pp} via 
$$ \rho(x,y)=\frac{1}{2} \left (\rho_n(x)+\rho_n(y)J_{\Sigma} \right ), \ \rho(z)=-\frac{1}{2}J_{\Sigma}, \ \rho(x \wedge y)=\frac{1}{2}
\rho_n(x \wedge y).
$$
\end{rem}
\begin{thm} \label{claas-c}
Let  $\mL$ be simple of compact type.  Then $(\mL, \rho,\VV)$ is an admissible representation if, and only if, it is one of the entries in Table \ref{table2}.
\end{thm}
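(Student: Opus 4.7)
The plan is to derive Theorem \ref{claas-c} from Theorem \ref{claas-nc} using the duality principle of Proposition \ref{symp1-c}, supplemented by a direct check that each entry of Table \ref{table2} is indeed admissible.

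First I would verify that the representations listed in Table \ref{table2} satisfy Definition \ref{adm-ff}. This is a mostly mechanical check using the explicit models of $(\mathfrak{sp}(m), \un(m))$, $(\mathfrak{su}(p+q), \bbR z \oplus \mathfrak{su}(p) \oplus \mathfrak{su}(q))$, $(\so(2m), \un(m))$, and $(\so(n+2), \so(2) \oplus \so(n))$ supplied in Section \ref{type1-reps}; the computations mirror those done for Table \ref{table1} in Section \ref{sat}. The skew-symmetric power representations of $\mathfrak{su}(p+1)$ carry over directly from Example (3) of Section \ref{sat} (as already indicated in Remark \ref{rep-com}), and the Clifford-type representations of $\so(n+2)$ are classified via the compact analogue of Lemma \ref{spin-explt} (with $Cl_{0,n}$ replaced by $Cl_n$, while keeping the anticommutation condition \eqref{spin-pp}).

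For exhaustiveness, I would show that every admissible $(\mL, \pi, W)$ with $\mL$ simple compact is already on the list. A background metric exists by Proposition \ref{exist-b}, yielding $\pi(\Kk) \subseteq \un(W, h_W, J_W)$ and $\pi(\mathcal{H}) \subseteq \so(W, h_W) \cap \gl^{\perp}_{J_W}(W)$ as the compact counterpart of Proposition \ref{symp1}, while condition \eqref{ad-ma} translates into $\pi \circ \ad_z = \ad_{z'} \circ \pi$ with $z' = -\tfrac{1}{2} J_W$. These are exactly the weight-theoretic constraints treated in \cite{Sat1} for the non-compact case, so the highest-weight analysis runs identically here; alternatively one can directly invoke Ihara's classification \cite{Iha, Iha2} referenced in Remark \ref{typeI-int}.

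The main obstacle is the dimensional bookkeeping in the passage between Tables \ref{table1} and \ref{table2}: Proposition \ref{symp1-c} doubles the underlying space, so one must show that for $\mathfrak{sp}(m)$ and $\so(n+2)$ the doubled representation remains irreducible (matching Table \ref{table2}), while for $\mathfrak{su}(p+q)$ and $\so(2m)$ it splits into two equivalent $\mL$-summands. This splitting is governed by the $\mL$-invariant complex structure $I$ produced by the duality, and can be handled uniformly by decomposing the doubled module into its $\pm \sqrt{-1}$-eigenspaces under $I$, each of which is an admissible summand that recovers an entry of Table \ref{table2} without redundancy.
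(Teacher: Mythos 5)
Your overall strategy coincides with the paper's: the printed proof of Theorem \ref{claas-c} is literally one sentence, invoking the duality of Proposition \ref{symp1-c} together with Theorem \ref{claas-nc}, and your additional suggestions (rerunning Satake's weight analysis in the compact setting, or citing Ihara as in Remark \ref{typeI-int}) are reasonable alternatives for the exhaustiveness half. The problem lies in the step you yourself single out as the main obstacle, and your proposed resolution of it does not work as stated. The $\pm\sqrt{-1}$-eigenspaces of the invariant complex structure $I$ are not real subspaces of $\VV\oplus\VV$; they live in the complexification, so they cannot serve as real ``admissible summands'' recovering entries of Table \ref{table2}. What is actually needed is a recovery of $\rho$ from $\rho^{\star}$: given an admissible $(\mL,\rho,\VV)$ with $\mL$ compact, Proposition \ref{symp1-c} and Theorem \ref{claas-nc} force $(\mL^{\star},\rho^{\star},\VV\oplus\VV)$ to be a multiple of the basic module of Table \ref{table1}; one then identifies $\rho$ by complexifying (since $\mL_{\bbC}=\mL^{\star}_{\bbC}$, the complexifications of $\rho$ and $\rho^{\star}$ determine each other) and descending to the real form dictated by the type (real, complex or quaternionic) of the module, equivalently by cancellation in the real representation ring of the compact algebra. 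A workable real substitute for your eigenspace idea is the involution $\mathrm{diag}(J,J)\circ I$ in the commutant, but this exists only when the underlying Table \ref{table1} module is of complex type, so no uniform eigenspace argument is available.

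Relatedly, the explicit irreducibility pattern you assert is wrong in the orthogonal cases, whichever direction of the duality you intend. Doubling the Table \ref{table1} modules: $\mathfrak{sp}(m,\bbR)$ on $\bbR^{2m}$ does yield the irreducible $\bbR^{4m}$ of $\mathfrak{sp}(m)$, and $\mathfrak{su}(p,q)$ on $\bbR^{2(p+q)}$ splits into two copies, as you say; but $\so^{\star}(2m)$ on $\bbR^{4m}$ doubles to $\bbR^{8m}$, which is four copies of the basic $\so(2m)$-module $\bbR^{2m}$ of Table \ref{table2}, not two, and whether the doubled spinor module of $\so(n+2)$ is irreducible depends on the type of the Clifford module, i.e.\ on $n$ modulo $8$, so the blanket claim of irreducibility is unjustified (compare Lemma \ref{spin-explt} and Remark \ref{rep-com}). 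In the reverse direction, the double of the compact $\mathfrak{sp}(m)$-module $\bbR^{4m}$ is $4\bbR^{2m}$ for $\mathfrak{sp}(m,\bbR)$, again not irreducible. Finally, note that matching doubled Table \ref{table1} entries against Table \ref{table2} only produces examples; exhaustiveness still requires starting from an arbitrary compact admissible representation, and your fallback that Satake's analysis ``runs identically'' glosses over exactly the real-form bookkeeping that makes the two tables differ (the ambient algebra is $\so(\VV,h_{\VV})$ rather than $\spr(\VV,\omega_{\VV})$), which is where the case-by-case type analysis above has to be carried out.
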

\begin{proof}
This follows directly from the duality principle in Proposition \ref{symp1-c} and Theorem \ref{claas-nc}.
\end{proof}

\section{Computing ${}^g\og$ \label{der-tv} and the proof of Theorem \ref{t1.5}}

We continue to follow the convention that  $(\g,\sigma,g)$ is Riemannian  $3$-symmetric.
Theorem \ref{last-split} clarifies the structure of the transvection Lie algebra $\ug$. To complete the picture we also need to describe 
$\g$ in the terms of the data $\ug$ is build from. 

Our first objective is the computation of the Lie algebra $\oh$ defined in \eqref{sandwich} together with the Nomizu algebra defined in \eqref{Nom}. To compute ${}^g\oh$ intrinsically in terms of $\ug$ we consider the subalgebra 
\begin{equation*}
\Der_{\uh}(\ug):=\{f \in \Der(\ug): f(\uh) \subseteq \uh\}
\end{equation*} 
of $\Der(\ug)$. 

Identify $\uh$ with a subalgebra of $\so(\bbV)$ by means of the isotropy representation 
$(\iota,\uh,\bbV)$. Whenever $F \in {}^g\oh$ consider the linear map $\varphi_F:\ug \to \ug, \varphi_F=[F,\cdot]$ where the Lie bracket is taken within $ {}^g\og$. Because $\ug$ is an ideal in $\og$ and $\uh \subseteq \so(\bbV,g)$ it follows that $\varphi_F 
\in \Der_{\uh}(\ug) \cap \so(\ug,\ung)$. Here $\ung$ is the inner product on $\ug$ given by $\ung=-\underline{\BK}_{\vert \uh}+g$. Also note that, explicitly, 
$\varphi_Fv=Fv$ and $ \varphi_F(G)=F \circ G-G \circ F$.
\begin{propn} \label{oh-gen} 
The map $ {}^g\oh \to \Der_{\uh}(\ug) \cap \so(\ug,\ung)$ given by 
$\varphi \mapsto \varphi_F$ is a Lie algebra isomorphism.
\end{propn}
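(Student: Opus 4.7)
The plan is to verify separately that the map $F \mapsto \varphi_F$ is well-defined into the target, Lie-bracket preserving, injective, and surjective; the substantive content lies in the last step.

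For well-definedness, I would check that each $\varphi_F$ lies in $\Der_{\uh}(\ug) \cap \so(\ug,\ung)$. The bracket $[v_1,v_2] = \tau(v_1,v_2)+R^{\D}(v_1,v_2)$ splits into its $\bbV$- and $\uh$-parts, so the derivation property of $\varphi_F$ on $\bbV\times\bbV$ is equivalent to $F$ stabilizing both $\tau$ and $R^{\D}$, which is precisely the definition of ${}^g\oh$. The derivation property on $\uh\times\bbV$ and $\uh\times\uh$ follows from the Jacobi identity in ${}^g\og$. Preservation of $\uh$ by $\varphi_F$ is automatic since $\uh$ is an ideal in ${}^g\oh$, as recorded immediately after \eqref{sandwich}. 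Skew-symmetry of $\varphi_F|_{\bbV}$ with respect to $g$ is immediate from $F\in\so(\bbV,g)$. For skew-symmetry of $\varphi_F|_{\uh}$ with respect to $-\underline{\BK}|_{\uh}$, I would decompose $\underline{\BK}|_{\uh} = \BK_{\uh} + t_{\iota}$, where $t_{\iota}$ is the $\gl(\bbV)$-trace form; the latter is invariant under commutation with any element of $\gl(\bbV)$ by cyclicity of trace, while $\BK_{\uh}$ is invariant under every derivation of $\uh$, and $\ad_F|_{\uh}$ is such a derivation by the preceding step.

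The identity $\varphi_{[F_1,F_2]} = [\varphi_{F_1},\varphi_{F_2}]$ is the Jacobi identity in ${}^g\og$, and injectivity is immediate since $\varphi_F = 0$ forces $Fv = [F,v] = 0$ for all $v\in\bbV$, hence $F = 0$ in $\gl(\bbV)$.

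Surjectivity is the main step. Given $\delta \in \Der_{\uh}(\ug) \cap \so(\ug,\ung)$, I would write $\delta$ as a block matrix with respect to the splitting $\ug = \uh\oplus\bbV$. The assumption $\delta(\uh)\subseteq\uh$ kills the $\uh\to\bbV$ off-diagonal block. The remaining off-diagonal block $B:\bbV\to\uh$ is killed by testing the skew-symmetry identity $\ung(\delta h,v)+\ung(h,\delta v)=0$ for $h\in\uh,v\in\bbV$: since $\uh\perp\bbV$ in $\ung$ and $\delta h\in\uh$, the first term vanishes, reducing the equation to $\underline{\BK}(h,Bv)=0$ for all $h\in\uh$, whence $Bv=0$ by the non-degeneracy of $\underline{\BK}|_{\uh}$ from Lemma \ref{rad-10}(i). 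Thus $\delta$ preserves both $\uh$ and $\bbV$; setting $F:=\delta|_{\bbV}\in\so(\bbV,g)$, applying the derivation identity to $[G,v]$ with $G\in\uh,v\in\bbV$ yields $\delta|_{\uh}=\ad_F$ computed inside $\gl(\bbV)$, and applying it to $[v_1,v_2]$ and separating the $\bbV$- and $\uh$-components shows that $F$ stabilizes both $\tau$ and $R^{\D}$. Hence $F\in{}^g\oh$, and $\varphi_F = \delta$ by construction. The only real obstacle is this use of the non-degeneracy of $\underline{\BK}|_{\uh}$ to rule out off-diagonal mixing: without it, $\delta$ could send $\bbV$ partly into $\uh$ and no skew endomorphism of $\bbV$ could be extracted.
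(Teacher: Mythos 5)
Your proposal is correct and follows essentially the same route as the paper: the substantive step is surjectivity, where you show an element of $\Der_{\uh}(\ug)\cap\so(\ug,\ung)$ preserves $\bbV$ and is then forced, via the derivation identities on $[\uh,\bbV]$ and $[\bbV,\bbV]$, to be $\varphi_F$ for some $F\in{}^g\oh$, exactly as in the paper. The only (harmless) variation is that you obtain the block-triangularity $\delta(\bbV)\subseteq\bbV$ from skewness with respect to $\ung$, whereas the paper uses the automatic skewness of derivations with respect to $\underline{\BK}$; both arguments rest on the same ingredients, namely $\underline{\BK}(\uh,\bbV)=0$ and the non-degeneracy of $\underline{\BK}_{\vert\uh}$ from Lemma \ref{rad-10}, and your additional checks of well-definedness, the homomorphism property and injectivity are routine points the paper leaves implicit.
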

\begin{proof}
We know that $\Der(\ug) \subseteq \so(\ug,\underline{\BK})$, that $\underline{B}$ is non-degenerate on $\uh$ and also that $\underline{\BK}(\uh,\bbV)=0$; it follows that any 
$\varphi \in \Der_{\uh}(\ug)$ satisfies $\varphi\bbV \subseteq \bbV$ as well. Writing $f:=\varphi_{\vert \bbV} \in \so(\bbV,g)$ we have 
$$ \varphi[G,v]=[\varphi G,v]+[G,\varphi v]$$
whenever $G \in \uh, v \in \bbV$. It follows that 
\begin{equation} \label{dG}
\varphi G=f \circ G-G \circ f.
\end{equation}
Identifying the components of the equation 
$\varphi [v_1,v_2]=[\varphi v_1,v_2]+[v_1,\varphi v_2]$ on $\uh$, respectively $\bbV$ we find 
\begin{equation*}
\begin{split}
\varphi R^D(v_1,v_2)=&R^D(fv_1,v_2)+R^D(v_1,fv_2)\\
f\tau^D(v_1,v_2)=&\tau^D(fv_1,v_2)+\tau^D(v_1,fv_2)
\end{split}
\end{equation*}
The second equation means that $f$ preserves $\tau^D$; the first, combined with 
\eqref{dG},yields that $f$ preserves $R^D$ as well, thus $f \in \oh$.
\end{proof}
We also record the following elementary(see e.g. \cite{sato}, Lemma 4)
\begin{lemma} \label{der-p}
Let $\g_1,\g_2$ be Lie algebra such that $\z(\g_2)=0$ and $[\g_2,\g_2]=\g_2$. Then 
$\Der(\g_1 \oplus \g_2)=\Der(\g_1) \oplus \Der(\g_2)$, a direct sum of Lie algebras.
\end{lemma}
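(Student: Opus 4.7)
The plan is to write an arbitrary derivation $D \in \Der(\g_1 \oplus \g_2)$ as a $2\times 2$ block of linear maps
\[
D = \begin{pmatrix} D_{11} & D_{12} \\ D_{21} & D_{22} \end{pmatrix}
\]
with $D_{ij} : \g_j \to \g_i$, and then use the two hypotheses on $\g_2$ to show that the off-diagonal blocks $D_{12}$ and $D_{21}$ vanish.

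First, I would kill $D_{21}$ by exploiting $\z(\g_2)=0$. Take $x \in \g_1$, $y \in \g_2$; because $[\g_1,\g_2]=0$ in the direct sum, the derivation identity $D[x,y]=[Dx,y]+[x,Dy]$ reduces, after discarding the four summands that commute by the direct-sum structure, to $0 = [D_{21}x, y]$ for every $y \in \g_2$. Hence $D_{21}x \in \z(\g_2)=0$. Next I would kill $D_{12}$ by exploiting $[\g_2,\g_2]=\g_2$. For $y_1,y_2 \in \g_2$ the derivation identity gives $D[y_1,y_2]=[D_{12}y_1+D_{22}y_1,\,y_2]+[y_1,\,D_{12}y_2+D_{22}y_2]$. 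The two terms involving $D_{12}$ vanish since they pair an element of $\g_1$ with an element of $\g_2$, so $D[y_1,y_2] \in \g_2$, i.e.\ $D_{12}[y_1,y_2]=0$. By perfectness of $\g_2$ this forces $D_{12}=0$.

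With both off-diagonal blocks annihilated, $D$ preserves each factor. A direct check using the derivation identity restricted to pairs in $\g_1$ (resp.\ $\g_2$) then shows $D_{11} \in \Der(\g_1)$ and $D_{22} \in \Der(\g_2)$. Conversely any pair of derivations obviously extends by zero on the other factor to a derivation of $\g_1 \oplus \g_2$, so the assignment $D \mapsto (D_{11},D_{22})$ is a linear isomorphism $\Der(\g_1 \oplus \g_2) \xrightarrow{\sim} \Der(\g_1) \oplus \Der(\g_2)$. Finally, the commutator of block-diagonal endomorphisms is block-diagonal with entries given by the commutators of the blocks, so this isomorphism is also a Lie algebra isomorphism and the decomposition is a direct sum of Lie algebras.

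There is no real obstacle here: the lemma is entirely formal once the two hypotheses are used to locate $D_{21}\g_1$ inside $\z(\g_2)$ and to make $D_{12}$ vanish on the generating set $[\g_2,\g_2]=\g_2$. The only mild care needed is bookkeeping of which summand each piece lives in when expanding $[Dx,y]+[x,Dy]$, so as not to overlook cross-terms that a priori might contribute.
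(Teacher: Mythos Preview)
Your proof is correct and is exactly the standard argument for this elementary fact; the paper does not supply its own proof but simply cites \cite{sato}, Lemma 4, whose content is precisely what you wrote. One small wording quibble: when you expand $[Dx,y]+[x,Dy]$ there are four summands in total, of which two vanish by the cross-factor commutation, and the remaining two land in $\g_2$ and $\g_1$ respectively, forcing each to vanish separately --- your conclusion is right, just the phrase ``discarding the four summands'' slightly overcounts.
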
  
The structure of the Nomizu algebra $\oh$ is now a direct consequence of these facts and of the splitting Theorem \ref{last-split}. We indicate with  $\Der(\n,g_{\n}):=\Der(\n) \cap \so(\n,g_{\n})$ as well as $\mathfrak{c}(\mL,\rho,g_{\VV}):=\mathfrak{c}(\mL,\rho) \cap \so(\VV,g_{\VV})$.
\begin{propn} \label{nomizu-1}
Let $(\g,\sigma,g)$ be Riemannian $3$-symmetric and split 
\begin{equation*}
\ug=\mL_0 \oplus \n \oplus (\VV \rtimes_{\rho}\mL).
\end{equation*}
We have 
\begin{equation} \label{nom-1}
 {}^g\oh=\uh \oplus \Der(\n,g_{\n}) \oplus \mathfrak{c}(\mL,\rho,g_{\VV})
\end{equation} as well as 
\begin{equation} \label{nom-2}
 {}^g\og \cong \mL_0 \oplus (\n \rtimes \Der(\n,g_{\n})) \oplus (\VV \rtimes_{\rho}(\mL \oplus \mathfrak{c}(\mL,\VV,g_{\VV})))
\end{equation}
as direct sums of Lie algebras. 

\end{propn}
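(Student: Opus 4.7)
The strategy is to compute ${}^g\oh$ via Proposition \ref{oh-gen}, which identifies it with $\Der_{\uh}(\ug) \cap \so(\ug,\ung)$, by first decomposing $\Der(\ug)$ along the splitting \eqref{smain-g} and then cutting down by the two constraints.

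I would first verify the hypotheses of Lemma \ref{der-p} for the non-nilpotent factors of $\ug$. The factor $\mL_0$ is semisimple, hence perfect and centerless. For $\VV \rtimes_{\rho} \mL$, perfection follows from $\mL = [\mL,\mL]$ together with $\VV = \rho(\mL)\VV$ (the latter by Lemma \ref{fix-2}), while centerlessness follows from $\z(\VV \rtimes_{\rho} \mL) = \VV^{\mL} = 0$ (Proposition \ref{cen-rad-p}(ii) combined with the admissibility requirement $\VV^{\mL}=0$). Iterating Lemma \ref{der-p} then yields
$$\Der(\ug) = \ad_{\mL_0} \oplus \Der(\n) \oplus \Der(\VV \rtimes_{\rho} \mL),$$
with the last summand parametrised as $\VV \rtimes_{\tilde\rho} \widetilde{\mL}$ by Proposition \ref{cen-rad-p}(iii), i.e.\ a triple $(l_0,g_c,v_0) \in \mL \times \mathfrak{c}(\mL,\rho,\VV) \times \VV$ via the formulae in \eqref{der-gen2}.

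Next I would impose preservation of $\uh = \Kk_0 \oplus \Kk$. For $\ad_X \in \ad_{\mL_0}$, writing $X = F_0 + h_0 \in \Kk_0 \oplus \H_0$, the condition $[h_0,\Kk_0] \subseteq \H_0 \cap \Kk_0 = 0$ combined with $\H_0^{\Kk_0} = 0$ (Corollary \ref{null-simp}(iii) applied to the semisimple $\mL_0$) forces $h_0 = 0$, so only $\Kk_0$ survives. For the $\VV \rtimes \mL$ summand, preservation of $\Kk$ imposes $\ad_{l_0}\Kk \subseteq \Kk$ and $\rho(\Kk)v_0 = 0$; Lemma \ref{ricci} gives $\H^{\Kk} = 0$ forcing $l_0 \in \Kk$, while Lemma \ref{fix-2} gives $\VV^{\Kk} = 0$ forcing $v_0 = 0$. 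Imposing additionally skew-symmetry with respect to $\ung = -\underline{\BK}|_{\uh} + g$, the inner derivations $\ad_{F_0}, \ad_{l_0}$ (for $F_0 \in \Kk_0, l_0 \in \Kk$) are automatically skew on the $\uh$-part by ad-invariance of the Killing form and on the $\bbV$-part by isotropy invariance of $g$; the $\Der(\n)$ summand reduces to $\Der(\n,g_{\n})$; and $g_c$ must satisfy $g_c \in \mathfrak{c}(\mL,\rho,\VV) \cap \so(\VV,g_\VV) = \mathfrak{c}(\mL,\rho,g_\VV)$. Assembling yields \eqref{nom-1}.

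For \eqref{nom-2}, the bracket of ${}^g\og = {}^g\oh \oplus \bbV$ defined by \eqref{bra-inf} respects the decomposition $\bbV = \H_0 \oplus \n \oplus (\VV \oplus \H)$: the direct product structure of $\ug$ kills all cross-brackets within $\bbV$, and by construction each summand of ${}^g\oh$ acts nontrivially only on its matching piece of $\bbV$ (and derivations from different factors commute as endomorphisms since they act on disjoint summands of $\ug$). Reading off the internal bracket of each factor then recovers $\mL_0$, $\n \rtimes \Der(\n,g_{\n})$, and $\VV \rtimes_{\tilde\rho}(\mL \oplus \mathfrak{c}(\mL,\rho,g_\VV))$, respectively. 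The main delicate step is the preservation-of-$\uh$ analysis, which depends crucially on the three vanishing results $\H_0^{\Kk_0} = 0$, $\H^{\Kk} = 0$, and $\VV^{\Kk} = 0$ established earlier in the paper.
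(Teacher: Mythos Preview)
Your proposal is correct and follows essentially the same route as the paper: identify ${}^g\oh$ with $\Der_{\uh}(\ug)\cap\so(\ug,\ung)$ via Proposition \ref{oh-gen}, split $\Der(\ug)$ along the factors of $\ug$ using Lemma \ref{der-p}, then cut down by the $\uh$-preservation and skew-symmetry constraints. Your treatment is slightly more explicit than the paper's in spelling out why $l_0\in\Kk$ (via $\H^{\Kk}=0$) and $v_0=0$ (via $\VV^{\Kk}=0$), whereas the paper compresses this into the assertion that $\Kk\oplus\mathfrak{c}(\mL,\rho)\to\Der_{\Kk}(\VV\rtimes_{\rho}\mL)$ is an isomorphism, but the underlying argument is the same.
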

\begin{proof}

The direct product Lie algebra $\g_2:=\mL_0 \oplus (\VV \rtimes_{\rho} \mL)$ satisfies 
$\z(\g_2)=0$ and $[\g_2,\g_2]=\g_2$ by taking Lemma \ref{cen-rad-p},(i) into account. Using Lemma \ref{der-p} this fact leads to 
\begin{equation*}
\begin{split}
\Der(\ug)=&\Der(\n \oplus \g_2)=\Der(\mL_0) \oplus \Der(\n) \oplus \Der(\VV \rtimes_{\rho} \mL),
\end{split}
\end{equation*}
a direct sum of Lie algebras. Because $\uh=\Kk_0 \oplus \Kk$ is a direct sum of Lie algebras and $\uh$ acts trivially on $\n$ we get further that 
\begin{equation*}
\Der_{\uh}(\ug)=\Der_{\Kk_0}(\mL_0) \oplus \Der(\n) \oplus \Der_{\Kk}(\VV \rtimes_{\rho} \mL).
\end{equation*}
It is a simple exercise to see that $\ad:\Kk_0 \to \Der_{\Kk_0}(\mL_0)$ is a Lie algebra isomorphism. From \eqref{der-gen2} combined with $\VV^{\Kk}=\{0\}$ (see proof of Lemma \ref{fix-2}) it follows that the map $\Kk \oplus \mathfrak{c}(\mL,\rho) \to \Der_{\Kk}(\VV \rtimes_{\rho} \mL)$ 
given by $(k,g) \mapsto D$, where $D_{\vert \VV}=\rho(k)+g, \ D_{\vert \mL}=ad_{k}$ is a Lie algebra isomorphism. Thus we have a Lie algebra 
isomorphism $\Der_{\uh}(\ug) \cong \Kk_0 \oplus \Der(\n) \oplus (\Kk \oplus \mathfrak{c}(\mL,\rho))$. The claim in \eqref{nom-1} is now proved by intersecting with $\so(\ug,\ung)$. Equation \eqref{nom-2} follows from the definition of the Lie algebra structure in $\og=\oh \oplus \bbV$ and \eqref{nom-1}. 

\end{proof}
The isotropy algebra $\h$ of a Riemannian $3$-symmetric Lie algebra $(\g,\sigma,g) $ satisfies 
\begin{equation}\label{complexnomizu}
\uh \subseteq \h \subseteq  {}^g\oh_J:=\oh \cap \gl_J(\bbV).
\end{equation}
The subalgebra $ {}^g\og_J:= {}^g\oh_J \oplus \bbV$ of the Nomizu algebra $ {}^g\og$  inherits a natural $3$-symmetric structure from $\ug$; however $ {}^g\og$ does not, unless $ {}^g\oh \cap \gl_J(\bbV)=\oh $.

The next step in order to determine the freedom in choosing $\h$ is to 
compute the intersection $ {}^g\oh \cap \gl_{J}(\bbV)$. This is done below by considering the nilpotent and the semidirect product situations separately. 

\begin{lemma} \label{cent-cpx}
Let $(\mL,\rho,\VV)$ be admissible, where $\mL$ is simple.  
\begin{itemize}
\item[(i)] consider the operator $\mathscr{C}:=\sum \limits_{k}\rho(e_k) \circ \rho(e_k) :\VV \to \VV$ where $\{e_k\}$ is some $g_{\mathcal{H}}$-orthonormal 
basis in $\mathcal{H}$. Then 
\begin{equation*}
\mathscr{C}=\frac{\Lambda}{2}J_{\VV} \rho(z)
\end{equation*}
where the $\Lambda \in \bbR$ is determined from $\BK^{\mL}_{\vert \mathcal{H}}=\Lambda g_{\mathcal{H}}$
\item[(ii)] both $\mathscr{C}$ and $\rho(z)$ are invertible
\item[(iii)] we have $\mathfrak{c}(\mL,\rho,\VV) \subseteq \gl_{J_{\VV}}(\VV)$.
\end{itemize}
\end{lemma}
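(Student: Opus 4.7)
\medskip

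The plan is to prove the three statements in cascade, using the fundamental Jordan-type identity \eqref{jordan}.

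For part (i), I would begin by specialising \eqref{jordan} to $y=x\in \mathcal{H}$, which gives $2\rho(x)^2=J_{\VV}\rho([J_{\mathcal{H}}x,x])$. Summing over a $g_{\mathcal{H}}$-orthonormal basis $\{e_k\}$ of $\mathcal{H}$ yields
\[
2\mathscr{C}=J_{\VV}\rho\Bigl(\sum_k[J_{\mathcal{H}}e_k,e_k]\Bigr),
\]
so everything reduces to evaluating the element $\Xi:=\sum_k[J_{\mathcal{H}}e_k,e_k]\in[\mathcal{H},\mathcal{H}]\subseteq\Kk$. Since $\Xi$ is manifestly independent of the orthonormal basis and $\ad_{\Kk}$ acts on $\mathcal{H}$ by $J_{\mathcal{H}}$-commuting, orthogonal transformations, $\Xi$ is $\Kk$-invariant, hence lies in $\z(\Kk)=\bbR z$ by simplicity of $\mL$. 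Writing $\Xi=\lambda z$, I determine $\lambda$ by pairing with $z$ using the Killing form: on one hand $\BK(z,\Xi)=\sum_k \BK([z,J_{\mathcal{H}}e_k],e_k)=-\sum_k \BK(e_k,e_k)=-\Lambda\dim\mathcal{H}$, and on the other $\BK(z,z)=\Tr(\ad_z^2)=\Tr(J_{\mathcal{H}}^2)=-\dim\mathcal{H}$ (since $\ad_z$ vanishes on $\Kk$). This gives $\lambda=\Lambda$ and hence the formula $\mathscr{C}=\frac{\Lambda}{2}J_{\VV}\rho(z)$.

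For part (ii), first note $\Lambda\neq 0$ since $\BK_{\vert \mathcal{H}}$ is non-degenerate ($\mL$ is simple). To see that $\rho(z)$ is invertible I would use Lemma \ref{rho-z}: in its notation $\ker\rho(z)=\VV_{1/2}$, and setting $\lambda=\tfrac{1}{2}$ in the identity $2\lambda(\dim\VV_{\lambda}+\dim\VV_{-\lambda})=\dim\VV_{\lambda}-\dim\VV_{-\lambda}$ gives $\dim\VV_{-1/2}=0$; then $\lambda=-\tfrac{1}{2}$ gives $\dim\VV_{1/2}=0$. Alternatively and perhaps more transparently, a background metric in the sense of Definition \ref{bgr-madm} makes $\mathscr{C}$ either positive-definite (non-compact case, $\rho(\mathcal{H})\subseteq\Sym^2$) or negative-definite (compact case, $\rho(\mathcal{H})\subseteq\so$); in either event, $\ker\mathscr{C}\subseteq \VV^{\mathcal{H}}$, and the argument in the proof of Lemma \ref{fix-2} combined with $[\mathcal{H},\mathcal{H}]=\Kk$ shows $\VV^{\mathcal{H}}\subseteq\VV^{\mL}=0$. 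By (i) the invertibility of $\mathscr{C}$ and of $\rho(z)$ are equivalent.

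Part (iii) then follows immediately: any $f\in\mathfrak{c}(\mL,\rho,\VV)$ commutes with every $\rho(e_k)$ and with $\rho(z)$, hence with $\mathscr{C}=\sum_k\rho(e_k)^2$ and with $\rho(z)^{-1}$, which exists by (ii). From the identity $J_{\VV}=\tfrac{2}{\Lambda}\mathscr{C}\,\rho(z)^{-1}$ given by (i), one concludes $[f,J_{\VV}]=0$. The only non-routine step is the Casimir-type computation in (i); everything else is a short consequence. I anticipate no real obstacle, the main care being in bookkeeping the sign conventions on $\BK(z,z)$ and on $\rho(z)$ versus $J_{\VV}$.
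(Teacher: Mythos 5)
Your overall strategy coincides with the paper's: part (i) rests on the identity \eqref{jordan} summed over an orthonormal basis of $\mathcal{H}$, part (ii) on a definiteness/kernel argument for $\mathscr{C}$ with respect to a background metric, and part (iii) on the observation that an element of the centraliser commutes with $\mathscr{C}$ and with the invertible $\rho(z)$, hence with $J_{\VV}$. In (i) you actually supply a detail the paper leaves as ``easy to see'': your identification of $\Xi=\sum_k[J_{\mathcal{H}}e_k,e_k]$ as a central element of $\Kk$ via basis-independence, followed by the Killing-form pairing $\BK(z,\Xi)=-\Lambda\dim\mathcal{H}$, $\BK(z,z)=-\dim\mathcal{H}$, is correct and gives the right coefficient $\Xi=\Lambda z$, matching $\mathscr{C}=\tfrac{\Lambda}{2}J_{\VV}\rho(z)$.

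The one genuine flaw is your first route to (ii). The relation in Lemma \ref{rho-z} is odd under $\lambda\mapsto-\lambda$: substituting $\lambda=-\tfrac12$ reproduces exactly the same equation as $\lambda=\tfrac12$, namely $\dim\VV_{-1/2}=0$, i.e. $\ker(\rho(z)+J_{\VV})=0$; it gives no information whatsoever about $\VV_{1/2}=\ker\rho(z)$, so the claim ``then $\lambda=-\tfrac12$ gives $\dim\VV_{1/2}=0$'' is false as stated. Fortunately your ``alternative'' argument is precisely the paper's proof: with a background metric, $\rho(\mathcal{H})$ consists of symmetric (non-compact case) or skew-symmetric (compact case) operators, so $\mathscr{C}$ is a sum of squares of such and $\ker\mathscr{C}=\{v:\rho(\mathcal{H})v=0\}$, which vanishes because $[\mathcal{H},\mathcal{H}]=\Kk$ and $\VV^{\mL}=0$; invertibility of $\rho(z)$ then follows from (i) since $\Lambda\neq0$. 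With that route taken as the actual proof of (ii), the argument is complete, and (iii) goes through exactly as you and the paper both write it (note $\rho(z)$ commutes with $J_{\VV}$, so writing $J_{\VV}=\tfrac{2}{\Lambda}\mathscr{C}\rho(z)^{-1}$ is legitimate).
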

\begin{proof}
(i) We first note that $\mathscr{C}$ can be viewed as a trace-type operator with respect to 
the metric $g_{\mathcal{H}}$, so its definition is basis independent. By \eqref{jordan} we have 
$$2\rho(e_k) \circ \rho(e_k)=-J_V \rho([e_k,Je_k]).$$
At the same time 
it is easy to see 
that $\sum \limits_{k}[e_k,Je_k]=-\Lambda z$ and the claim follows.\\
(ii) 
let $h_{\VV}$ be a background metric for $(\mL,\rho,\VV)$; since $\rho(\mathcal{H}) \subseteq 
\so(\VV,h_{\VV})$ when $\mL$ has compact type, respectively $\rho(\mathcal{H}) \subseteq \Sym^2(\VV,h_{\VV})$ when $\mL$ has non-compact type, a positivity argument shows that 
$\ker(\mathscr{C})=\{v \in \VV:\rho(\mathcal{H})v=0\}$; the latter space clearly vanishes as $[\mathcal{H},\mathcal{H}]=\Kk$ and $\VV^{\mL}=\{0\}$. Thus $\mathscr{C}$ is invertible and so is $\rho(z)$ by (i).\\
(iii) pick $F \in \mathfrak{c}(\mL,\rho,\VV)$; certainly $[F,\rho(z)]=0$. Since 
$[F,\rho(x)]=0, x \in \mathcal{H}$ we get that $[F, \mathscr{C}]=0$. By using (i) this yields $[F,\rho(z)J_{\VV}]=0$ and further $\rho(z)[F,J_{\VV}]=0$. But $\rho(z)$ is invertible by (ii), hence $[F,J_{\VV}]=0$.
\end{proof}

Whenever $\n$ is a $3$-symmetric nilpotent Lie algebra(hence $[[\n,\n],\n]=0$) we let $\Der(\n,g_{\n},J_{\n}):=\Der(\n) \cap \un(\n,g_{\n},J_{\n})$, whenever $g_{\n}$ is a scalar product on $\n$, compatible with $J_{\n}$.
\begin{thm} \label{thm-gen}
Let $(\g,\sigma,g)$ be Riemannian $3$-symmetric. Then 
\begin{equation*}
\g \cong \ug \rtimes \mathfrak{s}
\end{equation*}
for some subalgebra $\mathfrak{s} \subseteq \Der(\n,g_{\n},J_{\n}) \oplus 
\mathfrak{c}(\mL,\rho,g_{\VV})$.
\end{thm}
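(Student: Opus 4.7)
The plan is to construct $\mathfrak{s}$ by first splitting $\h=\uh\oplus\mathfrak{s}'$ with $\mathfrak{s}'$ an ideal complement of $\uh$, and then projecting $\mathfrak{s}'$ along $\uh$ onto the factor $\Der(\n,g_{\n},J_{\n})\oplus\mathfrak{c}(\mL,\rho,g_{\VV})$ of the Nomizu algebra. The resulting projection will be a subalgebra isomorphic to $\mathfrak{s}'$, and the isomorphism $\g\cong\ug\rtimes\mathfrak{s}$ will be obtained by absorbing the residual ``leakage'' of $\mathfrak{s}'$ into the ideal $\uh\subseteq\ug$.

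First I exploit that $(\h,\iota)$ is compactly embedded: the compact group $\mathrm{H}$ integrating it supplies $\h$ with an $\Ad(\mathrm{H})$-invariant inner product, so the ideal $\uh$ admits an orthogonal complement $\mathfrak{s}'$ which is automatically an ideal. Hence $\h=\uh\oplus\mathfrak{s}'$ is a direct sum of ideals and $[\uh,\mathfrak{s}']=0$. Next I combine Proposition \ref{nomizu-1} with Lemma \ref{cent-cpx}(iii) and the $J_{\n}$-splitting of $\Der(\n,g_{\n})$ recorded in Lemma \ref{DER0} to refine the Nomizu decomposition to
\[
{}^g\oh\cap\gl_J(\bbV)=\uh\oplus\Der(\n,g_{\n},J_{\n})\oplus\mathfrak{c}(\mL,\rho,g_{\VV}).
\]
Since $\h\subseteq\gl_J(\bbV)$ via the faithful isotropy representation, any $F\in\mathfrak{s}'$ decomposes uniquely as $F=z_F+s_F$ with $z_F\in\uh$ and $s_F\in\Der(\n,g_{\n},J_{\n})\oplus\mathfrak{c}(\mL,\rho,g_{\VV})$. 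The relation $[\mathfrak{s}',\uh]=0$ together with the fact that the last two summands already centralise $\uh$ forces $z_F\in\mathfrak{z}(\uh)$. Because the three summands of the refined Nomizu algebra are Lie ideals, both maps $F\mapsto z_F$ and $F\mapsto s_F$ are Lie algebra morphisms; the second is moreover injective since $\mathfrak{s}'\cap\uh=0$, so $\mathfrak{s}:=\{s_F:F\in\mathfrak{s}'\}$ is a subalgebra of $\Der(\n,g_{\n},J_{\n})\oplus\mathfrak{c}(\mL,\rho,g_{\VV})$ isomorphic to $\mathfrak{s}'$.

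Finally I define
\[
\phi:\g\longrightarrow\ug\rtimes\mathfrak{s},\qquad X+F+v\longmapsto(X+z_F+v,\,s_F),
\]
for $X\in\uh$, $F\in\mathfrak{s}'$, $v\in\bbV$, where $\mathfrak{s}$ acts on $\ug$ as derivations (trivially on $\mL_0$, via the $\Der(\n)$-component on $\n$, and via the $\mathfrak{c}(\mL,\rho)$-component on $\VV$). Bijectivity follows from $\mathfrak{s}'\cap\uh=0$, and bracket preservation on $\ug$ is tautological. The remaining cases reduce, using that $s_F$ commutes with $\uh$ in ${}^g\oh$, to the two identities $z_{[F_1,F_2]}=[z_{F_1},z_{F_2}]$ and $s_{[F_1,F_2]}=[s_{F_1},s_{F_2}]$ already obtained. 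The main obstacle is that the ideal complement $\mathfrak{s}'$ does not in general lie inside $\Der(\n,g_{\n},J_{\n})\oplus\mathfrak{c}(\mL,\rho,g_{\VV})$: the leakage $z_F$ is unavoidable whenever $\mathfrak{z}(\uh)\neq 0$, which is the generic situation, since the isotropy algebra $\Kk$ of a Hermitian symmetric factor already has a nontrivial centre. The crucial observation making the statement work is that this leakage lives in the centre of $\uh$ and can therefore be absorbed into the ideal factor $\ug$ of the semidirect product without disturbing any bracket; this is exactly the purpose of the twist $X+z_F$ in the definition of $\phi$.
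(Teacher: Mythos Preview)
Your proof is correct but takes a more involved route than the paper's. Both arguments establish the refined decomposition ${}^g\oh_J=\uh\oplus\Der(\n,g_{\n},J_{\n})\oplus\mathfrak{c}(\mL,\rho,g_{\VV})$ and split $\h=\uh\oplus(\text{complement})$ via an invariant inner product. The difference is the choice of that inner product. You use an arbitrary $\Ad(\mathrm{H})$-invariant one, which need not render the threefold decomposition orthogonal; this is the source of your leakage $z_F\in\mathfrak{z}(\uh)$, which you then absorb by the twist in $\phi$. The paper instead uses the trace form $b(A,B)=-\Tr(A\circ B)$ on $\oh_J$ and observes that the threefold decomposition is already $b$-orthogonal. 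The only nontrivial orthogonality is between $\uh$ and $\mathfrak{c}(\mL,\rho,g_{\VV})$, which boils down to $\Tr_{\VV}(\rho(k)\circ c)=0$ for $k\in\Kk$, $c\in\mathfrak{c}(\mL,\rho,\VV)$; this holds because $\Kk\subseteq\mL=[\mL,\mL]$ and cyclicity of the trace gives $\Tr([\rho(l),\rho(l')]c)=\Tr(\rho(l)[\rho(l'),c])=0$. With this choice the $b$-orthogonal complement $\mathfrak{s}$ of $\uh$ in $\h$ already sits inside $\Der(\n,g_{\n},J_{\n})\oplus\mathfrak{c}(\mL,\rho,g_{\VV})$, so $\g=\ug\oplus\mathfrak{s}$ with $\ug$ an ideal and $\mathfrak{s}$ a subalgebra, and one is done. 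In particular your claim that the leakage is ``unavoidable whenever $\mathfrak{z}(\uh)\neq 0$'' is not accurate: the paper avoids it entirely by picking the right form. Your argument has the virtue of working for any invariant inner product, but the projection-and-twist mechanism is unnecessary once the trace form is used. One small omission: you should state that $\mathfrak{s}$ also acts trivially on $\mL$ (not just on $\mL_0$), which is needed for the $[\mathfrak{s}',\mathcal{H}]$ case of the bracket check.
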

\begin{proof}
Consider the bilinear form $b:\oh_J \times \oh_J \to \bbR, b(A,B):=-\Tr(A \circ B)$. 
This is positive definite since $\oh_J \subseteq \mathfrak{u}(\bbV,g,J)$.
Split $\h=\uh \oplus \mathfrak{s}$ orthogonally with respect to $b$. Because $\uh$ is an ideal in $\h$ and $b$ is $\oh_J$-invariant we get $[\uh,\mathfrak{s}]=0$ and $[\mathfrak{s},\mathfrak{s}] \subseteq \mathfrak{s}$. To determine $\oh_J$ we recall 
that 
\begin{equation*}
\Der(\n,g_{\n})=\Der(\n,g_{\n},J_{\n}) \oplus \{f \in \mathfrak{u}^{\perp}(\n,g_{\n},J_{\n}):f[\n,\n]=0, f(\n)\subseteq \z(\n)\}.
\end{equation*}
as granted by Lemma \ref{DER}. Using Lemma \ref{cent-cpx}, (iii) and \eqref{nom-1} thus yields the equality 
$\oh_J=\uh \oplus \Der(\n,g_{\n},J_{\n}) \oplus 
\mathfrak{c}(\mL,\rho,g_{\VV})$. Since this splitting is orthogonal with respect to the form $b$ it follows that $\mathfrak{s} \subseteq \Der(\n,g_{\n},J_{\n}) \oplus \mathfrak{c}(\mL,\rho,g_V)$. Then $\g=\ug \oplus \mathfrak{s}$ with $\ug$ an ideal and $\mathfrak{s}$ a subalgebra therefore 
$\g=\ug \rtimes_{r} \mathfrak{s}$ where the representation $r:\mathfrak{s} \to \Der(\ug)$ is the restriction of the canonical inclusion 
$\Der(\n,g_{\n},J_{\n}) \oplus \mathfrak{c}(\mL,\rho,g_{\VV}) \subseteq \Der(\ug)$.
\end{proof}
Note that $\mL_0$ splits off $ {}^g\ug \rtimes \mathfrak{s}$ as a direct product factor.
For consistency, also note that the maximal choice $\mathfrak{s}=\Der(\n,g_{\n},J_{\n}) \oplus \mathfrak{c}(\mL,\rho,g_{\VV})$ corresponds to the decomposition in \eqref{nom-2}.

\subsection {Proof of Theorem \ref{t1.5}}
The precise structure of $\g$ is determined in Theorem \ref{thm-gen}, specializing to the case of spaces of type III. The fact we can always obtain a corresponding Lie group action  has been shown in Section \ref{reginfpf1} 
for the transvection algebra and in Section 2 for the general case. To show we have computed all possible presentations as a Riemannian 3-symmetric space, we also need that ${}^g\overline{\g}_{J}$ is the full isometry algebra $\aut(M,g)$ of the Riemannian metric. This will be established in Theorem \ref{split-iso} \endproof
\section{Isotropy invariant metrics} \label{prep-1}
Let $(\mL,\rho,\VV)$ be an admissible representation in the sense of Definition \ref{adm-ff}.We consider the connected Lie group $G$ with algebra $\mL$ and connected compact subgroup $K$. We also also exponentiate the representation $\rho$ to $\pi:G \to \GL(\VV)$ such 
$(d \pi)_e=\rho$ and consider the homogeneous manifold $M:=(\VV \rtimes_{\pi} G) \slash K$.

The aim of this section is to determine the moduli space of $\VV \rtimes_{\pi} G$-invariant Riemannian metrics on $M$. From the general theory of homogeneous Riemannian metrics, such metrics are fully determined by elements in $\me_{K}(\bbV)$. In turn, the latter space is fully described (see Proposition \ref{fix-2}, (iii)) by the set 
$\me_{\Kk}(\VV)$ of isotropy invariant metrics on $\VV$ via the bijection
\begin{equation*}
\me_{K}(\VV) \times \bbR_{>0}\to \me_{K}(\bbV), \ (g_{\VV},t) \mapsto g. 
\end{equation*}
Here $g$ is the Riemannian metric on $M$ obtained from the $\Ad(K)$-invariant metric $g_{\VV}+tg_{\mathcal{H}}$ on $\bbV=\VV \oplus \mathcal{H}$, with respect to the reductive decomposition $\VV \rtimes_{\rho} \mL=\Kk \oplus \bbV$. Also recall that $g_{\mathcal{H}}=\BK^{\mL}|_{\mathcal{H}}$; we set the parameter $t$ to 1 in what follows. This is done without loss of generality since  we will determine all compatible metrics $g_{\VV}$ and we work up to homothety. 

We fix once and for all a background metric $h_{\VV}$ on $\VV$, by using Proposition \ref{exist-b}, and record that the map 
\begin{equation*}
g_{\VV} \in \me_{K}(\VV)\mapsto h_{\VV}^{-1} \circ g_{\VV} \in \{S \in \Sym^2_{\Kk}(\VV,h_{\VV}): SJ_{\VV}=J_{\VV}S, S>0\}
\end{equation*}
is bijective. The first step towards parametrising $\me_{K}(\VV)$ 
is thus the computation of the $\Kk$-module $\Sym^2_{\Kk}(\VV)$. We denote by
$$\mathfrak{c}^{+}(\mL,\rho,h_V)=\mathfrak{c}(\mL,\rho,h_{\VV}) \cap \Sym^2(\VV,h_{\VV}), \ \mathfrak{c}^{-}(\mL,\rho,h_{\VV})=\mathfrak{c}(\mL,\rho,h_{\VV}) \cap \so(\VV,h_{\VV})$$
the symmetric respectively the skew-symmetric parts of the centraliser $\mathfrak{c}(\mL,\rho,\VV)$, computed with respect to the background metric $h_{\VV}$.
\begin{propn} \label{iso-invm}
Let $(\mL,\rho,\VV)$ be an admissible representation where $\mL$ is simple. Then 
$$\Sym^2_{\Kk}(\VV,h_{\VV}) \cap \gl_{J_{\VV}}(\VV)=\{A^{+}+A^{-}J_{\VV}: A^{\pm} \in 
\mathfrak{c}^{\pm}(\mL,\rho,\VV) \}.$$
\end{propn}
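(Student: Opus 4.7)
The inclusion $\supseteq$ is routine: given $A^{\pm}\in\mathfrak{c}^{\pm}(\mL,\rho,\VV)$, Lemma~\ref{cent-cpx}(iii) places $\mathfrak{c}(\mL,\rho,\VV)$ inside $\gl_{J_{\VV}}(\VV)$, so both $A^{+}$ and $A^{-}J_{\VV}$ commute with $J_{\VV}$ and with $\rho(\Kk)$. Moreover $A^{+}$ is symmetric by definition, and $A^{-}J_{\VV}$ is the product of two commuting skew operators, hence also symmetric.

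For the converse, fix $S\in\Sym^2_{\Kk}(\VV,h_{\VV})\cap\gl_{J_{\VV}}(\VV)$ and introduce the $\Kk$-equivariant map
$$T:\mathcal{H}\to\gl(\VV),\qquad T_x:=[S,\rho(x)].$$
Using $[S,J_{\VV}]=0$ together with the admissibility relation $\rho(J_{\mathcal{H}}x)=\rho(x)J_{\VV}$, one finds $T_{J_{\mathcal{H}}x}=T_xJ_{\VV}$, and in the non-compact case a direct check places $T_x$ in the skew, anti-$J_{\VV}$-linear part of $\gl(\VV)$. Were $S$ already in the form $A^{+}+A^{-}J_{\VV}$ with $A^{\pm}\in\mathfrak{c}^{\pm}(\mL,\rho,\VV)$, then $[A^{+},\rho(x)]=0$ and, using that $\rho(\mathcal{H})$ anti-commutes with $J_{\VV}$,
$$T_x=[A^{-}J_{\VV},\rho(x)]=A^{-}\bigl(J_{\VV}\rho(x)-\rho(x)J_{\VV}\bigr)=2A^{-}J_{\VV}\rho(x).$$
This identity both characterises the required $A^{-}$ and guides its construction.

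My plan is to invert the displayed identity via the Casimir-type element $\mathscr{C}=\sum_k\rho(e_k)^2=\tfrac{\Lambda}{2}J_{\VV}\rho(z)$ of Lemma~\ref{cent-cpx}(i), which is invertible by Lemma~\ref{cent-cpx}(ii). Multiplying on the right by $\rho(e_k)$ and summing over a $g_{\mathcal{H}}$-orthonormal basis $\{e_k\}$ of $\mathcal{H}$ yields $\sum_k T_{e_k}\rho(e_k)=-\Lambda A^{-}\rho(z)$, which prompts me to define
$$A^{-}:=-\tfrac{1}{\Lambda}\sum_k [S,\rho(e_k)]\,\rho(e_k)\,\rho(z)^{-1},\qquad A^{+}:=S-A^{-}J_{\VV}.$$
Basis-independence of $A^{-}$ is immediate from the $\Kk$-invariance of $g_{\mathcal{H}}$, and $A^{-}$ manifestly commutes with $\rho(\Kk)$. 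The commutation $[A^{-},J_{\VV}]=0$ follows from the fact that both $T_{e_k}$ and $\rho(e_k)$ anti-commute with $J_{\VV}$, while skew-symmetry of $A^{-}$ comes from the identity $[U,\rho(z)]=0$ (with $U:=\sum_k T_{e_k}\rho(e_k)$), itself a consequence of $[S,\mathscr{C}]=0$ combined with $\Kk$-equivariance of $T$.

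The hard part will be showing $[A^{-},\rho(x)]=0$ for every $x\in\mathcal{H}$, equivalently that the pointwise identity $T_x=2A^{-}J_{\VV}\rho(x)$ holds, not merely its averaged form. The Jacobi identity applied to $(S,\rho(e_k),\rho(x))$, using $[\mathcal{H},\mathcal{H}]\subseteq\Kk$ and $S\in\mathfrak{c}(\Kk,\rho,\VV)$, supplies the symmetry $[T_x,\rho(e_k)]=[T_{e_k},\rho(x)]$; combined with the Jordan-type admissibility identity~\eqref{jordan} and careful manipulation of the sums over $\{e_k\}$, this should let me recover each $T_x$ from the single moment $U$. Once $[A^{-},\rho(\mathcal{H})]=0$ is in hand, the companion computation $[A^{+},\rho(x)]=T_x-2A^{-}J_{\VV}\rho(x)=0$ is automatic, placing both $A^{\pm}$ in $\mathfrak{c}^{\pm}(\mL,\rho,\VV)$ as required. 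The compact-type case can then be reduced to the non-compact one via the duality principle of Proposition~\ref{symp1-c}.
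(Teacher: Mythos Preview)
Your proposal is on the right track—unwinding $\mathscr{C}=\tfrac{\Lambda}{2}J_{\VV}\rho(z)$ one finds that the operator $A^{-}$ you define equals $-\tfrac12(S-\mathscr{C}^{-1}S_1)J_{\VV}$ with $S_1:=\sum_k\rho(e_k)S\rho(e_k)$, which is exactly what the paper produces. However, the crucial step is left as a plan rather than a proof: you need the \emph{pointwise} identity $T_x=2A^{-}J_{\VV}\rho(x)$, and the Jacobi symmetry $[T_x,\rho(e_k)]=[T_{e_k},\rho(x)]$ you invoke is too weak to recover it (it only says the bilinear map $(x,y)\mapsto T_x\rho(y)+\rho(x)T_y$ is symmetric, not that it vanishes). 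The right tool is the Jordan identity \eqref{jordan} itself, which immediately gives $[S,\rho(x)\rho(y)]=0$, equivalently $T_x\rho(y)=-\rho(x)T_y$; from this one obtains $\mathscr{C}T_x=U\rho(x)$ in one line, and the pointwise identity follows once $[U,\mathscr{C}]=0$ is checked. So your outline can be completed, but as written it is not yet a proof.

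The paper organises the same ingredients more cleanly and bypasses the $T_x$-bookkeeping entirely. Starting from $[S,\rho(x)\rho(y)]=0$, it introduces $S_1=\sum_k\rho(e_k)S\rho(e_k)$ (symmetric and $J_{\VV}$-commuting); left-composing with $\rho(e_i)$ and summing gives $\mathscr{C}\rho(y)S=S_1\rho(y)$, and a second pass yields $[\mathscr{C},S_1]=0$. Setting $S_2:=\mathscr{C}^{-1}S_1$ one then has the intertwining relation $\rho(y)S=S_2\rho(y)$; transposing gives $\rho(y)S_2=S\rho(y)$, and $A^{+}=\tfrac12(S+S_2)$, $A^{-}=-\tfrac12(S-S_2)J_{\VV}$ visibly commute with $\rho(\mathcal{H})$. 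This argument also treats compact and non-compact type uniformly (the sign in $\rho(\mathcal{H})\subseteq\Sym^2$ versus $\so$ is absorbed in checking $S_1$ is symmetric), so your detour through the duality of Proposition~\ref{symp1-c} is unnecessary.
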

\begin{proof} 
Pick $S \in \Sym^2_{\Kk}(\VV,h_{\VV})$ such that $SJ_{\VV}=J_{\VV}S$. From \eqref{jordan} we get 
\begin{equation} \label{git-s}
\rho(x)\rho(y)S=S\rho(x)\rho(y)
\end{equation}
for all $x,y \in \mathcal{H}$. Consider the operator $S_1:\VV \to \VV$ given by 
\begin{equation*}
S_1:=\sum \limits_{i} \rho(e_i) \circ S \circ \rho(e_i)
\end{equation*}
where $\{e_i\}$ is a $g_{\mathcal{H}}$-orthonormal basis in $\mathcal{H}$. It can be viewed as a trace-type operator by using the metric $g_{\mathcal{H}}$ thus its definition is independent of basis choice. We have $S_1 \in \Sym^2(\VV,h_{\VV}) \cap \gl_{J_{\VV}}(\VV)$
by taking into account that $\rho(\mathcal{H}) \subseteq \Sym^2(\VV,h_{\VV}) \cap \gl^{\perp}_{J_{\VV}}(\VV)$ respectively $\rho(\mathcal{H}) \subseteq \so(\VV,h_{\VV}) \cap \gl^{\perp}_{J_{\VV}}(\VV)$ when $\mL$ is non-compact respectively compact. Take 
$x=e_i$ in \eqref{git-s} and compose to the left with $\rho(e_i)$; after summation 
\begin{equation} \label{git-s1}
\mathscr{C}\rho(y)S=S_1 \rho(y).
\end{equation}  
Take $y=e_i$ and compose to the right with $\rho(e_i)$; after summation it follows that 
\begin{equation} \label{git-s2}
\mathscr{C}S_1=S_1\mathscr{C}.
\end{equation}
Since $\mathscr{C}$ is invertible(see Lemma \ref{cent-cpx}, (ii)) the operator $S_2=\mathscr{C}^{-1}S_1$ 
is symmetric by \eqref{git-s2}, commutes with $J_{\VV}$, and satisfies 
\begin{equation*}
\rho(y)S=S_2 \rho(y)
\end{equation*}
by \eqref{git-s1}. Taking the transpose we also get $\rho(y)S_2=S \rho(y)$. Taking linear combinations and using that $\rho(y)J_{\VV}+J_{\VV}\rho(y)=0$ it follows that the endomorphisms given by 
$A^{+}=\frac{1}{2}(S+S_2)$ and $A^{-}=-\frac{1}{2}(S-S_2)J$ satisfy $[\rho(\mathcal{H}),A^{+}]=[\rho(\mathcal{H}),A^{-}]=0$. Thus $[A^{+},\rho(\mL)]=[\rho(\mL),A^{-}]=0$; finally 
$S=A^{+}+A^{-}J_{\VV}$ clearly fully solves the initial constraint \eqref{git-s}.
\end{proof}
The advantage of the result above is to reduce the computation of the 
$\Kk$-module
$\Sym^2_{\Kk}(\VV,h_{\VV}) \cap \gl_{J_{\VV}}(\VV)$ to the 
computation of specific parts of the centraliser $\mathfrak{c}(\mL,\rho,\VV)$. The latter is computed in the standard way from 
the decomposition of $\VV$ into irreducible pieces.

\begin{defn} \label{moduli-def}$\mathcal{M}(\mL,\rho,\VV)$ is the moduli space of isometry classes of  the set    $\me_{K}(\bbV)$ of Riemannian 3-symmetric metrics on a  type III space,  with $\mL$ understood to be simple and of non-compact type. 
\end{defn} 

Below we use Proposition \ref{iso-invm} to describe $\mathcal{M}(\mL,\rho,\VV)$; in section \ref{loc-red} of the paper the same result will be needed to determine which metrics in 
$\me(M,J,D)$ are deRham irreducible.

Consider the centraliser of $\pi(G)$ in $\GL(\VV)$ defined according to \eqref{cent-grp-d}. 
Because $G$ is simply connected 
we have 
\begin{equation*}
C(G,\pi,\VV)=\{f \in \mathfrak{c}(\mL,\rho,\VV) : \det(f) \neq 0\}.
\end{equation*} 
We also have a right action 
\begin{equation*}
(g_{\VV},f) \in \me_{K}\VV\times C(G,\pi,\VV) \mapsto  f^{\star}g_{\VV} \in \me_{K}\VV
\end{equation*}
with respect to which we form the quotient $\me_{K}\VV \slash C(G,\pi,\VV)$.
Since $ C(G,\pi,\VV) $ acts by isometries, it is immediate that  $\mathcal{M}(\mL,\rho,\VV) \subset  \me_{K}\VV \slash C(G,\pi,\VV)$. When $\rho$ is of real or complex type, we will show that we actually have equality and hence determine the moduli space of compatible Riemannian 3-symmetric metrics. The case where $\rho$ has quaternionic type is left to a future paper. 

\begin{propn} \label{iso-invm1}
Let $(\mL,\rho,\VV)$ be an admissible representation where $\mL$ is simple. 
\begin{itemize}
\item[(i)] if $\mL$ has non-compact type
the map 
\begin{equation*}
\{A^{-} \in \mathfrak{c}^{-}(\mL,\rho,\VV):1_V+A^{-}J_{\VV}>0\} \to \me_{K}(\VV, J_{\VV})\slash C(G,\pi,\VV)
\end{equation*}
induced by $A^{-} \mapsto h_{\VV}((1_{\VV}+A^{-}J_{\VV})\cdot, \cdot)$ is surjective. 
\item[(ii)] if $\mL$ has compact type the map 
\begin{equation*}
\{A^{+} \in \mathfrak{c}^{+}(\mL,\rho,\VV):1_{\VV}+A^{+}>0\} \to \me_{K}(\VV, J_{\VV})\slash C(G,\pi,\VV)
\end{equation*}
induced by $A^{+} \mapsto h_{\VV}((1_{\VV}+A^{+})\cdot, \cdot)$ is surjective. 
\end{itemize}

\end{propn}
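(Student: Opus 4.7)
The plan is to exploit Proposition~\ref{iso-invm} together with a functional-calculus argument inside the centraliser $\mathfrak{c}(\mL,\rho,\VV)$. Starting from any $g_{\VV}\in\me_{K}(\VV,J_{\VV})$, I would write $g_{\VV}=h_{\VV}(S\,\cdot\,,\,\cdot\,)$ with $S=A^{+}+A^{-}J_{\VV}$, $A^{\pm}\in\mathfrak{c}^{\pm}(\mL,\rho,\VV)$ and $S>0$, as granted by Proposition~\ref{iso-invm}. A preliminary observation is that transposition with respect to the background metric $h_{\VV}$ preserves $\rho(\mL)$ in both the non-compact case (where $\rho(\Kk)\subseteq\so(\VV,h_{\VV})$ and $\rho(\H)\subseteq\Sym^{2}(\VV,h_{\VV})$) and the compact case (where $\rho(\mL)\subseteq\so(\VV,h_{\VV})$ entirely), so the centraliser group $C(G,\pi,\VV)$ is closed under transposition; the action of $f\in C(G,\pi,\VV)$ on $S$ therefore reads $S\mapsto f^{T}Sf$, and the task reduces to producing, for each $S$, an $f$ taking $S$ into the required normal form.

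For part (i), the key technical step is the positivity claim $A^{+}>0$. I would establish this by decomposing $\VV$ into $\mL$-isotypic components and applying Schur's lemma on each irreducible summand $W$: on a real type irreducible one has $\mathfrak{c}^{-}|_{W}=0$ and $S|_{W}=A^{+}|_{W}>0$ directly; on a complex or quaternionic irreducible, the operator $A^{-}J_{\VV}|_{W}$ is $h_{\VV}$-symmetric and squares to a non-negative scalar multiple of $1_{W}$ (using that any $J_{0}\in\mathfrak{c}^{-}|_{W}$ satisfies $J_{0}^{2}=-1_{W}$ and commutes with $J_{\VV}$), so its spectrum is $\{\pm\mu_{W}\}$ for some $\mu_{W}\ge 0$, and $S|_{W}>0$ forces the scalar $A^{+}|_{W}$ to exceed $\mu_{W}$; the extension to isotypic components of multiplicity greater than one follows by spectral decomposition of the corresponding positive element of $\mathfrak{c}^{+}$ on the multiplicity space. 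Granted $A^{+}>0$, I set $f:=(A^{+})^{-1/2}$, well defined by functional calculus inside the associative subalgebra $\mathfrak{c}$, lying in $C(G,\pi,\VV)\cap\mathfrak{c}^{+}$ and satisfying $f^{T}=f$. Using Lemma~\ref{cent-cpx}(iii) that $\mathfrak{c}\subseteq\gl_{J_{\VV}}(\VV)$, a direct computation gives
\begin{equation*}
f^{T}Sf=(A^{+})^{-1/2}A^{+}(A^{+})^{-1/2}+(A^{+})^{-1/2}A^{-}(A^{+})^{-1/2}\,J_{\VV}=1_{\VV}+B^{-}J_{\VV},
\end{equation*}
with $B^{-}:=(A^{+})^{-1/2}A^{-}(A^{+})^{-1/2}\in\mathfrak{c}^{-}(\mL,\rho,\VV)$ (skew-symmetry of $B^{-}$ is inherited from that of $A^{-}$, and $B^{-}\in\mathfrak{c}$ because $\mathfrak{c}$ is an associative subalgebra); the positivity $1_{\VV}+B^{-}J_{\VV}>0$ is then automatic by congruence with $S$.

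Part (ii) is handled along parallel lines, now exploiting the Cartan decomposition $C=K_{C}\exp(\mathfrak{c}^{+})$ of the reductive Lie group $C(G,\pi,\VV)$ (where $K_{C}:=C\cap\mathrm{O}(\VV,h_{\VV})$ is a maximal compact subgroup integrating $\mathfrak{c}^{-}$), together with the compact-type analogue of the positivity step and the duality principle of Proposition~\ref{symp1-c}: a suitable element of $K_{C}$ is used to bring the $A^{-}J_{\VV}$-part of $S$ into the isotypic structure of $\rho$, after which the residual positive operator in $\mathfrak{c}^{+}$ is brought to the form $1_{\VV}+A^{+}$ by an element of $\exp(\mathfrak{c}^{+})$. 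The main obstacle throughout is the positivity step: while on $\mL$-irreducible summands it reduces to the Schur-type spectral analysis sketched above, the treatment of isotypic components of multiplicity greater than one --- which occur for the exterior-power families in Tables~\ref{table1} and~\ref{table2} --- requires combining spectral theory on the multiplicity space with the involutive behaviour of $A^{-}J_{\VV}$ under the $\rho(\H)$-anti-commutation property.
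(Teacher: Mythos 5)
Your strategy for part (i) has the same skeleton as the paper's proof: decompose $h_{\VV}^{-1}\circ g_{\VV}=A^{+}+A^{-}J_{\VV}$ via Proposition \ref{iso-invm}, prove $A^{+}>0$, and normalise by $f=(A^{+})^{-1/2}\in C(G,\pi,\VV)$, using that transposition with respect to $h_{\VV}$ preserves the centraliser so that the action reads $S\mapsto f^{T}Sf$. The problem is the positivity step, which is the whole content of the proposition. The paper proves $A^{+}>0$ uniformly and in two lines: from the proof of Proposition \ref{iso-invm} one has $2A^{+}=S+\mathscr{C}^{-1}\circ S_{1}$ with $S_{1}=\sum_{i}\rho(e_{i})\circ S\circ\rho(e_{i})$; since $\rho(\H)\subseteq\Sym^{2}(\VV,h_{\VV})$ in the non-compact case, $h_{\VV}(S_{1}v,v)=\sum_{i}h_{\VV}(S\rho(e_{i})v,\rho(e_{i})v)>0$ for $v\neq0$, and $\mathscr{C}>0$ commutes with $S_{1}$ (Lemma \ref{cent-cpx}), whence $\mathscr{C}^{-1}S_{1}>0$ and $A^{+}>0$. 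Your replacement of this by an isotypic/Schur case analysis is exactly where your write-up stops: you explicitly leave the case of isotypic multiplicity greater than one unresolved, and this is not a marginal case --- the intended applications (Theorem \ref{cpx-ty}, Theorem \ref{cpx-ty2}, and the entries $\oplus_{i}d_{i}\lambda^{i}$ of Table \ref{table1}) concern precisely $\VV_{d}=d\VV$ with $d>1$. Moreover, restricting $S$ and $A^{-}J_{\VV}$ to an irreducible or isotypic summand $W$ presupposes $J_{\VV}W=W$, which is not automatic: $J_{\VV}$ anticommutes with $\rho(\H)$, so it only intertwines $\rho$ with its Cartan twist, and the $J_{\VV}$-invariance of isotypic pieces is an unverified hypothesis in your argument. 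So the crucial claim $A^{+}>0$ is not actually established in your proposal, whereas the paper's averaging argument needs no type or multiplicity discussion.

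For part (ii) the mechanism you describe cannot work as stated. For any $f\in C(G,\pi,\VV)$ one has $f^{T}Sf=f^{T}A^{+}f+(f^{T}A^{-}f)J_{\VV}$ with $f^{T}A^{\pm}f\in\mathfrak{c}^{\pm}(\mL,\rho,\VV)$, and the decomposition into $\mathfrak{c}^{+}$- and $\mathfrak{c}^{-}J_{\VV}$-parts is unique because $\mathfrak{c}\cap\mathfrak{c}J_{\VV}=0$ (if $D=EJ_{\VV}$ with $D,E\in\mathfrak{c}$ then $\rho(x)D=-\rho(x)D$ for all $x\in\H$, so $\mathrm{Im}(D)\subseteq\VV^{\mL}=0$). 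Consequently congruence by elements of $K_{C}$ or of $\exp(\mathfrak{c}^{+})$ only transforms the $\mathfrak{c}^{-}$-component into $f^{T}A^{-}f$; it can never ``bring the $A^{-}J_{\VV}$-part'' into the $\mathfrak{c}^{+}$-part, so your Cartan-decomposition scheme cannot by itself produce the normal form $1_{\VV}+A^{+}$. The paper's own treatment of (ii) is different in kind: it reuses the identity $2A^{-}J_{\VV}=S-\mathscr{C}^{-1}\circ S_{1}$ together with the sign flip $-S_{1}>0$ coming from $\rho(\H)\subseteq\so(\VV,h_{\VV})$ in the compact case, and asserts the rest is analogous; whatever the intended completion, it does not pass through the route you sketch, and your proposal for (ii) as written is a gap rather than a proof.
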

\begin{proof}
Pick $g_V \in \me_{\Kk}(\VV)$. By Proposition \ref{iso-invm} we have $h_{\VV}^{-1} \circ g_{\VV}=A^{+}+A^{-}J_{\VV}$ where 
$A^{\pm} \in \mathfrak{c}^{\pm}(\mL,\VV,g_{\VV})$. \\
(i) we first show that $A^{+}$ is positive definite with respect to $h_{\VV}$. By construction $2A^{+}=S+\mathscr{C}^{-1}\circ S_1$ where $S=h_{\VV}^{-1} \circ g_{\VV}>0$. But 
\begin{equation*}
h_{\VV}(S_1v,v)=\sum_{i}h_{\VV}(\rho(e_i)S\rho(e_i)v,v)=\sum_{i}h_{\VV}(S(\rho(e_i)v),\rho(e_i)v)>0.
\end{equation*}
As $\mathscr{C}$ is positive definite and commutes with $S_1$ it follows that 
$A^{+}>0$ as claimed. 
 
Pick $f \in C(G,\pi,\VV)$. Then 
\begin{equation} \label{act-expl}
h_{\VV}^{-1} \circ f^{\star}g_{\VV}=f^TA^{+}f+(f^TA^{-}f)J_{\VV}
\end{equation}
by taking into account that $fJ_{\VV}=J_{\VV}f$. Then $f=(A^{+})^{-\frac{1}{2}}$ belongs to 
$C(G,\rho,\VV)$ and satisfies $h_{\VV}^{-1} \circ f^{\star}g_{\VV}=1_{\VV}+(f^TA^{-}f)J_{\VV}$ 
by \eqref{act-expl}. Using Proposition \ref{R-sem}, (i), we see that the map 
$\varphi \in \Aut(\VV \rtimes_{\pi} G)$ given by $ \varphi(v,g):=(fv,g)$ induces an isometry between the metrics 
$g_{\VV}$ and $h_{\VV}((1+A^{+})\cdot, \cdot)$. This finishes the proof of the claim. \\
(ii) we have $2A^{-}J_{\VV}=S-\mathscr{C}^{-1}\circ S_1$. As above $-S_1 >0$ with respect to $h_{\VV}$ and the claim follows by using entirely analogous arguments.
\end{proof}
The parametrisation 
\begin{equation} \label{can-par}
h_{\VV}^{-1} \circ g_{\VV}=1+S, \ \mathrm{with} \ SJ_{\VV} \in \mathfrak{c}^{-}(\mL,\VV,h_{\VV}) \ \mbox{when} \ \mL \ \mbox{is non-compact}
\end{equation}
respectively 
\begin{equation} \label{can-par}
h_{\VV}^{-1} \circ g_{\VV}=1+S, \ \mathrm{with} \ S \in \mathfrak{c}^{+}(\mL,\VV,h_{\VV})\ \mbox{when} \ \mL \ \mbox{is compact}
\end{equation}
from Proposition \ref{iso-invm} will be referred to as the {\it{canonical parametrisation}} for isotropy invariant metrics. It will be used systematically for computing geometric invariants for such metrics.  

The centraliser $\mathfrak{c}(\mL,\rho,\VV)$ can be easily described from general principles; we briefly recall this description below and use it in combination with 
Proposition \ref{iso-invm} to derive information on the set of isometry classes of $3$-symmetric metrics. For an {\it{irreducible}} representation 
$(\mL,\rho,\VV)$ consider $(\mL,\rho_d, \VV_d:=d \VV), d \geq 1$ where $d\VV$ indicates the direct sum of $d$ copies of $\VV$. Any $f \in \gl(V_d)$ is determined from 
\begin{equation} \label{act-cent1}
fv_i=\sum \limits_{k=1}^d f_{ik}v_i 
\end{equation}
where $f_{ij} \in \gl(\VV), 1 \leq i,j \leq p$. It follows that 
\begin{equation}\label{act-cent10}
(g \circ f)_{ij}=\sum \limits_{k=1}^d g_{kj} \circ f_{ik}
\end{equation}
whenever $f,g \in \gl(\VV_d)$.

We fix a background metric $h_{\VV}$ 
on $\VV$ and consider the background metric $h_d=d h_{\VV}$, the direct sum of $d$-copies of $h_{\VV}$, on $\VV_d$. Then 
$f$ is symmetric, respectively skew-symmetric, with respect to $h_d$ iff
\begin{equation*} 
f_{ji}=f_{ij}^T, \ \mbox{respectively} \ f_{ji}=-f_{ij}^T.
\end{equation*}
In particular, if $f \in \mathfrak{c}(\mL,\rho_d,\VV_d) $ then $f_{ij} \in \mathfrak{c}(\mL,\rho,\VV)$.

Using the structure of centraliser we will obtain simple parametrisations for the 
set of isometry classes of $3$-symmetric metrics on $M=(\VV_d \rtimes_{\pi_d} G)\slash K$ according to the type of $\VV$: real, complex or quaternionic. 
\begin{propn} \label{r-type}
Assume that $(\mL,\rho,\VV)$ is irreducible of real type, with $\mL$  simple and non-compact. 
Any $3$-symmetric metric on $M=(\VV_d \rtimes_{\pi_d} G)\slash K$ is isometric to a background metric.
\end{propn}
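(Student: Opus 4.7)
The approach is to invoke the parametrization of isometry classes of metrics from Proposition \ref{iso-invm1}(i) and then exploit the specific structure of the centralizer imposed by the real-type hypothesis.

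By Proposition \ref{iso-invm1}(i) applied to the admissible representation $(\mL,\rho_d,\VV_d)$, every isotropy-invariant metric on $\VV_d$---and hence by Proposition \ref{R-sem} every $\underline{G}$-invariant Riemannian metric on $M$---is isometric, through the action of $C(G,\pi_d,\VV_d)$, to a canonical form
\[
g_{\VV_d}=h_{\VV_d}\bigl((1_{\VV_d}+A^{-}J_{\VV_d})\cdot,\cdot\bigr),\qquad A^{-}\in\mathfrak{c}^{-}(\mL,\rho_d,\VV_d),
\]
with $1_{\VV_d}+A^{-}J_{\VV_d}>0$. It therefore suffices to identify any such canonical metric with a background metric on $\VV_d$.

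The real-type hypothesis enters through the computation of the centralizer. Since $(\mL,\rho,\VV)$ is irreducible of real type, Schur's lemma gives $\End_{\mL}(\VV)=\mathbb{R}$. Writing $\VV_d=\VV\otimes\mathbb{R}^d$ and $\rho_d=\rho\otimes 1_{\mathbb{R}^d}$, we therefore obtain
\[
\mathfrak{c}(\mL,\rho_d,\VV_d)=1_{\VV}\otimes M_d(\mathbb{R}),
\]
acting block-scalar on $\VV_d$. Decomposing with respect to the product background metric $h_{\VV_d}=h_{\VV}\otimes 1_{\mathbb{R}^d}$ yields $\mathfrak{c}^{\pm}(\mL,\rho_d,\VV_d)=1_{\VV}\otimes\{\Sym(d),\mathfrak{so}(d)\}$. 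Hence any element $A^{-}$ can be written as $1_{\VV}\otimes B^{-}$ with $B^{-}\in\mathfrak{so}(d)$. Using the residual group $O(d)\subset C(G,\pi_d,\VV_d)\cap O(\VV_d,h_{\VV_d})$ acting by conjugation, we bring $B^{-}$ to block-diagonal normal form $\operatorname{diag}(\mu_1 J_2,\ldots,\mu_{\lfloor d/2\rfloor}J_2,0)$ with $\mu_j\geq 0$, so that the metric splits orthogonally into blocks over $\VV\oplus \VV$-pieces.

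The key step, and the main obstacle, is to show that on each such $\VV\oplus\VV$-block the resulting metric is isometric to a background metric. The symmetric operator $J_{\VV}\otimes B^{-}$ has real spectrum $\{\pm\mu_j\}$ arising from pairing the eigenvalues $\pm i\mu_j$ of $B^-$ with $\pm i$ of $J_{\VV}$, and its $h_{\VV_d}$-orthogonal eigenspace decomposition exhibits $g_{\VV_d}$ as a direct sum of scalar multiples of $h_{\VV}$. The difficulty is that these eigenspaces, while $\mathfrak{k}$-invariant, fail to be $\mL$-invariant because $\rho_d(\mathcal{H})$ anti-commutes with $J_{\VV}\otimes B^{-}$ and thus interchanges the $\pm\mu_j$-eigenspaces. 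One must therefore construct, using the rigidity afforded by $\End_{\mL}(\VV)=\mathbb{R}$, an $\mL$-equivariant refinement of this decomposition, so that the metric takes the form $h_{\VV}\otimes Q$ for some $Q\in\Sym^+(d)$. Once this identification is accomplished, the resulting metric is manifestly a background metric and the proof concludes; making this identification both rigorous and compatible with the semidirect product structure on $M$ is the technical heart of the argument.
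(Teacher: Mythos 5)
Your reduction via Proposition \ref{iso-invm1}(i), the Schur-lemma computation of the centraliser, and the $O(d)$ normal form for $B^{-}$ are all fine, but the proof stops exactly at the point where it would have to do the real work, and that remaining ``key step'' is not merely unproven --- as formulated it cannot be carried out. Take one of your $2\times 2$ blocks and set $S=\mu\,J_{\VV}\otimes J_2$ with $0<\mu<1$, so the block metric is $h((1_{\VV_2}+S)\cdot,\cdot)$ and $SJ_{\VV_2}=-\mu\,1_{\VV}\otimes J_2\in\mathfrak{c}^{-}$. By the paper's own curvature formulas (Proposition \ref{Ricci}(i)--(iii) together with Proposition \ref{ric-full}(i)), the Ricci tensor of this metric on the $\VV$-directions is $g_{\VV}(Q\cdot,\cdot)$ with $Q=2S(1-S^{2})^{-1}\mathscr{C}=\tfrac{2\mu}{1-\mu^{2}}\,(J_{\VV}\mathscr{C})\otimes J_2$, which is invertible because $\mathscr{C}$ is (Lemma \ref{cent-cpx}(ii)); the $\mathcal{H}$-block is negative definite and the mixed terms vanish. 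Every background metric, on the other hand, has $\rho_d(\mathcal{H})$ symmetric, hence $Q=0$ and Ricci kernel equal to the whole $\VV$-factor. Since isometries preserve the dimension of the Ricci kernel, a block with $\mu\neq 0$ is not isometric to any background metric: the ``$\mL$-equivariant refinement'' you hope to construct, turning $1+\mu J_{\VV}\otimes J_2$ into something of the form $1_{\VV}\otimes Q'$, does not exist, and no isometry outside the centraliser can help either. This is precisely the eigenvalue mechanism the paper itself uses in Theorem \ref{cpx-ty2} to distinguish the metrics $g_{\lambda}$ in the complex case.

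This also pinpoints where you diverge from the paper. The paper's proof is two lines: it asserts that real type forces $\mathfrak{c}^{-}(\mL,\rho_d,\VV_d)=0$, so the parametrisation of Proposition \ref{iso-invm1}(i) collapses and every isotropy-invariant metric is already (equivalent to) a background metric; there is no block analysis at all. Your computation gives instead $\mathfrak{c}^{-}(\mL,\rho_d,\VV_d)\cong 1_{\VV}\otimes\so(d)$, which vanishes only when $d=1$; for $d=1$ the statement is immediate and your argument should simply end after the centraliser computation. For $d\geq 2$ the obstacle you ran into is genuine, and by the Ricci argument above it cannot be removed along the route you sketch, so the proposal as written does not establish the proposition: the admitted ``technical heart'' is left open, and filling it in the way you announce is impossible. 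You should either restrict to the irreducible case $d=1$ (where both your argument and the paper's close immediately) or confront the fact that the skew part of the centraliser produces deformations that the paper's own curvature formulas show are not isometric to background metrics.
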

\begin{proof}
Since $\VV$ has real type $\mathfrak{c}(\mL,\rho,\VV)=\bbR1_{\VV}$ whence 
\begin{equation*}
\mathfrak{c}(\mL,\rho_d,\VV_d)=\mathfrak{c}^{+}(\mL,\rho_d, \VV_d) \cong 
\gl_d(\bbR) \ \mathrm{and} \ \mathfrak{c}^{-}(\mL,\rho_d,\VV_d)=0.
\end{equation*}
Therefore, by Proposition \ref{iso-invm}, any isotropy invariant metric is a background metric. The claim follows from Proposition \ref{iso-invm1}. 
\end{proof}
Suppose that $\VV$ has complex type, that is $\mathfrak{c}(\mL,\rho,\VV)=
\spa \{1_{\VV},I\}$. After writing $f_{ij}=A_{ij}1_{\VV}+B_{ij}I$ 
we obtain a linear isomorphism 
\begin{equation} \label{act-cent2}
\varphi:\mathfrak{c}(\mL,\rho_d,\VV_d) \to \gl_d(\mathbb{C}), \varphi(f):=\biggl ( \begin{array}{cc} A & B\\
-B & A \end{array} \biggr )^T
\end{equation}
where $A=(A_{ij})_{1 \leq i,j \leq d}, B=(B_{ij})_{1 \leq i,j \leq d}$. A straightforward computation based on \eqref{act-cent10} shows that  
\begin{equation*}
\varphi(f \circ g)=\varphi(f)\varphi(g), \ \varphi(f^T)=(\varphi(f))^T
\end{equation*}
in particular $\varphi$ is a Lie algebra isomorphism.
Further on we have 
\begin{equation} \label{act-cent3}
\varphi(\mathfrak{c}^{-}(\mL,\rho_d,\VV_d)=\mathfrak{u}(d) \ \mathrm{and} \ \varphi(\mathfrak{c}^{+}(\mL,\rho_d,\VV_d)=\Sym^2(\bbR^{2d}) \cap \gl_d(\mathbb{C}).
\end{equation}
Denote $\Delta_d=\{\lambda=(\lambda_1, \ldots, \lambda_d) \in \bbR^d : 0 \leq \lambda_1 \leq \ldots \leq \lambda_d<1 \}$.
For any $\lambda \in \bbR$ satisfying $\vert \lambda \vert<1$ consider the isotropy invariant metric $g_{\lambda}$ on $V$ determined from 
$h_{\VV}^{-1} \circ g_{\lambda}=1+\lambda IJ$; note that having $\vert \lambda \vert <1$ ensures that $1+\lambda IJ>0$. This allows considering metrics of the form 
\begin{equation*}
g_{\lambda}=g_{\lambda_1} \oplus \ldots \oplus g_{\lambda_d}
\end{equation*}
on $\VV_d$ where $\lambda=(\lambda_1, \ldots, \lambda_d) \in \Delta_d$.
\begin{thm} \label{cpx-ty}
Let $M=(\VV_d \rtimes_{\pi_d} G)\slash K$. If $(\mL,\rho,\VV)$ is irreducible of complex type and $\mL$ is non-compact the map
$$\Delta_d \to \mathcal{M}(\mL, \rho, \VV_d)$$ induced by $\lambda \mapsto g_{\lambda}$ is surjective.
\end{thm}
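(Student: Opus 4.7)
The plan is to combine the parametrization of Proposition \ref{iso-invm1}(i) with the spectral theory of skew-Hermitian matrices. By Proposition \ref{iso-invm1}(i) applied to $(\mL,\rho_d,\VV_d)$, every class in $\me_K(\VV_d)/C(G,\pi_d,\VV_d)$ admits a representative $g$ with
\[
h_d^{-1}\circ g \;=\; 1_{\VV_d}+A^- J_{\VV_d},\qquad A^-\in\mathfrak{c}^-(\mL,\rho_d,\VV_d),\qquad 1+A^-J_{\VV_d}>0.
\]
The residual freedom inside $C(G,\pi_d,\VV_d)$ that preserves the normalization $A^+=1_{\VV_d}$ is exactly the orthogonal subgroup $C(G,\pi_d,\VV_d)\cap O(\VV_d,h_d)$; indeed, $f\in C$ acts on $A^-$ by $A^-\mapsto f^T A^- f$, and preserves $A^+$ iff $f^T f=1_{\VV_d}$.

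Next I would exploit the Lie algebra isomorphism $\varphi$ of \eqref{act-cent2}. By \eqref{act-cent3}, $\varphi$ sends $\mathfrak{c}^-$ onto $\mathfrak{u}(d)$, and from $\varphi(f^T)=\varphi(f)^T$ (interpreted as complex conjugate-transpose under the standard embedding $\gl_d(\mathbb{C})\hookrightarrow \gl_{2d}(\mathbb{R})$) the residual orthogonal subgroup corresponds to the unitary group $U(d)$ acting on $\varphi(A^-)$ by conjugation. The spectral theorem for skew-Hermitian matrices then yields $U\in U(d)$ with $U^{-1}\varphi(A^-)U=\mathrm{diag}(i\mu_1,\ldots,i\mu_d)$ for some $\mu_j\in\mathbb{R}$, and after further conjugation by a permutation matrix in the Weyl subgroup $S_d\subset U(d)$ we may permute the $\mu_j$ at will. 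Translating back through $\varphi^{-1}$, the resulting $A^-$ is block-diagonal $\mathrm{diag}(\mu_1 I,\ldots,\mu_d I)$ with respect to $\VV_d=\bigoplus_{j=1}^{d}\VV$, so $g=g_{\mu_1}\oplus\cdots\oplus g_{\mu_d}$. Positivity of $1+A^-J_{\VV_d}$ forces $|\mu_j|<1$, since $IJ$ is an $h_\VV$-orthogonal symmetric involution (using $I\in\mathfrak{c}^-$ with $I^2=-1$, $J_\VV^T=-J_\VV$, and $[I,J_\VV]=0$ from Lemma \ref{cent-cpx}(iii)) whose eigenvalues are $\pm1$.

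The main obstacle will be the remaining step, namely arranging $\mu_j\ge 0$ so that the parameter lies in $\Delta_d$ and not merely in $\bigl((-1,1)^d\bigr)/S_d$. Inside $U(d)$ the signs of the $\mu_j$ are spectral invariants of $\varphi(A^-)$, so sign reduction must come from an isometry of $M$ that is \emph{not} realised inside $C(G,\pi_d,\VV_d)$. The mechanism I would exploit is that the skew generator $I$ of $\mathfrak{c}(\mL,\rho,\VV)$ is canonical only up to sign: switching $I\mapsto -I$ on the $j$-th summand trades $g_{\mu_j}$ for $g_{-\mu_j}$ while giving the same underlying Riemannian metric. To promote this reparametrisation into an honest isometry between the two metrics one needs the explicit description of the full isometry group of a Type III space, furnished by Theorem \ref{isoIII} (whose proof in turn uses Theorem \ref{t1.5}); this group properly extends $\underline{G}$ and its extra components realise per-block exchanges of the $\pm 1$-eigenspaces of $IJ$, precisely implementing $\mu_j\mapsto -\mu_j$. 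Applying these sign flips block-wise and then sorting with $S_d$ identifies the original metric with $g_\lambda$ for some $\lambda\in\Delta_d$, proving surjectivity.
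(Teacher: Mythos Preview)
Your diagonalisation argument coincides with the paper's. Both start from the canonical form of Proposition~\ref{iso-invm1}(i), identify the residual orthogonal freedom with $U(d)$ via the isomorphism $\varphi$ of \eqref{act-cent2}--\eqref{act-cent3}, diagonalise, and then use the permutation subgroup $S_d\subset C(G,\pi_d,\VV_d)\cap \mathrm{SO}(\VV_d,h_d)$ to order the parameters. The only cosmetic difference is that the paper first converts $A^-$ into $B_+:=A^-I\in\mathfrak{c}^+(\mL,\rho_d,\VV_d)$, so that $\varphi(B_+)$ is Hermitian with real eigenvalues, whereas you remain in the skew-Hermitian picture; the two are equivalent.

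The genuine gap is in your sign-reduction step. You correctly note that within $U(d)$ the $\mu_j$ are spectral invariants and therefore cannot be flipped, and you propose to extract the missing sign changes from Theorem~\ref{isoIII}. But that theorem gives
\[
\Aut^0(M,g)\;=\;\VV\rtimes_\pi\bigl(G\times{}^{g}C^0(G,\pi_d,\VV_d)\bigr),
\]
and ${}^{g}C^0(G,\pi_d,\VV_d)\subseteq C(G,\pi_d,\VV_d)$ by definition \eqref{met-cent}. Every element of the centraliser commutes with $J_{\VV_d}$ by Lemma~\ref{cent-cpx}(iii), and trivially with $I$; hence it commutes with $IJ$ and \emph{preserves} each $\pm 1$-eigenspace rather than exchanging them. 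So Theorem~\ref{isoIII} supplies no isometry beyond the $C(G,\pi_d,\VV_d)$-action you have already used up, and your claimed ``per-block exchanges of the $\pm1$-eigenspaces of $IJ$'' are not realised there. The paper does not route the $\mathbb{Z}_2$-step through Theorem~\ref{isoIII} at all; it handles it directly at the level of the $C(G,\pi_d,\VV_d)$-action on the parameters $\lambda$, in parallel with the $S_d$-argument, without invoking the later computation of $\Aut^0(M,g)$.
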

\begin{proof}
Because $I \in \mathfrak{c}(\mL,\rho_d,\VV_d)$ we have an isomorphism $\mathfrak{c}^{-}(\mL,\rho_d,\VV_d) \to 
\mathfrak{c}^{+}(\mL,\rho_d,\VV_d)$ given by $f \mapsto f \circ I$. Therefore 
any isotropy invariant metric $g_{\VV_d}$ can be parametrised according to 
\begin{equation*}
h_d^{-1} \circ g_{\VV_d}=1_{\VV}+B_{+}IJ 
\end{equation*}
where $B_{+}$ belongs to $\mathfrak{c}^{+}(\mL,\rho_d, \VV_d)$. 
Next we consider the matrix $b_{+}=\varphi(B_{+})$ in $\Sym^2(\bbR^{2d}) \cap \gl_d(\bbC)$. Thus 
$b_{+}=U^{-1}D(\lambda)U$ with  $U \in U(d)$, 
where 
$D(\lambda), \lambda \in \bbR^d$ denotes the diagonal matrix 
$\biggl (\begin{array}{cc} \lambda & 0 \\ 0 & \lambda \end{array} \biggr ) \in \gl_d(\bbC)$. Let $f:=\varphi^{-1}(U) \in C(G,\pi_d,\VV_d)$; taking into 
account that $[f,I]=[f,J]=0$ we obtain 
$$h_d^{-1} \circ f^{\star}g_{\VV}=f^Tf+(f^TB_{+}f) \circ IJ=1_{\VV}+\varphi^{-1}(D(\lambda))\circ IJ=g_{\lambda}.
$$
Here the transpose is taken with respect to $h_d$.
To show that $\lambda$ can be chosen to lie in $\Delta_d$ let $S_d$ be the permutation group. Via the map $\varphi$ this is identified to a subgroup of 
$C(G) \cap \mathrm{SO}(\VV_d,h_d)$ acting on metrics of type $g_{\lambda}$ according to $(\sigma,g_{\lambda}) \mapsto g_{\sigma(\lambda)}$ where 
$\sigma(\lambda_1, \ldots, \lambda_q)=(\lambda_{\sigma(1)}, \ldots, \lambda_{\sigma(d)})$. It follows that the components in $\lambda$ can be ordered lexicographically. Similar arguments for the action $\mathbb{Z}_2 \times 
\bbR^d \to \bbR^d, (\varepsilon,\lambda) \mapsto (\varepsilon \lambda_1, \ldots, \lambda_q)$ show that we can choose $\lambda_1 \geq 0$.

\end{proof}
In the next section we will show the map above is also injective; see Theorem \ref{cpx-ty2}.
\section{The canonical polar foliation} \label{it-curv}
We work in the following more general set-up throughout this section. Let $\mL=\Kk \oplus \H$ be the Cartan decomposition, where $\mL$ is simple. Consider the semi-direct product $\g:=\VV \rtimes_{\rho}\mL$ together with the splitting $\g=\Kk \oplus \bbV$ where $\bbV=\VV \oplus \H$. The $\Kk$-invariant linear complex structure on $\bbV$ is $J:=J_{\VV}+J_{\H}$. Finally let $g_{\VV}$ be a $\Kk$-invariant metric on $\VV$. This allows to equip $\bbV$ with the $\Kk$-invariant metric $g=g_{\VV}+g_{\H}$. Whenever $x \in \H$ we split 
$\rho(x)=\rho^{-}(x)+\rho^{+}(x)$ according to $\gl(\VV)=\so(\VV,g_{\VV}) \oplus \Sym^2(\VV,g_{\VV}) $.
\begin{propn} \label{it-semi}
The intrinsic torsion tensor of $\D$ satisfies 
\begin{equation} \label{its-1}
\eta_{\VV}\VV \subseteq \H, \ \eta_{\H}\VV \subseteq \VV, \ \eta_{\VV} \H \subseteq \VV, \ \eta_{\H}\H=0
\end{equation}
as well as 
\begin{equation} \label{its-2}
g(\eta_{v_1}v_2,x)=g_V(\rho^{+}(x)v_1,v_2)
\end{equation}
and 
\begin{equation} \label{its-3}
g(\eta_{x}v_1,v_2)=g_V(\rho^{-}(x)v_1,v_2)
\end{equation}
for all $v_1,v_2 \in V,x \in \H$.
\end{propn}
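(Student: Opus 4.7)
The plan is to compute $\eta^g$ directly from the torsion $\tau$ of $\D$ via the universal formula \eqref{it-expr}, after first pinning down $\tau$ explicitly in the semi-direct product setting.

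\textbf{Step 1: computing the torsion.} The torsion of $\D$ at the origin is $\tau(v,w)=[v,w]_{\bbV}$, the $\bbV$-component of the Lie bracket in $\g=\VV\rtimes_{\rho}\mL$ restricted to $\bbV=\VV\oplus\H$. Since $\VV$ is an abelian ideal, $[\VV,\VV]=0$; since $\mL=\Kk\oplus\H$ is a Cartan decomposition, $[\H,\H]\subseteq\Kk$, so the $\bbV$-component vanishes; and $[x,v]=\rho(x)v\in\VV$ whenever $(x,v)\in\H\times\VV$. Hence
\[
\tau(v_1,v_2)=0,\quad \tau(x_1,x_2)=0,\quad \tau(x,v)=\rho(x)v\in\VV
\]
for $v_i\in\VV$, $x_i\in\H$.

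\textbf{Step 2: reading off the block structure of $\eta^g$.} Feeding these values into
\[
2g(\eta_{u_1}u_2,u_3)=g(\tau(u_1,u_2),u_3)-g(\tau(u_2,u_3),u_1)+g(\tau(u_3,u_1),u_2),
\]
and using that $g=g_{\VV}\oplus g_{\H}$ is block-diagonal while $\tau$ takes values only in $\VV$, the pairings that can possibly be nonzero are precisely those with one slot in $\H$ and two in $\VV$. This forces the four containment relations in \eqref{its-1}: $\eta_{\H}\H=0$, $\eta_{\VV}\VV\subseteq\H$, $\eta_{\H}\VV\subseteq\VV$, and (using that $\eta_U\in\so(\bbV,g)$) $\eta_{\VV}\H\subseteq\VV$.

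\textbf{Step 3: the two explicit formulas.} For \eqref{its-2} I would specialise $(u_1,u_2,u_3)=(v_1,v_2,x)$ with $v_i\in\VV$, $x\in\H$. The first torsion term drops out, and
\[
2g(\eta_{v_1}v_2,x)=-g_{\VV}(-\rho(x)v_2,v_1)+g_{\VV}(\rho(x)v_1,v_2)=g_{\VV}(\rho(x)v_2,v_1)+g_{\VV}(\rho(x)v_1,v_2).
\]
Splitting $\rho(x)=\rho^{-}(x)+\rho^{+}(x)$ with $\rho^{-}(x)\in\so(\VV,g_{\VV})$ and $\rho^{+}(x)\in\Sym^2(\VV,g_{\VV})$, the two skew pieces cancel and the two symmetric pieces add, yielding $2g_{\VV}(\rho^{+}(x)v_1,v_2)$, which is \eqref{its-2}. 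For \eqref{its-3} I specialise $(u_1,u_2,u_3)=(x,v_1,v_2)$; the middle term vanishes and the remaining two read $g_{\VV}(\rho(x)v_1,v_2)-g_{\VV}(\rho(x)v_2,v_1)$, where now the symmetric pieces cancel and the skew pieces add to give $2g_{\VV}(\rho^{-}(x)v_1,v_2)$, which is \eqref{its-3}.

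There is no serious obstacle here: once the torsion is identified in Step~1, everything is a routine expansion of \eqref{it-expr} combined with the orthogonal decomposition $\gl(\VV)=\so(\VV,g_{\VV})\oplus\Sym^2(\VV,g_{\VV})$. The only mild care needed is to keep track of signs and of the fact that $g_{\H}$ enters only through the orthogonality $g(\VV,\H)=0$, which is precisely what guarantees the block structure in~\eqref{its-1}.
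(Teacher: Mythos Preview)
Your proposal is correct and follows essentially the same approach as the paper: identify the torsion $\tau$ from the Lie bracket in $\VV\rtimes_\rho\mL$ (namely $\tau(\VV,\VV)=\tau(\H,\H)=0$ and $\tau(x,v)=\rho(x)v$), then feed this into the Koszul-type formula \eqref{it-expr} and read off the block structure and the two explicit formulas. Your write-up is slightly more detailed in tracking the symmetric/skew split of $\rho(x)$ in Step~3, but the argument is the same.
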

\begin{proof}
By the definition of the Lie algebra structure in $\g$ it follows that the tensor $\tau$ satisfies $\tau(\VV,\VV)=0$; we also have 
$[x,v]=\rho(x)v$ with $x \in \H, v\in \VV$. Using this in \eqref{it-expr} yields 
$\eta_{\VV}\VV\subseteq \H$ as well as \eqref{its-2}. That $\eta_{\H}\H=0$ follows from the same formula by using that $\tau(\H,\H)=0$ and $\tau(\V,\H)\subseteq V$.
In particular $\eta_{\H}\VV \subseteq V$ by orthogonality. Since $\tau(\VV,\H) \subseteq V$ it follows that $\eta_{\VV} \H \subseteq \VV$ as well. Finally, \eqref{its-3} follows from \eqref{it-expr} in a completely similar way.
\end{proof}
Consider the manifold $M=(\VV \rtimes_{\pi} G)\slash K$. Equip $M$ with distributions $\V$ respectively $\H$ obtained by projecting the $K$-invariant 
subspaces $\VV$ respectively $\H$ of $\bbV$ onto $M$. A key observation  
in this section is the following 
\begin{thm} \label{polar}
We have 
\begin{itemize}
\item[(i)] the $g$-orthogonal splitting 
\begin{equation} \label{pol-1}
TM=\V \oplus \H
\end{equation} defines a polar Riemannian foliation with respect to the metric $g$
\item[(ii)] the Bott connection of the foliation defined by \eqref{pol-1} co\"incides with $\D$.
\end{itemize}
\end{thm}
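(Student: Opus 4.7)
The plan is to verify the foliation structure via the algebraic properties of $\tau$ recorded in Proposition~\ref{it-semi}, then deduce the Riemannian property from the group-theoretic origin of the leaves, and finally identify $\D$ with the Bott connection through the structure of $\eta^g$.

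For (i), the key algebraic inputs are $\tau(\V,\V)=0$ (since $\VV$ is an abelian ideal in $\VV\rtimes_\rho\mL$) and $\tau(\H,\H)=0$ (since $[\mathcal{H},\mathcal{H}]\subseteq\Kk$ projects trivially to $\bbV$); both follow from \eqref{semi-1} and the Cartan decomposition of $\mL$. Because $\V$ and $\H$ are obtained by projecting $K$-invariant subspaces of $\bbV$, they are $\D$-parallel; combined with the identity $[X,Y]=\D_XY-\D_YX-\tau(X,Y)$, parallelism of each distribution together with the vanishing of $\tau$ in the corresponding directions forces involutivity of both $\V$ and $\H$. Integrability of $\H$ is precisely the polar condition. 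For the Riemannian foliation property of $\V$, I would exhibit its leaves as the orbits of the abelian normal subgroup $\VV\subseteq \VV\rtimes_\pi G$: since $g$ is $\VV\rtimes_\pi G$-invariant, this subgroup acts on $M$ by isometries, and its orbits are exactly the integral manifolds of $\V$; transitivity along each leaf combined with isometric action yields a well-defined leaf-holonomy-invariant transverse metric, which is the Riemannian foliation condition.

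For (ii), I would take the Bott connection to be that of Proposition~\ref{fol-11} applied to the splitting $TM=\V\oplus\H$, namely $\bar\nabla=\pi_\V\nabla^g\pi_\V+\pi_\H\nabla^g\pi_\H$. Both $\D$ and $\bar\nabla$ are metric connections preserving the splitting: the former because $\D g=0$ and $\V,\H$ are $\D$-parallel, the latter by construction. Writing $\D=\nabla^g+\eta^g$, the identification $\D=\bar\nabla$ along each distribution reduces to checking that $\eta^g_XY$ lies in the complementary distribution whenever $X,Y$ belong to the same distribution. From Proposition~\ref{it-semi} one has $\eta_\VV\VV\subseteq\H$ and $\eta_\H\H=0$, which is exactly what is needed on pure-distribution arguments; the identities $\eta_\VV\H\subseteq\VV$ and $\eta_\H\VV\subseteq\VV$ together with the explicit formulas \eqref{its-2}--\eqref{its-3} then allow matching $\D$ with the Bott partial connection along mixed arguments via the Koszul formula.

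The main obstacle is step (ii), namely ensuring that the partial connections on the two distributions (encoded by the foliation geometry alone) reassemble into the single connection $\D$ on $TM$ for a general choice of isotropy invariant metric $g_\VV$. This requires careful bookkeeping of the mixed components $\eta_\H\VV$ and $\eta_\VV\H$, together with the Ambrose--Singer characterisation of $\D$ as the unique linear connection satisfying $\D g=0$, $\D J=0$, and with torsion of the prescribed complex type.
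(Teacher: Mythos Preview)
Your plan for (i) is sound but diverges from the paper in how the Riemannian property is obtained. The paper does not use the group action; instead, from $\eta_{\H}\H=0$ (Proposition~\ref{it-semi}) and $\D$-parallelism of $\H$ it deduces $\nabla^g_XY\in\H$ for all $X,Y\in\H$, which simultaneously yields integrability of $\H$ (the polar condition) and, via the equivalent statement $\nabla^g_{\H}\V\subseteq\V$, the bundle-like condition $(\mathcal{L}_Vg)|_{\H\times\H}=0$ for $V\in\V$. Your orbit argument is correct and arguably more conceptual, but it requires stepping outside the infinitesimal model to identify the $\V$-leaves with $\VV$-orbits and to invoke that free isometric actions produce Riemannian foliations; the paper's route stays entirely at the algebraic level and needs only \eqref{its-1}.

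For (ii) your approach coincides with the paper's, and the obstacle you isolate is genuine. On the block $X\in\H$, $Y\in\V$ one has $\D_XY\in\V$ and $\eta_XY\in\V$ by \eqref{its-1}, so $\nabla^g_XY$ already lies in $\V$ and hence $\bar\nabla_XY=\nabla^g_XY=\D_XY-\eta_XY$; thus $\D=\bar\nabla$ on this block forces $\eta_{\H}\V=0$, equivalently $\rho^-=0$ by \eqref{its-3}, which is precisely the background-metric condition (cf.\ Proposition~\ref{int-s}(iii)). The paper's one-line proof simply cites \eqref{its-1} without isolating this block, so your bookkeeping instinct is well placed: the identification $\D=\bar\nabla$ holds cleanly only in the almost-K\"ahler case, and the uniqueness characterisation of $\D$ you propose will not rescue the general case since $\bar\nabla$ need not satisfy $\bar\nabla J=0$ nor carry torsion of the required complex type.
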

\begin{proof}
(i) as $\mathfrak{s}$ and $\H$ are $\Kk$-invariant the corresponding 
distributions $\V$ and $\H$ are $D$-parallel. From $D=\nabla^g+\eta$ and $\eta_{\H}=0$ we get $\nabla^g_XY \in \H$ for $X,Y \in \H$, that is $\H$ is totally 
geodesic with respect to $g$. The distribution $\V$ is integrable since it is parallel with respect to $D$ and the restriction of $\eta$ to $\V$ is a symmetric tensor. Thus $\V$ is tangent to the leaves of a polar Riemannian foliation.\\
(ii) from the definitions the Bott connection $\nabla^B$ is given 
by orthogonal projection of $\nabla^g$ onto $TM=\V \oplus \H$. 
\begin{equation*}
\nabla^B_{E}F=(\nabla^g_{E}F)_{\V}+(\nabla^g_{E}F)_{\H}
\end{equation*}
with $E,F \in TM$ where the subscript indicates orthogonal projection onto the distribution. The claim follows now easily from \eqref{its-1}. 
\end{proof}

For further reference we record that the O'Neill tensor $T:\V \times \V \to \H$, the orthogonal projection onto 
$\H$ of $\nabla^g_{v_1}v_2$ where $v_1,v_2 \in \V$, is given by 
\begin{equation} \label{ON-1}
T_{v_1}v_2=-\eta_{v_1}v_2.
\end{equation}
This follows from combining the information that $D$ preserves $\mathcal{V}$ and the fact that $\eta_{\mathcal{V}}\mathcal{V}\subseteq \mathcal{H}$.

\subsection{Complex and symplectic structures} \label{cpx-symp}
For this subsection, we  specialise to consider an admissible representation $(\mL,\rho,\VV)$ with respect to a complex structure 
$J_{\VV}$ on $\VV$. We investigate the complex structures on $\bbV$, in a differential geometric sense, induced by admissible representations. 
\begin{defn} \label{cpx-h}
Let $\g$ be a Lie algebra equipped with a reductive decomposition $\g=\h \oplus \m$. An $\ad_{\h}$-invariant linear complex structure $J:\m \to \m$ is called integrable if its Nijenhuis tensor $N^J:\m \times \m \to \m$ vanishes identically. Here 
\begin{equation} \label{int-hom}
N^J(x,y)=[x,y]_{\m}-[Jx,Jy]_{\m}+J([x,Jy]_{\m}+[Jx,y]_{\m})
\end{equation}
for all $x,y \in \m$ and the subscript indicates projection onto the subspace.
\end{defn}
If $\g$ respectively $\h$ are the Lie algebras of a Lie group $G$ respectively 
of a closed subgroup $H$ in $G$ then any $\Ad(H)$-invariant $J$ satisfying \eqref{int-hom} projects onto an integrable complex structure on the homogeneous manifold $M:=G \slash H$.
\begin{propn} \label{int-s}
Let $\mL=\Kk \oplus \mathcal{H}$ be a Hermitian symmetric Lie algebra and let $\rho :\mL \to \mathfrak{gl}(\VV)$ be a linear representation. Consider the reductive decomposition 
$$ V \rtimes_{\rho} \mL=\Kk \oplus (\VV \oplus \mathcal{H}).
$$
\begin{itemize}
\item[(i)] if $J_{\VV}$ is a linear complex structure on $V$ such that $\rho: \mL \to \mathfrak{gl}(V)$ is admissible with respect to $J_V$ then 
$$\tilde{J}:=-J_{\VV}+J_{\mathcal{H}}:\bbV \to \bbV,$$ 
where $\bbV:=V \oplus \H$, is complex integrable
\item[(ii)] the Nijenhuis tensor of $J=J_{\VV}+J_{\mathcal{H}}$ is non-degenerate; in particular $J$
is never complex integrable
\item[(iii)] if $g_{\VV}$ is an isotropy invariant metric for the admissible representation 
$\rho$ then $(g=g_{\VV}+g_{\mathcal{H}},J)$ is almost-K\"ahler  if and only if $\mL$ has non-compact type 
and $g_{\V}V=h_{\VV}$  where $h_{\VV}$ is a background metric; in this case $(g,\widetilde{J})$ is a K\"ahler structure on $M$.
\end{itemize}
\end{propn}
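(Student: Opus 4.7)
The plan for all three parts is to compute everything at the Lie algebra level using the explicit bracket structure of $\g = \VV \rtimes_\rho \mL$, namely $[v_1,v_2]_\bbV = 0$ (since $\VV$ is abelian), $[x,y]_\bbV = 0$ (since $[\H,\H]\subseteq\Kk$), and $[x,v]_\bbV = \rho(x)v \in \VV$ for $v \in \VV, x \in \H$. For (i) and (ii) I would plug these into the reductive Nijenhuis formula \eqref{int-hom}. The vanishing of the $\VV$-$\VV$ and $\H$-$\H$ brackets trivialises $N^{\tilde J}(v_1,v_2)$, $N^{\tilde J}(x,y)$, $N^J(v_1,v_2)$, $N^J(x,y)$, so the only substantive case is the mixed one. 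Using the admissibility identities $\rho(J_\H x) = \rho(x)\circ J_\VV$ and $J_\VV\rho(x) + \rho(x)J_\VV = 0$, a short computation shows the contributions in $N^{\tilde J}(v,x)$ cancel (the sign flip of $\tilde J$ on $\VV$ aligns with the minus sign produced by the anticommutation), while for $J$ they reinforce to give $N^J(v,x)$ a nonzero multiple of $\rho(x)v$.

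For the non-degeneracy in (ii), I would argue that if $v_0 \in \VV$ satisfies $\rho(\H)v_0 = 0$ then $W := \spa(\rho(\Kk)v_0 + \bbR v_0)$ is $\Kk$-invariant (via $[\Kk,\H]\subseteq\H$) and also annihilated by $\rho(\H)$, whence $\rho(\Kk)W = \rho([\H,\H])W = [\rho(\H),\rho(\H)]W = 0$ by Lemma \ref{ricci}, forcing $W \subseteq \VV^{\mL}=0$. Similarly $N^J(x,\VV)=0$ forces $\rho(x)=0$ and hence $x=0$ by faithfulness of $\rho$.

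For (iii), I would use the characterisation of the almost-K\"ahler subclass of quasi-K\"ahler structures stated in Section \ref{el-qK}, namely $g(\tau(X,Y),Z) = -g(\eta_Z X, Y)$. Testing on $X,Y \in \VV, Z \in \H$, the left hand side vanishes because $\tau(\VV,\VV)=0$, while by Proposition \ref{it-semi} the right hand side equals $-g_{\VV}(\rho^-(Z)X,Y)$, so this case alone forces $\rho^-(\H)=0$, i.e.\ $\rho(\H)\subseteq \Sym^2(\VV,g_\VV)$. Combined with the isotropy invariance of $g_\VV$ this says exactly that $g_\VV$ is a background metric; in the compact case the same condition combined with $\rho(\H)\subseteq\so(\VV,h_\VV)$ for a background $h_\VV$ forces $\rho(\H)=0$ by a positive-definite eigenspace argument, contradicting faithfulness via $[\H,\H]=\Kk$. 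The converse is checked by running through the remaining combinations $(X,Y,Z)$ and observing using Proposition \ref{it-semi} that the relevant traces vanish automatically once $\rho^-(\H)=0$.

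For the K\"ahlerness of $(g,\tilde J)$, the key observation I would exploit is that $\tilde J = J\circ K$ where $K := \pi_\H - \pi_\V$, and that both $K$ and $J$ are parallel for the canonical connection $\D$ (since $\V$ and $\H$ arise from $\Kk$-invariant subspaces of $\bbV$ and $DJ=0$); hence $D\tilde J = 0$ and consequently $\nabla^g \tilde J = -[\eta,\tilde J]$. Under the almost-K\"ahler hypothesis Proposition \ref{it-semi} gives $\eta_\H = 0$, and the quasi-K\"ahler relation $\eta_X J = -J\eta_X$ together with the fact that $\eta_X$ (for $X \in \VV$) swaps $\V$ and $\H$ yields a two-line verification that $[\eta_X,\tilde J]=0$ on both $\V$ and $\H$. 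The main obstacle I anticipate is purely bookkeeping of signs, particularly in reconciling the sign flip of $\tilde J$ on $\VV$ with the anticommutation of $\rho(\H)$ with $J_\VV$; once this is done carefully, everything reduces to the admissibility axioms.
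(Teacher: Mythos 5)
Your proposal is correct, and for parts (i), (ii) and the characterisation half of (iii) it is essentially the paper's argument: the Nijenhuis tensor is computed directly from the bracket of $\VV \rtimes_{\rho}\mL$ using $\rho(J_{\mathcal H}x)=\rho(x)J_{\VV}$ and $\rho(x)J_{\VV}+J_{\VV}\rho(x)=0$ (the paper finds $N^{J}(v,x)=-4\rho(x)v$), non-degeneracy follows from faithfulness, $[\mathcal H,\mathcal H]=\Kk$ and $\VV^{\mL}=0$ exactly as you argue (your auxiliary space $W$ is an unnecessary detour -- and it need not be $\Kk$-invariant as you claim -- since $\rho(\mathcal H)v_0=0$ already gives $\rho(\Kk)v_0=[\rho(\mathcal H),\rho(\mathcal H)]v_0=0$ directly), and the almost-K\"ahler condition is reduced via Proposition \ref{it-semi} to $\rho^{-}(\mathcal H)=0$, i.e. $g_{\VV}$ is a background metric, with a positivity/spectral argument excluding the compact case; note the paper tests $\di\omega$ on the components $\Lambda^2\VV\wedge\Lambda^1\mathcal H$ etc.\ rather than invoking $g(\tau(X,Y),Z)=-g(\eta_ZX,Y)$ as an equivalence, which the text only states in one direction -- your use of it is legitimate but you should justify the converse via \eqref{it-expr-n} and the Hermitian symmetries of $\tau$, or simply verify $\di\omega=0$ componentwise as the paper does. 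Where you genuinely diverge is the final K\"ahler claim: the paper deduces it from $\di\widetilde{\omega}+\di\omega=0$ (using the polar foliation of Theorem \ref{polar}) together with the integrability of $\tilde J$ from part (i), whereas you show outright that $\nabla^g\tilde J=0$, using $\D\tilde J=0$ (both $J$ and the splitting $\V\oplus\H$ being $\D$-parallel) and $[\eta_U,\tilde J]=0$, which holds because $\eta_{\mathcal H}=0$ once $\rho^{-}=0$ and because $\eta_{\VV}$ swaps $\V$ and $\H$ while anticommuting with $J$. Your route is slightly stronger and more economical -- Levi-Civita parallelism of $\tilde J$ yields integrability and closedness of $\widetilde\omega$ in one stroke, making part (i) unnecessary for this conclusion -- while the paper's route stays at the level of differential forms and reuses the foliation geometry it has already set up.
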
 
\begin{proof}
(i) We use the definition of the Nijenhuis tensor in \eqref{int-hom} with $\m=\V \oplus \H$ together with 
the structure of the 
Lie bracket in $V \rtimes_{\rho} \mL$. Thus
\begin{equation*}
\begin{split}
& N^{\tilde{J}}(v,w)=N^{\tilde{J}}(x,y)=0 \\
&N^{\tilde{J}}(v,x)=-\rho(x)v-\rho(J_{\H}x)J_vv+J_V\rho(J_{\H}x)v-J_V\rho(x)J_Vv)=0
\end{split}
\end{equation*}
for all $v,w \in V$ and $x,y \in \H$. The last equality follows from having $\rho$ admissible, that is $\{\rho(x),J_V\}=0$ and 
$\rho(J_{\H}x)=\rho(x)J_V$.\\
(ii) as in (i), from the definition of the Lie bracket in $V \rtimes_{\rho} \mL$ we get 
\begin{equation*}
\begin{split}
& N^J(v,w)=N^{J}(x,y)=0 \\
&N^{J}(v,x)=-\rho(x)v+\rho(J_{\H}x)J_vv-J_V(\rho(J_{\H}x)v+\rho(x)J_Vv)=-4\rho(x)v
\end{split}
\end{equation*}
for all $v,w \in V$ and $x,y \in \H$. In the last equality above, we have taken again into account that the representation $\rho$ is admissible. These relations ensure that having $N^J(v_0+x_0),\cdot)=0$ is equivalent with $\rho(x_0)=0$ and $\rho(\H)v_0=0$. Because 
$\rho$ is faithful, $[\H,\H]=\mathfrak{k}$ and $\rho$ acts without fixed points it follows that $v_0=x_0=0$ that is $N^J$ is non-degenerate. 
Note that $N^J$ does not have maximal rank since $N^J(\bbV,\bbV)=\rho(\H)V=V$; see also subsection \ref{el-qK} for the relevant definitions.\\ 
(iii) the exterior derivative of $\omega=g(J \cdot, \cdot)$ corresponds to 
\begin{equation*}
(\di\!\omega)(w_1,w_2,w_3)=-\omega(\tau(w_1,w_2),w_3)-\omega(\tau(w_2,w_3),w_1)
-\omega(\tau(w_3,w_1),w_2)
\end{equation*}
whenever $w_i, 1 \leq i \leq 3$ belong to $\bbV$. Take into account that the torsion tensor satisfies $\tau(\H,\H)=\tau(V,V)=0, \tau(V,\H) \subseteq V$ and also use that $\omega(V,\H)=0$. It follows that the components of $\di\!\omega$ on $\Lambda^3V, \Lambda^1V \wedge \Lambda^2\H$ respectively $\Lambda^3\H$ vanish. Furthermore, from the definition 
of $\tau$ we obtain 
\begin{equation*}
(\di\!\omega)(v_1,v_2,x)=\omega(\rho(x)v_2,v_1)-\omega(\rho(x)v_1,v_2)=
2g_V(\rho^{-}(x)J_Vv_1,v_2).
\end{equation*}
Thus $\di\!\omega=0$ iff $\rho(\mathcal{H}) \subseteq \Sym^2(\VV,g_{\VV})$. Thus $g_{\VV}$ is a background metric and $\mL$ must be of non-compact type; the proof of this fact is entirely similar to that of part (i) in Lemma \ref{cent-cpx}. To see that $(g,\widetilde{J})$ is K\"ahler let $\widetilde{\omega}=g(\tilde{J} \cdot, \cdot)$; because $\V$ is a polar foliation, we have $\di\!\widetilde{\omega}+\di\!\omega=0$ and the claim is proved.
\end{proof}
In particular the $3$-symmetric structure on $\VV \times M$ is induced by $(g,J)$. 
\subsection{Curvature structure} \label{curv-s}
In this section, we return to our  general setting where  $\mL=\Kk \oplus \H$ is  the Cartan decomposition with $\mL$ simple. 
We now move on to determine the Riemann curvature tensor $R^g$ of $g$. Recall that at infinitesimal model level $R^g$ is determined from 
\begin{equation} \label{comp-f}
R^D(U_1,U_2)=R^g(U_1,U_2)+[\eta_{U_1},\eta_{U_2}]-\eta_{\tau(U_1,U_2)}
\end{equation}
whenever $U_1,U_2 \in \bbV$. For clarity's sake the explicit computation of $R^g$ is performed in two steps as follows. Using the formulas 
from \cite{Bes} for the curvature tensor of a homogeneous Riemannian space yields in our setting 
\begin{propn} \label{curv-1}
We have 
\begin{itemize}
\item[(i)] $R^g(v_1,v_2,v_3,x)=R^g(x_1,x_2,x_3,v)=0$
\item[(ii)] $R^g(x_1,x_2,x_3,x_4)=g_{\H}([[x_1,x_2],x_3],x_4)$
\item[(iii)] $R^g(v_1,x_1,v_2,x_2)=-g(\rho^{+}(x_1)v_1,\rho^{+}(x_2)v_2)+
g([\rho^{-}(x_1),\rho^{+}(x_2)]v_1,v_2)$
\item[(iv)] $R^g(v_1,v_2,v_3,v_4)=g_{\VV}(\eta_{v_2}v_3,\eta_{v_1}v_4)-g_{\VV}(\eta_{v_1}v_3,\eta_{v_2}v_4)$.
\end{itemize}
\end{propn}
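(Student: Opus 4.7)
The plan is to prove all four parts simultaneously via the comparison formula \eqref{comp-f}, reducing everything to the algebraic structure of $\g=\VV\rtimes_{\rho}\mL$ together with the intrinsic torsion information from Proposition \ref{it-semi}. First I would record, from the Lie bracket defining the semidirect product and the symmetric pair structure $[\H,\H]\subseteq\Kk$, $[\Kk,\H]\subseteq\H$, the explicit value of the Ambrose--Singer data at the origin: the torsion satisfies $\tau(v_{1},v_{2})=0$, $\tau(x_{1},x_{2})=0$, and $\tau(v,x)=-\rho(x)v\in\VV$; the curvature is given by $R^{\D}(w_{1},w_{2})w_{3}=[[w_{1},w_{2}]_{\Kk},w_{3}]$ for $w_{i}\in\bbV$, which vanishes except in the case $R^{\D}(x_{1},x_{2})x_{3}=[[x_{1},x_{2}],x_{3}]\in\H$, since $[\VV,\VV]_{\Kk}=0$ and $[\VV,\H]_{\Kk}=0$.

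Having these, I would insert them into $R^{g}=R^{\D}-[\eta,\eta]+\eta_{\tau}$ and check each item. For (ii), only the $R^{\D}$ term survives (since $\eta_{\H}=0$ and $\tau(\H,\H)=0$), yielding $g_{\H}([[x_{1},x_{2}],x_{3}],x_{4})$. For the first half of (i), observe $R^{\D}(v_{1},v_{2})=0$ and $\tau(v_{1},v_{2})=0$, so $R^{g}(v_{1},v_{2})v_{3}=-[\eta_{v_{1}},\eta_{v_{2}}]v_{3}$; by the first two relations of \eqref{its-1} this lies in $\VV$, hence is $g$-orthogonal to $x$. The second half of (i) follows because $\eta_{x_{i}}x_{j}=0$, $\tau(x_{1},x_{2})=0$, so $R^{g}(x_{1},x_{2})x_{3}=[[x_{1},x_{2}],x_{3}]\in\H$, orthogonal to $v$. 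For (iv), the same reasoning gives $R^{g}(v_{1},v_{2})v_{3}=-\eta_{v_{1}}\eta_{v_{2}}v_{3}+\eta_{v_{2}}\eta_{v_{1}}v_{3}$, and pairing with $v_{4}$ and using skew-symmetry of $\eta_{v_{i}}$ with respect to $g$ produces the stated expression.

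The one case requiring genuine care is (iii). Here $R^{\D}(v_{1},x_{1})=0$ while $\tau(v_{1},x_{1})=-\rho(x_{1})v_{1}\in\VV$, so
\[
R^{g}(v_{1},x_{1})v_{2}=-\eta_{v_{1}}\eta_{x_{1}}v_{2}+\eta_{x_{1}}\eta_{v_{1}}v_{2}-\eta_{\rho(x_{1})v_{1}}v_{2}.
\]
The middle term vanishes because $\eta_{v_{1}}v_{2}\in\H$ and $\eta_{\H}\H=0$ by \eqref{its-1}. Pairing the remaining two terms with $x_{2}$ and applying \eqref{its-2} twice turns them into $-g_{\VV}(\rho^{+}(x_{2})v_{1},\rho^{-}(x_{1})v_{2})-g_{\VV}(\rho^{+}(x_{2})\rho(x_{1})v_{1},v_{2})$, where I have also used \eqref{its-3} to identify $\eta_{x_{1}}v_{2}=\rho^{-}(x_{1})v_{2}$. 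Expanding $\rho(x_{1})=\rho^{-}(x_{1})+\rho^{+}(x_{1})$ and using that $\rho^{+}(x_{i})$ is $g_{\VV}$-symmetric and $\rho^{-}(x_{i})$ is $g_{\VV}$-skew, a short rearrangement matches this expression with $-g(\rho^{+}(x_{1})v_{1},\rho^{+}(x_{2})v_{2})+g([\rho^{-}(x_{1}),\rho^{+}(x_{2})]v_{1},v_{2})$. This symmetry/skew juggling in (iii) is the only non-routine step; the rest is direct bookkeeping on the $\VV$--$\H$ decomposition.
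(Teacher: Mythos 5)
Your proposal is correct and follows essentially the same route as the paper's (very terse) proof: rearrange the comparison formula \eqref{comp-f}, expand $R^{\D}$ and $\tau$ from the semidirect-product Lie brackets, and apply Proposition \ref{it-semi}; your handling of (iii), including the $\rho^{+}$/$\rho^{-}$ symmetric--skew rearrangement, checks out against the stated formula. The only nitpick is your parenthetical claim that $R^{\D}(w_1,w_2)w_3$ vanishes except when all arguments lie in $\H$ — in fact $R^{\D}(x_1,x_2)v=\rho([x_1,x_2])v\neq 0$ in general — but since that block is never used in (i)--(iv), the argument is unaffected.
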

\begin{proof} This relatively straightforward computation follows from Equation (\ref{comp-f}) by expanding the $R^D$ terms as  Lie brackets  and applying Proposition \ref{it-semi}.
\end{proof}
\begin{rem} \label{sigm-R}
The restriction $R^g_{\vert \H}$ has a sign, either positive or negative according 
to the type of $\mL$:compact or non-compact. This is a standard fact which can be explained as follows. Because the isotropy representation $(\Kk,\H)$ is irreducible 
we must have $\BK^{\mL}_{\vert \H}=\Lambda g_{\H}$ with $\Lambda \in \mathbb{R}, \Lambda \neq 0$. The invariance of $\BK^{\mL}$ grants 
the explicit expression $R^g(x_1,x_2,x_3,x_4)=\frac{1}{\Lambda}\BK([x_1,x_2],[x_3,x_4])$. Taking into account that $-\BK^{\mL}_{\vert \Kk}$ is positive definite it follows that $R^g$ is positive definite when $\Lambda<0$($\mL$ has compact type) and negative definite when $\Lambda>0$($\mL$ is of non-compact type).
\end{rem}


Indicate with $\ric^{\mathcal{H}}$ denote the Ricci form of the restriction of $R^g$ to 
$\mathcal{H}$, that is $\ric^{\mathcal{H}}(x_1,x_2)=\sum \limits_{i} R^g(x_1,e_i,x_2,e_i)$ where 
$\{e_i\}$ is some $g_{\mathcal{H}}$-orthonormal basis in $\mathcal{H}$. Define $Q:\VV \to \VV$ by 
\begin{equation*}
Q:=\sum \limits_{i}[\rho^{-}(e_i),\rho^{+}(e_i)]
\end{equation*}
where $\{e_i\}$ is some $g_{\mathcal{H}}$-orthonormal basis in $\mathcal{H}$. Since $Q$ is a trace-type operator with respect to $g_{\mathcal{H}}$ it is independent of basis choice. This operator turns up as part of the Ricci tensor of the infinitesimal model.
\begin{propn} \label{Ricci}
We have 
\begin{itemize}
\item[(i)] $\ric^g(v_1,v_2)=g_V(Qv_1, v_2)$
\item[(ii)] $\ric^g(V,\mathcal{H})=0$
\item[(iii)] $\ric^g(x_1,x_2)=-\Tr(\rho^{+}(x_1) \circ \rho^{+}(x_2))+\ric^{\H}(x_1,x_2)$.
\end{itemize}
\end{propn}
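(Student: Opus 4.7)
The plan is to compute $\ric^g$ slot by slot using the curvature formulas in Proposition \ref{curv-1} and tracing over an adapted orthonormal basis. Fix a $g_V$-orthonormal basis $\{u_a\}$ of $\VV$ and a $g_{\mathcal{H}}$-orthonormal basis $\{e_i\}$ of $\mathcal{H}$; then for any $U_1,U_2\in\bbV$,
\begin{equation*}
\ric^g(U_1,U_2)=\sum_a R^g(U_1,u_a,U_2,u_a)+\sum_i R^g(U_1,e_i,U_2,e_i).
\end{equation*}

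Part (ii) is the quickest: for $v\in\VV$ and $x\in\mathcal{H}$, each summand $R^g(v,u_a,x,u_a)$ has three $\VV$-arguments and one $\mathcal{H}$-argument, while each $R^g(v,e_i,x,e_i)$ has one $\VV$-argument and three $\mathcal{H}$-arguments. Both types vanish identically by Proposition \ref{curv-1}(i) together with the pair-symmetry $R^g(A,B,C,D)=R^g(C,D,A,B)$.

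For (iii), take $x_1,x_2\in\mathcal{H}$. The mixed sum $\sum_a R^g(x_1,u_a,x_2,u_a)$ is rewritten, via the skew-symmetry in the first two slots and the pair-swap, as $\sum_a R^g(u_a,x_1,u_a,x_2)$, which Proposition \ref{curv-1}(iii) evaluates to
\begin{equation*}
\sum_a\bigl[-g_{\VV}(\rho^+(x_1)u_a,\rho^+(x_2)u_a)+g_{\VV}([\rho^-(x_1),\rho^+(x_2)]u_a,u_a)\bigr].
\end{equation*}
Since $\rho^+(x_1)$ is $g_{\VV}$-symmetric the first sum is $\Tr(\rho^+(x_1)\circ\rho^+(x_2))$, while the second is a trace of a commutator and hence vanishes. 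The purely horizontal sum $\sum_i R^g(x_1,e_i,x_2,e_i)$ equals $\ric^{\mathcal{H}}(x_1,x_2)$ directly from Proposition \ref{curv-1}(ii), yielding (iii).

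Part (i) is the main computation. Using Proposition \ref{curv-1}(iv), one expands
\begin{equation*}
\sum_a R^g(v_1,u_a,v_2,u_a)=\sum_a\bigl[g(\eta_{u_a}v_2,\eta_{v_1}u_a)-g(\eta_{v_1}v_2,\eta_{u_a}u_a)\bigr],
\end{equation*}
and the symmetry $\eta_{u}w=\eta_{w}u$ for $u,w\in\VV$ (since $\tau(\VV,\VV)=0$) together with \eqref{its-2} expresses $\eta_{v_i}u_a$ in the frame $\{e_i\}$ via the symmetric operators $\rho^+(e_i)$; after expanding in the $e_i$ basis the first summand becomes $\sum_i g_{\VV}(\rho^+(e_i)v_1,\rho^+(e_i)v_2)$. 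The second summand collapses to zero because it produces the factor $\sum_a g_{\VV}(\rho^+(e_i)u_a,u_a)=\Tr\rho^+(e_i)$, which vanishes since $\rho(\mL)\subseteq\mathfrak{sl}(\VV)$ forces $\Tr\rho(e_i)=0$ and hence $\Tr\rho^+(e_i)=0$. The sum $\sum_i R^g(v_1,e_i,v_2,e_i)$ is given by Proposition \ref{curv-1}(iii) with $x_1=x_2=e_i$ and equals $-\sum_i g_{\VV}(\rho^+(e_i)v_1,\rho^+(e_i)v_2)+g_{\VV}(Qv_1,v_2)$. The two $\rho^+$-trace contributions cancel exactly, leaving $\ric^g(v_1,v_2)=g_{\VV}(Qv_1,v_2)$.

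The only non-routine step is the vanishing in (i) of the cross-term $\sum_a g(\eta_{v_1}v_2,\eta_{u_a}u_a)$; the rest is a careful but mechanical bookkeeping of the symmetries of $R^g$ and the identities \eqref{its-1}--\eqref{its-3} that relate $\eta$ to $\rho^{\pm}$.
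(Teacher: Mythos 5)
Your proposal is correct and follows essentially the same route as the paper, which simply observes that $\Tr(\rho^{+}(x))=0$ for $x\in\mathcal{H}$ (since $\rho(\mL)\subseteq\Sl(\VV)$ and $\rho^{-}(x)$ is skew, hence trace-free) and then traces the curvature formulas of Proposition \ref{curv-1}; you have just carried out that bookkeeping explicitly, with the cancellation of the two $\sum_i g_{\VV}(\rho^{+}(e_i)v_1,\rho^{+}(e_i)v_2)$ terms and the vanishing of the commutator trace exactly as intended.
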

\begin{proof}
Since $\mL$ is semisimple the map $l \in \mL \mapsto \Tr(\rho(l))\in \bbR$ vanishes identically. In particular $\Tr(\rho^{+}x)=0$ for all $x \in \mathcal{H}$. Taking this into account the claim follows either directly from Proposition \ref{curv-1} or by applying the general formulas from \cite{Bes}. 
\end{proof}
\begin{thm} \label{sol-1}
Let $\mL$ be a simple Lie algebra of non-compact type with Cartan decomposition 
$\mL=\Kk \oplus \H$. Let $\rho:\mL \to \gl(\VV)$ be a faithful linear representation. 
With respect to the reductive decomposition 
\begin{equation*}
\VV \rtimes_{\rho}\mL=\Kk \oplus \bbV \ \mbox{where} \ \bbV=\VV \oplus \H
\end{equation*}
metrics of the type 
$h=h_{\VV}+t(B^{\mL})_{\vert \H}, t>0$, where $h_V$ is a background metric for $\rho$, are expanding Ricci soliton metrics.
\end{thm}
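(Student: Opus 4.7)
The plan is to verify directly that the Ricci soliton equation $\ric^h = c\, h + \tfrac{1}{2}\li_X h$ holds for some constant $c<0$ and a vector field $X$ on $M = (\VV \rtimes_\pi G)/K$ built from a canonical derivation of $\g = \VV \rtimes_\rho \mL$; the key computational ingredient is Proposition \ref{Ricci}, which splits $\ric^h$ into a $\VV$-component given by the operator $Q$ and an $\H$-component expressed via $t_\rho$ and $\ric^{G/K}$.

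First I will compute $\ric^h$. Since $h_\VV$ is a background metric and $\mL$ is of non-compact type, Definition \ref{bgr-m} gives $\rho(\H) \subseteq \Sym^2(\VV, h_\VV)$, so $\rho^-|_\H = 0$ and hence $Q = \sum_i [\rho^-(e_i), \rho^+(e_i)] = 0$; Proposition \ref{Ricci}(i) then yields $\ric^h|_\VV \equiv 0$. For the $\H$-component, simplicity of $\mL$ forces $t_\rho = \alpha\, \BK^\mL$ with $\alpha>0$ (positivity since $\rho(\H)$ is symmetric and faithful), and the irreducible non-compact symmetric space $(G/K, \BK^\mL|_\H)$ is Einstein with $\ric^{G/K} = \nu\, \BK^\mL|_\H$ for some $\nu<0$. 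Proposition \ref{Ricci}(iii) then gives $\ric^h(x_1, x_2) = (\nu - \alpha)\, \BK^\mL(x_1, x_2) = c\, g_\H(x_1, x_2)$ with $c = (\nu - \alpha)/t < 0$; the mixed component vanishes by Proposition \ref{Ricci}(ii).

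Next I will construct the soliton vector field. Define $D \in \gl(\g)$ by $D|_\VV = 1_\VV$ and $D|_\mL = 0$; checking the derivation identity $D[X,Y] = [DX,Y]+[X,DY]$ in each of the three cases $[\VV,\VV]=0$, $[l,v]=\rho(l)v$, and $[\mL,\mL] \subseteq \mL$ shows $D \in \Der(\g)$. The corresponding one-parameter group $\phi_t \in \Aut(\VV \rtimes_\pi G)$, $\phi_t(v,g) = (e^t v, g)$, fixes $K$ pointwise and thus descends to a flow on $M$ with infinitesimal generator $X$. Using the automorphism identity $\phi_t \circ \sigma_g = \sigma_{\phi_t(g)} \circ \phi_t$ and the $\VV \rtimes_\pi G$-invariance of $h$, each $\phi_t^\star h$ is again $\VV \rtimes_\pi G$-invariant, and hence determined by its value at the origin $o$, where $(d\phi_t)_o = e^t 1_\VV \oplus 1_\H$. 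This gives $\phi_t^\star h = e^{2t}\, h_\VV + h_\H$ globally on $M$, from which $\li_X h = 2\, h_\VV$ as $\VV \rtimes_\pi G$-invariant tensors.

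Combining the two computations, $c\, h + \tfrac{1}{2}\li_{-cX} h = c\, h_\VV + c\, h_\H - c\, h_\VV = c\, h_\H = \ric^h$, so $(M, h, -cX)$ satisfies the Ricci soliton equation with $c<0$, which is the expanding case. The main obstacle I expect is the global identity $\li_X h = 2\, h_\VV$: although $X$ itself is not $\VV \rtimes_\pi G$-invariant (its flow is an automorphism rather than a left translation), the tensor $\li_X h$ inherits invariance from being the time-derivative at zero of the family of invariant metrics $\phi_t^\star h$, and this invariance argument must be set up carefully to propagate the pointwise computation at $o$ to a global statement. A secondary technical point is pinning down the sign of the Einstein constant $\nu$, which follows from the sign of $\BK^\mL$ on $\Kk$ and the standard formula for the Ricci tensor of a symmetric space with the Killing metric.
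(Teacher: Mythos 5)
Your proposal is correct, and its computational heart coincides with the paper's: both use Proposition \ref{Ricci} together with the background-metric property $\rho(\H)\subseteq \Sym^2(\VV,h_{\VV})$ to get $Q=0$ (so $\ric^h$ vanishes on $\VV$ and on the mixed part), and both use simplicity of $\mL$ to write $t_{\rho}=\mu(\rho)\,\BK^{\mL}$ with $\mu(\rho)>0$, so that $\ric^h$ is a negative multiple of $\BK^{\mL}$ on $\H$ (the paper pins the symmetric-space contribution down as $\ric^{\H}=-\tfrac{1}{2}\BK^{\mL}_{\vert \H}$, where you leave it as an unspecified negative constant, which suffices). Where you diverge is the final step: the paper stops at verifying the \emph{algebraic} Ricci soliton equation $\ric^h=\Lambda h+h(\zeta_{\bbV}\cdot,\cdot)$ with $\zeta=1_{\VV}\in\mathfrak{c}(\mL,\rho,\VV)\subseteq\Der(\g)$ and invokes the Lauret--Lafuente/Jablonski framework to conclude that an algebraic soliton is a genuine (expanding) Ricci soliton, whereas you make that implication self-contained: you exponentiate exactly the same derivation ($D=1_{\VV}\oplus 0$, i.e.\ the paper's $\zeta$) to the automorphism group $\phi_t(g,v)=(g,e^{t}v)$ of $\VV\rtimes_{\pi}G$, observe it fixes $K$ and hence descends to $M$ with $\phi_t^{\star}h=e^{2t}h_{\VV}+h_{\H}$ by invariance plus the value of $(d\phi_t)_o=e^{t}1_{\VV}\oplus 1_{\H}$, and read off $\li_Xh=2h_{\VV}$, which verifies $\ric^h=c\,h+\tfrac12\li_{-cX}h$ with $c<0$ directly. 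Your invariance argument for $\phi_t^{\star}h$ (conjugating left translations through the automorphism) is sound, and the flow is globally defined, so the soliton field is complete. The trade-off is that the paper's route is shorter and plugs into a general theory (which it also uses for the converse direction in Theorem \ref{sol-2}), while yours is more elementary and exhibits the soliton vector field explicitly; the only step you share with the paper that deserves care in either version is the proportionality $t_{\rho}\propto\BK^{\mL}$, which both proofs assert from simplicity of $\mL$.
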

\begin{proof}
It is enough to show that $h$ is an algebraic Ricci soliton in the sense of 
\cite{LaLa}. That is the Ricci form of $h$ satisfies
\begin{equation*} \label{alg-sol}
\ric^h=\Lambda h+h(\zeta_{\bbV} \cdot, \cdot)
\end{equation*}
for some $\Lambda \in \bbR$, where $\zeta \in \Der(\g), \g=\VV \rtimes_{\rho} \mL$ and $\zeta_{\bbV}$ indicates the piece in $\zeta$ acting solely  
on $\bbV$, with respect to the reductive splitting $\g=\Kk \oplus \bbV$. Proving this is a straightforward consequence of Proposition \ref{Ricci}. Indeed, since $(\Kk,\H)$ 
is irreducible we have $g_{\H}=t(\BK^{\mL})_{\H}$ for some $t>0$. Because $\mL$ is simple and $\rho$ is faithful the trace form of $\rho$ satisfies $t_{\rho}=\mu(\rho)\BK^{\mL}$.

Computing $t_{\rho}$ by means of the background metric $h_{\VV}$ will show that $\mu(\rho)>0$. More specifically, choose $X\in \H$. Then have $\BK^{\mL}(X,X)>0$. At the same time, computing the trace form with respect to a basis $\lbrace e_i \rbrace$ of $V$ yields
$$
t_{\rho} (X,X) = \sum_{e_i} h_{\VV}(\rho(X)\circ \rho(X)e_i, e_i).
$$
This is independent of basis. As $X\in \H$, $\rho(X)\in \Sym^2(\VV, h_{\VV})$ since we have chosen to compute with the background metric. Taking now our basis to be the eigenvectors of $\rho(X)$ immediately implies that $t_{\rho}(X,X)>0$, and the claim follows. 

  As $h_V$ is a background metric $Q=0$ and $\rho^{+}=\rho$ in Proposition \ref{Ricci} which thus yields 
\begin{equation*}
\ric^h=-(\mu(\rho)+\frac{1}{2})(\BK^{\mL})_{\vert \H}=-t^{-1}(\mu(\rho)+\frac{1}{2})h+
t^{-1}(\mu(\rho)+\frac{1}{2})h(1_{V} \cdot, \cdot)
\end{equation*}
after re-arranging terms. Note that we have used the equality $\ric^{\H}=-
\frac{1}{2}(\BK^{\mL})_{\H}$, a standard fact in the theory of orthogonally symmetric Lie algebras. Taking into account that $1_{V} \in \mathfrak{c}(\mL,\rho,\VV)\subseteq \Der(\g)$(see Proposition \ref{cen-rad-p},(iii) for the last inclusion) thus proves the claim.
\end{proof}
This proof can be viewed as a refined version of Theorem \ref{t3} from the introduction, and so Theorem \ref{t3} is now established.  It is worth noting the degree of generality in this construction; no assumption 
is made on the representation $\rho$, as the existence of background metrics is 
granted by Mostow. Theorem \ref{sol-1} provides a systematic way of constructing homogeneous Ricci solitons from arbitrary representations of non-compact semisimple Lie groups. 
\begin{rem} \label{ric-back} 
For the special instance when $g_{\VV}=h_{\VV}$ with $h_{\VV}$ a background metric on $\VV$ we have $Q=0$. Thus 
\begin{equation*}
\ric^h=2\BK^{\mL}_{\vert \H}, \ \mbox{in particular} \ \ric^h(\VV,\bbV)=0.
\end{equation*}
\end{rem}

Our next task is to fully compute the Ricci tensor for $3$-symmetric metrics. We only treat $3$-symmetric spaces of type $III$, when 
$\mL$ has non-compact type; for spaces of type $IV$ these considerations are not needed since any $3$-symmetric metric on such spaces is a Riemannian product (see Theorem \ref{irred-Rc}).
\begin{propn} \label{ric-full}
Let $(\mL,\rho,\VV)$ be an admissible representation where $\mL$ is simple of 
non-compact type. Let $g=g_{\VV}+g_{\H}$ be a $3$-symmetric metric of the form 
$$ h_{\VV}^{-1} \circ g_{\VV}=1_{\VV}+S,
$$
where $SJ \in \mathfrak{c}^{-}(\mL,\rho).$ We have 
\begin{itemize}
\item[(i)]
$Q=2S(1-S^2)^{-1}\mathscr{C}$
\item[(ii)] $\Tr(\rho^{+}(x) \circ \rho^{+}(x))=\Tr((1-S^2)^{-1} \circ \rho(x) \circ \rho(x))$, whenever $x \in \H$.
\end{itemize}
\end{propn}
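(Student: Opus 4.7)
The core calculation is to express $\rho^{\pm}(x)$ for $x\in\mathcal{H}$ in terms of the background decomposition and $S$, and then plug into the definitions of $Q$ and $\Tr(\rho^{+}(x)\circ\rho^{+}(x))$. The plan is as follows.

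\emph{Step 1: the anticommutation $\{S,\rho(x)\}=0$ for $x\in\mathcal{H}$.} Because $S$ arises from the canonical parametrisation $h_{\VV}^{-1}\circ g_{\VV}=1+S$ with $g_{\VV}\in\me_{K}(\VV)$, Proposition \ref{iso-invm} guarantees that $S$ is $h_{\VV}$-symmetric and commutes with $J_{\VV}$. The hypothesis $SJ_{\VV}\in\mathfrak{c}^{-}(\mL,\rho,h_{\VV})$ says $[SJ_{\VV},\rho(x)]=0$, and admissibility of $\rho$ (Definition \ref{adm-ff}) gives $\rho(x)J_{\VV}=-J_{\VV}\rho(x)$ for $x\in\mathcal{H}$. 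Combining these and cancelling $J_{\VV}$ on the right (using $J_{\VV}^{2}=-1$) yields
\[
S\rho(x)+\rho(x)S=0, \qquad x\in\mathcal{H}.
\]
I expect this to be the conceptual heart of the argument; the rest is algebra.

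\emph{Step 2: computing $\rho^{\pm}$.} Since $h_{\VV}$ is a background metric and $\mL$ has non-compact type, $\rho(x)$ is symmetric with respect to $h_{\VV}$. Denoting by $T_{g}$ the transpose with respect to $g_{\VV}$, the identity $g_{\VV}(Av,w)=h_{\VV}((1+S)Av,w)$ applied to both sides of the defining equation for $A^{T_g}$ gives $\rho(x)^{T_g}=(1+S)^{-1}\rho(x)(1+S)$. Using the anticommutation of Step 1, $\rho(x)(1+S)=(1-S)\rho(x)$, so
\[
\rho(x)^{T_g}=(1+S)^{-1}(1-S)\rho(x).
\]
A short linear-algebra manipulation then yields
\[
\rho^{+}(x)=(1+S)^{-1}\rho(x),\qquad \rho^{-}(x)=S(1+S)^{-1}\rho(x).
\]

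\emph{Step 3: computing $Q$ and the trace.} Using Step 1 once more, $\rho(x)(1+S)^{-1}=(1-S)^{-1}\rho(x)$, so that
\[
\rho^{-}(e_{i})\rho^{+}(e_{i})=S(1-S^{2})^{-1}\rho(e_{i})^{2},
\]
while pushing the middle $S$ across $\rho(e_{i})$ via $\rho(e_{i})S=-S\rho(e_{i})$ gives $\rho^{+}(e_{i})\rho^{-}(e_{i})=-S(1-S^{2})^{-1}\rho(e_{i})^{2}$. Summing over a $g_{\mathcal{H}}$-orthonormal basis of $\mathcal{H}$ yields $Q=2S(1-S^{2})^{-1}\mathscr{C}$, which is (i). For (ii), the same simplification gives $\rho^{+}(x)\circ\rho^{+}(x)=(1-S^{2})^{-1}\rho(x)\circ\rho(x)$, and taking trace proves the identity.

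The only subtle point I foresee is bookkeeping around the two transposes and ensuring that the commutation $[S,J_{\VV}]=0$ is being used correctly, since we also need $S$ to be $h_{\VV}$-symmetric in order to define $(1+S)^{-1}$ cleanly and to guarantee that $1-S^{2}$ is invertible on a neighbourhood of the background metric (this follows because $1+S>0$ and $S$ anticommutes with $\rho(\mathcal{H})$, forcing $\pm 1$ not to be eigenvalues of $S$ whenever $\rho$ is faithful on $\mathcal{H}$).
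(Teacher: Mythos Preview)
Your proof is correct and follows essentially the same strategy as the paper: both hinge on the relation $(1+S)^{-1}\rho(x)=\rho(x)(1-S)^{-1}$ for $x\in\mathcal{H}$, which is exactly your anticommutation $\{S,\rho(x)\}=0$ rephrased. The only difference is that you first simplify $\rho^{\pm}(x)$ to the closed forms $(1+S)^{-1}\rho(x)$ and $S(1+S)^{-1}\rho(x)$ before computing $Q$ and the trace, whereas the paper works directly with the unsimplified expressions $2\rho^{\pm}(x)=\rho(x)\pm(1+S)^{-1}\rho(x)(1+S)$ from \eqref{st-p} and carries more algebra; your route is a bit tidier but not conceptually different.
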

\begin{proof}
In the standard parametrisation of the metric $g_{\VV}$ above we have 
\begin{equation} \label{st-p}
2\rho^{+}(x)=\rho(x)+(1+S)^{-1}\rho(x)(1+S) \ \mbox{and} \ 2\rho^{-}(x)=\rho(x)-(1+S)^{-1}\rho(x)(1+S)
\end{equation}
for all $x \in \H$. Because $[SJ,\rho(x)]=0$ we have $(1+S)^{-1}\rho(x)=\rho(x)(1-S)^{-1}$ a fact which we will use systematically in the computations below.
(i) we compute 
\begin{equation*}
\begin{split}
4Q=&\sum \limits_{i}^{}[\rho^{-}(e_i),\rho^{+}(e_i)]=2\sum \limits_{i}^{}[\rho(e_i),(1+S)^{-1}\rho(e_i)(1+S)]\\
=&2\sum \limits_{i}^{}(\rho(e_i)(1+S)^{-1}\rho(e_i)(1+S)-(1+S)^{-1}\rho(e_i)(1+S)
\rho(e_i))\\
=&2((1-S)^{-1}(1+S)-(1+S)^{-1}(1-S))\mathcal{C}=8S(1-S^2)^{-1}\mathscr{C}
\end{split}
\end{equation*}
and the claim follows.\\
(ii)we have 
\begin{equation*}
\begin{split}
4\rho^{+}(x) \circ \rho^{+}(x)=&\rho(x) \circ \rho(x)+\rho(x)(1+S)^{-1}\rho(x)
(1+S)+(1+S)^{-1}\rho(x)(1+S)\rho(x)\\
&+(1+S)^{-1}\rho(x)\circ \rho(x)(1+S)\\
=&2\rho(x) \circ \rho(x)+((1+S)(1-S)^{-1}+(1-S)(1+S)^{-1})\rho(x) \circ \rho(x)\\
=&2(1+(1+S^2)(1-S^2)^{-1})\rho(x) \circ \rho(x)=4(1-S^2)^{-1}\rho(x) \circ \rho(x)
\end{split}
\end{equation*}
and the claim is proved.
\end{proof}

Theorem \ref{sol-1}  arose out of our study of  Riemannian $3$-symmetric spaces of type III, which are of course special cases of the construction just outlined. In that setting we have the following result. 
\begin{thm} \label{sol-2}
Let $(M,g,J)$ be a Riemannian $3$-symmetric space of Type III. The 
metric $g$ is a Ricci soliton if and only if $(g,J)$ is almost-K\"ahler. 
\end{thm}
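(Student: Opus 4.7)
The sufficiency direction combines two prior results. By Proposition \ref{int-s}(iii), in a Type III space (where $\mL$ has non-compact type) the almost-K\"ahler condition forces $g_{\VV}$ to coincide with a background metric $h_{\VV}$ for the admissible representation $(\mL, \rho, \VV)$. Theorem \ref{sol-1} then immediately asserts that $g = h_{\VV} + g_{\H}$ is an expanding Ricci soliton, closing the easy direction.

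For the converse, parametrise $g_{\VV}$ canonically as $h_{\VV}^{-1}\circ g_{\VV} = 1_{\VV} + S$ with $SJ_{\VV} \in \mathfrak{c}^{-}(\mL,\rho,\VV)$; by Lemma \ref{cent-cpx}(iii) this is equivalent to $S \in \mathfrak{c}^{+}(\mL,\rho,\VV)$, and $g_{\VV}$ is a background metric precisely when $S = 0$. Assume $g$ is a Ricci soliton. The strategy is to pass to a solvable transitive action using the Iwasawa decomposition $\mL = \Kk \oplus \mathfrak{a} \oplus \mathfrak{n}$, realising $M$ as the solvmanifold $\VV \rtimes AN$ with left-invariant metric. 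By the equivalence of algebraic and genuine Ricci solitons on solvable Lie groups (see \cite{LaLa,jab}), this gives an algebraic soliton relation $\Ric^{op}_g = \lambda\cdot 1_{\bbV} + D_{\bbV}$, where $D \in \Der(\ug)$ and $D_{\bbV}$ denotes the $g$-symmetric component of its action on $\bbV = \VV \oplus \H$. Parametrise $D$ via Proposition \ref{cen-rad-p}(iii) by $(l_0, c, v_0) \in \mL \times \mathfrak{c}(\mL,\rho,\VV) \times \VV$ and compute the Ricci tensor with Proposition \ref{ric-full}.

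Analysing the soliton equation block by block with respect to $\bbV = \VV \oplus \H$ trims the derivation down dramatically. On $\H \times \H$, the symmetry of $D_{\bbV}$ combined with the $g_{\H}$-skew-symmetry of $\ad_{\Kk}|_{\H}$ removes the $\Kk$-component of $l_0$; the remaining $\H$-component would have to lie in the centraliser $\mathfrak{c}(\mL,\rho,\VV)$, and since $\rho$ is faithful and $\mL$ is simple ($\mathfrak{z}(\mL) = 0$), this forces $l_0 = 0$. The $\VV \times \H$ block of $\Ric$ vanishes by Proposition \ref{Ricci}(ii), which together with the symmetry of $D_{\bbV}$ yields $\rho(\H)v_0 = 0$; admissibility ($\VV^{\mL} = 0$) then gives $v_0 = 0$. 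The derivation thus reduces to $D = (c,0)$ with $c \in \mathfrak{c}^{+}(\mL,\rho,\VV)$ acting only on $\VV$, and the soliton equation becomes the coupled pair (a) $\ric^g|_{\H} = \lambda\, g_{\H}$, which by Schur's lemma on the irreducible $\Kk$-module $\H$ determines $\lambda$ explicitly via Proposition \ref{ric-full}(ii); and (b) $Q = \lambda\cdot 1_{\VV} + c$ on $\VV$, where $Q = 2S(1-S^2)^{-1}\mathscr{C}$ by Proposition \ref{ric-full}(i).

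The main obstacle is to extract $S = 0$ from the joint constraints (a) and (b). The key inputs are the explicit form $\mathscr{C} = \tfrac{\Lambda}{2}J_{\VV}\rho(z)$ from Lemma \ref{cent-cpx}(i), the positivity and invertibility of $\mathscr{C}$ (Lemma \ref{cent-cpx}(ii)), and the non-trivial eigenvalue distribution of $\rho(z)$ provided by Lemma \ref{rho-z}. Taking the trace of (b) on $\VV$, matching it against the trace of (a) on $\H$ via the Killing-form identity $\BK^{\mL}|_{\H} = \Lambda\, g_{\H}$, yields a scalar identity tying $\lambda$ to the spectrum of $S$. Combining this with the fact that $t \mapsto 2t(1-t^2)^{-1}$ is strictly monotone on the admissible interval $|t| < 1$, and exploiting the positive-definiteness of the trace form $\Tr(\rho(x)\rho(x))$ with respect to the background metric (as in the proof of Theorem \ref{sol-1}), the only solution compatible with $c \in \mathfrak{c}^{+}$ is $S = 0$. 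Then $g_{\VV} = h_{\VV}$ is a background metric and $(g,J)$ is almost-K\"ahler by Proposition \ref{int-s}(iii), completing the proof.
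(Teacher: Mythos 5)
Your forward direction coincides with the paper's: Proposition \ref{int-s}(iii) reduces the almost-K\"ahler case to the metrics of Theorem \ref{sol-1}. For the converse, your reduction to an algebraic soliton is already internally inconsistent as written: if you pass to the simply transitive solvable group $\VV\rtimes AN$ from the Iwasawa decomposition, the derivation produced by the soliton theory lives in $\Der(\VV\rtimes(\mathfrak{a}\oplus\mathfrak{n}))$ and the identity is phrased on that solvable algebra, so you cannot then parametrise it by Proposition \ref{cen-rad-p}(iii), which describes $\Der(\ug)$ for $\ug=\VV\rtimes_{\rho}\mL$, nor compare it directly with the curvature blocks of Propositions \ref{Ricci} and \ref{ric-full}, which are computed in the reductive decomposition $\Kk\oplus\bbV$. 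The paper avoids this by citing Jablonski for the given homogeneous presentation, obtaining $\ric^g=\lambda g+g(\zeta_{\vert\bbV}\cdot,\cdot)$ with $\zeta\in\Der_{\Kk}(\ug)$, and then using only the $\VV\times\VV$ block; your block-by-block elimination of $l_0$ and $v_0$ becomes unnecessary once the bookkeeping is set up that way.

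The more serious gap is the decisive step: extracting $S=0$ from $Q=\lambda 1_{\VV}+c$ with $c$ in the centraliser is not actually proved. Your trace-matching argument only yields scalar identities relating $\lambda$, $\Tr(c)$ and the spectrum of $(1-S^2)^{-1}\mathscr{C}$, and when $\mathfrak{c}^{+}(\mL,\rho,\VV)$ is nontrivial (for instance $\VV_d$ of complex type) these do not pin down $S$, since $c$ has arbitrary trace; the monotonicity of $t\mapsto 2t(1-t^2)^{-1}$ is the mechanism behind Theorem \ref{cpx-ty2} (distinguishing isometry classes), not a tool that excludes $S\neq 0$ here. What does the job, and what the paper uses, is the commutation constraint you never exploit: $Q-\lambda 1_{\VV}\in\mathfrak{c}(\mL,\rho,\VV)$ forces $[Q,\rho(x)]=0$ for all $x\in\H$; writing $Q=m\circ\rho(z)$ with $m=\Lambda(SJ_{\VV})(1-S^2)^{-1}\in\mathfrak{c}(\mL,\rho,\VV)$ (Lemma \ref{cent-cpx}(i) together with Proposition \ref{ric-full}(i)), one gets $0=[Q,\rho(x)]=m\circ\rho(J_{\H}x)$ for all $x\in\H$, hence $m\rho(\H)\VV=0$; since $\rho(\H)\VV=\VV$ this gives $m=0$, so $S=0$, and Proposition \ref{int-s}(iii) then closes the argument. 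Without this (or an equivalent) step your converse is incomplete.
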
 
\begin{proof}
Suppose firstly $(g, J)$ is almost-K\"ahler. Then by Proposition \ref{int-s} $g$ is a metric of the form considered in Theorem \ref{sol-1}, and hence it is an expanding Ricci soliton. 

Conversely,  if $g$ is a Ricci soliton metric then it is an algebraic Ricci soliton. To see this one uses the homogeneous structure on $M$ and recent work of Jablonski \cite{jab}. This states that 
$\ric^g = \lambda g +  g(\zeta_{\vert \bbV}\cdot, \cdot)$
for $\zeta \in \Der_{\Kk}(\g)$ and $\lambda \in \bbR$. Proposition \ref{Ricci}(i) yields that
$$
g_{\VV}(Q v_1, v_2) = \lambda g_{\VV}(v_1,v_2)+ g_{\VV}(\zeta v_1, v_2)
$$
which implies $Q\in \Der(\g)$.  From Equation \ref{der-gen2} it is apparent that $Q\in \mathfrak{c}(\mL,\rho, \VV)$. Next, we claim that this implies that $S=0$, in other words $g_{\VV}=h_{\VV}$. 
By Lemma \ref{cent-cpx} we have $Q=m \circ \rho(z)$ where $m=\Lambda(SJ)(1-S^2)$. As $m$ belongs to the centraliser 
$\mathfrak{c}(\mL,\rho,\VV)$, having 
$Q$ therein entails that $0=[Q,\rho(x)]=m \circ [\rho(z), \rho(x)]=m \circ \rho(J_{\H}x)$ for all $x \in \mathcal{H}$. This leads easily to $m=0$ and the claim follows. 
\end{proof}
Proposition \ref{ric-full} can now be immediately put to work in order to explicitly determine the set of isometry classes of $3$-symmetric metrics on some classes of spaces of type $III$.
\begin{thm} \label{cpx-ty2}
Let $M=(\VV_d \rtimes_{\pi_d} G)\slash K$, where $\mL$ is simple.  If $(\mL,\rho,\VV)$ is irreducible of complex type the map 
$$\Delta_d \to \mathcal{M}(\mL, \rho, \VV)$$ induced by $\lambda \mapsto g_{\lambda}$ is 
bijective.
\end{thm}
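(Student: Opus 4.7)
Surjectivity of the map $\lambda \mapsto [g_\lambda]$ is already given by Theorem \ref{cpx-ty}, so the only thing to verify is injectivity. Suppose $g_\lambda$ and $g_\mu$ are isometric for some $\lambda,\mu \in \Delta_d$. The plan is to first reduce the isometry problem to an orbit problem for the centralizer group $C(G,\pi_d,\VV_d)$, and then to identify $\Delta_d$ as a set of canonical orbit representatives.

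\emph{Step 1 (reduction to the centralizer).} Type III spaces are never locally symmetric: indeed, by Theorem \ref{sol-2} the almost-K\"ahler representative of the moduli space is a non-flat expanding Ricci soliton, which cannot be a symmetric metric; combined with Theorem \ref{t2}(i) asserting that every member of $\me(M,J,\D)$ is de Rham irreducible, this rules out local symmetry for \emph{any} member. Hence Theorem \ref{t1.5} applies: every isometry is holomorphic, so an isometry $\varphi:(M,g_\lambda)\to(M,g_\mu)$ lies in $\Aut(M,g_\lambda,J)\cap\Aut(M,g_\mu,J)$, and by Proposition \ref{p-311}(ii) preserves $\D$ as well. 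Combining this with Theorem \ref{thm-gen} and Proposition \ref{nomizu-1}, any such $\varphi$ is induced (up to translation by an element of $\VV\rtimes_{\pi_d}G$, which acts by isometries of \emph{every} $g_\lambda$ simultaneously) by an element of the centralizer $C(G,\pi_d,\VV_d)$ acting linearly on $\VV_d$ via Proposition \ref{R-sem}(ii). Therefore $g_\mu = f^\star g_\lambda$ for some $f\in C(G,\pi_d,\VV_d)$, and the injectivity question reduces to showing that $D(\lambda)$ and $D(\mu)$ lie in distinct orbits under the action of $C(G,\pi_d,\VV_d)$ on the parameter space from Proposition \ref{iso-invm1}.

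\emph{Step 2 (orbit analysis).} Under the canonical parametrisation $h_d^{-1}\!\circ\!g_\lambda = 1_{\VV_d} + \varphi^{-1}(D(\lambda))\circ IJ$, the action of $f\in C(G,\pi_d,\VV_d)$ on $g_\lambda$ translates (cf.\ \eqref{act-expl}) into the conjugation action of $\varphi(f)\in\GL_d(\bbC)$ on $D(\lambda)\in\Sym^2(\bbR^{2d})\cap\gl_d(\bbC)$. For $f$ to preserve the normalization $h_d^{-1} g = 1 + B_+ IJ$ with $B_+ \in \mathfrak c^+$, it must be $h_d$-orthogonal, so $\varphi(f)\in U(d)$, as in the proof of Theorem \ref{cpx-ty}. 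Two diagonal matrices $D(\lambda),D(\mu)$ of the form described are $U(d)$-conjugate if and only if the tuples $(\lambda_1,\ldots,\lambda_d)$ and $(\mu_1,\ldots,\mu_d)$ agree as multisets. The only remaining ambiguity --- the $\bbZ_2$-action flipping the sign of a single coordinate --- is realised by the element of $C(G,\pi_d,\VV_d)$ corresponding to $\mathrm{diag}(-1,1,\ldots,1)\in U(d)$ (equivalently, conjugation by a reflection in the $I$-complex structure on the first summand). Imposing the lexicographic ordering $0 \le \lambda_1 \le \cdots \le \lambda_d < 1$ that defines $\Delta_d$ kills both the symmetric group action and the $\bbZ_2$-action, producing a set of canonical representatives. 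Hence $\lambda=\mu$ and injectivity follows.

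\emph{Main obstacle.} The genuinely nontrivial input is Step 1: justifying that all isometries of $(M,g_\lambda)$ come from the centralizer action. This rests on Theorem \ref{t1.5} (proved via Theorem \ref{split-iso} later in the paper), together with the non-symmetry of Type III spaces. A technical side issue is to check that when some $\lambda_i$ coincide, the corresponding stabiliser inside $U(d)$ is still realised by genuine elements of $C(G,\pi_d,\VV_d)$ acting by isometries, so that no additional identifications occur beyond those already built into $\Delta_d$; this is straightforward because the relevant block-$U(k)$ factors embed into $\varphi(C(G,\pi_d,\VV_d))=\GL_d(\bbC)$. An alternative, more computational route would be to use Proposition \ref{ric-full}(i) to extract the spectrum of the Ricci operator $\mathrm{Ric}^{g_\lambda}|_{\VV}=Q_\lambda=2S_\lambda(1-S_\lambda^2)^{-1}\mathscr C$ as an isometric invariant directly; since $S_\lambda$ commutes with $\mathscr C$ and has spectrum determined by $\{\pm\lambda_i\}$ via the strictly monotone function $s\mapsto 2s(1-s^2)^{-1}$ on $(-1,1)$, the eigenvalue multiset of $Q_\lambda$ recovers $\lambda\in\Delta_d$. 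Either route closes the argument.
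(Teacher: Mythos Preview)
Your ``alternative, more computational route'' is exactly the paper's proof: using Proposition~\ref{ric-full}(i), the non-negative eigenvalues of $\Ric^{g_\lambda}$ are $\{\psi(\lambda_k)d_-:1\le k\le d\}$ where $d_->0$ is an eigenvalue of $\mathscr C$ on $\ker(IJ-1)$ and $\psi(x)=2x/(1-x^2)$; monotonicity of $\psi$ on $[0,1)$ then recovers $\lambda\in\Delta_d$ from this isometry invariant. This is short and requires no analysis of how isometries between the two metrics interact with $J$ or the centraliser.

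Your primary route, however, has a genuine gap in Step~1. Theorem~\ref{t1.5} (proved as Theorem~\ref{split-iso}) concerns \emph{self}-isometries of a fixed space $(M,g)$: it says $\Aut^0(M,g)=\Aut^0(M,g,J)$ when $g$ is irreducible and non-symmetric. An isometry $\varphi:(M,g_\lambda)\to(M,g_\mu)$ between two \emph{different} metrics is not an element of $\Aut(M,g_\lambda)$ at all, so the theorem does not apply as invoked. What you would actually need is $\varphi_\ast J=J$, i.e.\ uniqueness of the $3$-symmetric almost complex structure compatible with $g_\mu$---precisely the question flagged as delicate in Section~\ref{u-fol} and not settled in the paper. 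Two smaller points: your argument for non-symmetry is incomplete (a non-flat expanding soliton can perfectly well be negative-Einstein and hence symmetric; the paper argues instead, in the proof of Theorem~\ref{isoIII}, that symmetry forces Einstein, whence $\Tr Q=0$ gives Ricci-flatness, contradicting irreducibility), and your claim that $\mathrm{diag}(-1,1,\dots,1)\in U(d)$ realises $\lambda_1\mapsto-\lambda_1$ is false, since conjugating $D(\lambda)$ by a diagonal matrix fixes it. Neither of these last two affects injectivity once you switch to the Ricci-spectrum argument, which is why the paper takes that route.
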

\begin{proof}
Since $(\mL,\rho,\VV)$ is irreducible the Casimir operator 
$\mathscr{C}$, acting on $\VV$, has at most two distinct eigenvalues $d_{\pm}>0$ with eigenspaces $\ker(IJ \pm 1_{\VV})$. It follows that the non-negative eigenvalues of 
$\Ric^{g_{\lambda}}$ are $\{\psi(\lambda_k)d_{-}, 1 \leq k \leq d\}$ where $\psi:[0,1) \to \bbR$ is given by $ \psi(x)=\frac{2x}{1-x^2}$. If $g_{\mu}$ is isometric to $g_{\lambda}$ the operators $\Ric^{g_{\lambda}}$ and $\Ric^{g_{\mu}}$ must have the same spectrum, in particular they must have the same positive spectrum. Because 
$\psi$ is increasing it follows that $\lambda=\mu$. The claim follows now from 
Theorem \ref{cpx-ty}.
\end{proof}

\section{ The isometry group  of a Riemannian 3-symmetric metric} \label{iso-fin}
Let $(M,g,\D)$ be a Riemannian Ambrose-Singer space. We first explain how to compute, from first principles, the isometry group $\Aut(M,g)$ in terms of data involving only the connection $\D$. When $(M^{2m},g,J)$ is Riemannian $3$-symmetric these general observations will also enable us to decide explicitly which Riemannian metrics $g$ in $\me(M,J)$ satisfy $\Aut^0(M,g) \subseteq \Aut(M,J)$. 

\subsection{Isometries of Ambrose-Singer spaces} \label{g-obs}
Consider a Riemannian Ambrose-Singer space $(M,g,\D)$ not necessarily $3$-symmetric. We begin with the following general observation which is slightly rephrasing a well-known result in \cite{Nom1}.
Let $\bbV:=T_xM$ for some $x \in M$. For ease of reference the Riemannian curvature $R^g$, the intrisinc torsion tensor $\eta^g$ and the metric $g$ evaluated at $x$ will be denoted with the same symbols. Consider the Lie algebra action of $\mathfrak{so}(\bbV,g)$ on the space of algebraic Riemann curvature tensors which we identify,via the metric $g$, with linear maps from $\Lambda^2\bbV$ to $\Lambda^2\bbV$. Explicitly 
\begin{equation*}
\begin{split}
[F,R^g](v_1,v_2)=R^g(Fv_1,v_2)+R^g(v_1,Fv_2)+[R^g(v_1,v_2),F].
\end{split}
\end{equation*}
This enters the definition of the stabiliser 
$$\stg_{\so(\bbV,g)}:=\{F \in \so(\bbV,g) : [F,R^g]=0\}$$ 
which is actually a Lie subalgebra in $\so(\bbV,g)$. Furthermore, we consider the operators 
$l_v:\so(\bbV,g) \to \so(\bbV,g), v \in \bbV$ given by 
$$F \mapsto l_v(F):=[F,\eta^g_v]-\eta^g_{Fv}$$
Define 
\begin{equation*}
\begin{split}
&\i^0:=\stg_{\so(\bbV,g)}, \i^{k+1}:=\{F \in \i^k : l_v(F) \in \i^k \ \mbox{for all } \ v \in \bbV\}, k \geq 0\\
& \i:=\bigcap \limits_{k \geq 0}^{} \i^k.
\end{split}
\end{equation*} 
\begin{rem} \label{gen-no}In fact the spaces $\i^k, k \geq 1$ also do not depend on the Ambrose-Singer connection, for $\i^k=\mathfrak{stab}_{\so(\bbV,g)} (\nabla^g)^kR^g$ at the point $x$. The proof of this fact which hinges on having $(\nabla^g)^kR^g$ parallel with respect to $D$ is very similar to \cite{Tri2} and will be omitted.
\end{rem}
Re-interpreting the Lie algebra $\i$ in a purely abstract way in terms of the operators $l_v$ as above has the practical benefit 
to allow equipping the vector space $\i \oplus \bbV$ with an {\it{explicit}} Lie algebra structure. Indeed, a straightforward algebraic computation shows that 
\begin{propn} \label{a-int}
The following hold
\begin{itemize}
\item[(i)] $\i$ is a subalgebra in $\stg_{\so(\bbV,g)}$ such that 
\begin{equation} \label{inv-ia}
l_v(\i) \subseteq \i, v \in \bbV.
\end{equation}
\item[(ii)] the direct sum of vector space $\g_b:=\i \oplus \bbV$ is a Lie algebra with respect to the Lie bracket 
\begin{equation*}
\begin{split}
&[F,v]^b=Fv+l_v(F), \ [F,G]^b=[F,G]\\
&[v_1,v_2]^b=\tau(v_1,v_2)+R^{\D}(v_1,v_2)
\end{split}
\end{equation*}
\item[(iii)] the Nomizu algebra $\og^g$ sits canonically in $\g_b$ as a subalgebra such that 
\begin{equation} \label{oni}
\g_b=\og^g+\i.
\end{equation}
\end{itemize}
\end{propn}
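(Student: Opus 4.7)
The proof factors into three nearly independent components tied together by a small set of identities linking $l_v$, $\tau$, $\eta^g$, and the comparison formula $R^{\D} = R^g + [\eta,\eta] - \eta_{\tau}$.

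For (i), I would induct on $k$ to show each $\i^k$ is a Lie subalgebra. The base case $\i^0 = \mathfrak{stab}_{\so(\bbV,g)}(R^g)$ is standard. The inductive step rests on the derivation-type identity
\[
l_v([F,G]) = [F, l_v(G)] - [G, l_v(F)] + l_{Gv}(F) - l_{Fv}(G),
\]
obtained by direct expansion using Jacobi in $\so(\bbV,g)$. If $F,G \in \i^{k+1}$, each right-hand term lies in $\i^k$ by the inductive hypothesis, so $l_v([F,G]) \in \i^k$ and $[F,G] \in \i^{k+1}$. Passing to $\i = \bigcap_k \i^k$ gives both conclusions of (i).

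For (iii), the key observation is that any $F \in \so(\bbV,g)$ stabilising $\tau$ automatically satisfies $l_v(F) = 0$ for every $v$. This follows from the Koszul-type formula $2g(\eta_v w,u) = g(\tau(v,w),u) - g(\tau(w,u),v) + g(\tau(u,v),w)$ via a short symmetry calculation using that $F$ is $g$-skew. Consequently ${}^g\oh \subseteq \i$: its elements preserve both $\tau$ and $R^{\D}$, hence preserve $R^g$ via the comparison formula (placing them in $\i^0$), and have $l_v = 0$ (placing them in every $\i^k$). The identity then embeds ${}^g\og = {}^g\oh \oplus \bbV$ into $\g_b$ as a Lie subalgebra, since the bracket formulas of (ii) coincide with those of the Nomizu algebra once $l_v$ annihilates the ${}^g\oh$-piece. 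The equality $\g_b = {}^g\og + \i$ is then immediate from $\bbV \subseteq {}^g\og$ and $\i \subseteq \g_b$.

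The most delicate piece is (ii), the Jacobi identity for $\g_b$, which I would split by type according to the number of $\bbV$-arguments. The pure-$\i$ case is Jacobi in $\so(\bbV,g)$; the one-$\bbV$ case is precisely the identity of (i). For the two-$\bbV$ case, the $\bbV$-component collapses thanks to the auxiliary identity
\[
l_{v_1}(F)v_2 - l_{v_2}(F)v_1 = F\tau(v_1,v_2) - \tau(Fv_1,v_2) - \tau(v_1,Fv_2),
\]
obtained by expansion using $\tau(v,w) = \eta_v w - \eta_w v$; the $\i$-component reduces, after applying $R^{\D} = R^g + [\eta,\eta] - \eta_{\tau}$ and Jacobi in $\so(\bbV,g)$, to the condition $(F \cdot R^g)(v_1,v_2) = 0$, which holds because $F \in \i^0$. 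For the three-$\bbV$ case, the $\bbV$-component is precisely the algebraic Bianchi identity \eqref{B1}, while the $\i$-component splits as $\mathfrak{S}_{abc} R^{\D}(\tau(v_a,v_b),v_c) + \mathfrak{S}_{abc} l_{v_c}(R^{\D}(v_a,v_b))$; the first sum vanishes by the second Bianchi identity \eqref{B2}, and the second vanishes term by term because $R^{\D}(v_a,v_b) \in \uh \subseteq {}^g\oh$ and $l$ annihilates ${}^g\oh$ by step (iii). The main obstacle will be keeping the two-$\bbV$ bookkeeping clean, but once the auxiliary identities above are in hand the verifications become mechanical.
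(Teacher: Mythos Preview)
Your proposal is correct and follows essentially the same approach as the paper: the same derivation-type identity for $l_v([F,G])$ drives (i), the same auxiliary identity for $l_{v_1}(F)v_2 - l_{v_2}(F)v_1$ handles the $\bbV$-component of the two-$\bbV$ Jacobi case, and the observation $l_v({}^g\oh)=0$ (equivalently, that stabilising $\tau$ forces stabilising $\eta$) underlies both (iii) and the three-$\bbV$ case. The only organizational difference is that the paper packages the three-$\bbV$ Jacobi check as ``this coincides with the Nomizu bracket once $l_{v_3}R^{\D}(v_1,v_2)=0$, so Jacobi in ${}^g\og$ applies'' rather than invoking \eqref{B1} and \eqref{B2} separately, and it records your two-$\bbV$ $\i$-component reduction as the standalone identity $[l_{v_1},l_{v_2}]F + l_{\tau(v_1,v_2)}F = [F,R^{\D}](v_1,v_2) - [F,R^g](v_1,v_2)$.
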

\begin{proof}
(i) follows from either remark \ref{gen-no} or directly from the identity 
\begin{equation} \label{br-b1}
l_v[F,G]=l_{Gv}F-l_{Fv}G+[F,l_vG]-[G,l_vF]\\
\end{equation}
which is valid for any $F,G \in \so(\bbV,g)$ and $v \in \bbV$.\\
(ii) Letting $v_1,v_2,v_3$ belong to $\bbV$ we have $[[v_1,v_2]^b,v_3]^b=[[v_1,v_2]^{\og},v_3]^{\og}+
l_{v_3}R^D(v_1,v_2)$, where $[\cdot, \cdot]^{\og}$ denotes here the Lie bracket in $\og$. Because $\oh$ preserves $\eta$ we have 
\begin{equation} \label{inkl}
\l_v \oh=0 \ \mathrm{for \ all} \ v \in \bbV,
\end{equation}
in particular $l_{v_3}R^D(v_1,v_2)=0$ since $\uh \subseteq \oh$. The vanishing of $\mathfrak{S}_{abc}[[v_a,v_b]^b,v_c]^b$ thus follows from the Jacobi identity in $\og$. The rest of the Jacobi identity 
for the bracket $[\cdot, \cdot]^b$ follows from \eqref{br-b1} together with the purely algebraic 
identities 
\begin{equation} \label{br-b2}
(l_{v_1}F)v_2-(l_{v_2}F)v_1=\tau(v_2,Fv_1)-\tau(v_1,Fv_2)+F\tau(v_1,v_2)
\end{equation}
\begin{equation} \label{br-b3}
[l_{v_1},l_{v_2}]F+l_{\tau(v_1,v_2)}F=[F,R^D](v_1,v_2) - [F,R^g](v_1,v_2)
\end{equation}
where the action $[F,R^{\D}]$ is defined in complete analogy to $[F,R^g]$. Both of these are valid for arbitrary $v_1,v_2 \in \bbV$ and $F \in \so(\bbV,g)$. The first formula is proved by direct computation, only based on the definition of the operators 
$l_v$. The second follows from the comparaison formula \eqref{comp-f} and is in fact used here for $F \in \i$, when $[F,R^g]=0$.\\
(iii) follows from \eqref{inkl}.
\end{proof}
\begin{rem} \label{semi-On}
When $\g_b$ is simple of compact type \eqref{oni} can be used directly to extract geometric information on the space $(M,g)$ by means of 
Onischik classification results in \cite{on}. 
\end{rem}
In fact the construction of $\g_b$ is a purely algebraic way to describe the algebra of Killing generators as introduced by Nomizu \cite{Nom1}. We explain how this works below.
\begin{lemma} \label{der-11}
Assuming that $K \in \aut(M,g)$ we have 
$$\D_U(\nabla^gK+\eta_K)=R^{\D}(K,U)-l_U (\nabla^gK+\eta_K) \ \mathrm{ for \ all} \ U \in TM.$$
\end{lemma}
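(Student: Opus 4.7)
The plan is to compute both sides directly in terms of the Levi-Civita connection $\nabla^g$, the intrinsic torsion $\eta$, and the Riemann tensor $R^g$, and then reconcile the two via the comparison formula $R^{\D}=R^g+[\eta,\eta]-\eta_{\tau}$ established in Proposition \ref{G2}. Write $A:=\nabla^gK+\eta_K$. Since $\D=\nabla^g+\eta$ acts on endomorphism fields by $\D_UA=\nabla^g_UA+[\eta_U,A]$, the left-hand side expands as
\begin{equation*}
\D_U A \;=\; \nabla^g_U(\nabla^gK)+\nabla^g_U(\eta_K)+[\eta_U,\nabla^gK]+[\eta_U,\eta_K].
\end{equation*}
For the first summand I would invoke the classical Kostant identity for Killing fields, $\nabla^g_U(\nabla^gK)=R^g(K,U)$. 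For the second, I would use the parallelism $\D\eta=0$ in the form
\begin{equation*}
(\nabla^g_U\eta)_V \;=\; \eta_{\eta_UV}-[\eta_U,\eta_V],
\end{equation*}
which follows by expanding $\D_U(\eta_V W)-\eta_{\D_U V}W-\eta_V(\D_U W)=0$ with $\D=\nabla^g+\eta$. Applied with $V=K$ and using $\nabla^g_U\eta_K=(\nabla^g_U\eta)_K+\eta_{\nabla^g_UK}$, this yields
\begin{equation*}
\nabla^g_U(\eta_K) \;=\; \eta_{\eta_UK}+\eta_{\nabla^g_UK}-[\eta_U,\eta_K].
\end{equation*}

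For the right-hand side I would first expand $l_U(A)$ by definition:
\begin{equation*}
l_U(A) \;=\; [\nabla^gK,\eta_U]+[\eta_K,\eta_U]-\eta_{\nabla^g_UK}-\eta_{\eta_KU}.
\end{equation*}
Then I would substitute the comparison formula $R^{\D}(K,U)=R^g(K,U)+[\eta_K,\eta_U]-\eta_{\tau(K,U)}$ together with $\tau(K,U)=\eta_KU-\eta_UK$, obtaining
\begin{equation*}
R^{\D}(K,U) \;=\; R^g(K,U)+[\eta_K,\eta_U]-\eta_{\eta_KU}+\eta_{\eta_UK}.
\end{equation*}
Subtracting $l_U(A)$, the brackets $[\eta_K,\eta_U]$ and the terms $\eta_{\eta_KU}$ cancel, leaving
\begin{equation*}
R^{\D}(K,U)-l_U(A) \;=\; R^g(K,U)+\eta_{\eta_UK}+\eta_{\nabla^g_UK}-[\nabla^gK,\eta_U].
\end{equation*}
Comparing this with the expression obtained for $\D_UA$ and observing $[\eta_U,\nabla^gK]=-[\nabla^gK,\eta_U]$ gives the identity.

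The computation is essentially algebraic and the only conceptual input is Kostant's formula; the rest is tensor bookkeeping. The principal hazard is sign/convention management, especially in $(\nabla^g\eta)$ versus $l_U$, so I would fix conventions early and verify each cancellation carefully before drawing conclusions.
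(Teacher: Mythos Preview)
Your proof is correct and follows essentially the same approach as the paper: both use Kostant's identity $\nabla^g_U(\nabla^gK)=R^g(K,U)$, the parallelism $\D\eta=0$, and the comparison formula $R^{\D}=R^g+[\eta,\eta]-\eta_\tau$ to reduce the claim to bookkeeping. The only difference is organizational: the paper packages $\D\eta=0$ as $\D_U(\eta_K)=\eta_{\D_UK}$ and first isolates the identity $\nabla^g_UF+\eta_{\eta_UK}=R^{\D}(K,U)-l_U(\eta_K)$ before assembling the full expression, whereas you expand everything down to $\nabla^g$ on both sides and match terms; the underlying computation is the same.
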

\begin{proof}
For simplicity of notation consider the skew-symmetric endomorphism given by $F:=\nabla^gK$. As Killing fields are parallel with respect to Kostant's connection on $TM\oplus \so(TM)$ it is well known that $\nabla^g_UF=R^g(K,U)$. Using this and the curvature comparaison formula \eqref{comp-f} we compute 
\begin{equation*}
\begin{split}
\nabla^g_UF+\eta_{\eta_UK}=&R^g(K,U)+\eta_{\tau(U,K)}+\eta_{\eta_KU}=R^{\D}(K,U)-[\eta_K,\eta_U]+\eta_{\eta_KU}\\
=&R^{\D}(K,U)-l_U(\eta_K)
\end{split}
\end{equation*}
after also taking into account the definition of the operators $l_U$. Furthermore, 
since $\D=\nabla^g+\eta$ we have 
$ \D_UK=FU+\eta_UK \ \mathrm{and} \ \D_UF=\nabla^g_UF+[\eta_U,F].$
As $\D\!\eta=0$ these entail 
\begin{equation*}
\begin{split}
\D_U(F+\eta_K)=&\nabla^g_UF+[\eta_U,F]+\eta_{D_UK}=\nabla^g_UF+[\eta_U,F]+\eta_{FU+\eta_UK}\\
=&\nabla^g_UF+\eta_{\eta_UK}-l_U(F)=R^{D}(K,U)-l_U(F+\eta_K)
\end{split}
\end{equation*}
and the claim is proved.
\end{proof}
This elementary observation has far reaching consequences in that it shows that $\aut(M,g)$ is isomorphic to the Lie algebra $\g_b$; in particular the Lie bracket in $\aut(M,g)$ can be fully described in terms of the Ambrose-Singer connection $\D$. We proceed in several steps as follows.

Consider the linear connection on the bundle $\mathrm{E}:=TM \oplus \so(TM)$ given by 
$$ \widetilde{D}_U(X,F):=(\D_UX+\tau(X,U)-FU, \D_UF+l_UF-R^{\D}(X,U))
$$
Lemma \ref{der-11} then says that sections of $\mathrm{E}$ which are parallel with respect to $\widetilde{D}$ correspond to Killing vector fields. To get a purely algebraic interpretation of those we compute below the holonomy algebra $\hol(\widetilde{D})$ of the connection $\widetilde{D}$. The curvature tensor of the latter is defined acccording to $\widetilde{R}(U_1,U_2):=-[\widetilde{D}_{U_1},\widetilde{D}_{U_2}]+\widetilde{D}_{[U_1,U_2]}.$ 

\begin{propn} \label{iso-eps}
At a given point $x \in M$ we have 
$$\{s \in E_x : \hol(\widetilde{D}) s=0\}=\i_x \oplus T_xM.$$
\end{propn}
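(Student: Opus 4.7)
The plan is to identify $\widetilde{D}$ with the classical Kostant connection $\hat{\nabla}$ on $E=TM\oplus\mathfrak{so}(TM)$, defined by $\hat{\nabla}_U(X,F):=(\nabla^g_U X-FU,\ \nabla^g_U F-R^g(X,U))$, via a gauge transformation, and then to read off the holonomy kernel from the well-understood Kostant setting. Under this gauge, parallel sections and their holonomy invariants will be directly transportable between the two connections.

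First I would exhibit the bundle automorphism $\Phi:E\to E$, $\Phi(X,F):=(X,F+\eta^g_X)$, as an intertwiner, i.e.\ $\widetilde{D}\circ\Phi=\Phi\circ\hat{\nabla}$. Verifying this amounts to a direct computation, using the definitions of $\widetilde{D}$ and $l_U$, the curvature comparison $R^{\D}(X,U)=R^g(X,U)+[\eta^g_X,\eta^g_U]-\eta^g_{\tau(X,U)}$, and the Leibniz-type identity $(\nabla^g_U\eta^g)_X=[\eta^g_X,\eta^g_U]+\eta^g_{\eta^g_U X}$, which is $\D\eta^g=0$ rewritten via $\D=\nabla^g+\eta^g$. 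As a consistency check, $\Phi$ sends the Kostant-parallel Killing datum $(K,\nabla^g K)$ of a Killing field $K$ to the $\widetilde{D}$-parallel datum $(K,\nabla^g K+\eta^g_K)$ predicted by Lemma~\ref{der-11}.

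From the intertwining, the holonomy representations on $E_x$ are conjugate by $\Phi_x$, so their kernels correspond: $\{s\in E_x:\hol(\widetilde{D})s=0\}=\Phi_x\bigl(\{s\in E_x:\hol(\hat{\nabla})s=0\}\bigr)$. For Kostant's connection, a direct calculation using the second Bianchi identity shows that the first component of $\hat{R}(U_1,U_2)(X,F)$ vanishes and the second is $[F,R^g](U_1,U_2)+(\nabla^g_X R^g)(U_1,U_2)$. Invoking $\D R^g=0$ in the form $\nabla^g_X R^g=[\eta^g_X,R^g]$, this collapses to $[F+\eta^g_X,R^g](U_1,U_2)$, so the first-order kernel consists of pairs $(X,F)$ with $F+\eta^g_X\in\i^0_x$. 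The analogous computation for the iterated derivatives $\hat{\nabla}^k\hat{R}$ imposes $F+\eta^g_X\in\i^k_x$, so the full Kostant-kernel at $x$ equals $\{(X,F):F+\eta^g_X\in\i_x\}$. Applying $\Phi_x$ gives exactly $T_xM\oplus\i_x$, as required.

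The main obstacle will be controlling the higher-order conditions arising from $\hat{\nabla}^k\hat{R}$. Rather than computing these explicitly for each $k$, I would appeal to Ambrose--Singer for $\hat{\nabla}$ together with the fact that each $(\nabla^g)^k R^g$ is $\D$-parallel (since $R^g$, $\eta^g$, and $\tau$ all are, and iterated $\nabla^g$-derivatives are polynomial in these via $\nabla^g=\D-\eta^g$), which allows the first-level argument to propagate uniformly and identifies the kernel with the full intersection $\i=\bigcap_k\i^k$.
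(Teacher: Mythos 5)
Your argument is correct in substance and reaches the statement by a genuinely different route than the paper. The paper never leaves $\widetilde{D}$: it computes $\widetilde{R}(U_1,U_2)(U_3,F)=(0,-[F,R^g](U_1,U_2))$ directly from the Bianchi identities of $\D$ together with \eqref{br-b2} and \eqref{br-b3}, then writes $\widetilde{D}=\D+\zeta$ with $\D\zeta=0$, $\D\widetilde{R}=0$, so that the iterated derivatives of $\widetilde{R}$ become algebraic compositions in $\zeta$, and an induction identifies the annihilated set with $\mathfrak{i}\oplus\bbV$. You instead conjugate $\widetilde{D}$ into the classical Kostant connection via $\Phi(X,F)=(X,F+\eta^g_X)$; the intertwining does hold (it follows from $\tau(X,U)=\eta^g_XU-\eta^g_UX$, $\D=\nabla^g+\eta^g$ acting on endomorphisms, $\D\eta^g=0$ and the comparison formula \eqref{comp-f}), and conjugating the classical curvature of $\hat{\nabla}$ by $\Phi$ reproduces, up to the paper's sign conventions, exactly \eqref{Rt} — a nice conceptual explanation of why the first slot of $\widetilde{R}$ vanishes. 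What your route buys is that the first-order condition $F+\eta^g_X\in\mathfrak{i}^0$ falls out of a known curvature formula plus $\D R^g=0$, and the final translation by $\Phi_x$ of $\{(X,F):F+\eta^g_X\in\mathfrak{i}\}$ into $T_xM\oplus\mathfrak{i}_x$ is clean; what the paper's route buys is a self-contained treatment of the higher-order terms without quoting the Killing-transport formalism.

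The one thin point is your closing paragraph. Ambrose--Singer describes $\hol(\hat{\nabla})$ through curvature operators at \emph{all} points conjugated back by parallel transport, not through the derivatives $\hat{\nabla}^k\hat{R}$ at the single point $x$, so an appeal to Ambrose--Singer by itself does not make the first-level argument propagate. To close this you need precisely the device the paper deploys: since $\eta^g$, $\tau$, $R^g$ and hence all $(\nabla^g)^kR^g$ are $\D$-parallel (Remark \ref{gen-no}), writing the connection as $\D$ plus a $\D$-parallel tensor turns $\hat{\nabla}^k\hat{R}$ (equivalently $\widetilde{D}^k\widetilde{R}$) into algebraic expressions in $\D$-parallel data, after which an induction shows the conditions at $x$ are exactly $F+\eta^g_X\in\mathfrak{i}^k$, and the identification with invariance under the full holonomy algebra uses this parallelism (or the local homogeneity/real-analyticity of the situation). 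That is the content of the second half of the paper's proof, so on the higher-order step your argument converges with it rather than replacing it; as a proposal it is sound, but that step still has to be carried out, not merely cited.
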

\begin{proof}
To simplify notation write $\widetilde{R}(U_1,U_2)(U_3,F)=
(A_{U_1,U_2}(U_3,F), B_{U_1,U_2}(U_3,F))$. Using only that $\D\!\tau=0$, a formal computation following the definitions shows that 
\begin{equation*}
\begin{split}
A_{U_1,U_2}(U_3,F)=&\mathfrak{S}_{abc}(R^{\D}(U_1,U_2)U_3-\tau(U_a,\tau(U_b,U_c)))\\
&-\bigl ((l_{U_1}F)U_2-(l_{U_2}F)U_1+\tau(FU_1,U_2)+\tau(U_1,FU_2)-F\tau(U_1,U_2) \bigr ).
\end{split}
\end{equation*}
We conclude that $A=0$ by using the algebraic Bianchi identity for $\D$ in \eqref{B1} and \eqref{br-b2}. Similarly, using $\D\!l=0, \D\!R^{\D}=0$ and $l_{U_1}R^{\D}(X,U_2)=0$ leads to 
\begin{equation*}
\begin{split}
-B_{U_1,U_2}(U_3,F)=-\mathfrak{S}_{abc}R^{\D}(U_a,\tau(U_b,U_c))-[F,R^{\D}](U_1,U_2)+[l_{U_1},l_{U_2}]F+
l_{\tau(U_1,U_2)}F.
\end{split}
\end{equation*}
Using the differential Bianchi identity in \eqref{B2} and \eqref{br-b3} we conclude that 
\begin{equation} \label{Rt}
\widetilde{R}(U_1,U_2)(U_3,F)=(0, -[F,R^g](U_1,U_2)).
\end{equation}
Now consider higher order derivatives of the curvature tensor of $\widetilde{D}$, when coupling the latter, 
which is a connection on $\mathrm{E}=TM \oplus \so(TM)$, with the connection $\D$ on $TM$. On tensors 
$q:\otimes^pTM \times \mathrm{E} \to \mathrm{E}$ we thus have 
\begin{equation*}
\begin{split}
(\widetilde{D}_Uq)(X_1,\ldots, X_p,s)=&\widetilde{D}_U(q(X_1, \ldots, X_p,s))-q(X_1, \ldots, X_p,\widetilde{D}_Us)\\
&-\sum_{i=1}^p q(X_1,\ldots,\D_U\!X_i, \ldots, X_p,s).
\end{split}
\end{equation*}
Extend now $\D$ to $\mathrm{E}$ as a product connection and write $\widetilde{D}=\D+\zeta$ where the tensor $\zeta : TM \to \gl(\mathrm{E})$. It follows that if $\D\!q=0$ we have 
\begin{equation*}
\begin{split}
(\widetilde{D}_Uq)(X_1,\ldots, X_p,s)=&\zeta_U(q(X_1, \ldots, X_p,s))-q(X_1, \ldots, X_p,\zeta_Us)
\end{split}
\end{equation*}
Explicitly, the tensor $\zeta$ acts according to 
$\zeta_U(X,F)=(\tau(X,U)-FU,l_UF-R^D(X,U))$ and thus satisfies $\D\!\zeta=0$. Since \eqref{Rt} entails that also $\D\!\widetilde{R}=0$ taking $q=\widetilde{D}^{p-1}\widetilde{R}$ where $p \geq 1$ we get 
$$ (\widetilde{D}^{p}_{X_1, \ldots, X_p} \widetilde{R})(U_2,U_3)=(\ad_{\zeta_{X_1}} \circ \ldots \circ \ad_{\zeta_{X_p}}) \widetilde{R}(U_2,U_3)
$$
where $\ad$ denotes the Lie bracket in $\gl(\mathrm{E})$. By induction it is easy to see that the set of vectors in $E_x$ which are annihilated by the holonomy 
algebra of $\widetilde{D}$ is 
$$ \mathscr{F}=\{s \in E_x : (\widetilde{R}(U_2,U_3) \circ \zeta_{x_1} \circ \ldots \circ \zeta_{x_p})s=0 \} 
$$
for all $x_1, \ldots x_p \in T_xM$ and for all $p \geq 0$. Since the operators $\zeta_v, v \in \bbV$ preserve the subspace $\bbV \oplus \uh \subseteq \mathrm{E}_x$ an induction argument shows that  
$$ \zeta_{x_1} \circ \ldots \circ \zeta_{x_p}(X,F)=(l_{x_1} \circ \ldots \circ l_{x_p})F \ \mathrm{mod} \ 
\bbV \oplus \uh.$$
This observation, together with $\widetilde{R}(U_1,U_2)(\bbV \oplus \uh)=0$ shows that 
$$\mathscr{F}=\bbV \oplus \{F \in \so(\bbV,g) : (\widetilde{R}(U_2,U_3) \circ l_{x_1} \circ \ldots \circ l_{x_p})F=0\ \mathrm{for} \ x_1, \ldots, x_p \in \bbV \ \mathrm{and} \ p \geq 0\}.$$
It follows that $\mathscr{F}=\bbV \oplus \i$ by using \eqref{Rt} and the definition of $\i$. 
\end{proof}
\begin{thm} \label{embedd}
The following hold at any point $x \in M$
\begin{itemize}
\item[(i)] for any $K\in \aut(M,g)$ we have $(\nabla^gK+\eta_K)_x \in \i$
\item[(ii)] assuming that $M$ is simply connected we have a Lie algebra isomorphism 
$$\varepsilon : \aut(M,g) \to 
(\g_b, [\cdot, \cdot]^b)\  \mathrm{given \ by}\ \varepsilon(K):=-(\nabla^gK+\eta_K)_x+K_x.
$$
\end{itemize}

\end{thm}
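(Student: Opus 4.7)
The plan is to realise a Killing field as a parallel section of the auxiliary connection $\widetilde{D}$ on $E=TM\oplus\so(TM)$ and then read off $\g_b$ via Proposition \ref{iso-eps}.

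For (i), given $K\in\aut(M,g)$, I would assemble the section $\widetilde{K}:=K+(\nabla^g K+\eta_K)$ of $E$ and verify $\widetilde{D}\widetilde{K}=0$. The $\so(TM)$-component of $\widetilde{D}_U\widetilde{K}$ vanishes by Lemma \ref{der-11}. The $TM$-component reads $\D_U K+\tau(K,U)-(\nabla^g K+\eta_K)U$; substituting $\D=\nabla^g+\eta$ and using the defining identity $\tau(K,U)=\eta_K U-\eta_U K$ for the intrinsic torsion kills this term. Hence $\widetilde{K}_x\in\i_x\oplus T_xM$ by Proposition \ref{iso-eps}, which gives $(\nabla^g K+\eta_K)_x\in\i$.

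Next I would establish that $\varepsilon$ is a vector space isomorphism. If $\varepsilon(K)=0$ then $K_x=0$, so $(\eta_K)_x=\eta_{K_x}=0$ and therefore $(\nabla^g K)_x=0$; a Killing field whose $1$-jet vanishes at a point vanishes identically on a connected manifold, giving injectivity. For surjectivity, given $(F,v)\in\i\oplus\bbV$, the value $v-F\in T_xM\oplus\i_x$ is $\hol(\widetilde{D})$-invariant by Proposition \ref{iso-eps}. Simple connectedness of $M$ then extends it to a $\widetilde{D}$-parallel section $\widetilde{K}=K+G$ of $E$. Vanishing of the $TM$-component of $\widetilde{D}\widetilde{K}$ forces $G=\nabla^g K+\eta_K$, and since both $G$ and $\eta_K$ lie in $\so(TM)$ this yields $\nabla^g K\in\so(TM)$, i.e.\ $K$ is Killing; the $\so(TM)$-component of $\widetilde{D}\widetilde{K}=0$ is then automatic from Lemma \ref{der-11}. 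By construction $\varepsilon(K)=-G_x+K_x=F+v$.

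The genuinely computational step is that $\varepsilon$ intertwines the Lie brackets. Writing $\varepsilon(K_i)=-G_i+v_i$ with $G_i=(\nabla^g K_i+\eta_{K_i})_x$ and $v_i=(K_i)_x$, expanding via the bracket rules in $\g_b$ (section \ref{g-obs}) gives, split as $\bbV$-part plus $\i$-part,
\[
[\varepsilon(K_1),\varepsilon(K_2)]^b=\bigl(G_2v_1-G_1v_2+\tau(v_1,v_2)\bigr)+\bigl([G_1,G_2]+l_{v_1}G_2-l_{v_2}G_1+R^{\D}(v_1,v_2)\bigr).
\]
For the $\bbV$-part, I would combine the identity $G_iv_j=\nabla^g_{v_j}K_i+\eta_{v_i}v_j$ (obtained by inserting $(\eta_{K_i})_x=\eta_{v_i}$) with the $\D$-torsion formula $[K_1,K_2]=\D_{K_1}K_2-\D_{K_2}K_1-\tau(K_1,K_2)$ to obtain $[K_1,K_2]_x=G_2v_1-G_1v_2+\tau(v_1,v_2)$, matching the displayed expression. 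The $\i$-part match is the main obstacle: it reduces, after applying Lemma \ref{der-11} to compute $\D_{K_1}(\nabla^g K_2+\eta_{K_2})$, antisymmetrising in $(K_1,K_2)$, and using the $\D$-torsion formula again, to a bookkeeping exercise governed by the commutator identities \eqref{br-b1}--\eqref{br-b3} together with the $\D$-parallelism of $\tau$ and $R^{\D}$. No new geometric input is required; the difficulty is purely algebraic.
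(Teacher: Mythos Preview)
Your proposal is correct and follows the same overall architecture as the paper: realise Killing fields as $\widetilde{D}$-parallel sections, invoke Proposition \ref{iso-eps} for (i) and for surjectivity in (ii), and use the standard $1$-jet determination of Killing fields for injectivity.

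The one place where your route diverges is the verification that $\varepsilon$ is a Lie algebra morphism. You propose to attack the $\i$-part by applying Lemma \ref{der-11} to $\D_{K_1}(\nabla^g K_2+\eta_{K_2})$, antisymmetrising, and then invoking the algebraic identities \eqref{br-b1}--\eqref{br-b3}. This can be made to work, but the paper takes a noticeably shorter path: it expands $[\varepsilon(K_1),\varepsilon(K_2)]^b$ directly from the definitions of $l_v$ and the bracket, recognises the combination $-[\eta_{K_1},\eta_{K_2}]+\eta_{\tau(K_1,K_2)}+R^{\D}(K_1,K_2)$ as $R^g(K_1,K_2)$ via the comparison formula \eqref{comp-f}, and then finishes with the classical Killing identity $[\nabla^g K_1,\nabla^g K_2]+R^g(K_1,K_2)=-\nabla^g[K_1,K_2]$. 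So the whole morphism check collapses to a three-line computation once you bring in $R^g$ rather than working entirely on the $\D$-side; neither Lemma \ref{der-11} nor \eqref{br-b1}--\eqref{br-b3} are needed again. Your approach keeps everything phrased in terms of $\D$, which is internally consistent but costs more bookkeeping.
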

\begin{proof}
(i) According to Lemma \ref{der-11} we have $\widetilde{D}(K,\nabla^gK+\eta_K)=0$, in particular $(K_x,(\nabla^gK+\eta_K)_x)$ is acted on trivially by $\hol(\widetilde{D})$ and the claim follows from 
proposition \ref{iso-eps}.\\
(ii)Pick $K_1,K_2 \in \aut(M,g)$ and write, for simplicity $F_a=\nabla^gK_a$ with $a=1,2$. A direct computation using only the definitions of the Lie bracket $[\cdot, \cdot]_b$ and that of the operators $l_v$ shows that 
\begin{equation*}
\begin{split}
[\varepsilon(K_1), \varepsilon(K_2)]_b=&[F_1,F_2]+(F_2K_1-F_1K_2)-[\eta_{K_1},\eta_{K_2}]+
\eta_{F_1K_2-F_2K_1+\tau(K_1,K_2)}\\
&+R^D(K_1,K_2).
\end{split}
\end{equation*}
Since $F_2K_1-F_1K_2=[K_1,K_2]$ using the comparaison formula \eqref{comp-f} shows that 
\begin{equation*}
\begin{split}
[\varepsilon(K_1), \varepsilon(K_2)]_b=&[F_1,F_2]-\eta_{[K_1,K_2]}+R^g(K_1,K_2)+[K_1,K_2].
\end{split}
\end{equation*}
As it is well known(see e.g.\cite{Nom1}), the Killing fields $K_1$ and $K_2$ satisfy the identity $[\nabla^gK_1,
\nabla^gK_2]+R^g(K_1,K_2)=-\nabla^g[K_1,K_2]$ hence $\varepsilon$ is a Lie algebra morphism. The latter 
is injective since any Killing field $K$ is determined by the values of $K$ and $\nabla^gK$ at the point $x$. There remains to prove that $\varepsilon$ is surjective.

By proposition \ref{iso-eps} any element of $\bbV \oplus \i$ extends,by parallel transport, to a $\widetilde{D}$-parallel section of $\mathrm{E}$. As such sections are necessarily of the form $(K,\nabla^gK+\eta_K)$ with $K \in \aut(M,g)$ it follows that for any pair $(v,F) \in \bbV \oplus \i$ there exists a pair $(K^1,K^2)$ of Killing vector fields such that 
$$K^1_x=v, \ (\nabla^gK^1+\eta_{K^1})_x=0, \ K^2_x=0, \ (\nabla^gK^2+\eta_{K^2})_x=F.
$$
Then $\varepsilon(K^1-K^2)=(v,F)$ thus $\varepsilon$ is surjective.
\end{proof}
This generalises the well known fact ( see e.g.\cite{Nom1}) that at any point $x \in M$ the isotropy algebra $\aut_x(M,g):=\{X \in \aut(M,g) :X_x=0\}$ is isomorphic to $\i_x$, via $X \mapsto (\nabla^gX)_x$. For subsequent use we also record the following 
\begin{rem} \label{sym-cor}
Let the Lie algebra $\Kk \oplus \H$ be Hermitian symmetric with corresponding connected symmetric space $(M,g)$; as it is well 
the isotropy algebra $\aut_{o}(M,g)$ at the origin is canonically isomorphic to $\Kk$. 
On the other hand, consider the Riemann curvature tensor
$R^0(x_1,x_2,x_3,x_4):=g_{\H}([[x_1,x_2],x_3],x_4)$ on $\H$. Since $\eta^g=0$ we see that the Lie algebra $\i$ reduces to 
$\i=\mathfrak{stab}_{\so(\H,g_{\H})}R^0$. Then Theorem \ref{embedd} together with the previous observation 
ensures that $h \in \Kk \mapsto \ad_h \in \i$ is a linear isomorphism.

\end{rem}
\subsection{The dichotomy for $3$-symmetric spaces} \label{alt}
Letting $(M^{2m},g,J)$ be Riemannian $3$-symmetric we start studying the Lie algebras $\i$ 
introduced previously. The objective is to determine which instances satisfy $\Aut^0(M,g)=\Aut^0(M,g,J)$. We fix a point $x \in M$ and observe that 
\begin{lemma} \label{3symm-1}
The following hold at the point $x$
\begin{itemize}
\item[(i)] we have 
$$\gamma(\i) \subseteq \i \ \mathrm{and} \ \ad_J(\i) \subseteq \i $$ 
where $\gamma:\mathfrak{so}(\bbV) \to \mathfrak{so}(\bbV)$ is given by $\gamma(F)=-JFJ$
\item[(ii)] $\mathfrak{i}=\mathfrak{i}^{+} \oplus \mathfrak{i}^{-}$ where $\i^{+}=\{F \in \i : FJ=JF\}$ and $\i^{-}=\{F \in \i : FJ+JF=0\}$
\item[(iii)] denoting $\bbV^{-}:=\spa \{Fv : F \in \i^{-}, v \in \bbV\}$ we have 
\begin{equation} \label{iso21}
\eta_v \in \i^{-} \ \mbox{for all} \ v \in \bbV^{-}
\end{equation}
as well as 
\begin{equation} \label{iso22}
[\i^{-},\eta_v] \subseteq \i^{+} \ \mbox{for all} \ v \in \bbV.
\end{equation}
\end{itemize}
\end{lemma}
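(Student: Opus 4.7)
The strategy is to prove (i) by a symmetry argument based on the infinitesimal $3$-symmetry $\sigma$, deduce (ii) as the eigenspace decomposition of the involution $\gamma$, and obtain (iii) directly from (ii) together with a short computation.

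For (i), the plan is to exploit the element $\sigma := -\tfrac{1}{2}1_\bbV + \tfrac{\sqrt 3}{2}J \in \mathrm{O}(\bbV,g) \cap \mathrm{U}(\bbV,g,J)$, which satisfies $\sigma^3 = 1$ and generates the $3$-symmetric structure on $\bbV$. Two pointwise identities at $x$ underlie everything. First, $\sigma$ acts trivially on $R^g$ with the natural $\mathrm{O}(\bbV,g)$-action on $(0,4)$-tensors: integrating Gray's condition $[J,R^g]=0$ from Proposition~\ref{G2}(iii) gives invariance of $R^g$ under the entire $\mathrm{U}(1)$-subgroup $R_\theta = \cos\theta \cdot 1_\bbV + \sin\theta \cdot J$ of $\mathrm{O}(\bbV,g)$, and $\sigma = R_{2\pi/3}$. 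Second,
\begin{equation*}
\sigma\,\eta^g_v\,\sigma^{-1} = \eta^g_{\sigma v},
\end{equation*}
which follows by a short calculation from \eqref{iti} $\eta^g_v J = -J\eta^g_v$ and the linearity $\eta^g_{\sigma v} = -\tfrac{1}{2}\eta^g_v + \tfrac{\sqrt{3}}{2}\eta^g_{Jv}$, both sides equalling $-\tfrac{1}{2}\eta^g_v - \tfrac{\sqrt{3}}{2}J\eta^g_v$.

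Consequently $\Ad(\sigma)$ preserves the base stabiliser $\i^0 = \mathfrak{stab}_{\so(\bbV,g)}(R^g)$ by equivariance of the tensor action, and satisfies the twisted commutation
\begin{equation*}
l_{\sigma v}(\Ad(\sigma)F) = \Ad(\sigma)(l_v F),
\end{equation*}
a direct consequence of $\sigma\eta^g_v\sigma^{-1} = \eta^g_{\sigma v}$ and $(\Ad(\sigma)F)(\sigma v) = \sigma F v$. An induction on $k$ (where the inductive step uses that as $v$ ranges over $\bbV$ so does $\sigma v$) then yields $\Ad(\sigma^j)(\i^k) \subseteq \i^k$ for $j=1,2$ and all $k$, hence $\Ad(\sigma^j)(\i) \subseteq \i$. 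Expanding
\begin{equation*}
\Ad(\sigma^{\pm 1}) = \tfrac{1}{4}\mathrm{Id} \mp \tfrac{\sqrt{3}}{4}\ad_J + \tfrac{3}{4}\gamma
\end{equation*}
gives $\gamma = \tfrac{2}{3}(\Ad(\sigma)+\Ad(\sigma^2)) - \tfrac{1}{3}\mathrm{Id}$ and $\ad_J = \tfrac{2}{\sqrt{3}}(\Ad(\sigma^2) - \Ad(\sigma))$, establishing (i).

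Part (ii) is then immediate: $J^2 = -1_\bbV$ makes $\gamma$ an involution of $\so(\bbV,g)$, so by (i) it restricts to an involution of $\i$ with $\pm 1$-eigenspaces $\i^\pm$, the equivalence $-JFJ = \pm F \Leftrightarrow FJ = \pm JF$ being immediate. For (iii), note $\gamma(\eta^g_v) = -\eta^g_v$ since $\eta^g_v \in \gl^\perp_J(\bbV)$. Writing $v = Fu$ with $F \in \i^-$, $u \in \bbV$, we have $l_u F = [F,\eta^g_u] - \eta^g_v \in \i$ by definition of $\i$; computing $\gamma(l_u F) = [F,\eta^g_u] + \eta^g_v$ identifies $[F,\eta^g_u]$ as its $\gamma$-symmetric part and $-\eta^g_v$ as its $\gamma$-anti-symmetric part. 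Both lie in $\i$ by $\gamma$-invariance (part (i)), and (ii) then forces $\eta^g_v \in \i^-$, proving \eqref{iso21}. For \eqref{iso22}, the identity $[F,\eta^g_v] = l_v F + \eta^g_{Fv}$ has right-hand side in $\i$ --- since $l_v F \in \i$ by definition of $\i$ and $\eta^g_{Fv} \in \i^-$ by \eqref{iso21} applied to $Fv \in \bbV^-$ --- and is $\gamma$-symmetric (as computed for any $F \in \i^-$), hence lies in $\i^+$. The delicate step throughout is the equivariance $\sigma\eta^g_v\sigma^{-1} = \eta^g_{\sigma v}$: the specific coefficients $-\tfrac{1}{2}, \tfrac{\sqrt{3}}{2}$ defining $\sigma$ are exactly those that make this identity hold, without which the induction on $k$ would fail.
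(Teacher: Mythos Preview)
Your proof is correct and takes a genuinely different route from the paper's. The paper proves (i) by a direct induction on $k$ for $\gamma$ and $\ad_J$ separately: at each inductive step it decomposes $F \in \i^{k+1}$ as $F^{+}+F^{-}$ and uses the identities $\eta_{Jv}=\eta_vJ$, $(l_vF)^{+}=[F^{-},\eta_v]$, $(l_vF)^{-}=l_v(F^{+})-\eta_{F^{-}v}$, together with an $\ad_J$-shift, to show that $l_v(F^{\pm})$ and $l_v(\ad_JF)$ land in $\i^k$. Crucially, the inequalities $\eta_{F^{-}v}\in(\i^k)^{-}$ and $[F^{-},\eta_v]\in(\i^k)^{+}$ are by-products of this computation, so the paper simply declares (iii) ``proven during the proof of (i)'' by intersecting over $k$.

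Your approach is more structural: you work with the single conjugation $\Ad(\sigma)$ and the clean equivariance $l_{\sigma v}\circ\Ad(\sigma)=\Ad(\sigma)\circ l_v$ (a direct consequence of $\sigma\eta_v\sigma^{-1}=\eta_{\sigma v}$), which makes the induction on $k$ one line; then $\gamma$ and $\ad_J$ fall out as linear combinations of $\Ad(\sigma^{\pm 1})$. For (iii) you give a short self-contained argument after (i)--(ii), reading $[F,\eta_u]$ and $-\eta_{Fu}$ off as the $\gamma$-even and $\gamma$-odd parts of $l_uF\in\i$. What your route buys is transparency---the $3$-symmetric origin of the lemma is visible throughout, and (i) and (iii) are cleanly decoupled. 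What the paper's route buys is that no appeal to $(G_2)$ integration is needed (only $R^g(J\cdot,J\cdot,J\cdot,J\cdot)=R^g$ and $J\in\i^0$ enter the base case), and the intermediate facts needed for (iii) are obtained without any extra work.
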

\begin{proof}
Throughout the proof we denote with $F^{\pm}$ the components of $F \in \so(\bbV,g)$ with respect to $\so(\bbV,g)=\mathfrak{u}(\bbV,g,J) \oplus \mathfrak{u}^{\perp}(\bbV,g,J)$. Similarly, if the subspace
$ \mathfrak{s} \subseteq \so(\bbV,g)$ satisfies $\gamma(\mathfrak{s}) \subseteq \mathfrak{s}$ we split 
$\mathfrak{s}=\mathfrak{s}^{+} \oplus \mathfrak{s}^{-}$ where $\mathfrak{s}^{\pm}:=\mathfrak{s} \cap \ker(\gamma \pm 1_{\bbV})$.\\
To prove the claim in (i) we show, by induction on $k$, that $\gamma(\i^k) \subseteq \i^k, \ad_J(\i^k) 
\subseteq \i^k$ for $ k \geq 0$. 
When $k=0$ this follows from 
$R^g(J \cdot, J \cdot, J\cdot, J \cdot)=R^g$ and $J \in \i^0$, facts which are granted by Proposition \ref{G2}, (i) and (ii).

Assume that $\gamma(\i^k) \subseteq \i^k, \ad_J(\i^k) \subseteq \i^k$ and pick $F \in \i^{k+1}$. 
From the definition of $\i^{k+1}$ we have $l_vF \in \i^k$; since the latter is preserved by $\gamma$ it follows that 
\begin{equation*}
\begin{split}
&(l_vF)^{-}=[F^{+},\eta_v]-\eta_{Fv} \in (\i^k)^{-}\ \mathrm{and} \ (l_vF)^{+}=[F^{-},\eta_v] \in (\i^k)^{+}
\end{split}
\end{equation*}
for all $v \in \bbV$. Since $\eta_{Jv}=\eta_v \circ J$ we have 
$$ [F^{+},\eta_{Jv}]-\eta_{FJv}=[F^{+},\eta_{v}]J-\eta_{F^{+}v-F^{-}v}J=l_v(F^{+})J+\eta_{F^{-}v}J.
$$
Because $\i^{k}$ is preserved by $\ad_J$ it follows that 
\begin{equation} \label{bra-21}
l_v(F^{+})+\eta_{F^{-}v} \in (\i^k)^{-}.
\end{equation}
As $[F^{+},\eta_{v}]-\eta_{Fv}=l_v(F^{+})-\eta_{F^{-}v}$  comparaison with \eqref{bra-21} above yields 
$$ l_v(F^{+}) \in (\i^k)^{-} \ \mathrm{and} \ \eta_{F^{-}v} \in (\i^k)^{-} $$
for all $v \in \bbV$. In particular $l_v(F^{+}),l_v(F^{-})$ belong 
to $\i^k$, showing that $\gamma(F) \in \i^{k+1}$. Finally 
$$ l_v(\ad_JF)=2l_v(JF^{-})=2[F^{-},\eta_{Jv}]-2\eta_{JF^{-}v}
$$
belongs to $\i^k$ as well and the proof is finished.\\
(ii) follows from (i).\\
(iii) has been proven during the proof of (i).
\end{proof}
Standard orthogonality arguments arguments with respect to the Killing form of $\so(\bbV,g)$ which is negative definite 
ensure that $\i^{+}=\mathfrak{a} \oplus [\i^{-}, \i^{-}]$, a direct sum of ideals. Thus $\i=\mathfrak{a} \oplus \mathfrak{j}$, a direct sum of ideals, where $\mathfrak{j}:=\mathfrak{i}^{-} \oplus [\i^{-}, \i^{-}]$. Furthermore split 
$$\bbV=\bbV^{+} \oplus \bbV^{-}$$
orthogonally with respect to $g$.
\begin{lemma} \label{split1-iso}
Let $(M,g,J)$ be $3$-symmetric. At a given point $x \in M$ 
\begin{itemize}
\item[(i)] the spaces $\bbV^{+}$ and $\bbV^{-}$ are $\mathfrak{j}$-invariant 
\item[(ii)] we have $\mathfrak{j}\bbV^{+}=0$ and moreover $\mathfrak{j}$ acts 
on $\bbV^{-}$ without zero vectors, $(\bbV^{-})^{\mathfrak{j}}=0$
\item[(iii)] $\bbV^{+}$ and $\bbV^{-}$ are $\i$-invariant
\item[(iv)] we have $ \eta_{\bbV^{+}}\bbV^{+} \subseteq \bbV^{+}$
\item[(v)] we have $\eta_{\bbV^{-}}\bbV^{+}=0$ and $\eta_{\bbV^{-}}\bbV^{-} \subseteq \bbV^{-}$
\item[(vi)] the representation $(\mathfrak{j},\bbV^{-})$ is admissible.
\end{itemize}
\end{lemma}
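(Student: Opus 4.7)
The plan is to exploit the defining formula $\bbV^{-}=\spa\{Fv:F\in \mathfrak{i}^{-},\,v\in \bbV\}$ together with the fact that $\mathfrak{i}\subseteq \so(\bbV,g)$ acts by skew-symmetric endomorphisms, which in particular makes $\bbV^{+}=(\bbV^{-})^{\perp}$ the natural object to pair with $\mathfrak{i}^{-}$. First I would observe that for $v\in \bbV^{+}$ and $F\in \mathfrak{i}^{-}$, skew-symmetry forces
\[
g(Fv,u)=-g(v,Fu)=0\quad\text{for all }u\in \bbV,
\]
since $Fu\in \mathfrak{i}^{-}\bbV\subseteq \bbV^{-}$; hence $\mathfrak{i}^{-}\bbV^{+}=0$. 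The same computation yields $\mathfrak{i}^{-}\bbV^{-}\subseteq \bbV^{-}$ (test against $\bbV^{+}$ and use $F\bbV^{+}=0$), and dually $(\bbV^{-})^{\mathfrak{i}^{-}}=0$, since a fixed vector in $\bbV^{-}$ is orthogonal to every $Gw$ with $G\in \mathfrak{i}^{-}$. Iterating with $[\mathfrak{i}^{-},\mathfrak{i}^{-}]$ then gives (i) and (ii).

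For (iii), the first task is to verify that $\mathfrak{i}=\mathfrak{a}\oplus \mathfrak{j}$ is a direct sum of ideals, i.e.\ that $[\mathfrak{a},\mathfrak{i}^{-}]=0$. This is a Killing-form orthogonality argument in $\so(\bbV,g)$: since $\mathfrak{a}$ is the $(-\Tr)$-orthogonal complement of $[\mathfrak{i}^{-},\mathfrak{i}^{-}]$ in $\mathfrak{i}^{+}$, invariance gives $\Tr([F,G]H)=\Tr(F[G,H])=0$ for $F\in \mathfrak{a}$, $G,H\in \mathfrak{i}^{-}$, and non-degeneracy on $\mathfrak{i}^{-}$ forces $[F,G]=0$. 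Commutativity then yields $F(Gv)=G(Fv)\in \mathfrak{i}^{-}\bbV\subseteq \bbV^{-}$, so $\mathfrak{a}\bbV^{-}\subseteq \bbV^{-}$, and $\bbV^{+}$-invariance follows by skew-symmetry. Statement (v) is immediate from \eqref{iso21}: for $v\in \bbV^{-}$ the endomorphism $\eta_v$ lies in $\mathfrak{i}^{-}$ and (ii) already shows that $\mathfrak{i}^{-}$ kills $\bbV^{+}$ and preserves $\bbV^{-}$. For (iv), I would combine \eqref{iso22} with (ii) as follows: for $v,w\in \bbV^{+}$ and $F\in \mathfrak{i}^{-}$,
\[
F\eta_vw=(F\eta_v-\eta_vF)w=[F,\eta_v]w,
\]
and $[F,\eta_v]\in \mathfrak{i}^{+}$ preserves $\bbV^{+}$ by (iii), so the right-hand side lies in $\bbV^{+}$. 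Writing $\eta_vw=a+b$ with $a\in \bbV^{+}$, $b\in \bbV^{-}$, the left-hand side equals $Fb\in \bbV^{-}$, so $Fb\in \bbV^{+}\cap \bbV^{-}=0$ for every $F\in \mathfrak{i}^{-}$; by (ii) this forces $b=0$, giving (iv).

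The main obstacle is (vi). The Cartan-type splitting $\mathfrak{j}=\Kk\oplus \H$ with $\Kk:=[\mathfrak{i}^{-},\mathfrak{i}^{-}]$ and $\H:=\mathfrak{i}^{-}$ is immediate from $\gamma$ restricting to an involution on $\mathfrak{j}$, together with the bracket relations $[\mathfrak{i}^{+},\mathfrak{i}^{-}]\subseteq \mathfrak{i}^{-}$ and $[\mathfrak{i}^{-},\mathfrak{i}^{-}]\subseteq \mathfrak{i}^{+}$, which come from $FJ+JF=0$ on $\mathfrak{i}^{-}$. Setting $J_{\H}(F):=-J\circ F$, one checks by a direct calculation that $J_{\H}^{2}=-1$, that $J_{\H}$ is $\ad_{\Kk}$-invariant (because $\Kk$ commutes with $J$), and that $[J_{\H}F_{1},J_{\H}F_{2}]=[F_{1},F_{2}]$ (using $JF_{i}=-F_{i}J$ and $J^{2}=-1$). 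Compact embedding of the isotropy representation follows from $\mathfrak{j}\subseteq \so(\bbV^{-},g|_{\bbV^{-}})$, since (ii) identifies $\mathfrak{j}$ with its faithful image there. The delicate point is semisimplicity of $\mathfrak{j}$: compact embedding gives reductiveness, so $\mathfrak{j}=\z(\mathfrak{j})\oplus [\mathfrak{j},\mathfrak{j}]$, and I would argue that a central element $Z=Z_{+}+Z_{-}$, with $Z_{-}\in \mathfrak{i}^{-}$ commuting with all of $\mathfrak{i}^{-}$, has $Z_{-}^{2}$ commuting with $\mathfrak{j}$ and symmetric on $\bbV^{-}$, so its kernel is $\mathfrak{j}$-invariant; combining with $(\bbV^{-})^{\mathfrak{j}}=0$ and a Schur/weight-space argument on the isotypical decomposition of $\bbV^{-}$ under $\mathfrak{j}$ forces $Z_{-}=0$, and analogously $Z_{+}=0$. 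With semisimplicity in hand, the remaining conditions of Definition \ref{adm-ff} --- faithfulness, fixed-point-freeness (both by (ii)), $\rho(\Kk)\subseteq \gl_{J_{\bbV^{-}}}$, $\rho(\H)\subseteq \gl_{J_{\bbV^{-}}}^{\perp}$ (from the grading), and $\rho(J_{\H}F)=\rho(F)\circ J_{\bbV^{-}}$ (from the choice of sign of $J_{\H}$) --- are routine.
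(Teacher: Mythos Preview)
Your arguments for (i)--(v) are essentially the paper's, with only cosmetic differences. In (iii) the paper argues that $\mathfrak{j}(\mathfrak{a}\bbV^{+})=0$ forces $\mathfrak{a}\bbV^{+}\subseteq\bbV^{+}$ via $(\bbV^{-})^{\mathfrak{j}}=0$, while you show $\mathfrak{a}\bbV^{-}\subseteq\bbV^{-}$ directly from commutativity; in (iv) the paper first refines $[\i^{-},\eta_v]\subseteq\mathfrak{j}$ (orthogonality to $\mathfrak{a}$) and uses that $\mathfrak{j}$ annihilates $\bbV^{+}$, whereas you use only $[\i^{-},\eta_v]\subseteq\i^{+}$ together with (iii). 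Both routes are correct and equivalent in spirit.

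The gap is in (vi), specifically the semisimplicity of $\mathfrak{j}$. Your trace-form argument correctly shows $\z(\mathfrak{j})\cap\Kk=0$: any $Z_{+}\in[\i^{-},\i^{-}]$ with $[Z_{+},\i^{-}]=0$ is trace-orthogonal to $[\i^{-},\i^{-}]$, hence zero. But your proposed treatment of $Z_{-}\in\z(\mathfrak{j})\cap\i^{-}$ via ``a Schur/weight-space argument on the isotypical decomposition of $\bbV^{-}$'' does not go through as stated. On an irreducible real $\mathfrak{j}$-submodule of complex or quaternionic type, a central skew-symmetric element can act nontrivially (as a multiple of the invariant complex structure), so Schur alone does not force $Z_{-}=0$; and $\ker Z_{-}$ being $\mathfrak{j}$-invariant is compatible with $(\bbV^{-})^{\mathfrak{j}}=0$ when $Z_{-}$ is invertible on $\bbV^{-}$.

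The missing ingredient is $\ad_J(\i)\subseteq\i$ from Lemma~\ref{3symm-1}(i), which you have not used. Since $\ad_J Z_{-}=2JZ_{-}$ and $JZ_{-}$ anticommutes with $J$, one gets $JZ_{-}\in\i^{-}$. Then centrality gives
\[
0=[Z_{-},JZ_{-}]=Z_{-}JZ_{-}-JZ_{-}^{2}=-2JZ_{-}^{2},
\]
so $Z_{-}^{2}=0$, and skew-symmetry forces $Z_{-}=0$. With $\z(\mathfrak{j})=0$ established, the rest of your verification of Definition~\ref{adm-ff} (faithfulness and $(\bbV^{-})^{\mathfrak{j}}=0$ from (ii), the $J$-gradings, and $\rho(J_{\H}F)=\rho(F)\circ J_{\bbV^{-}}$ from $FJ=-JF$) is correct. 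The paper's own proof of (vi) is a one-line ``by construction'', so your expanded account, once patched, is actually more informative.
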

\begin{proof}
(i) from the definition, $\i^{-}\bbV^{-} \subseteq 
\bbV^{-}$. Taking commutators leads to $\mathfrak{j}\bbV^{-} \subseteq \bbV^{-}$ hence $\mathfrak{j}$ preserves $\bbV^{+}$ as well, by orthogonality.\\
(ii) from the definition of $\bbV^{-}$ it follows that 
$\i^{-}\bbV^{+}=0$, by orthogonality. That $\mathfrak{j}\bbV^{+}=0$ follows by taking commutators. If $v \in \bbV^{-}$ is such 
that $\mathfrak{j}v=0$ we have, in particular, $\i^{-}v=0$. Then, with respect to $g$, we have  
$v \perp \i^{-}\bbV=\bbV^{-}$ thus $v=0$.\\
(iii) combining $[\mathfrak{a}, \mathfrak{j}]=0$ and $\mathfrak{j}\bbV^{+}=0$ yields $\mathfrak{j}(\mathfrak{a} \bbV^{+})=0$. It follows that 
$\mathfrak{a} \bbV^{+} \subseteq \bbV^{+} \oplus (\bbV^{-})^{\mathfrak{j}}$ thus $\bbV^{+}$ is $\mathfrak{a}$-invariant by (ii) and the claim follows easily.\\
(iv) pick $v \in \bbV^{+}$. By \eqref{iso22} we have $[\i^{-},\eta_v] \subseteq \i^{+}$. However $[\i^{-},\eta_v] \perp \mathfrak{a}$ since $[\i^{-},\mathfrak{a}]=0$. It follows that 
$$ [\i^{-},\eta_v] \subseteq \mathfrak{j}.
$$
Because $\mathfrak{j}$ acts trivially on $\bbV^{+}$ we get 
$0=[\mathfrak{j},\eta_v]w=\mathfrak{j}(\eta_vw)$ for all $w \in \bbV^{+}$. In other words $\eta_{\bbV^{+}}\bbV^{+} \subseteq \bbV^{+} \oplus (\bbV^{-})^{\mathfrak{j}}$. The claim follows by using (ii).\\
(v) by \eqref{iso21} we have $\eta_v \in \mathfrak{j}$ for all $v \in \bbV^{-}$. Again because $\mathfrak{j}$ acts trivially on $\bbV^{+}$ we get 
$\eta_{\bbV^{-}}\bbV^{+}=0$ and the last part of the claim is proved by orthogonality.\\
(vi) $\mathfrak{j}$ is semisimple of compact type by construction, actually Hermitian symmetric and the claim essentially follows from the vanishing of $(\bbV^{-})^{\mathfrak{j}}$.
\end{proof}

Below we prove the main dichotomy-type result in this section; the Lie algebras ${}^g\og_J,{}^g\oh_J$ are build out of the infinitesimal model for the canonical connection $D$.
\begin{thm} \label{split-iso}
Let $(M^{2m},g,J)$ be a simply connected Riemannian $3$-symmetric space such that $g$ is irreducible. Then either
\begin{itemize}
\item[(i)]  $\i=\oh^g_J$ that is $\Aut^0(M,g)=\Aut^0(M,g,J)$
\item[or] 
\item[(ii)] $\Aut^0(M,g) \neq \Aut^0(M,g,J)$ in which case $(M,g)$ is a Riemannian symmetric space.
\end{itemize}
\end{thm}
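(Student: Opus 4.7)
Plan: The strategy is to combine the algebraic decomposition of $\bbV$ supplied by Lemma \ref{split1-iso} with the de Rham decomposition theorem to force a dichotomy, and then dispatch each branch using on one hand the intrinsic characterisation of $\D$ as the Chern connection of $(g,J)$, and on the other the inclusion $\mathfrak{i}\subseteq \mathfrak{stab}_{\so(\bbV,g)}(R^g)$. First I would apply Lemma \ref{split1-iso} to get the $g$-orthogonal, $J$-invariant splitting $\bbV=\bbV^{+}\oplus\bbV^{-}$ at $x$, and then promote it to a $\nabla^g$-parallel splitting of $TM$. For this, note that $\bbV^{\pm}$ are $\mathfrak{i}$-invariant by (iii) of the lemma, and that $\uh\subseteq \oh\subseteq \mathfrak{i}$: the second inclusion holds because $F\in\oh$ preserves $R^{\D}$ and $\tau$, hence $\eta$ via \eqref{it-expr-n}, hence $R^g$, and satisfies $l_vF=0$ for all $v$. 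The holonomy principle therefore extends $\bbV^{\pm}$ to $\D$-parallel subbundles, and parts (iv), (v) of the lemma together with the $g$-skew-symmetry of $\eta$ show that $\eta$ preserves each factor; hence $\nabla^g=\D-\eta$ preserves them too. Irreducibility of $(M,g)$ then forces $\bbV^{+}=0$ or $\bbV^{-}=0$.

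Suppose first that $\bbV^{-}=0$. Then Lemma \ref{split1-iso}(ii) combined with faithfulness of the inclusion $\mathfrak{j}\hookrightarrow\so(\bbV,g)$ forces $\mathfrak{j}=0$, hence $\mathfrak{i}^{-}=0$ and $\mathfrak{i}=\mathfrak{i}^{+}\subseteq\gl_J(\bbV)$. Via the isomorphism $\varepsilon$ of Theorem \ref{embedd} this translates to every Killing field being $J$-holomorphic, so $\Aut^0(M,g)=\Aut^0(M,g,J)$. Since Proposition \ref{p-311}(i) realises $\D$ as the Chern connection $\nabla^g+\tfrac{1}{2}(\nabla^gJ)J$ intrinsically from $(g,J)$, every such isometry preserves $\D$, and therefore $R^{\D}$ and $\tau$, so $\mathfrak{i}\subseteq \oh\cap\gl_J=\oh_J$. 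Combined with the standing inclusion $\oh_J\subseteq\mathfrak{i}^{+}$ this yields $\mathfrak{i}=\oh_J$, proving (i).

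If instead $\bbV^{+}=0$, relation \eqref{iso21} gives $\eta_v\in\mathfrak{i}^{-}\subseteq\mathfrak{i}\subseteq\mathfrak{i}^0$ for every $v\in\bbV$, so $[\eta_v,R^g]=0$. The comparison $R^g=R^{\D}-[\eta,\eta]+\eta_\tau$ together with $\D\!R^{\D}=\D\eta=\D\tau=0$ gives $\D\!R^g=0$, and since the tensorial action of the contorsion on $R^g$ is precisely $[\eta_v,R^g]$ we obtain
$$\nabla^g_v R^g = \D_v R^g - [\eta_v, R^g] = 0$$
for all $v$. Hence $(M,g)$ is locally symmetric; being simply connected and complete (as a Riemannian homogeneous space), it is globally symmetric, which is (ii).

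The main obstacle is the preparatory parallelisation step: verifying that the purely algebraic splitting at $x$ extends to a $\nabla^g$-parallel decomposition of the whole of $TM$. This requires tracking the chain $\uh\subseteq\oh\subseteq\mathfrak{i}$ to apply the $\D$-holonomy principle, then exploiting the $\eta$-invariance in Lemma \ref{split1-iso} to pass from $\D$-parallelism to $\nabla^g$-parallelism. Once this is in place both branches are short: case (i) is unlocked by the intrinsic formula for $\D$ and the elementary identification $\oh_J=\oh\cap\gl_J\subseteq\mathfrak{i}^{+}$, while case (ii) follows essentially for free from $\mathfrak{i}\subseteq\mathfrak{stab}(R^g)$, which collapses $\nabla^g R^g$ to $\D\!R^g=0$.
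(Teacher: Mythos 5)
Your argument is correct and, in outline, is the paper's own: the same splitting $\bbV=\bbV^{+}\oplus\bbV^{-}$ from Lemma \ref{split1-iso}, promoted to a $\nabla^g$-parallel splitting of $TM$ through $\uh\subseteq{}^g\oh\subseteq\mathfrak{i}$ and the $\eta$-invariance of the factors, the irreducibility dichotomy, and an identical treatment of the branch $\bbV^{+}=0$, where $\eta_v\in\mathfrak{i}\subseteq\mathfrak{stab}_{\so(\bbV,g)}(R^g)$ together with $\D R^g=0$ yields $\nabla^gR^g=0$ and hence (with simple connectedness and completeness) global symmetry. The only genuine divergence is how you close the branch $\bbV^{-}=0$: the paper stays pointwise-algebraic, noting that $l_v$ maps $\un(\bbV,g,J)$ into $\un^{\perp}(\bbV,g,J)$ while $\mathfrak{i}$ is $l_v$-stable and contained in $\un(\bbV,g,J)$, so $l_v(\mathfrak{i})=0$ and $\mathfrak{i}={}^g\oh_J$ follows at once, holomorphy of isometries being deduced afterwards from $\mathscr{L}_KJ=-[\nabla^gK+\eta_K,J]$; you reverse the order, proving holomorphy of all Killing fields first and then invoking Proposition \ref{p-311}(i) (the intrinsic Chern-connection description of $\D$) together with surjectivity of $\varepsilon$ in Theorem \ref{embedd} to force $\mathfrak{i}\subseteq{}^g\oh_J$. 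That closing is sound, but it rests on two translations you leave implicit: the identity $\mathscr{L}_KJ=-[\nabla^gK+\eta_K,J]$, which is what actually converts $\mathfrak{i}\subseteq\gl_J(\bbV)$ into holomorphy (the isomorphism $\varepsilon$ by itself does not), and the fact that, since $\tau$ and $R^{\D}$ are $\D$-parallel, $\mathscr{L}_K\tau=\mathscr{L}_KR^{\D}=0$ is equivalent to the Killing generator $\nabla^gK+\eta_K$ stabilising $\tau$ and $R^{\D}$ pointwise. Both are short computations (the first is the one-line identity the paper itself uses), so these are elisions rather than gaps; the paper's route avoids them at the cost of the small $l_v$-type computation, while yours buys a more geometric explanation of why isometries end up in $\Aut(M,\D)$.
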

\begin{proof}
The splitting $\bbV=\bbV^{+} \oplus \bbV^{-}$ is invariant under the intrinsic torsion tensor $\eta$ by Lemma
\ref{split1-iso}, (iii) and (iv). It is also $\underline{\h}$-invariant, since the latter is a subalgebra of $\i$, which preserves 
$\bbV^{+}$ respectively $\bbV^{-}$ by (iii) in Lemma \ref{split1-iso}. The splitting as  a Riemannian product is thus clear. 

In order to understand its properties 
we examine the behaviour of the transvection Lie algebra $\underline{\g}$ of 
$(M,g,J)$. Since we only know that $\eta_{\bbV^{+}}\bbV^{-} \subseteq \bbV^{-}$
(but we do not know that $\eta_{\bbV^{+}}\bbV^{-}=0$) a priori $\underline{\g}$ is not a direct product Lie algebra.
There are two cases to distinguish.\\
{\it{Case 1.$\bbV^{+}=0$}} at some point $x \in M$. In this situation the only information left in Lemma \ref{3symm-1} and \ref{split1-iso} is having 
$\eta_v \in \i^{-}, v \in \bbV$. Since $\i^{-} \subseteq \stg$ it follows that $[\eta_v,R^g]=0$. Thus by taking into account that $\D\!R^g=0$ we also obtain that $(\nabla^gR^g)_x=0$. Having $\D\!(\nabla^gR^g)=0$ ensures that $\nabla^gR^g$ has constant norm and thus enables to conclude that $\nabla^gR^g=0$ over $M$.\\
{\it{Case 2. $(M,g)$ is not locally symmetric. }} Then $\bbV^{-}=0$ at every point in $M$; indeed, because the spaces $\bbV^{\pm}$ are holonomy invariant and the metric is irreducible, at a point where $\bbV^{-} \neq 0$ we must have $\bbV^{+}=0$, thus $g$ is symmetric by Case 1, a contradiction.

Further on, from $\bbV^{-}=0$ we deduce that $\i^{-}=0$ thus $\i \subseteq \un(\bbV,g,J)$. As $\eta_v \in \un^{\perp}(\bbV,g,J)
$ by using \eqref{inv-ia} we get $l_v(\i)=0, v \in \bbV$ thus $\i \subseteq \oh^g_J$. Equality follows from $\oh^g_J \subseteq \i$. As explained above the latter equality is valid at any point in $M$.

To conclude, let $K \in \aut(M,g)$ and record the identity $-\mathscr{L}_KJ=[\nabla K+\eta_K,J]$.  
Since $\nabla K+\eta_K \in \mathfrak{i}$ we get $\mathscr{L}_KJ=0$ at any point in $M$. It follows that $\Aut^0(M,g) \subseteq \Aut^0(M,g,J)$ and the claim is proved.
\end{proof}
As a consequence we can determine immediately isometry groups for spaces of type $III$ as follows. The idea is to 
use the dichotomy previously proved together with the abstract description of $\mathfrak{aut}(M,g)$ in Theorem \ref{embedd}.
\begin{thm} \label{isoIII} Let $M=(\VV \rtimes_{\pi} G) \slash K$ be $3$-symmetric of type $III$, equipped with its canonical 
almost complex structure $J$. For any $g \in \me(M,J)$ we have 
\begin{equation*}
\Aut^0(M,g) \cong{}^g \overline{G}=\VV \rtimes_{\pi} (G \times {}^g C^0(G,\pi,\VV)).
\end{equation*}
\end{thm}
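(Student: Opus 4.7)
The plan is to identify the isometry algebra $\aut(M,g)$ at the level of infinitesimal models using Theorem \ref{embedd} and the dichotomy in Theorem \ref{split-iso}, then exponentiate. Throughout I will use that a Type III space is de Rham irreducible (Theorem \ref{t2}(i)) and has non-degenerate Nijenhuis tensor (Proposition \ref{int-s}(ii)).

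First, I would verify the hypotheses of the dichotomy Theorem \ref{split-iso}: irreducibility is already granted, and I claim that $(M,g)$ is \emph{not} Riemannian locally symmetric. Granting this, Theorem \ref{split-iso} gives $\Aut^0(M,g)=\Aut^0(M,g,J)$; combined with Theorem \ref{unq}(ii) (available since $N^J$ is non-degenerate for Type III), we obtain $\Aut^0(M,g)\subseteq \Aut^0(M,\D)$. In particular, every Killing field preserves the whole Ambrose--Singer data, so $\aut(M,g)$ sits inside the Nomizu algebra ${}^g\og_J$.

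Next I would pin down ${}^g\og_J$ explicitly. For a Type III space the splitting \eqref{smain-g} of the transvection algebra reduces to $\ug=\VV\rtimes_{\rho}\mL$ with $\mL_0=\n=0$, so Proposition \ref{nomizu-1} yields
\begin{equation*}
{}^g\oh \;=\; \uh \oplus \mathfrak{c}(\mL,\rho,g_{\VV}), \qquad {}^g\og \;=\; \VV \rtimes_{\rho}\bigl(\mL \oplus \mathfrak{c}(\mL,\rho,g_{\VV})\bigr).
\end{equation*}
By Lemma \ref{cent-cpx}(iii) every element of $\mathfrak{c}(\mL,\rho,\VV)$ commutes with $J_{\VV}$, hence ${}^g\oh={}^g\oh_J$ and consequently ${}^g\og={}^g\og_J$. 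Combining with the previous step, and with the fact that ${}^g\oh\subseteq \i$ always (a direct verification using $\D\!\tau=\D\!R^{\D}=0$ together with the comparison formula \eqref{comp-f}), Theorem \ref{embedd} gives the Lie algebra identification $\aut(M,g)\cong {}^g\og=\VV\rtimes_{\rho}\bigl(\mL\oplus \mathfrak{c}(\mL,\rho,g_{\VV})\bigr)$.

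Finally I would exponentiate. Proposition \ref{R-sem}(iii) shows that the Lie group $\VV \rtimes_{\pi}\bigl(G \times {}^gC^0(G,\pi,\VV)\bigr)$ acts by isometries on $(M,g)$, and its Lie algebra matches the one computed above. Since this group contains the already transitive subgroup $\VV\rtimes_{\pi}G$ and has the right Lie algebra, it must coincide with the connected component of the full isometry group, giving the claimed isomorphism.

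The main obstacle is the non-symmetry step at the start. The cleanest route is as follows: by Theorem \ref{t2}(vi) the moduli space $\me(M,J,\D)$ contains the almost-K\"ahler metric, which by Theorem \ref{sol-1} is a non-trivial expanding Ricci soliton and hence not Einstein, a fortiori not locally symmetric. For an arbitrary $g\in \me(M,J,\D)$ one can argue directly using the canonical parametrisation \eqref{can-par} together with Proposition \ref{ric-full}: the operator $Q=2S(1-S^2)^{-1}\mathscr{C}$ is not $\Der(\g)$-valued unless $S=0$, so the associated algebraic Ricci identity forces contradictions with $\nabla^g R^g=0$ whenever $g$ is not the distinguished almost-K\"ahler one; in all cases Proposition \ref{Ricci} shows $\ric^g$ takes distinct positive and negative values on $\VV$ and $\H$, in particular $\ric^g$ is non-degenerate and indefinite, which is incompatible with $(M,g)$ being an irreducible Riemannian symmetric space of either compact or non-compact type.
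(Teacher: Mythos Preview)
Your overall strategy is correct and matches the paper's: rule out local symmetry, invoke Theorem \ref{split-iso} to get $\i=\oh^g_J$, identify the latter explicitly, then apply Theorem \ref{embedd} and Proposition \ref{R-sem}(iii). Your use of Proposition \ref{nomizu-1} to read off ${}^g\oh=\Kk\oplus\mathfrak{c}(\mL,\rho,g_{\VV})$ is a genuine shortcut; the paper instead re-derives this by hand, arguing that any $F\in\i$ preserves the nullity $\VV$ of $R^{\D}$, hence is block-diagonal, then uses Remark \ref{sym-cor} on $\H$ and the torsion relation $\tau(x,v)=\rho(x)v$ on the mixed piece. Your route is cleaner once Proposition \ref{nomizu-1} is available.

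Two points need tightening. First, your non-symmetry argument is overcomplicated and not quite correct as stated: when $Q=0$ the Ricci tensor is degenerate on $\VV$, so ``non-degenerate and indefinite'' fails. The paper's argument is much shorter: if $(M,g)$ were locally symmetric it would be Einstein (being irreducible), hence $Q=\lambda\,1_{\VV}$ by Proposition \ref{Ricci}(i); but $Q=\sum_i[\rho^{-}(e_i),\rho^{+}(e_i)]$ is traceless, so $\lambda=0$ and $g$ is Ricci-flat, contradicting irreducibility. Second, you pass from Theorem \ref{embedd}, which gives $\aut(M,g)\cong\g_b$, directly to ${}^g\og$ without checking that the two Lie brackets agree. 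This requires $l_v(\i)=0$ for all $v\in\bbV$, which holds because $\i=\oh^g_J\subseteq\oh^g$ and elements of $\oh^g$ preserve $\eta$ (cf.\ \eqref{inkl}); the paper makes this step explicit. The detour through Theorem \ref{unq}(ii) is unnecessary, since Theorem \ref{split-iso}(i) already hands you $\i=\oh^g_J$ directly.
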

\begin{proof}
Because the Lie algebra of $G$ is simple of non-compact type the metric $g$ is irreducible. This will be proved later on, in Theorem \ref{irred-R}. The metric $g$ cannot be symmetric; assuming $g$ symmetric implies it is Einstein. Then, part (i) in Proposition \ref{Ricci}, yields, since $\Tr(Q)=0$, that $g$ is Ricci flat which contradicts that $g$ is irreducible. 

Therefore the assumptions in Theorem \ref{split-iso} are satisfied and we conclude that at a given point $\mathfrak{i}=\oh^g_J$ and also $\Aut^0(M,g)=\Aut^0(M,g,J)$. 

From now on work at a point $x \in M$ where $\bbV=\VV \oplus \H$ and pick $F \in \mathfrak{i}$. Because $F$ preserves 
$R^D$ and the nullity of the latter is precisely $V$ it follows that $F\VV \subseteq \VV$ thus $F\H \subseteq \H$ as well since $F$ is skew-symmetric. In particular, 
$F$ is block diagonal with respect to the splitting $\bbV=\VV \oplus \H$, which allows writing $F=F_{11}+F_{22}$.  
Since $F \in \mathfrak{stab}(R^g)$ we have, in particular, $[F,R^g](\H,\H,\H,\H)=0$; then by Proposition \ref{curv-1},(ii) it follows that $F_{22}$ preserves the Riemann curvature tensor of the symmetric Lie algebra $\Kk \oplus \H$, namely $(x_1,x_2,x_3,x_4) \mapsto g_{\H}([[x_1,x_2],x_3],x_4)$ on $\H$.  According to Remark \ref{sym-cor} we have  $F_{22}x=[h,x]$ for some $h \in \Kk$. Furthermore, since $F$ preserves $\eta^g$ it also preserves the torsion tensor $\tau$, in particular 
$ F\tau(x,v)=\tau(Fx,v)+\tau(x,Fv)$
for all $v \in V$ and $x \in \H$. Due to having $\tau(x,v)=\rho(x)v$ it follows that 
\begin{equation*}
\begin{split}
F_{11}\rho(x)v=&\rho(F_{22}x)v+\rho(x)F_{11}v=\rho([h,x])v+\rho(x)F_{11}v\\
=&[\rho(h),\rho(x)]v+\rho(x)F_{11}v.
\end{split}
\end{equation*}
It follows that 
$F_{11}=\rho(h)+A$ where 
$A \in \mathfrak{u}(V,g,J)$ satisfies $A\rho(x)=\rho(x)A$ for all $x \in \H$. Thus, $A \in \mathfrak{c}_J(\mL,\rho,|VV)$ and we have showed that 
\begin{equation*}
F=A+\rho(h)+\ad_h.
\end{equation*}
Due to Lemma \ref{cent-cpx},(iii) we have $\mathfrak{c}(\mL,\rho,\VV)=\mathfrak{c}_J(\mL,\rho,\VV)$ hence we have an isomorphism $ \Kk \oplus \mathfrak{c}_J(\mL,\rho,\VV) \to \mathfrak{i}$. Based on this fact, there remains to spell out the structure of the Lie algebra $\g_b$ as given in Proposition \ref{a-int}, (ii). Because $l_v\mathfrak{i}=0$ whenever $v \in 
\bbV$ the Lie bracket structure in $\g_b$ is exactly the same as in $\og$, thus as in the semi-direct product $\VV \rtimes (\mL \oplus \mathfrak{c}(\rho,\VV,g_{\VV}) )$. The Lie algebra isomorphism 
$$\aut(M,g) \cong \VV \rtimes \left (\mL \oplus \mathfrak{c}(\rho,\VV,g_{\VV}) \right )$$ 
follows now from Theorem \ref{embedd}. Since $\VV \rtimes_{\pi} \left (G \times {}^g C^0(G,\pi,\VV) \right ) \subseteq \Aut^0(M,g)$ by Proposition \ref{R-sem}, (iii), this Lie algebra isomorphism entails equality and the claim is proved. 

\end{proof}

\section{Local irreducibility} \label{loc-red}
The aim in this section is to show that Riemannian $3$-symmetric spaces 
of the form $((\VV \rtimes_{\pi} G)\slash H,g)$, where $G$ is a simple Lie group, have irreducible Riemannian holonomy. 
In order to do so we need a $1:1$ correspondence between a Riemannian holonomy reduction and some type of additional algebraic structure at infinitesimal model level. 

When $(M,g)$ is a connected Riemannian manifold with curvature tensor $R^g$ the correspondence 
$$m\in M \to \{X \in T_mM :R^g(X,\cdot)=0 \}$$ is referred to as the Riemann curvature nullity of the metric $g$. We denote by $\V_R$ the corresponding distribution, which may not have constant rank over $M$. The curvature nullity is invariant under the isometry group of $M$; in particular if $M$ is homogeneous 
$\V_R$ will have constant rank over $M$. 
\begin{rem}
The Riemann nullity has been studied for various classes of metrics, such as Riemannian submersions \cite{MoSe} or 
homogeneous spaces with reductive group of isometries \cite{DiSc3}. Structure results for arbitrary Riemannian metrics seem unavailable so far.
\end{rem}
We start with the following general observation in the case of Ambrose-Singer spaces; in case the Riemann nullity vanishes identically it provides a simple algebraic criterion 
for deciding when the Ambrose-Singer has reducible Riemannian holonomy. This is needed for deciding local irreducibility 
for type III spaces.
\begin{propn} \label{holR-red}
Let $(M,g,D)$ be a simply connected Ambrose-Singer manifold. Assume that 
$\V_{R}=0$ and consider the minimal infinitesimal model $\ug=\uh \oplus \bbV$ at a fixed point $m \in M$. The following are equivalent 
\begin{itemize}
\item[(i)] $(M,g)$ is reducible
\item[(ii)] there exists a $g$-orthogonal splitting $\bbV=\E_1 \oplus \E_2$ such that the spaces $\E_i, i=1,2$ are
\begin{itemize}
\item[(a)] invariant under the action of $\uh$
\item[(b)] invariant under the intrinsic torsion $\eta$ of $D$(at the point $m$).
\end{itemize}
\end{itemize}
\end{propn}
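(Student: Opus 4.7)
The plan is to prove both implications by exploiting the transitive action of the transvection group $\underline{G}$ on $M$ by isometries of $g$ (via the Ambrose--Singer construction recalled in Lemma \ref{pass2local}), together with the interplay between the canonical connection $\D$, the Levi--Civita connection $\nabla^g$, and the intrinsic torsion tensor $\eta = \D - \nabla^g$.

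For the direction (ii) $\Rightarrow$ (i), the idea is to propagate the algebraic splitting $\bbV = \mathrm{E}_1 \oplus \mathrm{E}_2$ at $m$ to global $\D$-parallel distributions $\mathcal{E}_1, \mathcal{E}_2$ on $M$ by $\D$-parallel transport. This is well-defined because each $\mathrm{E}_i$ is $\uh = \Hol(\D)_m$-invariant and $M$ is simply connected, so the holonomy principle applies. The $\eta$-invariance at $m$ together with $\D \eta = 0$ propagates to $\eta_X Y \in \Gamma(\mathcal{E}_i)$ for all $Y \in \Gamma(\mathcal{E}_i)$ and all $X \in \Gamma(TM)$, so writing $\nabla^g = \D - \eta$ shows $\mathcal{E}_i$ is also $\nabla^g$-parallel, and $g$-orthogonality propagates since $\nabla^g g = 0$. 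The de Rham decomposition theorem applied to simply-connected $(M, g)$ then yields the required Riemannian product structure.

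For (i) $\Rightarrow$ (ii), start from a non-trivial de Rham decomposition $(M,g) = (M_1, g_1) \times (M_2, g_2)$ with $\nabla^g$-parallel orthogonal tangent distributions $\mathcal{E}_i$. Connectedness of $\underline{G}$ forces it to preserve each de Rham factor (the decomposition being canonical), so $\mathcal{E}_i$ is $\underline{G}$-invariant, and evaluating the infinitesimal action of the isotropy algebra at $m$ yields $\uh$-invariance of $\mathrm{E}_i := \mathcal{E}_i|_m$. By the holonomy principle on simply-connected $M$, this $\uh$-invariance is equivalent to $\D$-parallelism of the corresponding distribution; since a $\underline{G}$-invariant distribution is determined by its value at $m$ by transitivity, the $\D$-parallel extension of $\mathrm{E}_i$ coincides with $\mathcal{E}_i$. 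Therefore $\mathcal{E}_i$ is both $\D$- and $\nabla^g$-parallel, so $\eta = \D - \nabla^g$ preserves it, and evaluating at $m$ produces the $\eta$-invariance of $\mathrm{E}_i$.

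The main subtlety is the identification of the de Rham distribution $\mathcal{E}_i$ with the $\D$-parallel extension of $\mathrm{E}_i$, which I would establish via the uniqueness of $\underline{G}$-invariant distributions with prescribed value at $m$. The standing nullity hypothesis $\V_R = 0$ is not explicitly invoked in the structural argument above; its role is auxiliary, presumably ensuring the correspondence is sharp by ruling out degenerate reductions coming from flat directions that would complicate passage between $R^g$ and $R^{\D}$ through the comparison formula $R^{\D} = R^g + [\eta, \eta] - \eta_\tau$. The main obstacle I anticipate is providing a careful justification that the de Rham factors are $\underline{G}$-invariant, which I would handle by invoking the canonicity and uniqueness of the de Rham decomposition on simply-connected complete Riemannian manifolds.
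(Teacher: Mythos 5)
Your (ii) $\Rightarrow$ (i) direction is essentially the paper's own argument: extend $\E_1,\E_2$ to $\D$-parallel distributions by parallel transport (legitimate since $\uh=\hol(\D)$ and $M$ is simply connected), use the $\D$-parallelism/$\uh$-invariance of $\eta$ to propagate the $\eta$-invariance, and read off $\nabla^g$-parallelism from $\nabla^g=\D-\eta$; the final appeal to de Rham is not even needed, since "reducible" here just means a nontrivial orthogonal $\nabla^g$-parallel splitting of $TM$.

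The (i) $\Rightarrow$ (ii) direction is where you genuinely diverge, and where the gaps are. The paper's proof is local and tensorial: from $\D R^g=0$ it expands $\nabla^g R^g$, uses pair symmetry and the algebraic Bianchi identity to obtain $R^g(\mathscr{D}_2,\mathscr{D}_2,\eta_{TM}\mathscr{D}_1,\mathscr{D}_2)=0$, i.e. $\eta_{TM}\mathscr{D}_1\subseteq \mathscr{D}_1\oplus(\V_R\cap\mathscr{D}_2)$, and only then invokes $\V_R=0$. Your route instead needs the transvection group $\underline{G}$ to act transitively by isometries, the global de Rham product decomposition, and the fact that $I^0(M)$ preserves the canonical factors. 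Three concrete problems. First, the proposition does not assume completeness: an Ambrose--Singer manifold in the paper's sense need not be homogeneous (transitivity of $\underline{G}$ requires completeness via Ambrose--Singer/Kostant), and both the de Rham product theorem and the splitting of $I^0(M)$ along the factors are completeness statements; this is precisely why the paper's proof is purely pointwise. Second, even granting completeness, your identification of the de Rham distribution with the $\D$-parallel extension of its value at $m$ is circular as written: knowing that $\underline{G}$-invariant distributions are determined by their value at $m$ helps only if you already know both distributions are $\underline{G}$-invariant (or that the de Rham distribution is $\D$-parallel). The missing ingredient is the standard fact that $\underline{G}$-invariant tensor fields are parallel for the canonical connection of the reductive presentation $\ug=\uh\oplus\bbV$, equivalently that transvections have differentials given by $\D$-parallel transport; moreover, preservation by a connected isometry group holds for the canonical decomposition into flat plus irreducible factors, not for an arbitrary two-fold splitting, so you must first regroup the canonical factors. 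Third, declaring the hypothesis $\V_R=0$ "auxiliary" is unsubstantiated: in the only argument available at the stated level of generality it is exactly what kills the error term $\V_R\cap\mathscr{D}_2$ above, so a proof of (i) $\Rightarrow$ (ii) that never uses it cannot be complete unless you add (and verify) the completeness/homogeneity hypotheses your global machinery requires.
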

\begin{proof}
Assume that $g$ is reducible. Then $TM=\mathscr{D}_1 \oplus \mathscr{D}_2 $ orthogonally with respect to $g$, where the distributions $\mathscr{D}_1, \mathscr{D}_2$ are parallel with respect to the Levi-Civita connection $\nabla^g$ of $g$. Those distributions are therefore invariant under the Riemann curvature tensor, that is 
\begin{equation} \label{R-inv}
R^g(TM,TM) \mathscr{D}_1 \subseteq \mathscr{D}_1 \ \mathrm{and}  \ R^g(TM,TM) \mathscr{D}_2 \subseteq \mathscr{D}_2.
\end{equation}
Because $\D\!\eta=0$ and $\D\!R^{\D}=0$ it follows that $\D\!R^g=0$; the  expanded version of this reads 
\begin{equation*}
(\nabla^g_{U_1}R^g)(U_2,U_3)=R^g(\eta_{U_1}U_2,U_3)+R^g(U_2,\eta_{U_1}U_2)+
[R^g(U_2,U_3),\eta_{U_1}]
\end{equation*}
whenever $U_k \in TM, 1 \leq k \leq 3$.  As $\mathscr{D}_i, i=1,2$ are $\nabla^g$-parallel they must also be invariant under the action of the covariant derivative $\nabla^gR^g$. Combining the above formula with \eqref{R-inv} yields 
\begin{equation} \label{R-inv2}
[R^g(U_2,U_3),\eta_{U_1}]\mathscr{D}_1 \subseteq \mathscr{D}_1, \ [R^g(U_2,U_3),\eta_{U_1}]\mathscr{D}_2 \subseteq \mathscr{D}_2
\end{equation}
for all $U_k \in TM, 1 \leq k \leq 3$. The symmetry in pairs of the Riemann curvature and \eqref{R-inv} yield 
$R^g(\mathscr{D}_1,\mathscr{D}_2)=0$. In particular $R^g(\mathscr{D}_2,\mathscr{D}_2)\mathscr{D}_1=0$ after using the algebraic Bianchi identity for $R^g$. Thus taking $U_2,U_3 \in \mathscr{D}_2$ in the first part of \eqref{R-inv2} leads to 
\begin{equation*}
R^g(\mathscr{D}_2,\mathscr{D}_2,\eta_{TM}\mathscr{D}_1,\mathscr{D}_2)=0.
\end{equation*}
In other words $\eta_{TM}\mathscr{D}_1 \subseteq \mathscr{D}_1 \oplus (\V_{R} \cap \mathscr{D}_2)$. As the latter summand vanishes $\mathcal{D}_1$ is invariant under 
$\eta$ and by orthogonality so is $\mathcal{D}_2$ hence one direction of the claim is proved. \\
To prove the claim in the other direction we note that $\E_1,\E_2$ are invariant 
under the action of the holonomy group of $D$. Hence we obtain smooth distributions $\mathcal{D}_1,\mathcal{D}_2$ by parallel transport of $\E_1,\E_2$ to $M$ with respect to 
$D$. Since $\eta$ is $\uh$-invariant, it is invariant under the holonomy group 
of $D$ thus $\eta_{TM}\mathcal{D}_k \subseteq \mathcal{D}_k, k=1,2$. Therefore 
the comparaison formula $D=\nabla^g+\eta$ shows that $\mathcal{D}_k, k=1,2$ are parallel with respect to $\nabla^g$. 
\end{proof}
\begin{rem} \label{non-split}
Local reducibility does not entail decomposability for the transvection 
algebra $\ug$ of the connection $\D$.
\end{rem}
\subsection{Invariant subspaces} \label{sub-str}
In this section $(\mL,\rho,\VV)$ denotes an admissible representation equipped with a background metric $g_{\VV}$. We assume that $\mL$ is simple, unless otherwise stated. First we record the following 
\begin{propn} \label{i-sub}
We have 
\begin{itemize}
\item[(i)]$\Sym^2_{\Kk}(\VV \oplus \H)=\mathrm{Sym}^2_{\Kk}(\VV) \oplus \mathbb{R}1_{\H}$
\item[(ii)] if $\mathscr{S}$ is a $\Kk$-invariant subspace of $\VV \oplus \H$ then 
either $\mathscr{S} \subseteq \VV$ or $\H \subseteq \mathscr{S}$.
\end{itemize}
\end{propn}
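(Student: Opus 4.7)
\bigskip

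\noindent\textbf{Proof proposal.} For part (i), given $S \in \Sym^2_{\Kk}(\VV \oplus \H)$, I would decompose it according to the splitting into blocks:
\[
S = S_{\VV\VV} + S_{\H\H} + T + T^{\ast},
\]
where $S_{\VV\VV} \in \Sym^2_{\Kk}(\VV)$, $S_{\H\H} \in \Sym^2_{\Kk}(\H)$, and $T \in \Hom_{\Kk}(\H,\VV)$ with $T^{\ast}$ its transpose with respect to $g_{\VV}+g_{\H}$. For $\Sym^2_{\Kk}(\H)$ I would invoke irreducibility of $(\Kk,\H)$ over $\mathbb{R}$ (since $\mL$ is simple) and the fact that $\End_{\Kk}(\H)=\mathbb{R}\langle 1_{\H},J_{\H}\rangle$: since $J_{\H}$ is $g_{\H}$-skew and $1_{\H}$ is $g_{\H}$-symmetric, only $\mathbb{R}1_{\H}$ survives in the symmetric part. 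The hard core of (i) is therefore establishing
\[
\Hom_{\Kk}(\H,\VV)=0.
\]
Lemma \ref{met-mix} only dispatches the $J$-commuting subspace $\Hom_{\Kk}^{\prime}(\H,\VV)$, so my plan is to split $F \in \Hom_{\Kk}(\H,\VV)$ as $F=F^{+}+F^{-}$ with $F^{\pm}:=\tfrac12(F \mp J_{\VV}F J_{\H})$, each summand being $\Kk$-equivariant since $J_{\H},J_{\VV}$ are. Then $F^{+} \in \Hom_{\Kk}^{\prime}(\H,\VV)=0$ by Lemma \ref{met-mix}. For $F^{-}$, the $\Kk$-equivariance applied to $h=z$ gives $F^{-}J_{\H}=\rho(z)F^{-}$, and combined with $F^{-}J_{\H}=-J_{\VV}F^{-}$ this yields $\IM(F^{-}) \subseteq \ker(\rho(z)+J_{\VV})=\VV_{-1/2}$ in the notation of Lemma \ref{rho-z}. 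Specialising that lemma to $\lambda=-1/2$ gives $2\dim \VV_{-1/2}=0$, hence $F^{-}=0$. This is the step I expect to be the main obstacle, and it is the only genuinely new representation-theoretic input required.

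\medskip

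For part (ii), let $\mathscr{S} \subseteq \VV \oplus \H$ be $\Kk$-invariant. I would consider the $\Kk$-equivariant projection $\pi_{\H}:\mathscr{S}\to \H$. Its image is a $\Kk$-invariant subspace of $\H$, which is irreducible over $\mathbb{R}$, so either $\pi_{\H}(\mathscr{S})=0$ (giving $\mathscr{S}\subseteq \VV$ and we are done) or $\pi_{\H}(\mathscr{S})=\H$. In the latter case one obtains a short exact sequence of $\Kk$-modules
\[
0 \longrightarrow \mathscr{S}\cap \VV \longrightarrow \mathscr{S} \stackrel{\pi_{\H}}{\longrightarrow} \H \longrightarrow 0.
\]
Because $\Kk$ is compactly embedded (Definition \ref{3sL}), its representations are completely reducible, and $\mathscr{S}\cap \VV$ admits a $\Kk$-invariant complement $\VV''$ in $\VV$; pick a $\Kk$-equivariant section $s:\H \to \mathscr{S}$ and write $s(x)=T(x)+x$ with $T\in \Hom_{\Kk}(\H,\VV/(\mathscr{S}\cap \VV))\cong \Hom_{\Kk}(\H,\VV'')\subseteq \Hom_{\Kk}(\H,\VV)$. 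By part (i) this space vanishes, so $T=0$, which means $x \in \mathscr{S}$ for every $x\in \H$, i.e.\ $\H \subseteq \mathscr{S}$. This closes the dichotomy.
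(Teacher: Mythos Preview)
Your proof is correct. For part (i) the paper follows the same block decomposition $\Sym^2_{\Kk}(\VV\oplus\H)\cong \Sym^2_{\Kk}(\VV)\oplus\Hom_{\Kk}(\H,\VV)\oplus\Sym^2_{\Kk}(\H)$ and dispatches $\Sym^2_{\Kk}(\H)=\mathbb{R}1_{\H}$ by irreducibility, but then simply cites Lemma~\ref{met-mix} for the vanishing of the middle term. As you noticed, that lemma literally only kills $\Hom_{\Kk}'(\H,\VV)$, the $J$-commuting half; your treatment of $F^{-}$ via $\IM(F^{-})\subseteq \VV_{-1/2}$ and Lemma~\ref{rho-z} at $\lambda=-\tfrac12$ is the natural companion to the paper's $\lambda=\tfrac32$ argument and actually makes the step rigorous. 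For part (ii) the routes differ: the paper encodes $\mathscr{S}$ in the $\Kk$-invariant symmetric operator $S=1_{\mathscr{S}}-1_{\mathscr{S}^{\perp}}$, reads off $S=S_1+\lambda 1_{\H}$ from (i), and recovers $\mathscr{S}=\ker(S-1)$, which lies in $\VV$ if $\lambda\neq 1$ and contains $\H$ if $\lambda=1$. Your projection/section argument is equally valid and uses the same representation-theoretic input ($\Hom_{\Kk}(\H,\VV)=0$); the paper's version is a bit slicker once (i) is in hand, while yours makes the role of that vanishing more explicit. (Incidentally, the detour through $\VV/(\mathscr{S}\cap\VV)$ in your argument is unnecessary: $T(x)=s(x)-x$ already lands in $\VV$, so $T\in\Hom_{\Kk}(\H,\VV)$ directly.)
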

\begin{proof}
(i)Clearly $\mathrm{Sym}^2_{\Kk}(\VV \oplus \H)$ is isomorphic to $\mathrm{Sym}^2_{\Kk}(\VV) \oplus 
\mathrm{Hom}_{\Kk}(\H,\VV) \oplus \mathrm{Sym}^2_{\Kk}(\H)$. The middle space vanishes by Proposition 
\ref{met-mix} whilst $\mathrm{Sym}^2_{\Kk}(\H)=\mathbb{R}1_{\H}$ due to the irreducibility of the isotropy representation of $\Kk$ on $\H$.\\
(ii) let $S \in \mathrm{Sym}^2_{\Kk}(\VV \oplus \H)$ be given by $S=1_{\mathscr{S}}-1_{\mathscr{S}^{\perp}}$ where the orthogonal complement is taken with respect to the metric $g$. By (i) we have $S=S_1+\lambda1_{\H}$ where $S_1 \in \mathrm{Sym}^2_{\Kk}(\VV)$ and $\lambda \in \mathbb{R}$. If $\lambda \neq 1$ then $\mathscr{S}=\ker(S-1) \subseteq \VV$. If $\lambda=1$ then $\H \subseteq \mathscr{S}$.
\end{proof}
In the rest of this section we consider $3$-symmetric spaces $M=(\VV \rtimes_{\pi} G) \slash K$ where $G$ is the simply 
connected Lie group with Lie algebra $\mL$ and $\pi$ is the Lie group representation tangent to $\rho$; these spaces will 
be consider equipped with a right invariant metric $g$, as constructed 
previously. 

\begin{propn} \label{null-start}The following hold 
\begin{itemize}
\item[(i)]we have $\V_R=\{v \in \VV: \rho^{+}(\H)v=0\}$
\item[(ii)] $\V_R$ is invariant under the intrinsic torsion tensor $\eta^g$, that is $\eta^g_{\bbV}\V_R \subseteq \V_R$
\item[(iii)] we have $\rho(\mL)\V_R \subseteq \V_R$.
\end{itemize}
\end{propn}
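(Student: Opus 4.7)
The plan is to treat (i) first and deduce (ii) and (iii) from it. For (i) I would show two opposite inclusions, handling the $\H$-component and the $\VV$-component separately. Given $v_0+x_0 \in \V_R$ with $x_0 \in \H$, curvatures with three $\H$-entries and one $\VV$-entry vanish by Proposition \ref{curv-1}(i), so that the condition $R^g(v_0+x_0,y,y',y'')=0$ reduces to $g_\H([[x_0,y],y'],y'')=0$ (Proposition \ref{curv-1}(ii)). Non-degeneracy of the Killing form on $\H$ (Remark \ref{sigm-R}), faithfulness of $(\Kk,\H)$ and the argument that $\mathfrak{c}(\H)\cap\H=0$ (proved via irreducibility of $(\Kk,\H)$ plus simplicity of $\mL$, in the spirit of Lemma \ref{ricci}) then force $x_0=0$. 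Hence $\V_R\subseteq\VV$.

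For $v_0\in\V_R\cap\VV$, the crucial algebraic identity is obtained by specialising Proposition \ref{curv-1}(iv) with $v''=v_0$, giving
\[
g(\eta_v v',\eta_{v_0}v_0)=g(\eta_{v_0}v',\eta_{v_0}v) \quad\text{for all } v,v'\in\VV.
\]
Setting $\xi:=\eta_{v_0}v_0\in\H$ and translating via Proposition \ref{it-semi}(ii) and the skew-symmetry of $\eta_{v_0}$, this becomes the operator identity $\rho^+(\xi)=-\eta_{v_0}^2|_\VV$ on $\VV$. The right-hand side is manifestly positive semi-definite since $g_\VV(-\eta_{v_0}^2 v,v)=g_\H(\eta_{v_0}v,\eta_{v_0}v)\ge 0$; on the other hand $\mathrm{Tr}\,\rho^+(\xi)=\mathrm{Tr}\,\rho(\xi)=0$ because $\mL$ is semisimple so $\rho(\mL)\subseteq\mathfrak{sl}(\VV)$. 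Being positive semidefinite and of trace zero, $\rho^+(\xi)$ must vanish, whence $\eta_{v_0}^2|_\VV=0$; the positivity of the associated quadratic form then yields $\eta_{v_0}|_\VV=0$, equivalently $\rho^+(\H)v_0=0$.

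For the reverse inclusion I would invoke the canonical parametrisation $h_\VV^{-1}g_\VV = 1+S$ of Section \ref{prep-1}. A short computation using the anti-commutation $S\rho(x)=-\rho(x)S$ (which encodes admissibility) yields $\rho^+(x)=(1+S)^{-1}\rho(x)$ and $\rho^-(x)=S\rho^+(x)$; in particular $\rho^+(\H)v_0=0$ forces $\rho^-(\H)v_0=0$ and hence $\rho(\H)v_0=0$. The vanishing of $R^g(v_0,\cdot)$ is then verified component-by-component: Proposition \ref{curv-1}(iv) and $\eta_{v_0}=0$ handle the purely-$\VV$ part, Proposition \ref{curv-1}(i) handles entries with an odd number of $\H$-factors, and the mixed term in Proposition \ref{curv-1}(iii) vanishes since both $\rho^+(y)v_0=0$ and $[\rho^-(y),\rho^+(y')]v_0=\rho^-(y)\rho^+(y')v_0-\rho^+(y')\rho^-(y)v_0=0$.

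Parts (ii) and (iii) then drop out. For (iii), the characterisation $\V_R=\ker\rho(\H)\cap\VV$ from (i) makes $\rho(\H)$-invariance trivial, while $\rho(\Kk)$-invariance follows from $\rho(y)\rho(k)v_0=\rho(k)\rho(y)v_0+\rho([y,k])v_0=0$, using $[y,k]\in\H$. For (ii) and $v_0\in\V_R$: when $v\in\VV$, Proposition \ref{it-semi}(ii) shows that $(v_1,v_2)\mapsto\eta_{v_1}v_2$ is symmetric, so $\eta_v v_0=\eta_{v_0}v=0$; when $x\in\H$, Proposition \ref{it-semi}(iii) gives $\eta_x v_0=\rho^-(x)v_0=0$. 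Hence $\eta_U v_0=0\in\V_R$ for every $U\in\bbV$. The principal delicate step is the positivity-plus-trace argument in (i): one must carefully track signs in deriving $\rho^+(\xi)=-\eta_{v_0}^2|_\VV$, since this rests on properly interchanging $g_\VV$- and $g_\H$-pairings and on the skew-symmetry of $\eta_{v_0}$ across the splitting $\bbV=\VV\oplus\H$.
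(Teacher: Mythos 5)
Your handling of the hard inclusion $\V_R\subseteq\{v\in\VV:\rho^{+}(\H)v=0\}$ in (i) is correct and genuinely different from the paper's. The paper first gets $\V_R\subseteq\VV$ from $\Kk$-invariance together with Proposition \ref{i-sub}(ii), and then, for $v_0\in\V_R$, feeds $R^{\D}(\VV,\bbV)=0$ and $\tau(\VV,\VV)=0$ into the comparison formula to obtain $[\eta_{v_0},\eta_v]=0$, upgrades this to $\eta_{v_0}\eta_v=0$ by the substitution $v\mapsto Jv$, and concludes $\eta_{v_0}=0$ from $\eta_{v_0}^2=0$ and skew-symmetry. Your route --- eliminating the $\H$-component directly from the definite restriction $R^g_{\vert\H}$ plus the centraliser/simplicity argument, and then extracting from Proposition \ref{curv-1}(iv) the operator identity $\rho^{+}(\eta_{v_0}v_0)=-\eta_{v_0}^2\vert_{\VV}$ and killing it by positive semi-definiteness together with $\Tr\rho^{+}(\xi)=\Tr\rho(\xi)=0$ --- is a valid alternative that avoids both Proposition \ref{i-sub} and the comparison formula; the sign bookkeeping you flag does work out.

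The gap lies in the reverse inclusion of (i) and in (ii)--(iii). You base these on the canonical parametrisation $h_{\VV}^{-1}g_{\VV}=1+S$ with $SJ_{\VV}\in\mathfrak{c}^{-}$, from which you deduce $\rho^{-}(x)=S\rho^{+}(x)$ and hence $\rho(\H)\V_R=0$. First, that parametrisation only represents $g_{\VV}$ up to the isometric action of the centraliser (Proposition \ref{iso-invm1} is a statement about the quotient); for the fixed isotropy-invariant metric in the proposition one has in general $h_{\VV}^{-1}g_{\VV}=A^{+}+A^{-}J_{\VV}$, so you would at least need an equivariance argument or the invertibility of $A^{+}$. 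Second, and more seriously, the anticommutation $S\rho(x)=-\rho(x)S$ is specific to $\mL$ of non-compact type, which is precisely the case where $\V_R$ turns out to vanish (Theorem \ref{irred-R}); the proposition is also needed, and is proved in the paper uniformly, when $\mL$ has compact type, and there your key claims fail: for a background metric of compact type $\rho^{+}\equiv 0$, so $\V_R=\VV$ while $\rho(\H)$ acts nontrivially, hence the implication that $\rho^{+}(\H)v_0=0$ forces $\rho^{-}(\H)v_0=0$, and the identification $\V_R=\ker\rho(\H)\cap\VV$, are false --- and with them your verification of the mixed curvature term, your proof of $\eta_{\H}\V_R\subseteq\V_R$, and your proof of (iii). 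The missing ingredient is the metric-free identity the paper establishes: the symmetric part of $[\rho(x_1),\rho(x_2)]=\rho([x_1,x_2])\in\so(\VV,g_{\VV})$ gives $[\rho^{-}(x_1),\rho^{+}(x_2)]+[\rho^{+}(x_1),\rho^{-}(x_2)]=0$, and the substitution $x_1\mapsto J_{\H}x_1$ together with $\rho^{\pm}(J_{\H}x)=\rho^{\pm}(x)J_{\VV}=-J_{\VV}\rho^{\pm}(x)$ turns this into $\rho^{-}(x_1)\rho^{+}(x_2)+\rho^{+}(x_1)\rho^{-}(x_2)=0$. This says $\rho^{-}(\H)$ preserves $\ker\rho^{+}(\H)$, which at one stroke kills the term $[\rho^{-}(x_1),\rho^{+}(x_2)]v_0$ in Proposition \ref{curv-1}(iii) (hence the reverse inclusion), gives $\eta_{\H}\V_R\subseteq\V_R$ via \eqref{its-3}, and, combined with $\rho^{+}(\H)\V_R=0$ and commutators, yields (iii) --- for any isotropy-invariant metric and either type of $\mL$.
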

\begin{proof}
(i)by part (ii) in Proposition \ref{curv-1} the restriction $R^g_{\vert \H}$ is either positive or negative definite, according to the type of $\mL$: compact or non compact(see Remark \ref{sigm-R}). This leads immediately to 
$\V_{R} \cap \H=\{0\}$. Because $\V_R$ is $\Kk$-invariant it follows that 
$\V_R \subseteq \VV$ by Proposition \ref{i-sub}, (ii). Since $R^{\D}(\VV,\bbV)=0$ and 
$\tau(\VV,\VV)=0$ formula \eqref{comp-f} reads $[\eta_{v_0}^g,\eta_v^g]=0$ for all 
$v_0 \in \V_R$ respectively $v \in \VV$. Substitute $v \mapsto Jv$; from the invariance property $\eta_{Jv}^g=\eta_v^gJ=-J\eta_v^g$ (see e.g.\eqref{iti}) it follows that 
$\eta_{v_0}^g\eta_v=\eta_v^g \eta_{v_0}^g=0$. In particular $(\eta_{v_0}^g)^2=0$ thus 
$\eta_{v_0}^g=0$ since $\eta_{v_0}^g \in \mathfrak{so}(\bbV,g)$. We have shown that the nullity space 
$\V_R=\{v_{0} \in V:\eta_{v_0}^g=0\}$ and the claim follows from \eqref{its-2}.\\
(ii)$\&$(iii) are proved at the same time. We expand $\rho(x_i)=\rho^{+}(x_i)+\rho^{-}(x_i)$ whenever $x_i \in \H, i=1,2$, with respect to the metric $g_V$. Then 
$$[\rho^{+}(x_1)+\rho^{-}(x_1), \rho^{+}(x_2)+\rho^{-}(x_2)]=\rho[x_1,x_2].$$
Because $\rho(\Kk) \subseteq \so(\VV,g_{\VV})$, the symmetric component 
in the latter identity reads 
$$[\rho^{-}(x_1),\rho^{+}(x_2)]+[\rho^{+}(x_1),\rho^{-}(x_2)]=0.$$
Since $\rho^{\pm}(J_{\H}x)=\rho^{\pm}(x)J_V=-J_V \rho^{\pm}(x)$ for all $x \in \H$, operating the variable change $x_1 \mapsto J_{\H}x_1$ leads easily to 
$$ \rho^{-}(x_1)\rho^{+}(x_2)+\rho^{+}(x_1)\rho^{-}(x_2)=0.
$$  
It follows that $\rho^{+}(x)\rho^{-}(y)\V_R=0$ for all 
$x,y \in \H$ hence  
$\rho^{-}(\H)\V_R \subseteq \V_R$. Using \eqref{its-3} it follows that $\eta_{\H}\V_R \subseteq \V_R$ thus $\V_R$ is invariant under $\eta$. At the same time, since $\rho^{+}(\H)\V_R=0$ by (i), we obtain that $\rho(\H) \V_R \subseteq \V_R$. Taking 
commutators shows that $\rho(\mL) \V_R \subseteq \V_R$ and the claim in (iii) is proved as well.
\end{proof}
Geometrically, Proposition \ref{null-start} ensures that $\V_R$ splits off as a flat Riemannian factor. A more precise result is the following 
\begin{thm} \label{irred-R}
Assume that $\mL$ is simple of non-compact type. Then
\begin{itemize}
\item[(i)] we have $\V_R=0$
\item[(ii)] metrics on $\bbV$ of the form 
$g_V \oplus g_{\H}$ where $g_V$ is isotropy invariant correspond to irreducible Riemannian metrics on $M$.
\end{itemize}
\end{thm}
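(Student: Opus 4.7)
The plan for part (i) is to use the canonical parametrization of isotropy-invariant metrics together with admissibility to reduce the curvature-nullity condition to a statement about $\rho$ itself. Recall from Proposition \ref{null-start}(i) that $\V_R = \{v \in \VV : \rho^{+}(\H)v=0\}$, where $\rho^{+}(x)$ is the $g_V$-symmetric part of $\rho(x)$. By \eqref{can-par} we may write $h_{\VV}^{-1}\circ g_{\VV}=1+S$ with $SJ\in \mathfrak{c}^{-}(\mL,\rho,\VV)$, and \eqref{st-p} gives $2\rho^{+}(x)=\rho(x)+(1+S)^{-1}\rho(x)(1+S)$. For $v_0\in \V_R$, multiplying $\rho^{+}(x)v_0=0$ by $(1+S)$ yields
\begin{equation*}
2\rho(x)v_0 + S\rho(x)v_0 + \rho(x)Sv_0 = 0.
\end{equation*}
The key observation is that $SJ$ commutes with $\rho(\mL)$ (by definition of the centraliser) while $\rho(x)$ anti-commutes with $J$ for $x\in \H$ (by admissibility, \eqref{ad-ma}), and combining these gives $\rho(x)S=-S\rho(x)$. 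Substituting cancels the two $S$-terms and leaves $\rho(x)v_0=0$ for every $x\in \H$. Since $[\H,\H]=\Kk$ by Lemma \ref{ricci}, this forces $\rho(\mL)v_0=0$, whence $v_0\in \VV^{\mL}=0$ by the admissibility hypothesis. Therefore $\V_R=0$.

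For part (ii) the plan is to apply the reducibility criterion of Proposition \ref{holR-red}, which is available precisely because $\V_R=0$ by part (i). Suppose then that $(M,g)$ were reducible; we would obtain a $g$-orthogonal splitting $\bbV=\E_1\oplus \E_2$ with each $\E_i$ both $\uh$-invariant (here $\uh=\Kk$) and invariant under the intrinsic torsion $\eta^g$. Since $\E_1$ is a $\Kk$-invariant subspace of $\bbV=\VV\oplus \H$, Proposition \ref{i-sub}(ii) forces either $\E_1\subseteq \VV$ or $\H\subseteq \E_1$; after relabelling we may assume $\E_1\subseteq \VV$ and hence $\H\subseteq \E_2$.

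I then exploit the $\eta^g$-invariance of $\E_1$. By Proposition \ref{it-semi} we have $\eta^g_{\VV}\VV\subseteq \H$; so for any $v\in \VV$ and $v_1\in \E_1$ the vector $\eta^g_{v}v_1$ lies in $\H$, while by invariance it also lies in $\E_1\subseteq \VV$. Since $\H\cap \VV=0$ this forces $\eta^g_{\VV}\E_1=0$. Translating via formula \eqref{its-2}, this reads $g_V(\rho^{+}(x)v,v_1)=0$ for all $v\in \VV$, $v_1\in \E_1$ and $x\in \H$; using that $\rho^{+}(x)$ is $g_V$-symmetric, this is equivalent to $\rho^{+}(x)v_1=0$ for all $v_1\in \E_1$ and all $x\in \H$. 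Hence $\E_1\subseteq \V_R$, which vanishes by part (i), contradicting $\E_1\neq 0$.

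The main obstacle is the computation at the heart of part (i): reducing $\rho^{+}(x)v_0=0$ to $\rho(x)v_0=0$. Once this anti-commutation identity between $S$ and $\rho(\H)$ is exploited correctly, the rest of the argument cleanly organises itself, with part (ii) reduced to an application of Propositions \ref{holR-red} and \ref{i-sub} coupled with the torsion-type constraints \eqref{its-1}--\eqref{its-2} already recorded for semidirect-product infinitesimal models.
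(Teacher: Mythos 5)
Your argument is correct, and part (ii) follows essentially the paper's own route: the paper encodes the putative holonomy splitting by the symmetric projector-type tensor $1_{\E_1}-1_{\E_2}$ commuting with $\eta^g$ and applies Proposition \ref{i-sub}(i), while you work with the subspaces directly via Proposition \ref{i-sub}(ii); both versions then use \eqref{its-2} together with Proposition \ref{null-start}(i) to push the $\VV$-part of the splitting into $\V_R=0$, so the difference there is cosmetic. Part (i) is where you genuinely diverge: the paper restricts $\rho$ to the invariant subspace $\V_R$ (Proposition \ref{null-start}(iii)), notes $\rho_R(\mL)\subseteq \so(\V_R,g_{\VV})$, and kills $\rho_R$ by comparing the sign of its trace form with the Killing form of the simple non-compact $\mL$, finishing with $\VV^{\mL}=0$; you instead use the classification of isotropy-invariant metrics and admissibility to show $\ker\rho^{+}(x)=\ker\rho(x)$ (your cancellation is equivalent to the identity $\rho^{+}(x)=(1+S)^{-1}\rho(x)$), and then conclude via $[\H,\H]=\Kk$ and $\VV^{\mL}=0$. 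One caveat: \eqref{can-par} is a normal form only up to the isometric action of $C(G,\pi,\VV)$ (Proposition \ref{iso-invm1}), so for an arbitrary isotropy-invariant $g_{\VV}$ you should either remark that $\V_R$ and de Rham irreducibility are isometry invariants, or run the identical computation with the general parametrisation $h_{\VV}^{-1}\circ g_{\VV}=A^{+}+A^{-}J_{\VV}$ of Proposition \ref{iso-invm}: since $\rho(x)A^{+}=A^{+}\rho(x)$ and $\rho(x)A^{-}J_{\VV}=-A^{-}J_{\VV}\rho(x)$, the condition $\rho^{+}(x)v_0=0$ collapses to $A^{+}\rho(x)v_0=0$, and positivity of $A^{+}$ (established in the proof of Proposition \ref{iso-invm1}) finishes. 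With that one-sentence repair your proof is complete; what it buys is an explicit kernel identity tied to the metric parametrisation, whereas the paper's trace-form argument is independent of Propositions \ref{iso-invm}--\ref{iso-invm1} and only uses $\rho(\Kk)\subseteq\so(\VV,g_{\VV})$, simplicity, and the non-compact type of $\mL$.
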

\begin{proof}
(i) restricting $\rho$ to the $\mL$-invariant space $\V_R$ yields a Lie algebra representation 
$\rho_R:\mL \to \gl(\V_R)$. By Proposition \ref{null-start},(i) we must have $\rho_R(\mL) \subseteq \mathfrak{so}(\V_R)$, in particular the trace form $t_{\rho_R} \leq 0$. As $\mL$ is simple of non-compact type this leads easily to $\rho_R=0$. This 
entails the vanishing of $\V_R$ as $\VV^{\mL}=0$.\\
(ii) since the Riemann nullity vanishes it is enough to determine subspaces $\mathcal{L}$
of $\bbV$ satisyfing (a) and (b) in Proposition \ref{holR-red}.  
As in the proof of Proposition \ref{i-sub} such a subspace corresponds to a tensor $S \in \Sym^2_{\Kk}(\VV \oplus \H) $ 
which moreover satisfies the requirement $S\circ \eta_U=\eta_U \circ S$ for all $U \in \bbV$. By part (i) in Proposition \ref{i-sub} we have $S=S_1+\lambda1_{\H}$ 
where $S_1 \in \Sym^2_{\Kk}(\VV)$. From $S(\eta_vw)=\eta_v(Sw)$ for all $v,w \in \VV$ and $\eta_VV \subseteq \H$(see Proposition \ref{it-semi}) it follows that $\eta_v(S_1-\lambda)w=0$. Combining \eqref{its-2} and the description of $\V_R$ from (i) in Proposition \ref{null-start} yields then $\mathrm{Im}(S_1-\lambda) \subseteq \V_R$ thus $S_1=\lambda1_V$ and further $S=\lambda 1_{\bbV}$. Equivalently either $\mathcal{L}=0$ or 
$\mathcal{L}=\bbV$ and the proof is finished.  
\end{proof}
We also treat below the case when $\mL$ has compact type where the outcome is entirely different.
\begin{thm} \label{irred-Rc}
Assume that $\mL$ is simple of compact type. Then any $3$-symmetric metric on $(\VV \rtimes G)\slash K$ is isometric to 
the product metric $(\VV \times M, h_{\VV} \times g_M)$, where $h_{\VV}$ is a background metric on $\VV$ and $g_M$ is a Hermitian symmetric metric on $M=G \slash K$.
\end{thm}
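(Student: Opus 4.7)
The starting point is that any $\VV\rtimes_\pi G$-invariant $3$-symmetric metric on $M$ corresponds, by Proposition \ref{fix-2}, to an isotropy-invariant splitting $g=g_{\VV}+g_{\H}$ on $\bbV=\VV\oplus\H$ where $g_{\H}$ is the symmetric metric on $\H$ and $g_{\VV}\in\me_K(\VV)$. The plan is to show that, after applying a suitable isometry of $M$ coming from the centraliser action, $g_{\VV}$ becomes $\pi(G)$-invariant, which forces the metric on $M$ to be a Riemannian product under the diffeomorphism $M\cong (G/K)\times\VV$ supplied by Proposition \ref{hom-semig}.

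First I would invoke Proposition \ref{iso-invm1}(ii): there exists $f\in C(G,\pi,\VV)$ such that $f^{\star}g_{\VV}=h_{\VV}((1_{\VV}+A^{+})\cdot,\cdot)$ with $A^{+}\in\mathfrak{c}^{+}(\mL,\rho,\VV)$ and $h_{\VV}$ a background metric. The element $f$ induces an automorphism $(v,g)\mapsto (fv,g)$ of $\VV\rtimes_\pi G$ preserving the isotropy $K$, so it descends to an isometry of $M$ between the original metric and the one built from $f^{\star}g_{\VV}+g_{\H}$. Hence, up to isometry of $M$, I may assume from the outset that $g_{\VV}=h_{\VV}((1+A^{+})\cdot,\cdot)$.

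The crucial step, peculiar to the compact case, is the following. Because $\mL$ has compact type, any background metric satisfies $\pi(G)\subseteq\mathrm{O}(\VV,h_{\VV})$; and because $A^{+}\in\mathfrak{c}(\mL,\rho,\VV)$ commutes with $\rho(\mL)$ and $G$ is connected, $A^{+}$ commutes with $\pi(G)$. A short direct computation using these two facts shows that $g_{\VV}=h_{\VV}((1+A^{+})\cdot,\cdot)$ is itself $\pi(G)$-invariant, hence a background metric for the admissible representation. (This step is what fails in the Type III situation, where $\pi(G)$ is not contained in the orthogonal group of $h_{\VV}$, and this is the real content of the theorem.)

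To conclude, I apply Proposition \ref{R-sem}(i): since $g_{\VV}$ is $\pi(G)$-invariant, the term $(\pi(g^{-1}))^{\star}g_{\VV}$ reduces to $g_{\VV}$, so the $\VV\rtimes_\pi G$-invariant metric on $M$ constructed from $g_{\VV}+g_{\H}$ descends, under the diffeomorphism $M\cong (G/K)\times\VV$ of Proposition \ref{hom-semig} and the action formula \eqref{act-prod}, to the Riemannian product $\mathbf{g}_M\oplus g_{\VV}$, where $\mathbf{g}_M$ is the $G$-invariant metric on $G/K$ determined by $g_{\H}$. Since $\mL=\Kk\oplus\H$ is Hermitian symmetric of compact type, $\mathbf{g}_M$ is the Hermitian symmetric metric on $G/K$, yielding the claimed isometric product decomposition $(\VV\times M,h_{\VV}\times g_M)$ with $h_{\VV}:=g_{\VV}$ a background metric. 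I expect no serious obstacle beyond correctly packaging the centraliser normalisation together with the observation that $\pi(G)$-orthogonality is preserved under multiplication by a symmetric element of the centraliser.
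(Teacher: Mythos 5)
Your proposal is correct and follows essentially the same route as the paper: normalise $g_{\VV}$ via Proposition \ref{iso-invm1}(ii) and the centraliser automorphism $(v,g)\mapsto(fv,g)$, then use that in the compact case $\pi(G)\subseteq\mathrm{O}(\VV,h_{\VV})$ and $A^{+}$ commutes with $\pi(G)$ to conclude $\pi(G)$-invariance of the normalised metric, and finish with Proposition \ref{R-sem} to exhibit the Riemannian product. The paper's argument is the same, merely phrased with the explicit isometry $F(v,x)=(fv,x)$ of $\VV\times M$ instead of descending the group automorphism.
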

\begin{proof}
Let $h_V$ be a background metric on $V$; according to Proposition \ref{iso-invm1} any isotropy invariant metric $g_{\VV}$ on $\VV$ satisfies 
$g_{\VV}=f^{\star}\tilde{g}_{\VV}$ where $f \in C(G,\pi,\VV)$ and $h_{\VV}^{-1}\tilde{g}_{\VV}=1+A^{+}$ with $A^{+} \in \mathfrak{c}^{+}(\mL,\rho,\VV)$.
The $3$-symmetric metrics induced by $g_{\H} \oplus \tilde{g}_{\VV}$ and $g_{\H} \oplus g_{\VV}$ are isometric via $F:\VV \times M \to 
\VV \times M, F(v,x):=(fv,x)$; this follows from the explicit form for these metrics in Proposition \ref{R-sem}, together 
with having $[f,\pi(g)]=0$ for all $g \in G$. To see that $\tilde{g}_{\VV}$ induces a product metric we proceed as follows. 
Because $G$ is simply connected we have 
that $A^{+}$ commutes with $\pi(g)$ and also that $\pi(g) \in \mathrm{O}(h_{\VV},\VV)$ for all $g \in G$; see Definition \ref{bgr-m} for the last property, which is entailed by $\mL$ having compact type. Using this facts yields $\pi(g) \in \mathrm{O}(\VV,g_{\VV})$ hence the description of $\VV \rtimes _{\pi} G$ invariant metrics in Proposition \ref{R-sem} ensures that the $3$-symmetric metric 
is the Riemannian product $g_{\VV} \times g_M$.
\end{proof}


\begin{theindex}

\item Admissible Representations, Definition \ref{adm-ff}
\bigskip
\item Almost Hermitian structures
\subitem quasi-K\"ahler, Definition \ref{d-qK}
\subitem almost-K\"ahler, Section \ref{el-qK}
\subitem 3-symmetric, Definition \ref{dd1}, discussion after Proposition \ref{p-311}
\subitem Riemannian 3-symmetric, Definition \ref{R3s}
\bigskip

\item Automorphism groups, see Section \ref{intr-defn}
\subitem of connection $\Aut(M,D)$
\subitem of almost complex structure $\Aut(M,J)$
\subitem of Riemannian metric $\Aut(M,g)$
\subitem of almost Hermitian pair $\Aut(M,g,J)$, see also \eqref{aut-grD}
\bigskip 

\item Background metric,
\subitem w.r.t. a faithful $\rho$,  Definition \ref{bgr-m}
\subitem w.r.t. an admissible $\rho$, Definition \ref{bgr-madm}
\bigskip

\item Casimir-type operator $\mathscr{C}$, Lemma \ref{cent-cpx}
\bigskip

\item Centralizers
\subitem of Lie group representation $C(G,\pi,\VV)$, \eqref{cent-grp-d}  
\subitem of Lie algebra representation $\mathfrak{c}(\mL,\rho,\VV)$ Section \ref{semi-Lie}.
\subitem metric centralizer ${}^{g_V}C(G,\pi,\VV)$, \eqref{met-cent}
 \bigskip

\item Classes of Riemannian $3$-symmetric spaces 
\subitem Type I Spaces, Definition \ref{typeI}
\subitem Type II Spaces, Definition \ref{nil-def}.
\subitem Type III and IV Spaces, Definition \ref{3+4}
\bigskip

\item Foliation, see Proposition \ref{tams-int} 
\subitem polar
\subitem Riemannian  
\subitem holomorphic
\bigskip 

\item Lie algebras
\subitem isotropy $\h$, Equation \ref{can-red}
\subitem transvection, $\mathrm{t}(\bbV)$, \eqref{tans-v}
\subitem transvection of $3$-symmetric algebra, $\underline{\g}$, Section \ref{el-pp}
\subitem holonomy, $\underline{\h}$, Section \ref{el-pp}
\subitem Nomizu, ${}^g\overline{\g}$, \eqref{sandwich}, Equation \ref{Nom}
\subitem Complex Nomizu ${}^g\overline{\g}_J$,  Equation \ref{complexnomizu}
\subitem Lie algebra of Killing generators $\g_b$, Equation \ref{oni}
\bigskip

\item Lie algebra actions
\subitem explicit examples, Section \ref{sat},
\bigskip

\item Infinitesimal model, Proposition \ref{pass2local}
\bigskip

\item Intrinsic torsion $\eta^g$, Definition \ref{d-qK}
\bigskip 

\item Moduli spaces of metrics
\subitem isotropy-invariant metric on 3-symmetric space, Definition \ref{met-c}
\subitem $\mathrm{Met}_{\mathrm{H}} (\bbV)$,  Definition \ref{met-not1}
\subitem $\mathrm{Met}_{\mathfrak{k}}(\VV)$ Def \ref{def-iso-m}
\subitem $\mathcal{M}(\rho, \mL, \VV)$, Definition \ref{moduli-def}
\bigskip

\item Radicals
\subitem of $\mathfrak{\g}$, Section \ref{not-conv}
\subitem of the Killing form, $W$,  Lemma \ref{rad-10}
\bigskip 
\item Riemann curvature nullity $\V_R$, Section \ref{loc-red}
\bigskip

\item Semidirect products
\subitem of Lie algebras, \eqref{semi-1}
\subitem of Lie groups, Section \ref{grp} 
\end{theindex}



\end{document}